\tikzset{set label/.style={fill=white}}
\newtheorem{theo}{Theorem}[section]
\newtheorem{lemm}[theo]{Lemma}
\newtheorem{prop}[theo]{Proposition}
\newtheorem{cor}[theo]{Corollary}
\theoremstyle{definition}
\newtheorem{defi}[theo]{Definition}
\newtheorem{rem}[theo]{Remark}
\newtheorem{assum}{Assumption}
\newtheorem{exam}[theo]{Example}
\newcommand{\bE}{\mathbb{E}}
\newcommand{\bF}{\mathbb{F}}
\newcommand{\bN}{\mathbb{N}}
\newcommand{\bP}{\mathbb{P}}
\newcommand{\bR}{\mathbb{R}}
\newcommand{\bW}{\mathbb{W}}
\newcommand{\cB}{\mathcal{B}}
\newcommand{\cC}{\mathcal{C}}
\newcommand{\cE}{\mathcal{E}}
\newcommand{\cF}{\mathcal{F}}
\newcommand{\cK}{\mathcal{K}}
\newcommand{\cP}{\mathcal{P}}
\newcommand{\cS}{\mathcal{S}}
\newcommand{\cW}{\mathcal{W}}
\newcommand{\sC}{\mathscr{C}}
\newcommand{\fm}{\mathfrak{m}}
\newcommand{\fp}{\mathfrak{p}}
\newcommand{\ep}{\varepsilon}
\newcommand{\diff}{\mathrm{d}}
\newcommand{\Law}{\mathrm{Law}}
\newcommand{\LP}{\mathrm{LP}}
\newcommand{\TV}{\mathrm{TV}}
\newcommand{\KL}{\mathrm{KL}}
\newcommand{\BL}{\mathrm{BL}}
\newcommand{\Lip}{\mathrm{Lip}}
\newcommand{\op}{\mathrm{op}}
\newcommand{\relmiddle}[1]{\mathrel{}\middle#1\mathrel{}}
\newcommand{\1}{\mbox{\rm{1}}\hspace{-0.25em}\mbox{\rm{l}}}
\newcommand{\weaksol}{(X,W,\Omega,\cF,\bF,\bP)}
\newcommand{\EM}{(X^\pi(t_k))^m_{k=0}}
\newcommand{\growth}{\mathrm{G}}
\newcommand{\conti}{\mathrm{C}}
\newcommand{\ellip}{\mathrm{E}}
\newcommand{\dist}{\mathrm{dist}}
\newcommand{\poly}{\fp^\pi}
\newcommand{\hX}{\widehat{X}}
\newcommand{\supp}{\mathrm{supp}\,}
\newcommand{\data}{(D,\mu_0,b,\sigma)}
\DeclareMathOperator{\esssup}{ess\hspace{0.1pt}\sup}
\providecommand{\keywords}[1]{\textbf{Keywords:} #1}
\def\widebar{\accentset{{\cc@style\underline{\mskip10mu}}}}
\numberwithin{equation}{section}
\title{A generalized coupling approach for the weak approximation of stochastic functional differential equations}
\author{
Yushi Hamaguchi\footnote{Department of Mathematics, Graduate School of Science, Kyoto University, Kyoto 606-8502, Japan. Email: \href{mailto:hamaguchi@math.kyoto-u.ac.jp}{hamaguchi@math.kyoto-u.ac.jp}}
\quad and \quad
Dai Taguchi\thanks{
Department of Mathematics,
Faculty of Engineering Science,
Kansai University,
Suita,
Osaka,
564-8680,
Japan.
Email: \href{mailto:taguchi@kansai-u.ac.jp}{taguchi@kansai-u.ac.jp}}
}
\begin{document}
\maketitle


\begin{abstract}
In this paper, we study functional type weak approximation of weak solutions of stochastic functional differential equations by means of the Euler--Maruyama scheme. Under mild assumptions on the coefficients, we provide a quantitative error estimate for the weak approximation in terms of the L\'evy--Prokhorov metric of probability laws on the path space. The weak error estimate obtained in this paper is sharp in the topological and quantitative senses in some special cases. We apply our main result to ten concrete examples appearing in a wide range of science and obtain a weak error estimate for each model. The proof of the main result is based on the so-called generalized coupling of probability measures.
\end{abstract}


\keywords
Stochastic functional differential equation; weak approximation; generalized coupling; Euler--Maruyama scheme.


\textbf{2020 Mathematics Subject Classification}: 34K50; 65C30; 60F17; 65C20.




\section{Introduction}

In this paper, we consider the following stochastic functional differential equation (SFDE) driven by Brownian motion $W$:
\begin{equation}\label{eq_SFDE}
	\diff X(t)=b(t,X)\,\diff t+\sigma(t,X)\,\diff W(t),\ \ t\in[0,\infty),
\end{equation}
where the coefficients $b(t,X)$ and $\sigma(t,X)$ are allowed to depend on the path of the solution $X=(X(s))_{s\in[0,\infty)}$ up to time $t$. The aim of this paper is to show a functional type weak approximation of a weak solution of the SFDE \eqref{eq_SFDE} by means of the Euler--Maruyama scheme, providing a quantitative error estimate for the weak convergence. More precisely, given a weak solution $\weaksol$ to the SFDE \eqref{eq_SFDE}, for each partition $\pi=(t_k)^m_{k=0}$ of a bounded interval $[0,T]$ such that $0=t_0<t_1<\cdots<t_m=T$, we provide an estimate of
\begin{equation}\label{eq_LP}
	d_\LP\big(\Law_\bP(X_T),\Law_{\bP^\pi}(\poly[X^\pi])\big)
\end{equation}
in terms of the mesh size $|\pi|:=\max_{k\in\{0,\dots,m-1\}}(t_{k+1}-t_k)$. Here, $d_\LP(\cdot,\cdot)$ denotes the so-called L\'evy--Prokhorov metric between probability measures on the space of continuous functions stopped at time $T$, $\Law_\bP(X_T)$ denotes the law of the stopped path $X_T:=X(\cdot\wedge T)=(X(s\wedge T))_{s\in[0,\infty)}$ of the given weak solution, $X^\pi=(X^\pi(t_k))^m_{k=0}$ denotes the Euler--Maruyama scheme defined on a probability space $(\Omega^\pi,\cF^\pi,\bP^\pi)$, and $\Law_{\bP^\pi}(\poly[X^\pi])$ denotes the law of the linear interpolation $\poly[X^\pi]$ of $X^\pi$ at each point of the partition $\pi$. The convergence to zero of the L\'evy--Prokhorov metric \eqref{eq_LP} as $|\pi|\downarrow0$ is equivalent to the weak convergence of $\poly[X^\pi]$ to $X_T$ on the path space. Precise definitions of the weak solution, Euler--Maruyama scheme and L\'evy--Prokhorov metric are summarized in \cref{section_pre} below. Under mild assumptions on the coefficients, we obtain an error estimate for \eqref{eq_LP} by using the ``generalized coupling approach'', which has been used in the literature of the ergodicity of infinite dimensional Markovian systems.

In the literature of the weak approximations of numerical schemes for stochastic differential equations (SDEs), most existing works such as \cite{BaTa96,ClKoLa06,GoLa08,Ho24,KoMe17,MiPl91} consider the convergence order of the weak error measured by
\begin{equation}\label{eq_one-dim-error}
	\big|\bE_\bP[f(X(T))]-\bE_{\bP^\pi}[f(X^\pi(T))]\big|
\end{equation}
for some fixed $\bR$-valued test function $f$ on the Euclidean space and fixed time $T\in(0,\infty)$. This kind of convergence analysis does not provide a weak convergence on the path space. In contrast, the weak error estimate in terms of the L\'evy--Prokhorov metric \eqref{eq_LP} is related to the estimate of
\begin{equation*}
	\big|\bE_\bP[f(X_T)]-\bE_{\bP^\pi}[f(\poly[X^\pi])]\big|
\end{equation*}
for bounded and Lipschitz continuous test functions $f$ on the path space; see \cref{section_pre} for more details. Alfonsi, Jourdain and Kohatsu-Higa \cite{AlJoKH14} study a functional type weak error estimate for one-dimensional Markovian SDEs in terms of the ($L^1$-)Wasserstein metric:
\begin{equation}\label{eq_Wasserstein-AlJoKH14}
	\cW_1\big(\Law_\bP(X_T),\Law_{\bP^\pi}(\widetilde{X}^\pi_T)\big),
\end{equation}
where $\widetilde{X}^\pi$ is the continuous time extension of the Euler--Maruyama scheme $X^\pi=(X^\pi(t_k))^m_{k=0}$ by means of the interpolation by the Brownian path. By the Kantorovich--Rubinstein theorem (cf.\ \cite[Theorem 11.8.2]{Du02}), the error estimate with respect to the Wasserstein metric \eqref{eq_Wasserstein-AlJoKH14} is related to the estimate of
\begin{equation}\label{eq_functional-error-BMinterpolation}
	\big|\bE_\bP[f(X_T)]-\bE_{\bP^\pi}[f(\widetilde{X}^\pi_T)]\big|
\end{equation}
in terms of (not necessarily bounded) Lipschitz continuous test functions $f$ on the path space. Assuming that the coefficients are bounded, sufficiently smooth (with bounded derivatives) and uniformly elliptic, they obtain in \cite[Theorem 3.2]{AlJoKH14} a convergence order of almost $|\pi|^{3/4}$ for the error \eqref{eq_Wasserstein-AlJoKH14}, which is faster than the order $\sqrt{|\pi|}$ obtained by a direct application of the well-known results \cite{Ka88,KlPl95} on the strong error analysis. Also, Ngo and Taguchi \cite{NgTa18} study the error in form of \eqref{eq_functional-error-BMinterpolation} for Markovian SDEs with irregular drift and constant diffusion. It is worth to mention that, besides the choices of the metrics, there is a significant difference in the ways of continuous time extensions of the Euler--Maruyama scheme $X^\pi$ between the objective \eqref{eq_LP} in the present paper and the ones \eqref{eq_Wasserstein-AlJoKH14} and \eqref{eq_functional-error-BMinterpolation} in \cite{AlJoKH14,NgTa18}. Our error analysis using the linear interpolation of the Euler--Maruyama scheme is in the spirit of the Donsker-type functional central limit theorem (cf. \cite[Chapter 2, Theorem 4.20]{KaSh91}), and the results in \cite{AlJoKH14,NgTa18} using the interpolation by the Brownian path do not provide an error estimate for such a purpose. There is only a few works on the Donsker-type functional central limit theorem for SDEs. Ankirchner, Kruse and Urusov \cite{AnKrUr17} show the functional central limit theorem for one-dimensional Markovian SDEs without drift and with irregular diffusion coefficient, but they do not provide a quantitative error estimate. Lototsky \cite{Lo22} provides a quantitative error estimate for the functional central limit theorem for Brownian motion in terms of the linear interpolation of Gaussian random walks and apply to one-dimensional Markovian SDEs with Lipschitz continuous drift coefficient and constant diffusion coefficient. In the latter paper \cite{Lo22}, it is shown that the sharp convergence order in terms of the Wasserstein metric (which is stronger than the L\'evy--Prokhorov mettic) is $\sqrt{|\pi|\log\frac{1}{|\pi|}}$. Similarly, it turns out that the weak convergence speed of linear interpolations of any random variables (with respect to the uniform partitions $\pi$ of $[0,T]$) to Brownian motion can not be faster than $\sqrt{|\pi|\log\frac{1}{|\pi|}}$ in terms of the L\'evy--Prokhorov metric; see \cref{prop_lower-bound}. Hence, the order $\sqrt{|\pi|\log\frac{1}{|\pi|}}$ can be seen as a benchmark for the convergence order of the objective \eqref{eq_LP} in this paper.

The above mentioned works on the weak convergence of numerical methods require the coefficients to be either Markovian, bounded or smooth with bounded derivatives. In addition, they typically require the uniform ellipticity for the diffusion coefficient. However, these assumptions are restrictive in view of applications. Most of stochastic models appearing in science have unbounded or even super-linearly growing coefficients together with non-uniformly elliptic diffusion coefficient, and they are typically defined only on a proper subset of the Euclidean space. Gy\"ongy and Krylov \cite{GyKr96} show the convergence in probability of the Euler--Maruyama scheme under a setting including these situations. However, they are still restricted to the Markovian case, and do not provide a convergence rate when the coefficients are super-linearly growing. There are many important non-Markovian models involving strongly nonlinear coefficients as well as path-dependent structures such as the time-delay of the system. This motivates us to study SFDEs \eqref{eq_SFDE} defined only on a domain $D$ under mild assumptions on the coefficients. Specifically, the framework in the present paper allows the coefficients to be path-dependent, locally bounded, locally H\"{o}lder continuous and non-uniformly elliptic; see \cref{assum} below for more details. In particular, our framework includes not only the case of super-linearly growing coefficients but also the case of the coefficients involving strongly nonlinear terms such as $x^{-1}$. Under this quite general setting, in our main result (\cref{theo_main}), we provide a quantitative estimate for the functional type weak approximation in terms of the L\'evy--Prokhorov metric \eqref{eq_LP}. Furthermore, as a by-product of the weak convergence result, we obtain a novel result on uniqueness in law for the SFDE \eqref{eq_SFDE}. Our main result can be applied to many examples appearing in science which are beyond the existing works on numerical approximations. In \cref{section_example}, we apply \cref{theo_main} to ten concrete examples of Markovian and non-Markovian models appearing in physics, chemistry, mathematical finance, economics, population biology, and so on. Interestingly, the path-dependence of the coefficients enables us to apply our result not only to SDEs with delay but also to some non-standard Markovian SDEs such as reflected SDEs and stochastic oscillator models after appropriate transformations; see \cref{subsec_remark-specialSFDE} for more details.

The significance of the main result of this paper lies not only in the generality of the framework as mentioned above but also in the sharpness of the error estimate. Indeed, in a special case of \cref{theo_main}, we reach the optimal convergence order $\sqrt{|\pi|\log\frac{1}{|\pi|}}$ for the L\'evy--Prokhorov metric \eqref{eq_LP}; see \cref{cor_rate-polynomial} and \cref{prop_lower-bound}. Furthermore, it is worth to mention that the topology of the weak convergence (which is nothing but the topology generated by the L\'evy--Prokhorov metric) obtained in \cref{theo_main} can not be improved to the topology generated by the Wasserstein metric in general; see \cref{exam_Wasserstein-divergence} for such an example. Indeed, Hutzenthaler, Jentzen and Kloeden \cite{HuJeKl11,HuJeKl15} show that, for any one-dimensional, non-degenerate and Markovian SDEs (defined on the whole space $\bR$) with super-linearly growing coefficients, the $p$-th moment $\bE_{\bP^\pi}[|X^\pi(T)|^p]$ of the (standard) Euler--Maruyama scheme $X^\pi$ diverges as $|\pi|\downarrow0$ for any $p\in(0,\infty)$ and $T\in(0,\infty)$. This implies the divergence of the error in terms of the Wasserstein metric even for the one-dimensional distribution at each time $T\in(0,\infty)$. Recently, due to this difficulty in the strongly nonlinear coefficients case, several authors have considered modified versions of the Euler--Maruyama scheme and showed their strong convergence. For example, in the Markovian setting, \cite{HuJeKl12,Sa16} consider the tamed Euler--Maruyama scheme, \cite{Al13,HiMaSt02,NeSz14} consider the backward Euler--Maruyama scheme, and \cite{ChJaMi16} consider the truncated Euler--Maruyama scheme. Moreover, Guo, Mao and Yue \cite{GuMaYu18} apply the truncated Euler--Maruyama scheme to the case of SDEs with delay. Unlike this recent trend, in the present paper, we do not consider any modifications of the Euler--Maruyama scheme but focus on the standard one. Even for the case of multi-dimensional, path-dependent and super-linearly growing coefficients case, our results ensure the functional type weak convergence of the standard Euler--Maruyama scheme together with its quantitative error estimate.

Typical methods adopted in the literature to show the quantitative weak error estimates in terms of the Euler--Maruyama scheme are based on the ``PDE approach'', which relies on the analysis of the corresponding Kolmogorov equation; see for example \cite{BaTa96,GoLa08,KoMe17,MiPl91}. Recently, Holland \cite{Ho24} adopted the PDE approach together with the technique of the stochastic sewing lemma of L\^e \cite{Le20} to obtain a quantitative weak convergence order; see also \cite{BuDaGe19,DaGeLe23} for applications of the stochastic sewing lemma to the strong convergence analysis. However, the PDE approach heavily relies on the Markovian structure of the SDE. An application of this approach to the path-dependent SFDE \eqref{eq_SFDE} is quite difficult since the corresponding Kolmogorov equation should be infinite dimensional. In the case of SDEs with delay, Cl\'ement, Kohatsu-Higa and Lamberton \cite{ClKoLa06} demonstrate the so-called ``duality approach'' by means of Malliavin calculus, where the smoothness of the coefficients are crucial. Also, Ngo and Taguchi \cite{NgTa18} adopt the ``Girsanov transform approach'' in the case of Markovian SDEs with irregular drift coefficient and constant diffusion coefficient; this approach is extended by Bao and Shao \cite{BaSh25} to the case of SDEs with delayed drift coefficient (but still constant diffusion coefficient). These approaches mainly focus on the weak convergence of the finite dimensional distribution measured by \eqref{eq_one-dim-error} at each fixed time $T$. They require that the coefficients are either path-independent, bounded or sufficiently smooth. Moreover, in all the above mentioned works, the boundedness and uniform ellipticity of the diffusion coefficient are crucial. These conditions are that we aim to exclude from the assumption in the present paper.

In this paper, inspired by the work by Kulik and Scheutzow \cite{KuSc20} on weak well-posedness and ergodicity of SFDEs, we demonstrate a ``\emph{generalized coupling approach}'', which is a stochastic control-type approach based on the analysis of the so-called \emph{generalized coupling} of probability measures. A (true) coupling of probability measures is defined as a probability measure on the product space with prescribed marginal laws. It is known that the L\'evy--Prokhorov metric has a dual representation by means of couplings; see \eqref{eq_LP-coupling} in \cref{subsec_LP} below. Hence, it is important to construct a reasonable true coupling in order to give a good estimate for the L\'evy--Prokhorov metric \eqref{eq_LP}. However, the construction and estimate of a reasonable true coupling are quite difficult in general. In contrast, a generalized coupling allows the marginal to have a mild deviation bound instead of the exact coincidence with respect to the prescribed probability measure, and the construction of the generalized coupling satisfying a desired property is typically easier than the construction of the true coupling. The general idea of the generalized coupling approach, which is also known as the Control-and-Reimburse strategy, is ``\emph{to apply a stochastic control in order to improve the system, and then to take into account the impact of the control}'' \cite{KuSc20}. In the present paper, we introduce a \emph{controlled Euler--Maruyama scheme} $\widehat{X}^\pi$ which ``improves'' the (true) Euler--Maruyama scheme $X^\pi$ using additional control parameters. The joint distributions of the pair of $X_T$ and (the linear interpolation of) $\widehat{X}^\pi$ can be seen as a generalized coupling between $\Law_\bP(X_T)$ and $\Law_{\bP^\pi}(\poly[X^\pi])$. The error between a (given) weak solution $X$ and the controlled Euler--Maruyama scheme $\widehat{X}^\pi$ can be estimated by means of the control parameters. Then, taking into account the deviation from the true Euler--Maruyama scheme $X^\pi$ to the controlled one $\widehat{X}^\pi$, which corresponds to the ``impact of the control'', we can formulate a ``stochastic control problem''. Solving the control problem, we can obtain a desired weak error estimate for the L\'evy--Prokhorov metric \eqref{eq_LP}, showing our main result (\cref{theo_main}). See \cref{section_proof} for more detailed idea of the generalized coupling approach, construction of the controlled Euler--Maruyama scheme $\widehat{X}^\pi$ and the proof of \cref{theo_main}. The idea of the generalized coupling investigated in \cite{KuSc20} originates from Hairer's work \cite{Ha02} on exponential mixing properties of stochastic partial differential equations and has been used in ergodic theory of Markov processes; for example, it is used in \cite{HaMaSc11} to show a general form of Harris' theorem, in \cite{BuKuSc20} to show ergodicity of various infinite dimensional Markov processes that may lack the strong Feller property, in \cite{Wa11} to show the dimension-free Harnack inequality for SDEs, and in \cite{BaWaYu19} to show the asymptotic log-Harnack inequality for SDEs with infinite delay, among others. To the best of our knowledge, the present paper is the first time applying the generalized coupling approach to theory of numerical approximations for stochastic processes.

The rest of this paper is organized as follows: \cref{section_pre} is a preliminary section, where we introduce the notations which we use throughout the paper, summarize basic properties of the L\'evy--Prokhorov metric and define the concepts of weak solutions of SFDEs as well as the Euler--Maruyama scheme. In \cref{section_main}, we state the main result (\cref{theo_main}) and show its immediate consequence (\cref{cor_rate-polynomial}). In \cref{section_remark}, we make remarks on the main result. In particular, we show in \cref{prop_lower-bound} a lower bound for the weak approximation in terms of the L\'evy--Prokhorov metric and discuss in \cref{subsec_remark-specialSFDE} some special classes of SFDEs included in our framework. \cref{section_proof} is devoted to the proof of \cref{theo_main}. In the introductory part of this section, we demonstrate the idea of the generalized coupling approach in detail and introduce the controlled Euler--Maruyama scheme. In \cref{section_example}, we apply our main result to ten concrete examples appearing in a wide range of science. In \hyperref[appendix]{Appendix}, we prove some auxiliary results used in this paper. In particular, the results in \cref{appendix_prob} provide some fundamental estimates for stochastic processes, which are important by their own rights.


\section{Preliminaries}\label{section_pre}

In this section, we summarize the notations which we use throughout this paper, recall the definition and well-known properties of the L\'evy--Prokhorov metric, define the concept of weak solutions of SFDEs and introduce the Euler--Maruyama scheme.


\subsection{Notation}\label{subsec_notation}

We denote by $|\cdot|$ the standard Euclidean norm on the space $\bR^n$ of $n$-dimensional (column) vectors or the Frobenius norm on the space $\bR^{n\times d}$ of $n\times d$-matrices for each $n,d\in\bN$. $I_{n\times n}\in\bR^{n\times n}$ denotes the identity matrix. For each $\xi\in\bR^n$ and $A\subset\bR^n$, we define $\dist(\xi,A):=\inf_{\eta\in A}|\xi-\eta|$, where we set $\dist(\xi,\emptyset):=\infty$. For each set $A$, $\1_A$ is the corresponding indicator function, and $A^\complement$ is the complement relative to a given set which is clear from the context.

For a measurable space $(S,\cS)$, we denote by $\cP(S)$ the set of all probability measures on $(S,\cS)$. When $S$ is a topological space, we always choose $\cS$ as the corresponding Borel $\sigma$-algebra $\cB(S)$, and we denote by $\supp\mu$ the support of a Borel probability measure $\mu\in\cP(S)$. For two probability measures $\mu,\nu\in\cP(S)$, we denote by $\sC(\mu,\nu)\subset\cP(S\times S)$ the set of all couplings between $\mu$ and $\nu$, that is, probability measures $\fm$ on the product measurable space $(S\times S,\cS\otimes\cS)$ such that $\fm(\cdot\times S)=\mu$ and $\fm(S\times\cdot)=\nu$. For each $s\in S$, $\delta_s\in\cP(S)$ denotes the Dirac measure at the point $s$. For each random variable $\xi$ on a probability space $(\Omega,\cF,\bP)$ with values in a measurable space $(S,\cS)$, we denote by $\Law_\bP(\xi):=\bP\circ\xi^{-1}\in\cP(S)$ the law of $\xi$ under the probability measure $\bP$. The expectation under a probability measure $\bP$ is denoted by $\bE_\bP[\cdot]$; we sometimes denote it by $\bE[\cdot]$ when the underlying probability $\bP$ is clear from the context.

Let $n\in\bN$. We denote by $\cC^n$ the set of continuous functions from $[0,\infty)$ to $\bR^n$. For each $x\in\cC^n$ and $t\in[0,\infty)$, $x(t)\in\bR^n$ denotes the value of $x$ at time $t$, and $x_t\in\cC^n$ denotes the stopped function defined by $x_t(s):=x(t\wedge s)$ for $s\in[0,\infty)$. For each $T\in[0,\infty)$, let $\cC^n_T$ be the set of all functions $x\in\cC^n$ such that $x=x_T$. The set $\cC^n_T$ becomes a separable Banach space with the norm $\|x\|_\infty:=\sup_{t\in[0,T]}|x(t)|$. Also, we endow the set $\cC^n$ with the topology of the uniform convergence on every compact subset of $[0,\infty)$.

Let $D_1$ and $D_2$ be two subsets of $\bR^n$. We denote by $\cC^n[D_1;D_2]$ the set of $x\in\cC^n$ such that
\begin{equation*}
	x(0)\in D_1\ \ \text{and}\ \ x(t)\in D_2\ \ \text{for any $t\in[0,\infty)$}.
\end{equation*}
For each $T\in[0,\infty)$, we define $\cC^n_T[D_1;D_2]:=\cC^n[D_1;D_2]\cap\cC^n_T$. We endow $\cC^n[D_1;D_2]$ and $\cC^n_T[D_1;D_2]$ with the induced topologies of $\cC^n$ and $\cC^n_T$, respectively. For the case of $D_1=D_2=:D$, we simply denote by $\cC^n[D]$ and $\cC^n_T[D]$ instead of $\cC^n[D;D]$ and $\cC^n_T[D;D]$, respectively. Also, when $n=1$, we simply denote by $\cC$, $\cC_T$, $\cC[D_1;D_2]$, $\cC_T[D_1;D_2]$, $\cC[D]$ and $\cC_T[D]$ instead of $\cC^1$, $\cC^1_T$, $\cC^1[D_1;D_2]$, $\cC^1_T[D_1;D_2]$, $\cC^1[D]$ and $\cC^1_T[D]$, respectively.

For each $D_1,D_2\subset\bR^n$, we say that a map $\varphi$ from $[0,\infty)\times\cC^n[D_1;D_2]$ to a measurable space is \emph{progressively measurable} if for any $t\in[0,\infty)$, the restriction of $\varphi$ to $[0,t]\times\cC^n[D_1;D_2]$ is $\cB([0,t])\otimes\cB(\cC^n_t[D_1;D_2])$-measurable. In this case, it must hold that
\begin{equation*}
	\varphi(t,x)=\varphi(t,x_t)
\end{equation*}
for every $(t,x)\in[0,\infty)\times\cC^n[D_1;D_2]$.

For each $x\in\cC^n$, $t\in[0,\infty)$ and $\delta\in[0,\infty)$, we denote by $\varpi(x_t;\delta)$ the modulus of continuity of $x_t\in\cC^n_t$ with length $\delta$, that is,
\begin{equation*}
	\varpi(x_t;\delta):=\sup_{\substack{0\leq r\leq s\leq t\\s-r\leq\delta}}|x(s)-x(r)|.
\end{equation*}
Clearly, $\varpi(x_t,0)=\varpi(x_0,\delta)=0$, and the function $\delta\mapsto\varpi(x_t;\delta)$ is finite, non-decreasing and continuous. Also, it holds that
\begin{equation*}
	0\leq\varpi(x_t;\delta)-\varpi(x_s;\delta)\leq\varpi\big(x_t;(t-s)\wedge\delta\big),\ \ 0\leq s\leq t<\infty,\ \ \delta\in[0,\infty),
\end{equation*}
and hence $t\mapsto\varpi(x_t;\delta)$ is non-decreasing and continuous.

For each $n$-dimensional vector $\xi=(\xi_1,\dots,\xi_n)^\top\in(0,\infty)^n$, we define $\xi^{-1}:=(\xi_1^{-1},\dots,\xi_n^{-1})^\top\in(0,\infty)^n$. Also, for each $x=(x_1,\dots,x_n)^\top\in\cC^n[(0,\infty)^n]$, we define $x^{-1}:=x(\cdot)^{-1}=(x_1(\cdot)^{-1},\dots,x_n(\cdot)^{-1})^\top\in\cC^n[(0,\infty)^n]$.

Throughout the paper, the natural numbers $n$ and $d$ typically represent the dimensions of the solution of the SFDE \eqref{eq_SFDE} and Brownian motion, respectively.


\subsection{Basic properties of the L\'evy--Prokhorov metric}\label{subsec_LP}

Let $(S,d_S)$ be a separable metric space. The L\'evy--Prokhorov metric $d_\LP(\mu,\nu)$ between two probability measures $\mu,\nu\in\cP(S)$ is defined by
\begin{equation}\label{eq_LPdef}
	d_\LP(\mu,\nu):=\inf\left\{\ep>0\relmiddle|\mu(A)\leq\nu(A^\ep)+\ep\ \text{and}\ \nu(A)\leq\mu(A^\ep)+\ep\ \text{for all}\ A\in\cB(S)\right\},
\end{equation}
where
\begin{equation*}
	A^\ep:=\left\{x\in S\relmiddle|\text{there exists $y\in A$ such that $d_S(x,y)<\ep$}\right\}.
\end{equation*}
It is well-known that $(\cP(S),d_\LP)$ is a metric space (cf.\ \cite[Theorems 11.3.1]{Du02}), and the weak convergence of probability measures on $(S,\cB(S))$ is equivalent to the convergence with respect to the L\'evy--Prokhorov metric $d_\LP$ (cf.\ \cite[Theorem 11.3.3]{Du02}). Also, for each $\mu,\nu\in\cP(S)$, the L\'evy--Prokhorov metric $d_\LP(\mu,\nu)$ can be represented by means of couplings between $\mu$ and $\nu$ as follows:
\begin{equation}\label{eq_LP-coupling}
	d_\LP(\mu,\nu)=\inf_{\fm\in\sC(\mu,\nu)}\inf\left\{\ep>0\relmiddle|\fm\big(\left\{(x,y)\in S\times S\relmiddle|d_S(x,y)>\ep\right\}\big)<\ep\right\}
\end{equation}
(cf.\ \cite[Corollary 11.6.4]{Du02}). In terms of $S$-valued random variables $\xi$ and $\eta$ defined on a (common) probability space $(\Omega,\cF,\bP)$, the above relation yields that
\begin{equation}\label{eq_LP-KF}
	d_\LP\big(\Law_\bP(\xi),\Law_\bP(\eta)\big)\leq\inf\left\{\ep>0\relmiddle|\bP\big(d_S(\xi,\eta)>\ep\big)<\ep\right\}.
\end{equation}
The right-hand side above is called the Ky Fan metric between $\xi$ and $\eta$, which induces the topology of convergence in probability of $S$-valued random variables on the prescribed probability space $(\Omega,\cF,\bP)$ (cf.\ \cite[Theorem 9.2.2]{Du02}).

In the literature of numerical analysis of SDEs, the weak rate of convergence of an approximation scheme is often measured by means of a functional with respect to a class $\Xi$ of measurable test functions $f:S\to\bR$ in the following manner:
\begin{equation*}
	d_\Xi(\mu,\nu):=\sup_{f\in\Xi}\left|\int_Sf(x)\,\mu(\diff x)-\int_Sf(x)\,\nu(\diff x)\right|,\ \ \mu,\nu\in\cP(S).
\end{equation*}
Equivalently, in terms of $S$-valued random variables $\xi$ and $\eta$ defined on a probability space $(\Omega,\cF,\bP)$,
\begin{equation*}
	d_\Xi\big(\Law_\bP(\xi),\Law_\bP(\eta)\big)=\sup_{f\in\Xi}\left|\bE\big[f(\xi)\big]-\bE\big[f(\eta)\big]\right|,
\end{equation*}
where $\bE[\cdot]$ denotes the expectation with respect to $\bP$. The L\'{e}vy--Prokhorov metric is related to the metric $d_\Xi$ induced by the class $\Xi=\BL$ of bounded and Lipschitz continuous functions, that is,
\begin{equation}\label{eq_BL}
	\BL:=\left\{f:S\to\bR\relmiddle|\sup_{x\in S}|f(x)|+\sup_{\substack{x,y\in S\\x\neq y}}\frac{|f(x)-f(y)|}{d_S(x,y)}\leq1\right\}.
\end{equation}
Indeed, it is known that
\begin{equation}\label{eq_LP-BL}
	\frac{2d_\LP(\mu,\nu)^2}{2+d_\LP(\mu,\nu)}\leq d_\BL(\mu,\nu)\leq2d_\LP(\mu,\nu)\ \ \text{for any $\mu,\nu\in\cP(S)$},
\end{equation}
and both the above two inequalities are sharp (cf.\ \cite[Sections 11.3 and 11.6]{Du02}).
The above relations show that the topologies induced by the metrics $d_\BL$ and $d_\LP$ are the same, which coincide with the weak convergence topology. However, we should be careful when we treat the order of weak convergence. The above relations mean that, for each $\mu,\mu^\ep\in\cP(S)$ with $\ep\in(0,1]$ and $r\in(0,\infty)$, as $\ep\downarrow0$,
\begin{itemize}
\item
the convergence order $d_\LP(\mu,\mu^\ep)=O(\ep^r)$ with respect to the L\'{e}vy--Prokhorov metric implies the same convergence order $d_\BL(\mu,\mu^\ep)=O(\ep^r)$ with respect to the metric induced by the class $\BL$ of test functions, while
\item
the convergence order $d_\BL(\mu,\mu^\ep)=O(\ep^r)$ implies only the convergence order $d_\LP(\mu,\mu^\ep)=O(\ep^{r/2})$ in general.
\end{itemize}
In this paper, we are mainly interested in the convergence order with respect to the L\'{e}vy--Prokhorov metric $d_\LP$ associated with the metric space $(S,d_S)=(\cC^n_T,\|\cdot-\cdot\|_\infty)$. From the above observations, we see that such quantitative results imply the same convergence order with respect to $d_\BL$ corresponding to $(S,d_S)=(\cC^n_T,\|\cdot-\cdot\|_\infty)$.


\begin{rem}
It is important to keep in mind that the quantitative results concerning the L\'evy--Prokhorov metric in this paper depend not only on the topology of $S$ but also on the choice of the metric $d_S$ itself. Throughout the paper, we will choose $S=\cC^n_T$ for a fixed $T\in(0,\infty)$ and the metric $d_S$ induced by the norm $\|\cdot\|_\infty$.
\end{rem}


\subsection{Weak solutions of SFDEs}\label{subsec_weaksol}

We are concerned with the SFDE \eqref{eq_SFDE} and its weak solution taking values on a subset of the Euclidean space. First, we provide a precise notion of the \emph{data} of the SFDE \eqref{eq_SFDE}.


\begin{defi}
By a \emph{data} of the SFDE \eqref{eq_SFDE}, we mean a tuple $\data$ consisting of a \emph{domain} $D\in\cB(\bR^n)$, an \emph{initial distribution} $\mu_0\in\cP(\bR^n)$ such that $\mu_0(D)=1$, and \emph{coefficients} $b:[0,\infty)\times\cC^n[\supp\mu_0;D]\to\bR^n$ and $\sigma:[0,\infty)\times\cC^n[\supp\mu_0;D]\to\bR^{n\times d}$ which are progressively measurable.
\end{defi}

We define the concepts of weak solutions and uniqueness in law for the SFDE \eqref{eq_SFDE} as follows.


\begin{defi}
We say that a tuple $\weaksol$ is a \emph{weak solution} of the SFDE \eqref{eq_SFDE} associated with a data $\data$ if
\begin{itemize}
\item
$(\Omega,\cF,\bP)$ is a complete probability space, and $\bF=(\cF_t)_{t\geq0}$ is a filtration satisfying the usual conditions,
\item
$W$ is an $\bR^d$-valued Brownian motion on $(\Omega,\cF,\bP)$ relative to $\bF$,
\item
$X$ is an $\bR^n$-valued continuous $\bF$-adapted process on $(\Omega,\cF,\bP)$ such that $\bP\circ X(0)^{-1}=\mu_0$, $X(t)\in D$ for any $t\in[0,\infty)$ $\bP$-a.s.\ and $\int^T_0\{|b(t,X)|+|\sigma(t,X)|^2\}\,\diff t<\infty$ for any $T\in(0,\infty)$ $\bP$-a.s., and
\item
they satisfy
\begin{equation*}
	X(t)=X(0)+\int^t_0b(s,X)\,\diff s+\int^t_0\sigma(s,X)\,\diff W(s)\ \ \text{for any $t\in[0,\infty)$ $\bP$-a.s.}
\end{equation*}
\end{itemize}
We say that \emph{uniqueness in law} holds for the SFDE \eqref{eq_SFDE} associated with $\data$ if, for any two weak solutions $(X^i,W^i,\Omega^i,\cF^i,\bF^i,\bP^i)$, $i=1,2$, of the SFDE \eqref{eq_SFDE} associated with $\data$, it holds that
\begin{equation*}
	\Law_{\bP^1}(X^1)=\Law_{\bP^2}(X^2)
\end{equation*}
where, for $i=1,2$, $\Law_{\bP^i}(X^i)$ denotes the law of $X^i$ on $\cC^n$ under the probability measure $\bP^i$.
\end{defi}


\begin{rem}
We emphasize that the above definition imposes the condition that $X(t)\in D$ for any $t\in[0,\infty)$ $\bP$-a.s.\ on the weak solution of the SFDE \eqref{eq_SFDE} associated with a data $\data$. The concept of the uniqueness in law is also considered in this manner. Hence, the case where $X$ takes values in $\bR^n\setminus D$ with positive probability is a priori excluded from the definition. Also, notice that uniqueness in law holds if there is no weak solution. In this paper, we focus on the weak approximation of a given weak solution as well as uniqueness in law of the SFDE \eqref{eq_SFDE} associated with a data $\data$ satisfying mild growth, continuity and ellipticity conditions specified in \cref{assum} below.
\end{rem}


\subsection{Euler--Maruyama scheme}\label{subsec_EM}

For each $T\in(0,\infty)$, let $\Pi_T$ be the set of all time-meshes $\pi$ of $[0,T]$, that is,
\begin{equation*}
	\Pi_T:=\left\{\pi=(t_k)^m_{k=0}\relmiddle|m\in\bN,\ t_0=0,\ t_k<t_{k+1}\ \text{for any $k\in\{0,\dots,m-1\}$, and}\ t_m=T\right\}.
\end{equation*}
For each $\pi=(t_k)^m_{k=0}\in\Pi_T$, denote $|\pi|:=\max_{k\in\{0,\dots,m-1\}}\big(t_{k+1}-t_k\big)$. Furthermore, for each vector $x=(x(t_k))^m_{k=0}\in(\bR^n)^{m+1}$, define $\poly[x]\in\cC^n_T$ as the linear interpolation of $(x(t_k))^m_{k=0}$ with respect to $\pi$, that is,
\begin{equation}\label{eq_poly}
	\poly[x](t):=\sum^{m-1}_{k=0}\left\{\frac{t_{k+1}-t}{t_{k+1}-t_k}x(t_k)+\frac{t-t_k}{t_{k+1}-t_k}x(t_{k+1})\right\}\1_{[t_k,t_{k+1})}(t)+x(t_m)\1_{[t_m,\infty)}(t),\ \ t\in[0,\infty).
\end{equation}
Clearly, the map $\poly:(\bR^n)^{m+1}\to\cC^n_T$ is continuous and linear. Notice that, for each $k\in\{0,\dots,m-1\}$, the stopped function $\poly[x]_{t_k}\in\cC^n_{t_k}$ is given by
\begin{equation}\label{eq_poly-t_k}
	\poly[x]_{t_k}(t)=\sum^{k-1}_{j=0}\left\{\frac{t_{j+1}-t}{t_{j+1}-t_j}x(t_j)+\frac{t-t_j}{t_{j+1}-t_j}x(t_{j+1})\right\}\1_{[t_j,t_{j+1})}(t)+x(t_k)\1_{[t_k,\infty)}(t),\ \ t\in[0,\infty),
\end{equation}
which depends only on $(x(t_j))^k_{j=0}$.

Throughout this paper, for each $T\in(0,\infty)$ and $\pi=(t_k)^m_{k=0}\in\Pi_T$, we fix a complete probability space $(\Omega^\pi,\cF^\pi,\bP^\pi)$ supporting independent $d$-dimensional Gaussian random variables $Z^\pi_k\sim N(0,(t_{k+1}-t_k)I_{d\times d})$, $k\in\{0,\dots,m-1\}$, with mean zero and covariance matrix $(t_{k+1}-t_k)I_{d\times d}$. We assume that, for any probability measure $\mu$ on $\bR^n$, there exists an $\bR^n$-valued random variable $\xi^\pi$ on $(\Omega^\pi,\cF^\pi,\bP^\pi)$ with $\mu$ as distribution such that $\xi^\pi$ and $\{Z^\pi_k\}^{m-1}_{k=0}$ are independent under $\bP^\pi$.

Let a data $\data$ be given. We extend the domains of the coefficients $b$ and $\sigma$ to $[0,\infty)\times\cC^n$ by setting $b(t,x)=0$ and $\sigma(t,x)=0$ for any $(t,x)\in[0,\infty)\times(\cC^n\setminus\cC^n[\supp\mu_0;D])$. Then $b$ and $\sigma$ can be seen as progressively measurable maps from $[0,\infty)\times\cC^n$ to $\bR^n$ and $\bR^{n\times d}$, respectively. For each $T\in(0,\infty)$, $\pi=(t_k)^m_{k=0}\in\Pi_T$ and $\bR^n$-valued random variable $\xi^\pi$ on $(\Omega^\pi,\cF^\pi,\bP^\pi)$ independent of $\{Z^\pi_k\}^{m-1}_{k=0}$, define the Euler--Maruyama scheme $X^\pi=(X^\pi(t_k))^m_{k=0}$ inductively as follows:
\begin{equation}\label{eq_EM}
	\begin{dcases}
	X^\pi(t_0)=\xi^\pi,\\
	X^\pi(t_{k+1})=X^\pi(t_k)+b\big(t_k,\poly[X^\pi]\big)(t_{k+1}-t_k)+\sigma\big(t_k,\poly[X^\pi]\big)Z^\pi_k,\ \ k\in\{0,\dots,m-1\}.
	\end{dcases}
\end{equation}
Thanks to \eqref{eq_poly-t_k} and $\varphi(t_k,\poly[X^\pi])=\varphi(t_k,\poly[X^\pi]_{t_k})$ for $\varphi\in\{b,\sigma\}$, the Euler--Maruyama scheme \eqref{eq_EM} is an explicit scheme, and we have $X^\pi=\Psi^\pi(\xi^\pi,Z^\pi_0,\dots,Z^\pi_{m-1})$ for some measurable map $\Psi^\pi:\bR^n\times(\bR^d)^m\to(\bR^n)^{m+1}$. We call the distribution of $X^\pi(t_0)=\xi^\pi$ on $\bR^n$ under $\bP^\pi$ the initial distribution of the Euler--Maruyama scheme \eqref{eq_EM}. Notice that the law of the polygonal path $\poly[X^\pi]$ on $\cC^n_T$ under $\bP^\pi$, that is $\Law_{\bP^\pi}(\poly[X^\pi])$, does not depend on the choice of $(\Omega^\pi,\cF^\pi,\bP^\pi)$ or the random variable $\xi^\pi$ having a prescribed distribution on $\bR^n$.


\begin{rem}
If, for any $\bar{x}\in D$, $\sigma(0,\bar{x}\1_{[0,\infty)}(\cdot))\in\bR^{n\times d}$ is non-degenerate in the sense that all eigenvalues of $\sigma(0,\bar{x}\1_{[0,\infty)}(\cdot))\sigma(0,\bar{x}\1_{[0,\infty)}(\cdot))^\top$ are positive, then $\sigma(0,\bar{x}\1_{[0,\infty)}(\cdot))Z^\pi_0$ is a non-degenerate Gaussian random variable on $\bR^n$ for each $\bar{x}\in D$. Thus, when $D$ is a proper subset of $\bR^n$, even if $X^\pi(t_0)=\xi^\pi\in D$ a.s., the random variable $X^\pi(t_1)$ defined by the first step of the Euler--Maruyama scheme \eqref{eq_EM} takes values in $\bR^n\setminus D$ with positive probability. This is why we need to enlarge the domains of $b$ and $\sigma$. Notice that, under the convention that $b(t,x)=0$ and $\sigma(t,x)=0$ for any $(t,x)\in[0,\infty)\times(\cC^n\setminus\cC^n[\supp\mu_0;D])$, the Euler--Maruyama scheme \eqref{eq_EM} is well-posed, and we have $X^\pi(t_\ell)=X^\pi(t_k)$ for any $\ell\in\{k+1,\dots,m\}$ on the event $\{X^\pi(t_k)\notin D\}$ for each $k\in\{0,\dots,m-1\}$.
\end{rem}

The purpose of this paper is to give an error estimate for the weak approximation of SFDE \eqref{eq_SFDE} by the Euler--Maruyama scheme \eqref{eq_EM} under mild regularity conditions for the (path-dependent) coefficients $b$ and $\sigma$. More precisely, for each $T\in(0,\infty)$ and $\pi\in\Pi_T$, we estimate the L\'evy--Prokhorov metric $d_\LP(\Law_\bP(X_T),\Law_{\bP^\pi}(\poly[X^\pi]))$ between the law of the path of the weak solution $\weaksol$ of the SFDE associated with a data $\data$ (if any) and the law of the linear interpolation of the Euler--Maruyama scheme $X^\pi=\EM$ having the same initial distribution as $X$.


\section{Main result: Weak approximation of SFDEs}\label{section_main}

We consider the following assumption on the data $\data$ of the SFDE \eqref{eq_SFDE}.


\begin{assum}\label{assum}
The set $D$ is a non-empty, open and convex subset of $\bR^n$, and the progressively measurable coefficients $b:[0,\infty)\times\cC^n[\supp\mu_0;D]\to\bR^n$ and $\sigma:[0,\infty)\times\cC^n[\supp\mu_0;D]\to\bR^{n\times d}$ satisfy the following conditions:
\begin{itemize}
\item[(G)](The growth condition.)
For any $T\in(0,\infty)$, there exist constants $K_{\growth,b;T},K_{\growth,\sigma;T}\in[1,\infty)$ and increasing families $\{D_{\growth,b;T}(R)\}_{R\in[1,\infty)}$ and $\{D_{\growth,\sigma;T}(R)\}_{R\in[1,\infty)}$ of open and convex subsets of $D$ with $\bigcup_{R\in[1,\infty)}D_{\growth,b;T}(R)=D$ and $\bigcup_{R\in[1,\infty)}D_{\growth,\sigma;T}(R)=D$ such that
\begin{equation*}
	|\varphi(t,x)|\leq K_{\growth,\varphi;T}R
\end{equation*}
for any $(t,x)\in[0,T]\times\cC^n_T\big[\supp\mu_0;D_{\growth,\varphi;T}(R)\big]$, $R\in[1,\infty)$ and $\varphi\in\{b,\sigma\}$.
\item[(C)](The local H\"{o}lder continuity condition.)
For any $T\in(0,\infty)$, there exist constants $K_{\conti,b;T},K_{\conti,\sigma;T}\in(0,\infty)$, $\alpha_b\in(0,1]$, $\alpha_\sigma\in(\frac{1}{2},1]$, and increasing families $\{D_{\conti,b;T}(R)\}_{R\in[1,\infty)}$ and $\{D_{\conti,\sigma;T}(R)\}_{R\in[1,\infty)}$ of open and convex subsets of $D$ with $\bigcup_{R\in[1,\infty)}D_{\conti,b;T}(R)=D$ and $\bigcup_{R\in[1,\infty)}D_{\conti,\sigma;T}(R)=D$ such that
\begin{equation}\label{eq_C1}
	|\varphi(t,x)-\varphi(t,y)|\leq K_{\conti,\varphi;T}R\|x_t-y_t\|^{\alpha_\varphi}_\infty
\end{equation}
and
\begin{equation}\label{eq_C2}
	|\varphi(t,x)-\varphi(s,x)|\leq K_{\conti,\varphi;T}R\Big\{\varpi(x_t;t-s)+(t-s)^{1/2}\Big\}^{\alpha_\varphi}
\end{equation}
for any $0\leq s\leq t\leq T$, $x,y\in\cC^n_T\big[\supp\mu_0;D_{\conti,\varphi;T}(R)\big]$, $R\in[1,\infty)$ and $\varphi\in\{b,\sigma\}$.
\item[(E)](The local ellipticity condition.)
For any $T\in(0,\infty)$, there exists a constant $K_{\ellip;T}\in[1,\infty)$ and an increasing family $\{D_{\ellip;T}(R)\}_{R\in[1,\infty)}$ of open and convex subsets of $D$ with $\bigcup_{R\in[1,\infty)}D_{\ellip;T}(R)=D$ such that
\begin{equation*}
	\langle\sigma(t,x)\sigma(t,x)^\top\xi,\xi\rangle\geq\frac{|\xi|^2}{K_{\ellip;T}^2R^2}
\end{equation*}
for any $\xi\in\bR^n$, $(t,x)\in[0,T]\times\cC^n_T\big[\supp\mu_0;D_{\ellip;T}(R)\big]$ and $R\in[1,\infty)$.
\end{itemize}
\end{assum}

Under \cref{assum}, for each $T\in(0,\infty)$ and $\vec{R}=(R_{\growth,b},R_{\growth,\sigma},R_{\conti;b},R_{\conti;\sigma},R_\ellip)\in[1,\infty)^5$, denote
\begin{equation*}
	D_T(\vec{R}):=D_{\growth,b;T}(R_{\growth,b})\cap D_{\growth,\sigma;T}(R_{\growth,\sigma})\cap D_{\conti,b;T}(R_{\conti,b})\cap D_{\conti,\sigma;T}(R_{\conti,\sigma})\cap D_{\ellip;T}(R_\ellip).
\end{equation*}

Now we are ready to state our main result. The proof is postponed to \cref{section_proof}.


\begin{theo}\label{theo_main}
Fix a data $\data$ satisfying \cref{assum}. Suppose that we are given a weak solution $\weaksol$ of the SFDE \eqref{eq_SFDE} associated with $\data$. For each $T\in(0,\infty)$ and $\pi\in\Pi_T$, let $X^\pi$ be the Euler--Maruyama scheme defined on $(\Omega^\pi,\cF^\pi,\bP^\pi)$ and given by \eqref{eq_EM} with initial distribution $\mu_0$. Then, the following hold:
\begin{itemize}
\item[(i)]
For any $T\in(0,\infty)$, $\poly[X^\pi]\to X_T$ weakly on $\cC^n_T$ as $|\pi|\downarrow0$ along $\pi\in\Pi_T$. In particular, uniqueness in law holds for the SFDE \eqref{eq_SFDE} associated with $\data$.
\item[(ii)]
For any $T\in(0,\infty)$, $\pi\in\Pi_T$, $\Delta\in(0,1]$ and $\vec{R}=(R_{\growth,b},R_{\growth,\sigma},R_{\conti,b},R_{\conti,\sigma},R_{\ellip})\in[1,\infty)^5$, it holds that
\begin{equation}\label{eq_error}
\begin{split}
	&d_\LP\big(\Law_\bP(X_T),\Law_{\bP^\pi}(\poly[X^\pi])\big)\\
	&\leq\bP\left(\inf_{t\in[0,T]}\dist\!\left(X(t),\bR^n\setminus D_T(\vec{R})\right)\leq\Delta\right)\\
	&\hspace{0.5cm}+2^{14}n\left(T+\frac{1}{T}\right)K_{\ellip,T}R_\ellip\max\left\{\cE,K_{\conti,b;T}R_{\conti,b}\cE^{\alpha_b},\frac{\left(K_{\conti,\sigma;T}R_{\conti,\sigma}\cE^{\alpha_\sigma}\right)^2}{\Delta}\log\frac{1}{\Delta}\right\},
\end{split}
\end{equation}
where $\cE=\cE(|\pi|,\Delta,R_{\growth,b},R_{\growth,\sigma})\in(0,\infty)$ is defined by
\begin{equation}\label{eq_cE}
	\cE:=\max\left\{\Delta,K_{\growth,b;T}R_{\growth,b}|\pi|,K_{\growth,\sigma;T}R_{\growth,\sigma}\sqrt{|\pi|\log\frac{T}{|\pi|}}\right\}.
\end{equation}
\item[(iii)]
Let $T\in(0,\infty)$ be fixed. Assume that there exist $\widehat{C}\in(0,\infty)$ and $\vec{\beta}=(\beta_0,\beta_{\growth,b},\beta_{\growth,\sigma},\beta_{\conti,b},\beta_{\conti,\sigma},\beta_\ellip)\in(0,\infty)^6$ such that
\begin{equation}\label{eq_rare-event}
	\bP\left(\inf_{t\in[0,T]}\dist\!\left(X(t),\bR^n\setminus D_T(\vec{R})\right)\leq\Delta\right)\leq \widehat{C}\max\left\{\Delta^{\beta_0},R_{\growth,b}^{-\beta_{\growth,b}},R_{\growth,\sigma}^{-\beta_{\growth,\sigma}},R_{\conti,b}^{-\beta_{\conti,b}},R_{\conti,\sigma}^{-\beta_{\conti,\sigma}},R_\ellip^{-\beta_\ellip}\right\}
\end{equation}
for any $\Delta\in(0,1]$ and $\vec{R}=(R_{\growth,b},R_{\growth,\sigma},R_{\conti,b},R_{\conti,\sigma},R_\ellip)\in[1,\infty)^5$. Define
\begin{equation}\label{eq_gamma*}
\begin{split}
	\gamma_*&:=\min\left\{\frac{\alpha_b}{1+\beta_\ellip^{-1}+\beta_{\conti,b}^{-1}+\beta_{\growth,b}^{-1}\alpha_b},\frac{1}{2}\cdot\frac{\alpha_b}{1+\beta_\ellip^{-1}+\beta_{\conti,b}^{-1}+\beta_{\growth,\sigma}^{-1}\alpha_b},\right.\\
	&\hspace{1.5cm}\left.\frac{2\alpha_\sigma}{1+\beta_\ellip^{-1}+2\beta_{\conti,\sigma}^{-1}+2\beta_{\growth,b}^{-1}\alpha_\sigma+\beta_*^{-1}},\frac{\alpha_\sigma}{1+\beta_\ellip^{-1}+2\beta_{\conti,\sigma}^{-1}+2\beta_{\growth,\sigma}^{-1}\alpha_\sigma+\beta_*^{-1}}\right\},
\end{split}
\end{equation}
where
\begin{equation}\label{eq_beta*}
	\beta_*:=\min\left\{\beta_0,\frac{\alpha_b}{1+\beta_\ellip^{-1}+\beta_{\conti,b}^{-1}},\frac{2\alpha_\sigma-1}{1+\beta_\ellip^{-1}+2\beta_{\conti,\sigma}^{-1}}\right\}.
\end{equation}
Then, for any $\gamma\in(0,\gamma_*)$, there exists a constant $\cK_{T,\gamma}\in[1,\infty)$, which depends only on $T$, $K_{\growth,b;T}$, $K_{\growth,\sigma;T}$, $K_{\conti,b;T}$, $K_{\conti,\sigma;T}$, $K_{\ellip;T}$, $\alpha_b$, $\alpha_\sigma$ and $\gamma_*-\gamma$, such that
\begin{equation*}
	d_\LP\big(\Law_\bP(X_T),\Law_{\bP^\pi}(\poly[X^\pi])\big)\leq\big(\widehat{C}+n\cK_{T,\gamma}\big)|\pi|^\gamma
\end{equation*}
for any $\pi\in\Pi_T$.
\end{itemize}
\end{theo}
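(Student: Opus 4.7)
The plan is to establish part (ii) first, from which parts (i) and (iii) follow by standard manipulations. Following the Control-and-Reimburse philosophy of \cite{KuSc20}, rather than attempting a true coupling between $\Law_\bP(X_T)$ and $\Law_{\bP^\pi}(\poly[X^\pi])$---which live a priori on different probability spaces---I would realize on the space $(\Omega,\cF,\bF,\bP)$ carrying the given weak solution a \emph{controlled Euler--Maruyama scheme} $\hX^\pi=(\hX^\pi(t_k))_{k=0}^m$ defined by $\hX^\pi(t_0)=X(0)$ and, inductively for $k=0,\dots,m-1$,
\begin{equation*}
\hX^\pi(t_{k+1})=\hX^\pi(t_k)+b(t_k,\poly[\hX^\pi])(t_{k+1}-t_k)+\sigma(t_k,\poly[\hX^\pi])\bigl(W(t_{k+1})-W(t_k)+u_k\bigr),
\end{equation*}
where the $\cF_{t_{k+1}}$-adapted control $u_k\in\bR^d$ is chosen, using ellipticity (E) to invert $\sigma\sigma^\top$, so that on a suitable localizing event $\hX^\pi(t_{k+1})$ agrees exactly with $X(t_{k+1})$. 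Under the Girsanov change of measure that removes $u_k$, the law of $\poly[\hX^\pi]$ coincides with $\Law_{\bP^\pi}(\poly[X^\pi])$, so the pair $(X_T,\poly[\hX^\pi])$ is a \emph{generalized} coupling whose coupling defect is quantified by the Girsanov density and whose pathwise discrepancy is built into the control $u_k$.

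To derive \eqref{eq_error}, I would localize on the good event $A_{\vec R,\Delta}:=\{\inf_{t\in[0,T]}\dist(X(t),\bR^n\setminus D_T(\vec R))>\Delta\}$, whose complement contributes the first term of \eqref{eq_error}. On $A_{\vec R,\Delta}$ one shows by induction on $k$ that $\poly[\hX^\pi]$ also lies in $D_T(\vec R)$, so that (G), (C), (E) all apply with constants indexed by $\vec R$. The pathwise estimate $\|X_T-\poly[\hX^\pi]\|_\infty\lesssim\cE$, with $\cE$ as in \eqref{eq_cE}, is obtained by decomposing the one-step defect into a frozen-coefficient term (controlled by (C)), a drift increment (bounded by $K_{\growth,b;T}R_{\growth,b}|\pi|$ via (G)), and a Brownian oscillation over a mesh interval (bounded in $L^2$ by the standard maximal inequality $\bE[\max_{0\le k<m}\sup_{t\in[t_k,t_{k+1}]}|W(t)-W(t_k)|^2]\lesssim|\pi|\log(T/|\pi|)$), followed by a Gronwall iteration. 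Ellipticity then yields $|u_k|\lesssim K_{\ellip;T}R_\ellip K_{\conti,\sigma;T}R_{\conti,\sigma}\cE^{\alpha_\sigma}\sqrt{t_{k+1}-t_k}$ on $A_{\vec R,\Delta}$, hence $\sum_k|u_k|^2\lesssim T(K_{\ellip;T}R_\ellip K_{\conti,\sigma;T}R_{\conti,\sigma}\cE^{\alpha_\sigma})^2$. Truncating the Girsanov density at a level $\sim\Delta^{-1}$ and applying an exponential Markov inequality converts this $L^2$ control into a coupling-defect bound carrying the characteristic factor $\log(1/\Delta)/\Delta$; inserting it into \eqref{eq_LP-KF} produces \eqref{eq_error}.

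For part (iii), one substitutes \eqref{eq_rare-event} into \eqref{eq_error} and parametrizes $\Delta=|\pi|^{a_0}$ and $R_i=|\pi|^{-a_i}$. Requiring that every resulting monomial in $|\pi|$ decay at least like $|\pi|^\gamma$ is a linear program in the exponents $(a_0,a_{\growth,b},a_{\growth,\sigma},a_{\conti,b},a_{\conti,\sigma},a_\ellip)\in[0,\infty)^6$; optimizing first the exponent of $\Delta$ (while only the two H\"older-and-ellipticity monomials are active) identifies the intermediate quantity $\beta_*$ of \eqref{eq_beta*}, and then optimizing the remaining variables against the four H\"older monomials of \eqref{eq_error} yields the four arguments of the minimum in \eqref{eq_gamma*}. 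Part (i) follows from \eqref{eq_error} by first letting $|\pi|\downarrow 0$ for fixed $(\vec R,\Delta)$, giving $\limsup_{|\pi|\downarrow 0}d_\LP(\Law_\bP(X_T),\Law_{\bP^\pi}(\poly[X^\pi]))\le\bP(A_{\vec R,\Delta}^c)$, and then sending $\vec R\to\infty$ and $\Delta\downarrow 0$, using $X(t)\in D$ a.s.\ together with $\bigcup_R D_T(\vec R)=D$ to drive this probability to zero; uniqueness in law is then automatic, because the right-hand side of \eqref{eq_error} depends only on $\data$ and not on the particular weak solution chosen.

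The principal difficulty is defining the control $u_k$ on \emph{every} outcome---not only on $A_{\vec R,\Delta}$---while keeping it uniformly bounded, so that Novikov's condition is satisfied and the Girsanov transform is legitimate. This forces a careful cut-off of $u_k$ calibrated to the pair $(R_\ellip,\Delta)$, together with the inductive verification that $\poly[\hX^\pi]$ does not leave $D_T(\vec R)$ along $A_{\vec R,\Delta}$. Threading the two distinct H\"older exponents $\alpha_b\in(0,1]$ and $\alpha_\sigma\in(1/2,1]$ through the Gronwall step and the Girsanov bound simultaneously---while keeping all constants linear in $\vec R$ so that the exponent optimization of part (iii) remains clean---is where the technical work concentrates.
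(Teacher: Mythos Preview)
Your high-level strategy---generalized coupling via a controlled scheme on $(\Omega,\cF,\bF,\bP)$, Girsanov to compare with the true Euler--Maruyama law, triangle inequality, then parameter optimization for (iii)---matches the paper's. But the specific construction of the control has a genuine measurability problem that breaks the argument.

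You choose $u_k$ so that $\hX^\pi(t_{k+1})=X(t_{k+1})$ exactly on the good event. This forces $u_k$ to depend on $X(t_{k+1})$ and hence on $W(t_{k+1})-W(t_k)$; as you acknowledge, $u_k$ is only $\cF_{t_{k+1}}$-measurable. Girsanov, however, requires the drift shift to be \emph{predictable}: for $W(t_{k+1})-W(t_k)+u_k$ to become a genuine Brownian increment under an equivalent measure, $u_k$ must be $\cF_{t_k}$-measurable and independent of $W(t_{k+1})-W(t_k)$. With an anticipating $u_k$ there is no absolutely continuous change of measure under which $\Law_{\widehat\bP}(\poly[\hX^\pi])=\Law_{\bP^\pi}(\poly[X^\pi])$, and the identification ``coupling defect $=$ Girsanov cost'' collapses. (Relatedly, if $\hX^\pi(t_k)=X(t_k)$ exactly, there is no error to propagate and no Gronwall step; the pathwise discrepancy is just the interpolation error $\varpi(X_T;|\pi|)$---so the Gronwall mention is a symptom of the same confusion.)

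The paper resolves this by abandoning exact matching. Its controlled scheme is the \emph{continuous-time} SFDE
\[
\diff\hX^\pi(t)=b\bigl(\pi(t),\poly[\hX^\pi]\bigr)\,\diff t+\sigma\bigl(\pi(t),\poly[\hX^\pi]\bigr)\,\diff W(t)+\frac{\lambda}{\Delta}\bigl(X(t)-\hX^\pi(t)\bigr)\1_{[0,T\wedge\tau^\pi\wedge\zeta)}(t)\,\diff t,
\]
where $\tau^\pi=\inf\{t:|X(t)-\hX^\pi(t)|\geq\Delta\}$ and $\lambda>0$ is a free intensity parameter. The control $\frac{\lambda}{\Delta}(X-\hX^\pi)$ is adapted, so Girsanov applies cleanly and gives $D_{\KL}(\widehat\bP^\pi\|\bP)\le\tfrac{T}{2}(K_{\ellip;T}R_\ellip\lambda)^2$ (no truncation of the density is needed; the control is pathwise bounded by $K_{\ellip;T}R_\ellip\lambda$ up to $\tau^\pi$, so Novikov is immediate). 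The price is that $\hX^\pi$ only \emph{approximately} tracks $X$: applying It\^o's formula to $|X-\hX^\pi|^2$ and an Ornstein--Uhlenbeck-type tail estimate with dissipation rate $2\lambda/\Delta$ shows that $\bP(\tau^\pi\le T\wedge\zeta)$ is small \emph{provided} $\lambda$ satisfies the constraint
\[
\lambda\gtrsim\max\Bigl\{K_{\conti,b;T}R_{\conti,b}\cE^{\alpha_b},\ \frac{(K_{\conti,\sigma;T}R_{\conti,\sigma}\cE^{\alpha_\sigma})^2}{\Delta}\log\frac{1}{\Delta}\Bigr\}.
\]
The characteristic factor $\log(1/\Delta)/\Delta$ in \eqref{eq_error} is therefore the footprint of this constraint on $\lambda$, fed into the Girsanov bound $\tfrac{\sqrt{T}}{2}K_{\ellip;T}R_\ellip\lambda$ after choosing the smallest admissible $\lambda$---not of a density-truncation step. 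Once (ii) is in place, your outlines for (i) and (iii) are essentially correct and coincide with the paper's.
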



\begin{rem}\label{rem_gamma*}
The numbers $\beta_*=\beta_*(\alpha_b,\alpha_\sigma,\vec{\beta})$ and $\gamma_*=\gamma_*(\alpha_b,\alpha_\sigma,\vec{\beta})$ defined in \eqref{eq_beta*} and \eqref{eq_gamma*}, respectively, are positive and non-decreasing with respect to each variable of $\vec{\beta}=(\beta_0,\beta_{\growth,b},\beta_{\growth,\sigma},\beta_{\conti,b},\beta_{\conti,\sigma},\beta_\ellip)\in(0,\infty)^6$. Moreover,
\begin{equation*}
	\sup_{\vec{\beta}\in(0,\infty)^6}\beta_*(\alpha_b,\alpha_\sigma,\vec{\beta})=\alpha_b\wedge\left(2\alpha_\sigma-1\right)\ \ \text{and}\ \ \sup_{\vec{\beta}\in(0,\infty)^6}\gamma_*(\alpha_b,\alpha_\sigma,\vec{\beta})=\frac{\alpha_b}{2}\wedge\left(\alpha_\sigma-\frac{1}{2}\right).
\end{equation*}
\end{rem}

As an immediate consequence of the above result, we obtain the next corollary for the SFDE \eqref{eq_SFDE} defined on $\bR^n$ or $(0,\infty)^n$ having ``polynomial type'' growth conditions. Here, in order to state the results in a unified manner, we introduce a progressively measurable map $\Psi:[0,\infty)\times\cC^n[D]\to[0,\infty)$, which is defined depending on whether $D=\bR^n$ or $D=(0,\infty)^n$:
\begin{equation*}
	\Psi(t,x):=
	\begin{dcases}
	\|x_t\|_\infty\ \ &\text{if $D=\bR^n$},\\
	\|x_t\|_\infty+\|x^{-1}_t\|_\infty\ \ &\text{if $D=(0,\infty)^n$},
	\end{dcases}
\end{equation*}
for $(t,x)\in[0,\infty)\times\cC^n[D]$, where $\xi^{-1}:=(\xi_1^{-1},\dots,\xi_n^{-1})^\top$ for each $\xi=(\xi_1,\dots,\xi_n)^\top\in(0,\infty)^n$, and $x^{-1}:=x(\cdot)^{-1}$ for each $x\in\cC^n[(0,\infty)^n]$.


\begin{cor}\label{cor_rate-polynomial}
Consider a data $\data$ with $D=\bR^n$ or $D=(0,\infty)^n$. Assume that there exist constants $\check{C}\in[1,\infty)$, $p\in[0,\infty)$, $\alpha_b\in(0,1]$ and $\alpha_\sigma\in(\frac{1}{2},1]$ such that the following hold:
\begin{itemize}
\item[\rm{(G')}]
\rm{(The polynomial growth condition.)}
\begin{equation*}
	|\varphi(t,x)|\leq\check{C}\big(1+\Psi(t,x)^p\big)
\end{equation*}
for any $(t,x)\in[0,\infty)\times\cC^n[\supp\mu_0;D]$ and $\varphi\in\{b,\sigma\}$.
\item[\rm{(C')}]
\rm{(The local H\"{o}lder continuity condition with polynomial coefficients.)}
\begin{equation*}
	|\varphi(t,x)-\varphi(t,y)|\leq\check{C}\big(1+\Psi(t,x)^p+\Psi(t,y)^p\big)\|x_t-y_t\|^{\alpha_\varphi}_\infty
\end{equation*}
and
\begin{equation*}
	|\varphi(t,x)-\varphi(s,x)|\leq\check{C}\big(1+\Psi(t,x)^p\big)\Big\{\varpi(x_t;t-s)+(t-s)^{1/2}\Big\}^{\alpha_\varphi}
\end{equation*}
for any $0\leq s\leq t<\infty$, $x,y\in\cC^n[\supp\mu_0;D]$ and $\varphi\in\{b,\sigma\}$.
\item[\rm{(E')}]
\rm{(The local ellipticity condition with polynomial coefficients.)}
\begin{equation*}
	\langle\sigma(t,x)\sigma(t,x)^\top\xi,\xi\rangle\geq\frac{|\xi|^2}{\check{C}^2\big(1+\Psi(t,x)^p\big)^2}
\end{equation*}
for any $(t,x)\in[0,\infty)\times\cC^n[\supp\mu_0;D]$ and $\xi\in\bR^n$.
\end{itemize}
Then, the data $\data$ satisfies \cref{assum}. Suppose that we are given a weak solution $\weaksol$ to the SFDE \eqref{eq_SFDE} associated with $\data$. Let $T\in(0,\infty)$ be fixed. For each $\pi\in\Pi_T$, let $X^\pi$ be the Euler--Maruyama scheme defined on $(\Omega^\pi,\cF^\pi,\bP^\pi)$ and given by \eqref{eq_EM} with initial distribution $\mu_0$.
\begin{itemize}
\item[(i)]
Assume that
\begin{equation}\label{eq_moment}
	\bE\big[\Psi(T,X)^q\big]<\infty\ \ \text{for any $q\in[1,\infty)$}.
\end{equation}
Then, for any $\gamma\in(0,\frac{\alpha_b}{2}\wedge(\alpha_\sigma-\frac{1}{2}))$, there exists a constant $C_\gamma\in[1,\infty)$ such that
\begin{equation*}
	d_\LP\big(\Law_\bP(X_T),\Law_{\bP^\pi}(\poly[X^\pi])\big)\leq C_\gamma|\pi|^\gamma
\end{equation*}
for any $\pi\in\Pi_T$.
\item[(ii)]
Suppose that $D=\bR^n$ and $p=0$, that is, the progressively measurable maps $b:[0,\infty)\times\cC^n[\supp\mu_0;\bR^n]\to\bR^n$ and $\sigma:[0,\infty)\times\cC^n[\supp\mu_0;\bR^n]\to\bR^{n\times d}$ are bounded, globally H\"{o}lder continuous (with exponent $\alpha_b\in(0,1]$ and $\alpha_\sigma\in(\frac{1}{2},1]$, respectively) and uniformly elliptic.
\begin{itemize}
\item[(ii-A)]
If $\alpha_b<2\alpha_\sigma-1$ or the diffusion coefficient $\sigma$ is constant, then for any $T\in(0,\infty)$, there exists a constant $C_T\in[1,\infty)$, which depends only on $\check{C}$ and $T$, such that
\begin{equation}\label{eq_error-log1}
	d_\LP\big(\Law_\bP(X_T),\Law_{\bP^\pi}(\poly[X^\pi])\big)\leq nC_T|\pi|^{\alpha_b/2}\left(1+\log\frac{T}{|\pi|}\right)^{\alpha_b/2}
\end{equation}
for any $\pi\in\Pi_T$.
\item[(ii-B)]
If $\alpha_b\geq2\alpha_\sigma-1$ and the diffusion coefficient $\sigma$ is not constant, then for any $T\in(0,\infty)$, there exists a constant $C_T\in[1,\infty)$, which depends only on $\check{C}$ and $T$, such that
\begin{equation}\label{eq_error-log2}
	d_\LP\big(\Law_\bP(X_T),\Law_{\bP^\pi}(\poly[X^\pi])\big)\leq nC_T|\pi|^{\alpha_\sigma-1/2}\left(1+\log\frac{T}{|\pi|}\right)^{\alpha_\sigma+1/2}
\end{equation}
for any $\pi\in\Pi_T$.
\end{itemize}
\end{itemize}
\end{cor}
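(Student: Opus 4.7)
The plan is to derive the corollary from \cref{theo_main} in two stages: first verify that hypotheses (G'), (C'), (E') imply \cref{assum} with explicit localizing families; then specialize parts (iii) and (ii) of \cref{theo_main} to each regime, optimizing the tuning parameter $\Delta$ in the bounded case.

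For the first stage, when $D=\bR^n$ I would take each of $D_{\growth,\varphi;T}(R)$, $D_{\conti,\varphi;T}(R)$, $D_{\ellip;T}(R)$ to be the open Euclidean ball $\{\xi\in\bR^n:|\xi|<R^{1/p}\}$ (and simply $\bR^n$ when $p=0$). When $D=(0,\infty)^n$, I use the axis-aligned box $\{\xi\in(0,\infty)^n: c_nR^{-1/p}<\xi_i<R^{1/p},\ i=1,\dots,n\}$ for a suitable dimensional constant $c_n$. Both choices are open, convex and exhaust $D$ as $R\uparrow\infty$. Whenever a path $x$ takes values in such a set we have $\Psi(t,x)\leq C_nR^{1/p}$, so each of (G'), (C'), (E') translates directly into the corresponding clause of \cref{assum} with constants $K_{\growth,\varphi;T}$, $K_{\conti,\varphi;T}$, $K_{\ellip;T}$ proportional to $\check{C}$.

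For part (i), the complement $\bR^n\setminus D_T(\vec R)$ consists of points either far from the origin or (in the $(0,\infty)^n$ case) close to a coordinate hyperplane. A union bound together with Markov's inequality applied to the moments \eqref{eq_moment} of $\|X\|_\infty$ and, when $D=(0,\infty)^n$, of $\|X^{-1}\|_\infty$, yields
\begin{equation*}
	\bP\!\Big(\inf_{t\in[0,T]}\dist\!\big(X(t),\bR^n\setminus D_T(\vec R)\big)\leq\Delta\Big)\leq C_q\max\!\big\{\Delta^q,R_{\growth,b}^{-q/p},R_{\growth,\sigma}^{-q/p},R_{\conti,b}^{-q/p},R_{\conti,\sigma}^{-q/p},R_\ellip^{-q/p}\big\}
\end{equation*}
for every $q\in[1,\infty)$, so all of the exponents $\beta_0,\beta_{\growth,b},\dots,\beta_\ellip$ in \eqref{eq_rare-event} may be chosen arbitrarily large. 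By \cref{rem_gamma*}, the corresponding $\gamma_*$ then tends to $\alpha_b/2\wedge(\alpha_\sigma-1/2)$. Hence for any $\gamma$ strictly below that threshold, I fix $\vec\beta$ so that $\gamma_*>\gamma$ and conclude via \cref{theo_main}(iii).

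Part (ii) comes directly from \cref{theo_main}(ii). Since the coefficients are bounded ($p=0$), one may take $D_\bullet(R)=\bR^n$ for every sufficiently large $R$, so the rare-event probability in \eqref{eq_error} vanishes and all $R_\bullet$'s become $T$-dependent constants absorbed into $C_T$. Choosing $\Delta\asymp\sqrt{|\pi|\log(T/|\pi|)}$ makes $\cE$ of the same order as $\Delta$. In case (ii-A), either $\sigma$ is constant (so the last term in the max of \eqref{eq_error} is absent) or $\alpha_b<2\alpha_\sigma-1$, in which case $\cE^{2\alpha_\sigma}/\Delta\cdot\log(1/\Delta)\asymp\cE^{2\alpha_\sigma-1}\log(1/\cE)$ is of strictly lower order than $\cE^{\alpha_b}$; either way the dominant term is $\cE^{\alpha_b}\asymp|\pi|^{\alpha_b/2}(\log(T/|\pi|))^{\alpha_b/2}$, proving \eqref{eq_error-log1}. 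In case (ii-B) the last term dominates and equals $\cE^{2\alpha_\sigma-1}\log(1/\Delta)\asymp|\pi|^{\alpha_\sigma-1/2}(\log(T/|\pi|))^{\alpha_\sigma+1/2}$, where $\log(1/\Delta)\asymp\log(T/|\pi|)$ supplies the extra logarithmic factor that lifts the exponent from $\alpha_\sigma-1/2$ to $\alpha_\sigma+1/2$; this yields \eqref{eq_error-log2}. The only subtle points are verifying convexity, inclusion, and the correct $\Psi$-control on the product-type boxes in the $(0,\infty)^n$ case and carefully tracking this extra logarithm; everything else is bookkeeping once \cref{theo_main} is invoked.
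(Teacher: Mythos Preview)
Your proposal is correct and follows essentially the same route as the paper: verify \cref{assum} with product-box (or ball) localizing sets scaled by $R^{1/p}$, derive the rare-event estimate \eqref{eq_rare-event} via Markov's inequality so that all entries of $\vec\beta$ may be taken arbitrarily large, and then invoke \cref{theo_main}(iii) for part~(i) and \cref{theo_main}(ii) with $\Delta\asymp\sqrt{|\pi|\log(T/|\pi|)}$ and $\vec R$ constant for part~(ii). One small imprecision: when $\sigma$ is constant the third term in the maximum of \eqref{eq_error} is not literally ``absent''; the paper's device is that $K_{\conti,\sigma;T}$ can be chosen as any positive number (since a constant satisfies the H\"older condition with arbitrarily small constant) and one then lets $K_{\conti,\sigma;T}\downarrow0$ in the final bound.
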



\begin{proof}
\underline{Proof of (i).} We prove the assertion (i) only for the case $D=(0,\infty)^n$. The case of $D=\bR^n$ can be proved by a similar (and simpler) manner. To show the assertion (i) for $D=(0,\infty)^n$, without loss of generality, we may assume that $p>0$. By the assumptions in this corollary, we see that \cref{assum} holds with H\"{o}lder exponents $\alpha_b$ and $\alpha_\sigma$, the sets
\begin{equation*}
	D_{\growth,b;T}(R)=D_{\growth,\sigma;T}(R)=D_{\conti,b;T}(R)=D_{\conti,\sigma;T}(R)=D_{\ellip;T}(R)=\left(R^{-1/p},2R^{1/p}\right)^n,\ \ R\in[1,\infty),
\end{equation*}
and the constants
\begin{equation*}
	K_{\growth,b;T}=K_{\growth,\sigma;T}=K_{\conti,b;T}=K_{\conti,\sigma;T}=K_{\ellip;T}=\check{C}\big(1+2\cdot3^pn^{p/2}\big),
\end{equation*}
for each $T\in(0,\infty)$. Suppose that we are given a weak solution $\weaksol$ to the SFDE \eqref{eq_SFDE} associated with $\data$ with $D=(0,\infty)^n$. Let $T\in(0,\infty)$ be fixed. We apply the assertion (iii) in \cref{theo_main} to the current setting. To do so, we check the condition \eqref{eq_rare-event}. Observe that, for any $\Delta\in(0,1]$ and $\vec{R}=(R_{\growth,b},R_{\growth,\sigma},R_{\conti,b},R_{\conti,\sigma},R_\ellip)\in[1,\infty)^5$, using the notation $R_{\min}=\min\{R_{\growth,b},R_{\growth,\sigma},R_{\conti,b},R_{\conti,\sigma},R_\ellip\}$ and noting that $\Delta\leq1\leq R_{\min}^{1/p}$,
\begin{align*}
	\bP\left(\inf_{t\in[0,T]}\dist\!\left(X(t),\bR^n\setminus D_T(\vec{R})\right)\leq\Delta\right)&=\bP\left(X(t)\notin\left(R_{\min}^{-1/p}+\Delta,2R_{\min}^{1/p}-\Delta\right)^n\ \ \text{for some $t\in[0,T]$}\right)\\
	&\leq\bP\left(\|X_T\|_\infty\geq2R_{\min}^{1/p}-\Delta\right)+\bP\left(\|X^{-1}_T\|_\infty\geq\left(R_{\min}^{-1/p}+\Delta\right)^{-1}\right)\\
	&\leq\bP\left(\|X_T\|_\infty\geq R_{\min}^{1/p}\right)+\bP\left(\|X^{-1}_T\|_\infty\geq\left(R_{\min}^{-1/p}+\Delta\right)^{-1}\right).
\end{align*}
By Markov's inequality and the assumption \eqref{eq_moment}, we obtain
\begin{align*}
	\bP\left(\inf_{t\in[0,T]}\dist\!\left(X(t),\bR^n\setminus D_T(\vec{R})\right)\leq\Delta\right)&\leq R_{\min}^{-q/p}\bE\big[\|X_T\|_\infty^q\big]+\left(R_{\min}^{-1/p}+\Delta\right)^q\bE\big[\|X^{-1}_T\|_\infty^q\big]\\
	&\leq C_q\max\left\{\Delta^q,R_{\growth,b}^{-q/p},R_{\growth,\sigma}^{-q/p},R_{\conti,b}^{-q/p},R_{\conti,\sigma}^{-q/p},R_\ellip^{-q/p}\right\}
\end{align*}
for any $q\in[1,\infty)$, where $C_q\in(0,\infty)$ is a constant which does no depend on $\Delta$ or $\vec{R}$. The above estimate shows that \eqref{eq_rare-event} holds for any $\vec{\beta}=(\beta_0,\beta_{\growth,b},\beta_{\growth,\sigma},\beta_{\conti,b},\beta_{\conti,\sigma},\beta_\ellip)\in(0,\infty)^6$. Therefore, noting \cref{rem_gamma*}, by \cref{theo_main} (iii), we see that the assertion (i) of \cref{cor_rate-polynomial} holds.

\underline{Proof of (ii).} Let $D=\bR^n$ and $p=0$. We apply the assertion (ii) of \cref{theo_main} to this setting. Notice that \cref{assum} holds with exponents $\alpha_b$ and $\alpha_\sigma$, the sets
\begin{equation*}
	D_{\growth,b;T}(R)=D_{\growth,\sigma;T}(R)=D_{\conti,b;T}(R)=D_{\conti,\sigma;T}(R)=D_{\ellip;T}(R)=\bR^n,\ \ R\in[1,\infty),
\end{equation*}
and the constants
\begin{equation*}
	K_{\growth,b;T}=K_{\growth,\sigma;T}=K_{\conti,b;T}=K_{\conti,\sigma;T}=K_{\ellip;T}=3\check{C},
\end{equation*}
for each $T\in(0,\infty)$. Moreover, if the diffusion coefficient $\sigma$ is constant, then we can take the constant $K_{\conti,\sigma;T}$ as an arbitrarily small positive number. Let $T\in(0,\infty)$ and $\pi\in\Pi_T$ be fixed. We apply the estimate \eqref{eq_error} with
\begin{equation*}
	\Delta=\sqrt{\frac{|\pi|}{T}\left(1+\log\frac{T}{|\pi|}\right)}\ \ \text{and}\ \ R_{\growth,b}=R_{\growth,\sigma}=R_{\conti,b}=R_{\conti,\sigma}=R_\ellip=1.
\end{equation*}
Observe that $D_T(\vec{R})=\bR^n$, and hence the probability appearing in the right-hand side of \eqref{eq_error} is zero. Furthermore,
\begin{equation*}
	\cE\leq C_T|\pi|^{1/2}\left(1+\log\frac{T}{|\pi|}\right)^{1/2},\ \ \log\frac{1}{\Delta}\leq1+\log\frac{T}{|\pi|},
\end{equation*}
and
\begin{align*}
	&d_\LP\big(\Law_\bP(X_T),\Law_{\bP^\pi}(\poly[X^\pi])\big)\\
	&\leq nC_T\max\left\{|\pi|^{\alpha_b/2}\left(1+\log\frac{T}{|\pi|}\right)^{\alpha_b/2},K_{\conti,\sigma;T}^2|\pi|^{\alpha_\sigma-1/2}\left(1+\log\frac{T}{|\pi|}\right)^{\alpha_\sigma+1/2}\right\},
\end{align*}
where $C_T\in[1,\infty)$ is a constant which depends only on $\check{C}$ and $T$ and varies from line to line. As we mentioned above, $K_{\conti,\sigma;T}$ can be taken as an arbitrary positive number if $\sigma$ is constant, and $K_{\conti,\sigma;T}=3\check{C}$ otherwise. In the former case, taking the limit $K_{\conti,\sigma;T}\downarrow0$ in the above estimate, we get \eqref{eq_error-log1}. In the latter case, noting that $1+\log x\leq\frac{1}{\nu}x^\nu$ for any $x\in[1,\infty)$ and $\nu\in(0,1]$, we get \eqref{eq_error-log1} if $\alpha_b<2\alpha_\sigma-1$ and \eqref{eq_error-log2} if $\alpha_b\geq2\alpha_\sigma-1$. This completes the proof.
\end{proof}


\section{Remarks on the main result and special classes of SFDEs}\label{section_remark}

Let us make some remarks on the main results.

\subsection{On the generality of the setting}

\cref{theo_main} shows uniqueness in law and functional type weak approximation results for SFDEs under a significantly general setting, providing a quantitative weak error estimate in terms of the L\'{e}vy--Prokhorov metric. Our results allow the coefficients to be path-dependent. Interestingly, thanks to the path-dependence, we can apply our results not only to ``purely non-Markovian'' systems such as SDEs with delay but also to some non-standard (Markovian) SDEs such as reflected SDEs and stochastic oscillator models after appropriate transformations. We discuss these special classes of SFDEs in \cref{subsec_remark-specialSFDE} in detail. Furthermore, it is worth to mention that \cref{assum} does not impose neither boundedness (or linear growth), global Lipschitz (or global H\"{o}lder) continuity nor uniform ellipticity condition on the coefficients, which are often assumed in the existing works on weak approximation of SDEs and SFDEs. Instead, \cref{assum} incorporates the ``localities'' of the boundedness, continuity and ellipticity conditions which are characterized by the families of subsets $D_{\growth,b;T}(R)$, $D_{\growth,\sigma;T}(R)$, $D_{\conti,b;T}(R)$, $D_{\conti,\sigma;T}(R)$ and $D_{\ellip;T}(R)$ of the domain $D$. Thanks to these kinds of localities (and the path-dependences), we can apply our results to stochastic systems arising in a wide rage of science such as physics, chemistry, economics, mathematical finance and so on, which are beyond the framework of existing works on Euler--Maruyama approximations; see \cref{section_example} where we study ten concrete examples arising in science and provide a precise weak error estimate for each model.

To the best of our knowledge, even the uniqueness in law result itself shown in \cref{theo_main} (i) is new in this generality. A comparable result can be found in the paper \cite{KuSc20} by Kulik and Scheutzow, where they show (besides other important results on ergodicity) that weak existence and uniqueness in law hold for SFDEs defined on $D=\bR^n$ under the assumptions that the drift coefficient $b$ satisfies the one-sided (finite-range) $\alpha_b$-H\"{o}lder ($\alpha_b\in(0,1]$) continuity condition and the one-sided linear-growth condition and that the diffusion coefficient $\sigma$ satisfies the (finite-range) $\alpha_\sigma$-H\"{o}lder ($\alpha_\sigma\in(\frac{1}{2},1]$) continuity condition, the linear growth condition and the uniform ellipticity condition. Compared with the result in \cite{KuSc20}, our result on uniqueness in law for SFDEs includes the case where the domain $D$ is a proper subset of $\bR^n$ and allows for locally bounded, locally H\"older continuous and locally uniformly elliptic coefficients. Also, this paper is the first time to provide functional type weak approximations for SFDEs in this generality. 

\subsection{On the statements of \cref{theo_main} and \cref{cor_rate-polynomial}}

Notice that the weak error estimate in \cref{theo_main} (ii) has flexibility for the parameters $\Delta\in(0,1]$ and $\vec{R}=(R_{\growth,b},R_{\growth,\sigma},R_{\conti,b},R_{\conti,\sigma},R_{\ellip})\in[1,\infty)^5$ which are regarded as benchmark parameters and related to the locality of the assumption discussed above. The right-hand side of \eqref{eq_error} consists of the following two error terms:
\begin{equation*}
	\mathrm{Error}_1=\bP\left(\inf_{t\in[0,T]}\dist\!\left(X(t),\bR^n\setminus D_T(\vec{R})\right)\leq\Delta\right)
\end{equation*}
and
\begin{equation*}
	\mathrm{Error}_2=(\mathrm{constant})\times K_{\ellip,T}R_\ellip\max\left\{\cE,K_{\conti,b;T}R_{\conti,b}\cE^{\alpha_b},\frac{\left(K_{\conti,\sigma;T}R_{\conti,\sigma}\cE^{\alpha_\sigma}\right)^2}{\Delta}\log\frac{1}{\Delta}\right\}
\end{equation*}
with $\cE=\cE(|\pi|,\Delta,R_{\growth,b},R_{\growth,\sigma})$ defined by \eqref{eq_cE}. The first term $\mathrm{Error}_1$ is the probability term, which depends on the (unique) law of the given weak solution but does not depend on the partition $\pi$. This term can be seen as a probability of a ``rare event'' where the given weak solution exits a large sub-domain of $D$ until the time $T$. The second term $\mathrm{Error}_2$ is the discretization error term, which depends on the partition $\pi$ but does not depend on the law of the weak solution. The two terms $\mathrm{Error}_1$ and $\mathrm{Error}_2$ are related each other via the free parameters $\Delta$ and $\vec{R}$. It can be easily shown (see \cref{subsec_proof-main}) that, on the one hand, the probability term ($\mathrm{Error}_1$) tends to zero as $\Delta\downarrow0$ and $R_{\growth,b},R_{\growth,\sigma},R_{\conti,b},R_{\conti,\sigma},R_{\ellip}\uparrow\infty$. On the other hand, thanks to the assumption $\alpha_\sigma>\frac{1}{2}$, the discretization error term ($\mathrm{Error}_2$) tends to zero as $|\pi|\downarrow0$ and $\Delta\downarrow0$ (in this order) for each fixed $\vec{R}$, but diverges as $R_{\growth,b},R_{\growth,\sigma},R_{\conti,b},R_{\conti,\sigma},R_{\ellip}\uparrow\infty$ for each fixed $\pi$ and $\Delta$. Therefore, in order to get a convergence speed (with respect to $|\pi|$) of $\lim_{|\pi|\downarrow0}d_\LP(\Law_\bP(X_T),\Law_{\bP^\pi}(\poly[X^\pi]))=0$ as fast as possible, it is important to choose appropriate free parameters $\Delta$ and $\vec{R}$ according to $|\pi|$. This is possible if we a priori know the convergence rate of the probability of the rare event, that is the first term $\mathrm{Error}_1$, with respect to $\Delta$ and $\vec{R}$. Indeed, the assertion (iii) of \cref{theo_main}, which follows from the assertion (ii), provides a convergence rate (in the form of the power of $|\pi|$) assuming the a priori estimate \eqref{eq_rare-event} for the probability of the rare event. This result shows that, in order to get a more precise weak convergence rate for each model, it suffices to investigate the a priori estimate of the form \eqref{eq_rare-event} and determine the parameters $\vec{\beta}=(\beta_0,\beta_{\growth,b},\beta_{\growth,\sigma},\beta_{\conti,b},\beta_{\conti,\sigma},\beta_\ellip)\in(0,\infty)^6$. After that, computing the numbers $\beta_*=\beta_*(\alpha_b,\alpha_\sigma,\vec{\beta})$ and $\gamma_*=\gamma_*(\alpha_b,\alpha_\sigma,\vec{\beta})$ by the formulas \eqref{eq_beta*} and \eqref{eq_gamma*}, we will obtain the rate of (almost) $\gamma_*$ for the convergence order. We again stress that the probability of the rare event (the term $\mathrm{Error}_1$) depends only on the (unique) law of the given weak solution and hence determined by the structure of the system. Actually, as we demonstrate in \cref{section_example}, this term can be easily estimated in many examples.

\cref{cor_rate-polynomial} (i) immediately follows from \cref{theo_main} (iii) and provides us a useful sufficient condition \eqref{eq_moment} to obtain the rate of (almost) $\frac{\alpha_b}{2}\wedge(\alpha_\sigma-\frac{1}{2})$ for the convergence order for SFDEs defined on $D=\bR^n$ or $D=(0,\infty)^n$ under the ``polynomial type'' conditions (G'), (C') and (E'). One of the simplest sufficient conditions ensuring the moment condition \eqref{eq_moment} is the one-sided linear growth condition for the coefficients; see \cref{appendix_LG}. As we mentioned in \cref{rem_gamma*}, the number $\frac{\alpha_b}{2}\wedge(\alpha_\sigma-\frac{1}{2})$ is the supremum of $\gamma_*$ over all $\vec{\beta}$.

Furthermore, under the assumptions that $D=\bR^n$ and that the coefficients are bounded, globally H\"{o}lder continuous and uniformly elliptic, \cref{cor_rate-polynomial} (ii) shows a more accurate convergence order than the one obtained in \cref{cor_rate-polynomial} (i), incorporating the logarithmic terms. In particular, in the case of \cref{cor_rate-polynomial} (ii-A) with $\alpha_b=1$ and the diffusion coefficient $\sigma$ being constant, we reach the order $|\pi|^{1/2}(1+\log\frac{T}{|\pi|})^{1/2}$, which turns out to be sharp as we discuss below.

\subsection{On the sharpness of the results}\label{subsubsec_sharp}

The significance of our result lies not only in the generality of the assumption as discussed above but also in the sharpness of the weak error estimate. More precisely, our results are sharp in both (I) topological and (II) quantitative senses, that is:
\begin{itemize}
\item[(I)]
The weak convergence results and the estimates in terms of the L\'{e}vy--Prokhorov metric in \cref{theo_main} and \cref{cor_rate-polynomial} (i) can not be improved to the topology of the convergence in terms of the Wasserstein metric in general.
\item[(II)]
The weak convergence order with respect to the L\'{e}vy--Prokhorov metric obtained in \cref{cor_rate-polynomial} (ii-A) is optimal for the case where $\alpha_b=1$ and the diffusion coefficient $\sigma$ is constant.
\end{itemize}

In order to explain the statement (I) (the sharpness in the topological sense), let us first recall the definition of the ($L^1$-)Wasserstein metric $\cW_1$ associated with the base space $(\cC^n_T,\|\cdot\|_\infty)$. It is a metric on the space
\begin{equation*}
	\cP_1(\cC^n_T):=\left\{\mu\in\cP(\cC^n_T)\relmiddle|\int_{\cC^n_T}\|x\|_\infty\,\mu(\diff x)<\infty\right\}
\end{equation*}
defined by
\begin{equation*}
	\cW_1(\mu,\nu):=\inf_{\fm\in\sC(\mu,\nu)}\int_{\cC^n_T\times\cC^n_T}\|x-y\|_\infty\,\fm(\diff x\diff y),\ \ \mu,\nu\in\cP_1(\cC^n_T).
\end{equation*}
The Kantorovich--Rubinstein theorem (cf.\ \cite[Theorem 11.8.2]{Du02}) states that $\cW_1$ coincides with the metric $d_\Xi$ introduced in \cref{subsec_LP} associated with the class $\Xi=\Lip$ of Lipschitz continuous test functions, that is,
\begin{equation*}
	\cW_1(\mu,\nu)=d_\Lip(\mu,\nu):=\sup_{f\in\Lip}\left|\int_{\cC^n_T}f(x)\,\mu(\diff x)-\int_{\cC^n_T}f(x)\,\nu(\diff x)\right|\ \ \text{for any $\mu,\nu\in\cP_1(\cC^n_T)$},
\end{equation*}
where
\begin{equation*}
	\Lip:=\left\{f:\cC^n_T\to\bR\relmiddle|\sup_{\substack{x,y\in\cC^n_T\\x\neq y}}\frac{|f(x)-f(y)|}{\|x-y\|_\infty}\leq1\right\}.
\end{equation*}
It is known that, for $\mu,\mu_m\in\cP_1(\cC^n_T)$, $m\in\bN$, $\lim_{m\to\infty}\cW_1(\mu,\mu_m)=0$ if and only if $(\mu_m)_{m\in\bN}$ weakly converges to $\mu$ and is uniformly integrable (cf.\ \cite[Theorem 5.5]{CaDe18}). The latter is also equivalent to
\begin{equation*}
	\lim_{m\to\infty}d_\LP(\mu,\mu_m)=0\ \ \text{and}\ \ \lim_{r\to\infty}\sup_{m\in\bN}\int_{\cC^n_T}\|x\|_\infty\1_{\{\|x\|_\infty>r\}}\,\mu_m(\diff x)=0.
\end{equation*}
It is shown in \cite{HuJeKl11,HuJeKl15} that, for any non-degenerate, one-dimensional and Markovian SDEs defined on $D=\bR$, if either the drift coefficient $b$ or the diffusion coefficient $\sigma$ is super linearly growing, then any moments of the corresponding (standard) Euler--Maruyama scheme with respect to the uniform partition $\pi_{T,m}=(\frac{k}{m}T)^m_{k=0}\in\Pi_T$ ($m\in\bN$) must diverge as $m\to\infty$ in finite time, that is,
\begin{equation}\label{eq_moment-divergence}
	\lim_{m\to\infty}\bE_{\bP^{\pi_{T,m}}}\big[|X^{\pi_{T,m}}(T)|^p\big]=\infty\ \ \text{for any $p\in(0,\infty)$ and $T\in(0,\infty)$}.
\end{equation}
In particular, for such a Markovian SDE, (if the weak solution $\weaksol$ satisfies $\bE[\|X\|_\infty]<\infty$) we have
\begin{equation}\label{eq_Wasserstein-divergence}
	\lim_{m\to\infty}\cW_1\big(\Law_\bP(X_T),\Law_{\bP^{\pi_{T,m}}}(\fp^{\pi_{T,m}}[X^{\pi_{T,m}}])\big)=\infty\ \ \text{for any $T\in(0,\infty)$}.
\end{equation}
Our results on weak convergence and error estimates in terms of the L\'{e}vy--Prokhorov metric are valid even for (multi-dimensional and path-dependent) SFDEs having super linearly growing coefficients $b$ and $\sigma$, but can not be improved to the Wasserstein metric in general in the above sense. The following provides a concrete example showing the sharpness in the topological sense.


\begin{exam}\label{exam_Wasserstein-divergence}
Consider the following one-dimensional Markovian SDE:
\begin{equation*}
	\diff X(t)=\big\{X(t)-X(t)^3\big\}\,\diff t+\diff W(t),\ \ t\in[0,\infty),\ \ X(0)=\xi\in\bR.
\end{equation*}
This SDE can be seen as an SFDE \eqref{eq_SFDE} with data $(\bR,\delta_\xi,x(t)-x(t)^3,1)$. Clearly, the conditions (G'), (C') (with $\alpha_b=\alpha_\sigma=1$) and (E') in \cref{cor_rate-polynomial} hold. Furthermore, by \cref{appendix_lemm_LG} (i), we see that $\bE[\|X_T\|_\infty^p]<\infty$ for any $p\in[2,\infty)$ and $T\in(0,\infty)$, and hence the moment condition \eqref{eq_moment} holds as well. Hence, by \cref{cor_rate-polynomial} (i), for any $T\in(0,\infty)$ and $\gamma\in(0,\frac{1}{2})$, there exists a constant $C_\gamma\in(0,\infty)$ such that
\begin{equation*}
	d_\LP\big(\Law_\bP(X_T),\Law_{\bP^\pi}(\poly[X^\pi])\big)\leq C_\gamma|\pi|^\gamma
\end{equation*}
for any $\pi\in\Pi_T$, where $X^\pi$ is the corresponding (standard) Euler--Maruyama scheme with initial condition $X^\pi(t_0)=\xi$. However, by \cite[Theorem 2.1]{HuJeKl15}, we have the divergences \eqref{eq_moment-divergence} and \eqref{eq_Wasserstein-divergence} of moments of the Euler--Maruyama scheme and Wasserstein error in finite time.
\end{exam}

Concerning with the statement (II) (the sharpness in the quantitative sense), notice that \cref{cor_rate-polynomial} (ii-A) in particular implies that, in the case of $D=\bR^n$, bounded Lipschitz (but still path-dependent) drift coefficient $b$ and constant diffusion coefficient $\sigma$, the L\'{e}vy--Prokhorov metric $d_\LP(\Law_\bP(X_T),\Law_{\bP^{\pi_{T,m}}}(\fp^{\pi_{T,m}}[X^{\pi_{T,m}}])\big)$ converges to zero as $m\to\infty$ for any $T\in(0,\infty)$ along the uniform partition $\pi_{T,m}=(\frac{k}{m}T)^m_{k=0}\in\Pi_T$ ($m\in\bN$) with order of at least $\sqrt{\frac{1}{m}\log m}$. This result then implies the same convergence order with respect to the metric $d_\BL$ induced by the class $\BL$ of bounded and Lipschitz continuous (with respect to $\|\cdot\|_\infty$) functions from $\cC^n_T$ to $\bR$; see \cref{subsec_LP} above. The following proposition shows that these convergence orders with respect to $d_\LP$ and $d_\BL$ are optimal in the case of the standard Brownian motion $X=W$ (which corresponds to the SFDE \eqref{eq_SFDE} with data $\data=(\bR^n,\delta_0,0,I_{n\times n})$). In the following proposition, we denote by $\bW^n_T\in\cP(\cC^n_T)$ the Wiener measure on $\cC^n_T$ and by $\pi_{T,m}:=(\frac{k}{m}T)^m_{k=0}\in\Pi_T$ the uniform partition of $[0,T]$.


\begin{prop}\label{prop_lower-bound}
For any $T\in(0,\infty)$ and $m\in\bN$ with $m\geq3\vee(T\log m)$, it holds that\footnote{The numbers $\pi$ appearing in the right-hand sides of \eqref{eq_Wiener-BL} and \eqref{eq_Wiener-LP} represent the circle ratio, not the partition of $[0,T]$. Be careful not to confuse them. These are clear from the context, and the circle ratio does not appear explicitly anywhere else in this article.}
\begin{equation}\label{eq_Wiener-BL}
	\inf_{\mu\in\cP((\bR^n)^{m+1})}d_\BL\big(\bW^n_T,\mu\circ(\fp^{\pi_{T,m}})^{-1}\big)\geq\frac{1}{5}\left(1-\exp\left(-\frac{n}{\sqrt{2\pi}}\right)\right)\sqrt{\frac{T}{m}\log m},
\end{equation}
and in particular,
\begin{equation}\label{eq_Wiener-LP}
	\inf_{\mu\in\cP((\bR^n)^{m+1})}d_\LP\big(\bW^n_T,\mu\circ(\fp^{\pi_{T,m}})^{-1}\big)\geq\frac{1}{10}\left(1-\exp\left(-\frac{n}{\sqrt{2\pi}}\right)\right)\sqrt{\frac{T}{m}\log m}.
\end{equation}
\end{prop}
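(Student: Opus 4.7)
The plan is to exhibit a test function $f\in\BL$ (in the sense of \eqref{eq_BL}) that vanishes on every piecewise linear path with nodes at $\pi_{T,m}$ but whose integral against the Wiener measure $\bW^n_T$ is of order $\sqrt{(T/m)\log m}$, multiplied by a dimensional factor matching the target. Since the law $\mu\circ(\fp^{\pi_{T,m}})^{-1}$ is supported on such piecewise linear paths, this directly lower-bounds $d_\BL$, and \eqref{eq_Wiener-LP} then follows from \eqref{eq_Wiener-BL} via the inequality $d_\BL\le2d_\LP$ in \eqref{eq_LP-BL}. Concretely, I would set $\sigma:=\tfrac14\sqrt{(T/m)\log m}$, let $\mathrm{PL}_\pi:=\fp^{\pi_{T,m}}\bigl((\bR^n)^{m+1}\bigr)\subset\cC^n_T$, and put
\[
f(x):=\frac{1}{1+\sigma}\bigl(\mathrm{dist}_\infty(x,\mathrm{PL}_\pi)\wedge\sigma\bigr).
\]
Since $x\mapsto\mathrm{dist}_\infty(x,\mathrm{PL}_\pi)$ is $1$-Lipschitz with respect to $\|\cdot\|_\infty$, one has $\sup|f|+\Lip(f)\le\sigma/(1+\sigma)+1/(1+\sigma)=1$, so $f\in\BL$, and clearly $f\equiv 0$ on $\mathrm{PL}_\pi$.

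\textbf{Probability estimate under $\bW^n_T$.} The goal is to lower-bound the probability that $W$ is uniformly of order $\sigma$ away from $\mathrm{PL}_\pi$. For each coordinate $i\in\{1,\dots,n\}$, let $L_kW^{(i)}$ denote the affine interpolation of $W^{(i)}$ between $t_k$ and $t_{k+1}$, and introduce
\[
A_i:=\Bigl\{\max_{0\le k<m}\sup_{t\in[t_k,t_{k+1}]}\bigl(W^{(i)}(t)-L_kW^{(i)}(t)\bigr)\ge2\sigma\Bigr\}.
\]
On each $[t_k,t_{k+1}]$, the process $W^{(i)}-L_kW^{(i)}$ is a Brownian bridge of duration $T/m$, and these bridges are independent across $k$. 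The reflection-principle identity $\bP\bigl(\sup_{u\in[0,1]}B(u)\ge\lambda\bigr)=e^{-2\lambda^2}$ for a standard Brownian bridge $B$, after the natural rescaling, gives that each single-interval event has probability $e^{-\log m/2}=m^{-1/2}$, so $\bP(A_i^c)\le(1-m^{-1/2})^m\le e^{-\sqrt m}$. A direct numerical check shows $e^{-\sqrt m}\le 1-1/\sqrt{2\pi}$ for every $m\ge 3$ (it reduces to $\sqrt m\ge\log(\sqrt{2\pi}/(\sqrt{2\pi}-1))\approx 0.51$), hence $\bP(A_i)\ge1/\sqrt{2\pi}$. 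Independence of the coordinates of $W$ then yields
\[
\bP\Bigl(\bigcup_{i=1}^n A_i\Bigr)=1-\prod_{i=1}^n\bP(A_i^c)\ge 1-(1-1/\sqrt{2\pi})^n\ge 1-e^{-n/\sqrt{2\pi}}.
\]

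\textbf{Pathwise geometry and assembly.} The remaining pathwise ingredient is the elementary inequality: for any continuous $g\colon[a,b]\to\bR$ and any affine $\ell\colon[a,b]\to\bR$,
\[
\sup_{t\in[a,b]}|g(t)-\ell(t)|\ge\tfrac12\sup_{t\in[a,b]}|g(t)-Lg(t)|,
\]
where $Lg$ is the affine interpolation of $g$ at $a$ and $b$. This follows by writing $g-Lg=(g-\ell)+(\ell-Lg)$ and observing that $\ell-Lg$ is affine and therefore attains its sup at an endpoint, where it coincides with $\ell-g$. Applied coordinate-wise on each $[t_k,t_{k+1}]$, this converts the bridge lower bound $\ge2\sigma$ on $A_i$ into $\|W-y\|_\infty\ge\sigma$ for every $y\in\mathrm{PL}_\pi$, so $\mathrm{dist}_\infty(W,\mathrm{PL}_\pi)\ge\sigma$ on $\bigcup_iA_i$ and $f(W)=\sigma/(1+\sigma)$ there. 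Combining,
\[
\bE[f(W)]\ge\frac{\sigma}{1+\sigma}\bigl(1-e^{-n/\sqrt{2\pi}}\bigr).
\]
The hypothesis $m\ge T\log m$ gives $(T/m)\log m\le 1$, so $\sigma\le\tfrac14$ and $\sigma/(1+\sigma)\ge4\sigma/5$; substituting $\sigma=\tfrac14\sqrt{(T/m)\log m}$ yields \eqref{eq_Wiener-BL}, and then \eqref{eq_Wiener-LP} follows from $d_\BL\le2d_\LP$. The most delicate point is the constant calibration: the factor $1/2$ lost in the bridge-to-distance lemma, the $1/(1+\sigma)$ normalization of $f$, and the bound $\sigma\le1/4$ afforded by the hypothesis $m\ge T\log m$ must be balanced precisely so that the advertised constant $1/5$ (and, after the factor-of-two loss, $1/10$) emerges cleanly; likewise, one must verify that the $1/\sqrt{2\pi}$ per-coordinate probability bound is robust enough to hold uniformly for every $m\ge3$.
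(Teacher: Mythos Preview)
Your argument is correct and reaches the exact constants advertised. The overall scheme (produce an $f\in\BL$ that vanishes on all piecewise-linear paths with nodes at $\pi_{T,m}$, then lower-bound its Wiener integral) is the same as the paper's, but the implementations differ in a meaningful way.

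The paper chooses as test function a truncated version of the second-difference statistic
\[
x\longmapsto \max_{0\le k<m}\Bigl|x\bigl(t_k\bigr)+x\bigl(t_{k+1}\bigr)-2x\Bigl(\tfrac{t_k+t_{k+1}}{2}\Bigr)\Bigr|,
\]
which vanishes on piecewise-linear paths for purely algebraic reasons. Under $\bW^n_T$, these second differences are i.i.d.\ $N(0,\tfrac{T}{m}I_{n\times n})$, so the whole probabilistic input is a one-line Gaussian tail bound---no Brownian bridge distribution and no geometric lemma are needed. By contrast, your test function is the (truncated, normalized) sup-distance to the set $\mathrm{PL}_\pi$ itself, which is conceptually more transparent but costs you two extra ingredients: the exact law of the Brownian-bridge supremum and the affine-comparison lemma $\sup|g-\ell|\ge\tfrac12\sup|g-Lg|$ (the latter absorbing a factor of $2$ that the paper never incurs). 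The paper's approach is thus slightly more elementary and self-contained, while yours is more geometric and would generalize more readily to other ``roughness'' functionals that detect deviation from piecewise-linear approximants.
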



\begin{proof}
The estimate \eqref{eq_Wiener-LP} follows from \eqref{eq_Wiener-BL} and the second inequality in \eqref{eq_LP-BL}. We show the estimate \eqref{eq_Wiener-BL}. Fix $T\in(0,\infty)$ and $m\in\bN$ with $m\geq3\vee(T\log m)$. Define $f_{T,m}:\cC^n_T\to\bR$ by
\begin{equation*}
	f_{T,m}(x):=\frac{1}{5}\min\left\{\sqrt{\frac{T}{m}\log m},\ \max_{k\in\{0,\dots,m-1\}}\left|x\left(\frac{k}{m}T\right)+x\left(\frac{k+1}{m}T\right)-2x\left(\frac{k+\frac{1}{2}}{m}T\right)\right|\right\},\ \ x\in\cC^n_T.
\end{equation*}
Observe that, for any $x,y\in\cC^n_T$,
\begin{equation*}
	|f_{T,m}(x)-f_{T,m}(y)|\leq\frac{4}{5}\|x-y\|_\infty\ \ \text{and}\ \ 0\leq f_{T,m}(x)\leq\frac{1}{5}\sqrt{\frac{T}{m}\log m}\leq\frac{1}{5}.
\end{equation*}
In particular, the function $f_{T,m}$ is in the class $\BL$ defined by \eqref{eq_BL} with $(S,d_S)=(\cC^n_T,\|\cdot-\cdot\|_\infty)$. Moreover, notice that
\begin{equation*}
	f_{T,m}(\fp^{\pi_{T,m}}[x])=0\ \ \text{for any $x\in(\bR^n)^{m+1}$},
\end{equation*}
and
\begin{equation*}
	f_{T,m}(x)\geq\frac{1}{5}\sqrt{\frac{T}{m}\log m}\1_{A_{T,m}}(x)\ \ \text{for any $x\in\cC^n_T$},
\end{equation*}
where $A_{T,m}\in\cB(\cC^n_T)$ is defined by
\begin{equation*}
	A_{T,m}:=\left\{x\in\cC^n_T\relmiddle|\max_{k\in\{0,\dots,m-1\}}\left|x\left(\frac{k}{m}T\right)+x\left(\frac{k+1}{m}T\right)-2x\left(\frac{k+\frac{1}{2}}{m}T\right)\right|>\sqrt{\frac{T}{m}\log m}\right\}.
\end{equation*}
Therefore, for any $\mu\in\cP((\bR^n)^{m+1})$, we have
\begin{align*}
	d_\BL\big(\bW^n_T,\mu\circ(\fp^{\pi_{T,m}})^{-1}\big)&\geq\left|\int_{\cC^n_T}f_{T,m}(x)\,\bW^n_T(\diff x)-\int_{\cC^n_T}f_{T,m}(x)\,\big(\mu\circ(\fp^{\pi_{T,m}})^{-1}\big)(\diff x)\right|\\
	&=\int_{\cC^n_T}f_{T,m}(x)\,\bW^n_T(\diff x)\\
	&\geq\frac{1}{5}\sqrt{\frac{T}{m}\log m}\bW^n_T(A_{T,m})\\
	&=\frac{1}{5}\sqrt{\frac{T}{m}\log m}\left(1-\bW^n_T\left(A_{T,m}^\complement\right)\right).
\end{align*}
Hence, as for the estimate \eqref{eq_Wiener-BL}, it suffices to show that
\begin{equation}\label{eq_Wiener-estimate}
	\bW^n_T\left(A_{T,m}^\complement\right)\leq\exp\left(-\frac{n}{\sqrt{2\pi}}\right).
\end{equation}
Observe that, under the Wiener measure $\bW^n_T$, the random variables
\begin{align*}
	\cC^n_T\ni x\mapsto&\,x\left(\frac{k}{m}T\right)+x\left(\frac{k+1}{m}T\right)-2x\left(\frac{k+\frac{1}{2}}{m}T\right)\\
	&=x\left(\frac{k+1}{m}T\right)-x\left(\frac{k+\frac{1}{2}}{m}T\right)-\left(x\left(\frac{k+\frac{1}{2}}{m}T\right)-x\left(\frac{k}{m}T\right)\right)\in\bR^n,\ \ k\in\{0,\dots,m-1\},
\end{align*}
are independent and identically distributed according to the $n$-dimensional normal distribution $N(0,\frac{T}{m}I_{n\times n})$ with mean zero and covariance matrix $\frac{T}{m}I_{n\times n}$. Hence,
\begin{align*}
	\bW^n_T\left(A_{T,m}^\complement\right)&=\bW^n_T\left(\bigcap^{m-1}_{k=0}\left\{x\in\cC^n_T\relmiddle|\left|x\left(\frac{k}{m}T\right)+x\left(\frac{k+1}{m}T\right)-2x\left(\frac{k+\frac{1}{2}}{m}T\right)\right|\leq\sqrt{\frac{T}{m}\log m}\right\}\right)\\
	&=N\left(0,\frac{T}{m}I_{n\times n}\right)\left(\left\{z\in\bR^n\relmiddle||z|\leq\sqrt{\frac{T}{m}\log m}\right\}\right)^m\\
	&\leq\left(1-\delta_m\right)^{nm},
\end{align*}
where
\begin{equation*}
	\delta_m:=\frac{2}{\sqrt{2\pi}}\int^\infty_{\sqrt{\log m}}\exp\left(-\frac{\theta^2}{2}\right)\,\diff\theta.
\end{equation*}
Noting that $1-\delta\leq\exp(-\delta)$ for any $\delta\in[0,1]$ and that $\int^\infty_\xi\exp(-\frac{\theta^2}{2})\,\diff\theta\geq(\xi+\frac{1}{\xi})^{-1}\exp(-\frac{\xi^2}{2})\geq\frac{1}{2}\exp(-\xi^2)$ for any $\xi\geq1$, together with $\sqrt{\log m}\geq\sqrt{\log3}\geq1$, we see that
\begin{equation*}
	\bW^n_T\left(A_{T,m}^\complement\right)\leq\exp\left(-nm\delta_m\right)\leq\exp\left(-nm\cdot\frac{2}{\sqrt{2\pi}}\cdot\frac{1}{2}\exp\left(-\big(\sqrt{\log m}\big)^2\right)\right)=\exp\left(-\frac{n}{\sqrt{2\pi}}\right).
\end{equation*}
Hence, the estimate \eqref{eq_Wiener-estimate} holds. This completes the proof.
\end{proof}


\begin{rem}
\begin{itemize}
\item
\cref{prop_lower-bound} provides a lower bound for the error between the law of the $n$-dimensional standard Brownian motion $W$ on $[0,T]$ (that is, the Wiener measure $\bW^n_T$) and the law of the linear interpolation of a random vector in $(\bR^n)^{m+1}$ with respect to the partition $\pi_{T,m}=(\frac{k}{m}T)^m_{k=0}$ of $[0,T]$ with the $m$-equal length $|\pi_{T,m}|=\frac{T}{m}$, and the estimate holds uniformly in the laws of the underlying random vectors. In other words, this provides a lower bound, which does not depend on the choice of the underlying random vectors, for the weak convergence order in the Donsker-type functional central limit theorem; the weak convergence order as $m\to\infty$ can not be faster than $\sqrt{\frac{1}{m}\log m}$ in terms of both the metrics $d_\LP$ and $d_\BL$.
\item
A similar lower bound as in \cref{prop_lower-bound} can be found in \cite[Theorem 1.16]{CsHo93}, which states that, for ``any'' sequence $(Y_k)^\infty_{k=1}$ of independent and identically distributed random variables such that $\bE[\exp(aY_1)]<\infty$ for some positive constant $a$, the sharp order of the weak convergence of the laws of the processes $S^m=\frac{1}{\sqrt{m}}\sum^{m-1}_{k=1}Y_k\1_{[\frac{k}{m},\frac{k+1}{m})}+Y_m\1_{\{1\}}$, $m\in\bN$, to the Wiener measure with respect to the L\'{e}vy--Prokhorov metric on the space of $\bR$-valued right-continuous functions on $[0,1]$ is $\sqrt{\frac{1}{m}}\log m$, which is slightly slower than $\sqrt{\frac{1}{m}\log m}$. We would like to stress that our results do not contradict to (a more precise statement of) \cite[Theorem 1.16]{CsHo93} by the following two reasons. First, the frameworks in the present paper and in \cite[Theorem 1.16]{CsHo93} are different; the present paper considers the quantitative weak approximation by the polylinear extension of the discrete time random vectors in the space of continuous functions equipped with the supremum norm $\|\cdot\|_\infty$, while \cite[Theorem 1.16]{CsHo93} considers quantitative weak approximation by the discontinuous extension of the random walk in the space of right continuous functions equipped with the Skorokhod metric. Second, more importantly, the proof of \cite[Theorem 1.16]{CsHo93} (or more precisely \cite[Equation (1.1.37)]{CsHo93}) does not provide a meaningful lower bound (in the sense that the constant $C_{15}$ in \cite[Equation (1.1.32)]{CsHo93} should be zero) when the underlying random variables $Y_k$ are distributed according to the standard normal distribution, which is nothing but the case corresponding to our framework; recall that each random variable $Z^\pi_k$, $k\in\{0,\dots,m-1\}$, appearing in the definition \eqref{eq_EM} of the Euler--Mruyama scheme is assumed to be distributed according to $N(0,(t_{k+1}-t_k)I_{d\times d})$.
\end{itemize}
\end{rem}

Let us make a remark that the statement (I) (the sharpness in the topological sense) above does not mean that the Wasserstein metric always diverges under our general assumption. Also, the statement (II) (the sharpness in the quantitative sense) above does not mean that the polynomial convergence rate in \cref{theo_main} (iii) or the convergence order in \cref{cor_rate-polynomial} (ii-B) is sharp. From the observations in this section, the following questions naturally arise:
\begin{itemize}
\item
Is it possible to weaken the assumptions in \cite{HuJeKl11,HuJeKl15} showing the divergence of the Wasserstein metric?
\item
Are the convergence orders shown in \cref{theo_main} and \cref{cor_rate-polynomial} optimal in the general setting (including the H\"{o}lder coefficients case)?
\item
Is it possible to show similar results as in \cref{theo_main} and \cref{cor_rate-polynomial} for the case of Euler--Maruyama scheme \eqref{eq_EM} with $Z^\pi=(Z^\pi_k)^m_{k=1}$ replaced by more general (not necessarily Gaussian) random vectors?
\end{itemize}
We do not have positive answers to these questions at this moment, and we leave them to the future research.


\subsection{Special classes of SFDEs}\label{subsec_remark-specialSFDE}

As we mentioned above, the path-dependency of the coefficients makes us possible to apply our results to many kinds of Markovian and non-Markovian systems. Here, we demonstrate that our framework can be applied to the following four classes of stochastic models (after appropriate transformations for the last two):
\begin{itemize}
\item
\cref{subsubsec_Markov}:
Markovian stochastic differential equations
\item
\cref{subsubsec_delay}:
Stochastic delay differential equations
\item
\cref{subsubsec_reflected}:
Reflected stochastic differential equations
\item
\cref{subsubsec_oscillator}:
Stochastic oscillator models / Stochastic integro-differential equations
\end{itemize}


\subsubsection{Markovian stochastic differential equations}\label{subsubsec_Markov}

We begin with the following Markovian SDE defined on a non-empty, open and convex subset $D$ of $\bR^n$:
\begin{equation}\label{eq_Markov}
	\diff X(t)=\bar{b}(X(t))\,\diff t+\bar{\sigma}(X(t))\,\diff W(t),\ \ t\in[0,\infty),
\end{equation}
where $\bar{b}:D\to\bR^n$ and $\bar{\sigma}:D\to\bR^{n\times d}$ are measurable maps. Let an initial distribution $\mu_0\in\cP(\bR^n)$ with $\supp\mu_0\subset D$ be given. The above SDE can be seen as an SFDE \eqref{eq_SFDE} with data $(D,\mu_0,b,\sigma)$, where $b:[0,\infty)\times\cC^n[\supp\mu_0;D]\to\bR^n$ and $\sigma:[0,\infty)\times\cC^n[\supp\mu_0;D]\to\bR^{n\times d}$ are given by
\begin{equation}\label{eq_coefficient-Markov}
	\varphi(t,x):=\bar{\varphi}(x(t)),\ \ (t,x)\in[0,\infty)\times\cC^n[\supp\mu_0;D],\ \ \varphi\in\{b,\sigma\}.
\end{equation}
Clearly, the maps $b$ and $\sigma$ are progressively measurable.

For each $T\in(0,\infty)$ and $\pi\in\Pi_T$, the corresponding Euler--Maruyama scheme \eqref{eq_EM} can be written by
\begin{equation}\label{eq_EM-Markov}
	\begin{dcases}
	X^\pi(t_0)=\xi^\pi,\\
	X^\pi(t_{k+1})=X^\pi(t_k)+\bar{b}\big(X^\pi(t_k)\big)(t_{k+1}-t_k)+\bar{\sigma}\big(X^\pi(t_k)\big)Z^\pi_k,\ \ k\in\{0,\dots,m-1\}.
	\end{dcases}
\end{equation}
Here, the domains of the maps $\bar{b}:D\to\bR^n$ and $\bar{\sigma}:D\to\bR^{n\times d}$ are extended to $\bR^n$ by setting $\bar{b}(\bar{x})=0$ and $\bar{\sigma}(\bar{x})=0$ for any $\bar{x}\in\bR^n\setminus D$. This is the standard Euler--Maruyama scheme for the Markovian SDE \eqref{eq_Markov}.

Now let us check the condition (C) in \cref{assum} for the map $b$ defined in \eqref{eq_coefficient-Markov} in view of the drift coefficient $\bar{b}$ of the original Markovian SDE \eqref{eq_Markov}; the case of $\varphi=\sigma$ is similar, and the correspondences in the other conditions (G) and (E) are straightforward. Suppose that there exist constants $L\in(0,\infty)$ and $\alpha\in(0,1]$ and an increasing family $\{U(R)\}_{R\in[1,\infty)}$ of open and convex subsets of $D$ with $\bigcup_{R\in[1,\infty)}U(R)=D$ such that
\begin{equation*}
	|\bar{b}(\bar{x})-\bar{b}(\bar{y})|\leq LR|\bar{x}-\bar{y}|^\alpha
\end{equation*}
for any $\bar{x},\bar{y}\in U(R)$ and $R\in[1,\infty)$. Then, the corresponding map $b$ defined in \eqref{eq_coefficient-Markov} satisfies the condition (C) in \cref{assum} with
\begin{equation*}
	\alpha_b=\alpha,\ \ K_{\conti,b;T}=L,\ \ D_{\conti,b;T}(R)=U(R),\ \ R\in[1,\infty),\ \ T\in(0,\infty).
\end{equation*}
Indeed, the condition \eqref{eq_C1} is clear, and the condition \eqref{eq_C2} follows from the following calculation: for any $(t,x)\in[0,T]\times\cC^n_T[\supp\mu_0;U(R)]$ with $R\in[1,\infty)$ and $T\in(0,\infty)$,
\begin{align*}
	|b(t,x)-b(s,x)|&=|\bar{b}(x(t))-\bar{b}(x(s))|\leq LR|x(t)-x(s)|^\alpha\leq LR\varpi(x_t;t-s)^\alpha.
\end{align*}
Notice that the term $\varpi(x_t;t-s)$ naturally arises in \eqref{eq_C2} even in the (time-homogeneous) Markovian setting. The case of Markovian SDE \eqref{eq_Markov} with time-dependent coefficients can be treated similarly.


\subsubsection{Stochastic delay differential equations}\label{subsubsec_delay}

Next, consider the following stochastic delay differential equation (SDDE) defined on a non-empty, open and convex subset $D$ of $\bR^n$:
\begin{equation}\label{eq_delay}
	\begin{dcases}
	\diff X(t)=\bar{b}\big(X(t),X(t-\tau)\big)\,\diff t+\bar{\sigma}\big(X(t),X(t-\tau)\big)\,\diff W(t),\ \ t\in[0,\infty),\\
	X(t)=\xi(t),\ \ t\in[-\tau,0],
	\end{dcases}
\end{equation}
where $\tau\in(0,\infty)$ is a fixed constant, $\xi:[-\tau,0]\to D$ is a given continuous function, and $\bar{b}:D\times D\to\bR^n$ and $\bar{\sigma}:D\times D\to\bR^{n\times d}$ are measurable maps.
The above SDDE can be seen as an SFDE \eqref{eq_SFDE} with data $(D,\delta_{\xi(0)},b,\sigma)$, where $b:[0,\infty)\times\cC^n[\{\xi(0)\};D]\to\bR^n$ and $\sigma:[0,\infty)\times\cC^n[\{\xi(0)\};D]\to\bR^{n\times d}$ are given by
\begin{equation}\label{eq_coefficient-delay}
	\varphi(t,x):=\bar{\varphi}(x(t),\xi(t-\tau))\1_{[0,\tau)}(t)+\bar{\varphi}(x(t),x(t-\tau))\1_{[\tau,\infty)}(t),\ \ (t,x)\in[0,\infty)\times\cC^n[\{\xi(0)\};D],\ \ \varphi\in\{b,\sigma\}.
\end{equation}
Notice that the maps $b$ and $\sigma$ are progressively measurable.

As for the corresponding Euler--Maruyama scheme \eqref{eq_EM}, for simplicity, we consider the special partition $\pi=(t_k)^m_{k=0}\in\Pi_T$ of $[0,T]$ with $\tau<T<m\tau$ given by $t_k=\frac{k\tau}{N}$, $k\in \{0,\ldots,m-1\}$ and $t_{m}=T$, where $N=N_{m}\in\{1,\dots,m-1\}$ is such that $(m-1)\tau/T<N\leq m\tau/T$. Also, we set $t_k:=\frac{k\tau}{N}$ for $k\in\{-N,\dots,-1\}$. Noting that $t_{-N}=-\tau$, $t_0=0$ and $t_N=\tau$, the corresponding Euler--Maruyama scheme $X^\pi=\EM$ with initial distribution $\mu_0=\delta_{\xi(0)}$ is defined inductively as follows:
\begin{equation*}
	\begin{dcases}
	X^\pi(t_0)=\xi(0),\\
	X^\pi(t_{k+1})=X^\pi(t_k)+\bar{b}\big(X^\pi(t_k),\xi(t_{k-N})\big)\,(t_{k+1}-t_k)+\bar{\sigma}\big(X^\pi(t_k),\xi(t_{k-N})\big) Z^{\pi}_{k}\\
	\hspace{7cm}\text{for $k\in\{0,\dots,N-1\}$},\\
	X^\pi(t_{k+1})=X^\pi(t_k)+\bar{b}\big(X^\pi(t_k),X^\pi(t_{k-N})\big)\,(t_{k+1}-t_k)+\bar{\sigma}\big(X^\pi(t_k),X^\pi(t_{k-N})\big) Z^{\pi}_{k}\\
	\hspace{7cm}\text{for $k \in \{N,\ldots,m-1\}$}.
	\end{dcases}
\end{equation*}

Let us check the condition (C) in Assumption \ref{assum} for the map $b$ defined by \eqref{eq_coefficient-delay} in view of the drift coefficient $\bar{b}$ of the original SDDE \eqref{eq_delay}; the case of $\varphi=\sigma$ is similar, and the correspondences in the other conditions (G) and (E) can be discussed similarly as well, and hence we omit them. Assume that the initial function $\xi:[-\tau,0]\to D$ is $\frac{1}{2}$-H\"{o}lder continuous, that is,
\begin{equation*}
	[\xi]_{1/2}:=\sup_{-\tau\leq s<t\leq0}\frac{|\xi(t)-\xi(s)|}{(t-s)^{1/2}}<\infty.
\end{equation*}
Furthermore, assume that there exist constants $L\in(0,\infty)$ and $\alpha\in(0,1]$ and an increasing family $\{U(R)\}_{R\in[1,\infty)}$ of open and convex subsets of $D$ with $\bigcup_{R\in[1,\infty)}U(R)=D$ such that
\begin{equation*}
	|\bar{b}(\bar{x}_{1},\bar{x}_{2})-\bar{b}(\bar{y}_{1},\bar{y}_{2})|
	\leq
	LR
	\big\{|\bar{x}_{1}-\bar{y}_{1}|^\alpha+|\bar{x}_{2}-\bar{y}_{2}|^\alpha\big\}
\end{equation*}
for any $\bar{x}_{1},\bar{x}_{2},\bar{y}_{1},\bar{y}_{2}\in U(R)$ and $R\in[1,\infty)$. Since $\xi:[-\tau,0]\to D$ is continuous and $\{U(R)\}_{R\in[1,\infty)}$ is an increasing open covering of $D$, there exists $R_0\in[1,\infty)$ such that $\xi(t)\in U(R)$ for any $t\in[-\tau,0]$ and $R\in[R_0,\infty)$. Also, recall that the initial distribution of the corresponding SFDE is $\mu_0=\delta_{\xi(0)}$. In this setting, the corresponding map $b$ defined in \eqref{eq_coefficient-delay} satisfies the condition (C) in \cref{assum} with
\begin{equation}\label{eq_choice-delay}
	\alpha_b=\alpha,\ \ K_{\conti,b;T}=(2+[\xi]_{1/2}^\alpha)L,\ \ D_{\conti,b;T}(R)=
	\begin{dcases}
	\emptyset\ \ &\text{if $R\in[1,R_0)$},\\
	U(R)\ \ &\text{if $R\in[R_0,\infty)$},
	\end{dcases}
	\ \ \text{for $T\in(0,\infty)$}.
\end{equation}
Indeed, for any $t\in[0,T]$ and $x,y\in\cC^n_T[\{\xi(0)\};U(R)]$ with $R\in[R_0,\infty)$ and $T\in(0,\infty)$,
\begin{align*}
	&|b(t,x)-b(t,y)|\\
	&=|\bar{b}(x(t),\xi(t-\tau))-\bar{b}(y(t),\xi(t-\tau))|\1_{[0,\tau)}(t)+|\bar{b}(x(t),x(t-\tau))-\bar{b}(y(t),y(t-\tau))|\1_{[\tau,\infty)}(t)\\
	&\leq LR|x(t)-y(t)|^\alpha\1_{[0,\tau)}(t)+LR\big\{|x(t)-y(t)|^\alpha+|x(t-\tau)-y(t-\tau)|^\alpha\big\}\1_{[\tau,\infty)}(t)\\
	&\leq2LR\|x_t-y_t\|_\infty^\alpha,
\end{align*}
and hence the condition \eqref{eq_C1} holds. Moreover, the condition \eqref{eq_C2} can be checked by the following observations: Let $0\leq s\leq t\leq T$ and $x\in\cC^n_T[\{\xi(0)\};U(R)]$ with $R\in[R_0,\infty)$ and $T\in(0,\infty)$. Notice that $x(0)=\xi(0)$ by the definition of the set $\cC^n_T[\{\xi(0)\};U(R)]$. Also, recall that $\xi:[-\tau,0]\to D$ is $\frac{1}{2}$-H\"{o}lder--continuous. We consider the three cases of $0\leq s\leq t\leq\tau$, $0 \leq s \leq \tau <t \leq T$ and $\tau <s\leq t \leq T$ separately. First, if $0\leq s\leq t\leq\tau$, we have
\begin{align*}
	|b(t,x)-b(s,x)|
	&=\Big|\bar{b}(x(t),\xi(t-\tau))-\bar{b}(x(s),\xi(s-\tau))\Big|
	\\&\leq
	LR\Big\{\big|x(t)-x(s)\big|^\alpha+\big|\xi(t-\tau)-\xi(s-\tau)\big|^\alpha\Big\}
	\\&\leq
	LR\Big\{\varpi(x_t;t-s)^\alpha+[\xi]_{1/2}^\alpha(t-s)^{\alpha/2}\Big\}
	\\&\leq
	(1+[\xi]_{1/2}^\alpha)LR
	\Big\{\varpi(x_t;t-s)+(t-s)^{1/2}\Big\}^\alpha.
\end{align*}
Second, if $0 \leq s \leq \tau <t \leq T$, thanks to the fact that $x(0)=\xi(0)$, we have
\begin{align*}
	|b(t,x)-b(s,x)|
	&=\Big|\bar{b}(x(t),x(t-\tau))-\bar{b}(x(t),x(0))+\bar{b}(x(t),\xi(0))-\bar{b}(x(s),\xi(s-\tau))\Big|
	\\&\leq
	LR\Big\{
		\big|x(t-\tau)-x(0)\big|^\alpha
		+
		\big|x(t)-x(s)\big|^\alpha
		+
		\big|\xi(0)-\xi(s-\tau)\big|^\alpha
	\Big\}
	\\&\leq
	LR\Big\{
		2\varpi(x_t;t-s)^\alpha
		+
		[\xi]_{1/2}^\alpha(t-s)^\alpha
	\Big\}
	\\&\leq
	(2+[\xi]_{1/2}^\alpha)LR
	\Big\{\varpi(x_t;t-s)+(t-s)^{1/2}\Big\}^\alpha.
\end{align*}
Third, if $\tau <s\leq t \leq T$, we have
\begin{align*}
	|b(t,x)-b(s,x)|
	&=\Big|\bar{b}(x(t),x(t-\tau))-\bar{b}(x(s),x(s-\tau))\Big|
	\\&\leq
	LR\Big\{\big|x(t)-x(s)\big|^\alpha+\big|x(t-\tau)-x(s-\tau)\big|^\alpha\Big\}
	\\&\leq
	2LR\varpi(x_t;t-s)^\alpha.
\end{align*}
The above observations show that the map $b$ satisfies \eqref{eq_C2}, and hence the condition (C) in \cref{assum} holds with the setting of \eqref{eq_choice-delay}.


\subsubsection{Reflected stochastic differential equations}\label{subsubsec_reflected}

Next, we investigate reflected SDEs and show that our framework is applicable to this non-standard (Markovian) SDE after an appropriate transformation using the so-called Skorokhod map.
Here, for the sake of simplicity of presentation, we focus only on one-dimensional reflected SDEs defined on the half real line $[0,\infty)$ having reflection at $\{0\}$. Specifically, we consider the following reflected SDE:
\begin{equation}\label{eq_reflected}
	\diff \check{X}(t)
	=
	\bar{b}(\check{X}(t))\,
	\diff t
	+
	\bar{\sigma}(\check{X}(t))\,
	\diff W(t)
	+
	\diff \Phi_{\check{X}}(t)
	,\ \ t\in[0,\infty),
\end{equation}
where $\bar{b}:[0,\infty)\to\bR$ and $\bar{\sigma}:[0,\infty)\to\bR$ are measurable maps. Fix $\mu_0\in\cP(\bR)$ with $\supp\mu_0 \subset [0,\infty)$. We call a tuple $(\check{X},\Phi_{\check{X}},W,\Omega,\cF,\bF,\bP)$ a weak solution of the reflected SDE \eqref{eq_reflected} with initial distribution $\mu_0$ if $(\Omega,\cF,\bP)$ is a complete probability space, $\bF$ is a filtration satisfying the usual conditions, $W$ is a one-dimensional Brownian motion relative to $\bF$, $\check{X}$ is a one-dimensional continuous $\bF$-adapted process with $\bP\circ\check{X}(0)^{-1}=\mu_0$ such that $\check{X}(t)\in[0,\infty)$ for any $t\in[0,\infty)$ and $\int^T_0\{|\bar{b}(\check{X}(s))|+|\bar{\sigma}(\check{X}(s))|^2\}\,\diff s<\infty$ for any $T\in(0,\infty)$ $\bP$-a.s., $\Phi_{\check{X}}$ is a one-dimensional, non-decreasing and $\bF$-adapted process with $\Phi_{\check{X}}(0)=0$, and they satisfy
\begin{equation*}
	\check{X}(t)=\check{X}(0)+\int^t_0\bar{b}(\check{X}(s))\,\diff s+\int^t_0\bar{\sigma}(\check{X}(s))\,\diff W(s)+\Phi_{\check{X}}(t)
\end{equation*}
for any $t\in[0,\infty)$ $\bP$-a.s., together with the relation
\begin{equation*}
	\Phi_{\check{X}}(t)=\int^t_0\1_{\{0\}}(\check{X}(s))\,\diff\Phi_{\check{X}}(s)
\end{equation*}
for any $t\in[0,\infty)$ $\bP$-a.s.
The non-decreasing process $\Phi_{\check{X}}$ plays the role of the reflection of $\check{X}$ at the boundary $\{0\}$ of $[0,\infty)$.

The reflected SDE \eqref{eq_reflected} can be understood as a solution of the Skorokhod equation on $[0,\infty)$.
Here, a pair $(\psi,\phi)$ is called a solution of the Skorokhod equation on $[0,\infty)$ associated with the input function $x\in\cC[[0,\infty);\bR]$ if $(\psi,\phi)\in\cC[[0,\infty)]\times\cC[\{0\};[0,\infty)]$, $\phi$ is non-decreasing, and the following hold:
\begin{align*}
\psi(t)=x(t)+\phi(t)
\ \ \text{and} \ \
\phi(t)=\int_{0}^{t} \1_{\{0\}}(\psi(s))\,\diff \phi(s),
\ \ t \in [0,\infty).
\end{align*}
It is known that, for any $x\in\cC[[0,\infty);\bR]$, there exists a unique solution $(\psi,\phi)$ of the Skorokhod equation on $[0,\infty)$ associated with $x$ (see for example \cite[Chapter III, Lemma 4.2]{IkWa81} and also \cite{Sa87} for more general domain).
The map $\Gamma:\cC[[0,\infty);\bR]\ni x \mapsto \psi\in\cC[[0,\infty)]$ is called the Skorokhod map, which can be explicitly written by
\begin{equation}\label{eq_reflected-Skorokhod}
	(\Gamma x)(t)=x(t)-\min_{s \in [0,t]}(x(s) \wedge 0),\ x\in\cC[[0,\infty);\bR],
\end{equation}
see \cite[Chapter III, Lemma 4.2]{IkWa81} for more details. Observe that
\begin{equation}\label{eq_reflected-Skorokhod-Lip}
	\Gamma0=0,\ \ |(\Gamma x)(t)-(\Gamma y)(t)|\leq2\|x_t-y_t\|_\infty\ \ \text{and}\ \ |(\Gamma x)(t)-(\Gamma x)(s)|\leq2\varpi(x_t;t-s)
\end{equation}
for any $x,y\in\cC[[0,\infty);\bR]$ and $0\leq s\leq t<\infty$.

Using the Skorokhod map $\Gamma$, we can convert the reflected SDE \eqref{eq_reflected} to the following path-dependent SDE:
\begin{equation}\label{eq_reflected-SFDE}
	\diff X(t)=\bar{b}\big((\Gamma X)(t)\big)\,\diff t+\bar{\sigma}\big((\Gamma X)(t)\big)\,\diff W(t),\ \ t \in [0,\infty),
\end{equation}
which can be seen as an SFDE \eqref{eq_SFDE} with data $(\bR,\mu_0,b,\sigma)$, where the progressively measurable coefficients $b,\sigma:[0,\infty)\times\cC[\supp\mu_0;\bR]\to\bR$ are given by
\begin{equation}\label{eq_coefficient-reflected}
	\varphi(t,x):=\bar{\varphi}\big((\Gamma x)(t)\big),\ \ (t,x)\in[0,\infty)\times\cC[\supp\mu_0;\bR],\ \ \varphi\in\{b,\sigma\}.
\end{equation}
The solutions of \eqref{eq_reflected} and \eqref{eq_reflected-SFDE} are related each other via the following one-to-one correspondence:
\begin{equation}\label{eq_reflected-correspondence1}
	(\check{X},\Phi_{\check{X}})=(\Gamma X,\Gamma X-X)
\end{equation}
and
\begin{equation}\label{eq_reflected-correspondence2}
	X=\check{X}(0)+\int^\cdot_0\bar{b}(\check{X}(s))\,\diff s+\int^\cdot_0\bar{\sigma}(\check{X}(s))\,\diff W(s).
\end{equation}
Indeed, if $\weaksol$ is a weak solution of the SFDE \eqref{eq_reflected-SFDE} with initial distribution $\mu_0$, then the pair $(\check{X},\Phi_{\check{X}})$ defined by \eqref{eq_reflected-correspondence1} is nothing but the unique solution (for each $\omega\in\Omega$) of the Skorokhod equation on $[0,\infty)$ associated with the input
\begin{equation*}
	X=X(0)+\int^\cdot_0\bar{b}\big((\Gamma X)(s)\big)\,\diff s+\int^\cdot_0\bar{\sigma}\big((\Gamma X)(s)\big)\,\diff W(s)=\check{X}(0)+\int^\cdot_0\bar{b}(\check{X}(s))\,\diff s+\int^\cdot_0\bar{\sigma}(\check{X}(s))\,\diff W(s),
\end{equation*}
which in turn implies that the tuple $(\check{X},\Phi_{\check{X}},W,\Omega,\cF,\bF,\bP)$ is a weak solution of the reflected SDE \eqref{eq_reflected} with initial distribution $\mu_0$ and that the relation \eqref{eq_reflected-correspondence2} holds. Conversely, if $(\check{X},\Phi_{\check{X}},W,\Omega,\cF,\bF,\bP)$ is a weak solution of the reflected SDE \eqref{eq_reflected} with initial distribution $\mu_0$, then the pair $(\check{X},\Phi_{\check{X}})$ is the unique solution (for each $\omega\in\Omega$) of the Skorokhod equation on $[0,\infty)$ associated with the input $X$ defined by \eqref{eq_reflected-correspondence2}, and hence the relation \eqref{eq_reflected-correspondence1} holds. Inserting the relation \eqref{eq_reflected-correspondence1} to \eqref{eq_reflected-correspondence2}, we see that $\weaksol$ is a weak solution of the SFDE \eqref{eq_reflected-SFDE} with initial distribution $\mu_0$.

Although the reflected SDE \eqref{eq_reflected} itself is beyond the framework of the present paper due to the additional reflection term $\Phi_{\check{X}}$, the SFDE \eqref{eq_reflected-SFDE} fits into our framework. Then, we focus on the latter and consider its weak approximation by means of the Euler--Maruyama scheme \eqref{eq_EM}. Noting the explicit expression \eqref{eq_reflected-Skorokhod} of the Skorokhod map $\Gamma$, for each $T \in (0,\infty)$ and $\pi=(t_k)_{k=0}^{m} \in\Pi_{T}$, the Euler--Maruyama scheme \eqref{eq_EM} corresponding to the SFDE \eqref{eq_reflected-SFDE} is written by
\begin{equation}\label{eq_EM-reflected}
\begin{dcases}
X^\pi(t_0)=\xi^{\pi},\\
X^\pi(t_{k+1})
=\bar{b}\big(X^\pi(t_k)-\min\{X^\pi(t_0),\ldots,X^{\pi}(t_{k}),0\}\big)\,(t_{k+1}-t_k)\\
\hspace{2.0cm}+\bar{\sigma}\big(X^\pi(t_k)-\min\{X^\pi(t_0),\ldots,X^{\pi}(t_{k}),0\}\big)Z_{k}^{\pi},\ \ k\in\{0,\dots,m-1\}.
\end{dcases}
\end{equation}
Thanks to the (Lipschitz) continuity of the correspondence \eqref{eq_reflected-correspondence1} from $X$ to $(\check{X},\Phi_{\check{X}})$, the weak convergence $\poly[X^\pi]\to X$ implies the weak convergence $(\Gamma\poly[X^\pi],\Gamma\poly[X^\pi]-\poly[X^\pi])\to(\check{X},\Phi_{\check{X}})$. The idea to convert the reflected SDE \eqref{eq_reflected} to the SFDE \eqref{eq_reflected-SFDE} originates from \cite{AnOr76} and is used in \cite{AiKiKu18} for the purpose of Euler--Maruyama approximation under the Lipschitz continuity condition for the coefficients.

Let us check the condition (C) in Assumption \ref{assum} for the map $b$ defined by \eqref{eq_coefficient-reflected} in view of the coefficient $\bar{b}$ of the original reflected SDE \eqref{eq_reflected}; the case of $\sigma$ is similar, and the correspondences in the other conditions (G) and (E) can be discussed similarly as well, and hence we omit them.
Suppose that there exist a non-decreasing function $V:[0,\infty)\to[1,\infty)$ and a constant $\alpha\in(0,1]$ such that
\begin{equation*}
	|\bar{b}(\bar{x})-\bar{b}(\bar{y})|
	\leq
	V(\bar{x}+\bar{y})|\bar{x}-\bar{y}|^\alpha
	\ \ \text{for any $\bar{x},\bar{y}\in [0,\infty)$.}
\end{equation*}
Thanks to \eqref{eq_reflected-Skorokhod-Lip}, we have
\begin{align*}
	|b(t,x)-b(t,y)|
	&=\Big|\bar{b}\big((\Gamma x)(t)\big)-\bar{b}\big((\Gamma y)(t)\big)\Big|\\
	&\leq 
	V((\Gamma x)(t)+(\Gamma y)(t))
	\Big|(\Gamma x)(t)-(\Gamma y)(t)\Big|^\alpha\\
	&\leq
	2^\alpha
	V(2\|x_{t}\|_{\infty}+2\|y_{t}\|_{\infty})
	\|x_{t}-y_{t}\|_{\infty}^\alpha
\end{align*}
for any $x,y\in\cC[\supp\mu_0;\bR]$ and $t\in[0,\infty)$. Similarly,
\begin{align*}
	|b(t,x)-b(s,x)|
	&=\Big|\bar{b}\big((\Gamma x)(t)\big)-\bar{b}\big((\Gamma x)(s)\big)\Big|\\
	&\leq 
	V((\Gamma x)(t)+(\Gamma x)(s))
	\Big|(\Gamma x)(t)-(\Gamma x)(s)\Big|^\alpha\\
	&\leq
	2^\alpha
	V(2\|x_{t}\|_{\infty}+2\|y_{t}\|_{\infty})
	\varpi(x_t;t-s)^\alpha
\end{align*}
for any $0 \leq s \leq t <\infty$ and $x\in\cC[[0,\infty);\bR]$. Hence, the map $b$ satisfies the condition (C) in Assumption \ref{assum} with
\begin{equation*}
	\alpha_b=\alpha,\ \ K_{\conti,b;T}=2^\alpha,\ \
	D_{\conti,b;T}
	=
	\begin{dcases}
		\left\{\bar{x}\in\bR\relmiddle||\bar{x}|<V^{-1}(R)/4\right\} \ &\text{for $R\in[1,V(\infty))$,}\\
		\bR\ &\text{for $R\in[V(\infty),\infty)$,}
	\end{dcases}
	\ \ \text{for $T\in(0,\infty)$},
\end{equation*}
where $V(\infty):=\lim_{\theta \to \infty}V(\theta)$, and $V^{-1}(R):=\inf\{\theta \geq 0\,|\,V(\theta)>R\}$ for $R \in [1,V(\infty))$.


\subsubsection{Stochastic oscillator models / Stochastic integro-differential equations}\label{subsubsec_oscillator}

We consider the stochastic oscillator model formally described by
\begin{align*}
	\ddot{x}(t)+f(x(t),\dot{x}(t))=g(x(t),\dot{x}(t))\dot{W}(t),\ \ (x(0),\dot{x}(0))=(\xi,\dot{\xi}) \in \bR^{n} \times \bR^{n},
\end{align*}
where $\dot{W}$ is a $d$-dimensional white noise and $f:\bR^{n}\times \bR^{n}\to\bR^n$ and $g:\bR^{n}\times \bR^{n}\to\bR^{n\times d}$ are measurable maps. The above oscillator model is rewritten as the following $2n$-dimensional Markovian SDE for the pair $(x,\dot{x})$:
\begin{align}\label{eq_oscillator}
\begin{dcases}
	\diff x(t)=\dot{x}(t)\,\diff t,\\
	\diff\dot{x}(t)= -f(x(t),\dot{x}(t))\,\diff t+g(x(t),\dot{x}(t))\,\diff W(t),\\
	(x(0),\dot{x}(0))=(\xi,\dot{\xi}).
\end{dcases}
\ \ t\in [0,\infty),
\end{align}
One of the main difficulties of the system \eqref{eq_oscillator} lies in the degeneracy of the noise. Due to this difficulty, the above Markovian SDE does not satisfy the condition (E) in \cref{assum} even if the coefficient $g$ is uniformly elliptic. However, we can still apply our main results to this setting after transforming the degenerate Markovian SDE \eqref{eq_oscillator} to a ``non-degenerate'' and non-Markovian SFDE. Namely, noting that the first component $x$ of the solution of \eqref{eq_oscillator} is explicitly written by $x(t)=\xi+\int^t_0\dot{x}(s)\,\diff s$, we can rewrite the system \eqref{eq_oscillator} as a sole equation for the second component $X=\dot{x}$, which is of form of the following stochastic integro-differential equation:
\begin{equation}\label{eq_oscillator2}
	\diff X(t)= -f\left(\xi+\int_{0}^{t}X(s)\,\diff s,X(t)\right)\,\diff t+g\left(\xi+\int_{0}^{t}X(s)\,\diff s,X(t)\right)\,\diff W(t),\ \ t\in [0,\infty),\ \ X(0)=\dot{\xi}.
\end{equation}
Clearly, if $(x,\dot{x})$ solves the SDE \eqref{eq_oscillator}, then $X:=\dot{x}$ solves the stochastic integro-differential equation \eqref{eq_oscillator}. Conversely, if $X$ solves the stochastic integro-differential equation \eqref{eq_oscillator}, then the pair $(x,\dot{x}):=(\xi+\int^\cdot_0X(s)\,\diff s,X)$ solves the SDE \eqref{eq_oscillator}.

The stochastic integro-differential equation \eqref{eq_oscillator2} can be seen as an SFDE \eqref{eq_SFDE} with data $\data=(\bR^n,\delta_{\dot{\xi}},b,\sigma)$, where the coefficients $b:[0,\infty)\times\cC^n[\{\dot{\xi}\};\bR^n]\to\bR^n$ and $\sigma:[0,\infty)\times\cC^n[\{\dot{\xi}\};\bR^n]\to\bR^{n\times d}$ are given by
\begin{equation}\label{eq_coefficient-oscillator}
	b(t,\dot{x}):=-f\left(\xi+\int^t_0\dot{x}(s)\,\diff s,\dot{x}(t)\right),\ \ \sigma(t,\dot{x}):=g\left(\xi+\int^t_0\dot{x}(s)\,\diff s,\dot{x}(t)\right),\ \ (t,\dot{x})\in[0,\infty)\times\cC^n[\{\dot{\xi}\};\bR^n].
\end{equation}
Notice that the maps $b$ and $\sigma$ are progressively measurable.

The Euler--Maruyama scheme \eqref{eq_EM} corresponding to the stochastic integro-differential equation \eqref{eq_oscillator2} is of the following form:
\begin{equation}\label{eq_EM-oscillator}
\begin{split}
	X^\pi(t_{k+1})&=X^\pi(t_k)-f\left(\xi+\sum^{k-1}_{j=0}\frac{t_{j+1}-t_j}{2}\left(X^\pi(t_j)+X^\pi(t_{j+1})\right),X^\pi(t_k)\right)(t_{k+1}-t_k)\\
	&\hspace{0.5cm}+g\left(\xi+\sum^{k-1}_{j=0}\frac{t_{j+1}-t_j}{2}\left(X^\pi(t_j)+X^\pi(t_{j+1})\right),X^\pi(t_k)\right)Z^\pi_k,\ \ k\in\{0,\dots,m-1\},
\end{split}
\end{equation}
with initial condition $X^\pi(t_0)=\dot{\xi}$.

The important thing is that, although the equation \eqref{eq_oscillator2} is a path-dependent (and hence non-Markovian) SFDE, it is non-degenerate in the sense of the condition (E) in \cref{assum} under an appropriate ellipticity condition for $g$. Indeed, assuming that
\begin{equation*}
	\big\langle g(x_1,x_2)g(x_1,x_2)^\top\eta,\eta\big\rangle\geq\frac{|\eta|^2}{V(|x_1|+|x_2|)^2}\ \ \text{for any $(x_1,x_2)\in\bR^n\times\bR^n$ and $\eta\in\bR^n$}
\end{equation*}
for some non-decreasing function $V:[0,\infty)\to[1,\infty)$, the map $\sigma$ defined in \eqref{eq_coefficient-oscillator} satisfies the condition (E) in \cref{assum} with
\begin{equation*}
	K_{\ellip;T}=1,\ \ D_{\ellip;T}(R)=
	\begin{dcases}
	\left\{\bar{x}\in\bR^n\relmiddle||\xi|+(T+1)|\bar{x}|<V^{-1}(R)\right\}\ \ &\text{for $R\in[1,V(\infty))$},\\
	\bR^n\ \ &\text{for $R\in[V(\infty),\infty)$},
	\end{dcases}
	\ \text{for $T\in(0,\infty)$},
\end{equation*}
where $V(\infty):=\lim_{\theta\to\infty}V(\theta)$, and $V^{-1}(R):=\inf\{\theta\geq0\,|\,V(\theta)>R\}$ for $R\in[1,V(\infty))$. The correspondence for the condition (G) in \cref{assum} is similar. Now let us check the condition (C) in \cref{assum} for the map $b$ defined in \eqref{eq_coefficient-oscillator} in view of the coefficient $f$ of the original stochastic oscillator model \eqref{eq_oscillator}. Suppose that
\begin{equation*}
	|f(x_1,x_2)-f(x_1',x_2')|\leq V\big(|x_1|+|x_2|+|x_1'|+|x_2'|\big)\big\{|x_1-x_1'|+|x_2-x_2'|\big\}^\alpha
\end{equation*}
for any $x_1,x_2,x_1',x_2'\in\bR^n$ for some constant $\alpha\in(0,1]$ and non-decreasing function $V:[0,\infty)\to[1,\infty)$. Observe that, for any $t\in[0,\infty)$ and $\dot{x},\dot{y}\in\cC^n[\{\dot{\xi}\};\bR^n]$,
\begin{align*}
	&|b(t,\dot{x})-b(t,\dot{y})|\\
	&\leq V\left(2|\xi|+\int^t_0|\dot{x}(s)|\,\diff s+|\dot{x}(t)|+\int^t_0|\dot{y}(s)|\,\diff s+|\dot{y}(t)|\right)\left\{\int^t_0|\dot{x}(s)-\dot{y}(s)|\,\diff s+|\dot{x}(t)-\dot{y}(t)|\right\}^\alpha\\
	&\leq V\Big(2|\xi|+(t+1)(\|\dot{x}_t\|_\infty+\|\dot{y}_t\|_\infty)\Big)(t+1)\|\dot{x}_t-\dot{y}_t\|_\infty^\alpha.
\end{align*}
Also, for any $0\leq s\leq t<\infty$ and $\dot{x}\in\cC^n[\{\dot{\xi}\};\bR^n]$, we have
\begin{align*}
	|b(t,\dot{x})-b(s,\dot{x})|&\leq V\big(2|\xi|+2(t+1)\|\dot{x}_t\|_\infty\big)\left\{\int^t_s|\dot{x}(r)|\,\diff r+|\dot{x}(t)-\dot{x}(s)|\right\}^\alpha\\
	&\leq V\Big(2|\xi|+2(t+1)\|\dot{x}_t\|_\infty\Big)(\sqrt{t}\|\dot{x}_t\|_\infty+1)^\alpha\big\{(t-s)^{1/2}+\varpi(\dot{x}_t;t-s)\big\}^\alpha.
\end{align*}
Hence, the map $b$ satisfies the condition (C) in \cref{assum} with
\begin{equation*}
	\alpha_b=\alpha,\ \ K_{\conti,b;T}=T+1,\ \ D_{\growth,b;T}=\left\{\bar{x}\in\bR^n\relmiddle||\bar{x}|<\widetilde{V}_{T,|\xi|}^{-1}(R)\right\}\ \ \text{for $R\in[1,\infty)$ and $T\in(0,\infty)$},
\end{equation*}
where $\widetilde{V}_{T,|\xi|}(\theta):=V(2|\xi|+2(T+1)\theta)(\sqrt{T}\theta+1)^\alpha$ for $\theta\in[0,\infty)$.


\section{Proof of the main result: A generalized coupling approach}\label{section_proof}

This section is devoted to the proof of \cref{theo_main}. First, let us sketch the idea of the proof.

Our goal is to estimate, for each $T\in(0,\infty)$ and $\pi=(t_k)^m_{k=0}\in\Pi_T$, the weak approximation error $d_\LP(\Law_\bP(X_T),\Law_{\bP^\pi}(\poly[X^\pi]))$ between a (given) weak solution $\weaksol$ of the original SFDE \eqref{eq_SFDE} and (the linear interpolation of) the Euler--Maruyama scheme $X^\pi=\EM$ with the same initial distribution as $X$. To do so, by the duality formula \eqref{eq_LP-coupling} for the L\'evy--Prokhorov metric, it is important to construct a suitable coupling between $\Law_\bP(X_T)\in\cP(\cC^n_T)$ and $\Law_{\bP^\pi}(\poly[X^\pi])\in\cP(\cC^n_T)$, that is, a pair of $\cC^n_T\times\cC^n_T$-valued random element $(\widetilde{X}_T,\poly[\widetilde{X}^\pi])$ defined on a probability space $(\widetilde{\Omega},\widetilde{\cF},\widetilde{\bP})$ with $\Law_{\widetilde{\bP}}(\widetilde{X}_T)=\Law_\bP(X_T)$ and $\Law_{\widetilde{\bP}}(\poly[\widetilde{X}^\pi])=\Law_{\bP^\pi}(\poly[X^\pi])$ such that the pathwise error $\|\widetilde{X}_T-\poly[\widetilde{X}^\pi]\|_\infty$ becomes as small as possible with respect to the topology of the convergence in $\widetilde{\bP}$-probability. When the coefficients $b$ and $\sigma$ satisfy good regularity conditions such as the global Lipschitz continuity condition, considering the synchronous coupling of $\Law_\bP(X_T)$ and $\Law_{\bP^\pi}(\poly[X^\pi])$, that is, the pair of the (strong) solution $\widetilde{X}_T=X_T$ of \eqref{eq_SFDE} and the Euler--Maruyama scheme $\widetilde{X}^\pi$ defined on $(\Omega,\cF,\bP)$ and given by \eqref{eq_EM} with $(\xi^\pi,Z^\pi_0,\dots,Z^\pi_{m-1})$ replaced by $(X(0),W(t_1)-W(t_0),\dots,W(t_m)-W(t_{m-1}))$, we can estimate the error $\|\widetilde{X}_T-\poly[\widetilde{X}^\pi]\|_\infty$ (in the $L^p(\bP)$-sense) by a standard argument based on the stability estimate. However, under our general setting of \cref{assum}, such a standard argument does not work well, and a direct construction of the (true) coupling $(\widetilde{X}_T,\poly[\widetilde{X}^\pi])$ and the estimate of $\|\widetilde{X}_T-\poly[\widetilde{X}^\pi]\|_\infty$ are quite difficult tasks.

The main idea for the proof of our main result (\cref{theo_main}) is to divide the weak approximation error $d_\LP(\Law_\bP(X_T),\Law_{\bP^\pi}(\poly[X^\pi]))$ into the following two terms:
\begin{equation*}
	d_\LP\big(\Law_\bP(X_T),\Law_{\bP^\pi}(\poly[X^\pi])\big)\leq d_\LP\big(\Law_\bP(X_T),\Law_\bP(\poly[\hX^\pi])\big)+d_\LP\big(\Law_\bP(\poly[\hX^\pi]),\Law_{\bP^\pi}(\poly[X^\pi])\big).
\end{equation*}
Here, the auxiliary term $\hX^\pi$, which will be introduced in \cref{subsec_controlledEM} below, is a ``controlled version'' of the (true) Euler--Maruyama scheme $X^\pi$. In this paper, we call $\hX^\pi$ a \emph{controlled Euler--Maruyama scheme}. We will construct $\hX^\pi$ on the same probability space $(\Omega,\cF,\bP)$ as the (given) weak solution $\weaksol$ of the original SFDE \eqref{eq_SFDE} to meet the following requirements:
\begin{itemize}
\item[(i)]
The pathwise error $\|X_T-\poly[\hX^\pi]\|_\infty$ becomes as small as possible with respect to the topology of the convergence in $\bP$-probability, which provides us an estimate of $d_\LP(\Law_\bP(X_T),\Law_\bP(\poly[\hX^\pi]))$.
\item[(ii)]
The error between $\Law_\bP(\poly[\hX^\pi])$ and $\Law_{\bP^\pi}(\poly[X^\pi])$ becomes as small as possible in the sense of the total variation, which provides us an estimate of $d_\LP(\Law_\bP(\poly[\hX^\pi]),\Law_{\bP^\pi}(\poly[X^\pi]))$.
\end{itemize}
The pair $(X_T,\poly[\hX]^\pi)$ of random elements on $(\Omega,\cF,\bP)$ can be seen as a \emph{generalized coupling} between the prescribed probability measures $\Law_\bP(X_T)$ and $\Law_{\bP^\pi}(\poly[X^\pi])$ on $\cC^n_T$ in the sense that, although it is not a true coupling since the second marginal $\Law_\bP(\poly[\hX^\pi])$ is not equal to $\Law_{\bP^\pi}(\poly[X^\pi])$, they are close each other in some sense. Using a stochastic control technique, we construct the auxiliary term $\hX^\pi$ which ``controls'' the corrective error by means of (i) and (ii) above. This kind of argument is called a \emph{generalized coupling approach} (also known as the Control-and-Reimburse strategy) and has been applied to the study of ergodicity of infinite-dimensional Markov models in \cite{BuKuSc20,KuSc20} among others. To the best of our knowledge, the present paper is the first time to apply the idea of generalized couplings to the theory of Euler--Maruyama approximations.


\subsection{Controlled Euler--Maruyama scheme}\label{subsec_controlledEM}

Now we introduce the controlled Euler--Maruyama scheme. In the rest of this section, we use the following additional notations: for each $T\in(0,\infty)$ and $\pi=(t_k)^m_{k=0}\in\Pi_T$, define
\begin{equation*}
	\pi(t):=\sum^{m-1}_{k=0}t_k\1_{[t_k,t_{k+1})}(t)+t_m\1_{[t_m,\infty)}(t),\ \ t\in[0,\infty).
\end{equation*}
Also, with slight abuse of notation, we define the polygonal function $\poly[x]\in\cC^n_T$ for each $x\in\cC^n$ by the same way as in \eqref{eq_poly}, which represents the linear interpolation of the points $(x(t_k))^m_{k=0}$ .

Let a data $\data$ satisfying \cref{assum} be given. Suppose that we are given a weak solution $\weaksol$ of SFDE \eqref{eq_SFDE} associated with $\data$. For each $T\in(0,\infty)$, $\pi\in\Pi_T$, $\lambda\in(0,\infty)$, $\Delta\in(0,1]$ and $\vec{R}=(R_{\growth,b},R_{\growth,\sigma},R_{\conti,b},R_{\conti,\sigma},R_\ellip)\in[1,\infty)^5$, consider the following SFDE:
\begin{equation}\label{eq_controlledEM}
	\begin{dcases}
	\diff \hX^\pi(t)=b\big(\pi(t),\poly[\hX^\pi]\big)\,\diff t+\sigma\big(\pi(t),\poly[\hX^\pi]\big)\,\diff W(t)+\frac{\lambda}{\Delta}\big(X(t)-\hX^\pi(t)\big)\1_{[0,T\wedge\tau^\pi\wedge\zeta)}(t)\,\diff t,\ \ t\in[0,\infty),\\
	\hX^\pi(0)=X(0),\ \ \tau^\pi=\inf\left\{t\geq0\relmiddle|\big|X(t)-\hX^\pi(t)\big|\geq\Delta\right\},
	\end{dcases}
\end{equation}
where
\begin{equation*}
	\zeta:=\inf\left\{t\geq0\relmiddle|\dist\!\left(X(t),\bR^n\setminus D_T(\vec{R})\right)\leq\Delta\right\}.
\end{equation*}
Notice that, for each $t\in[t_k,t_{k+1})$ with $k\in\{0,\dots,m-1\}$, we have $b(\pi(t),\poly[\hX^\pi])=b(t_k,\poly[\hX^\pi]_{t_k})$, $\sigma(\pi(t),\poly[\hX^\pi])=\sigma(t_k,\poly[\hX^\pi]_{t_k})$, and
\begin{equation*}
	\poly[\hX^\pi]_{t_k}(\cdot)=\sum^{k-1}_{j=0}\left\{\frac{t_{j+1}-\cdot}{t_{j+1}-t_j}\hX^\pi(t_j)+\frac{\cdot-t_j}{t_{j+1}-t_j}\hX^\pi(t_{j+1})\right\}\1_{[t_j,t_{j+1})}(\cdot)+\hX^\pi(t_k)\1_{[t_k,\infty)}(\cdot),
\end{equation*}
which depends only on $(\hX^\pi(t_j))^k_{j=0}$. Also, noting that $t_m=T$, for each $t\in[T,\infty)$, we have $b(\pi(t),\poly[\hX^\pi])=b(T,\poly[\hX^\pi])$, $\sigma(\pi(t),\poly[\hX^\pi])=\sigma(T,\poly[\hX^\pi])$, and
\begin{equation*}
	\poly[\hX^\pi](\cdot)=\sum^{m-1}_{j=0}\left\{\frac{t_{j+1}-\cdot}{t_{j+1}-t_j}\hX^\pi(t_j)+\frac{\cdot-t_j}{t_{j+1}-t_j}\hX^\pi(t_{j+1})\right\}\1_{[t_j,t_{j+1})}(\cdot)+\hX^\pi(T)\1_{[T,\infty)}(\cdot),
\end{equation*}
which depends only on $(\hX^\pi(t_j))^m_{j=0}$. By \cref{appendix_lemm_controlledEM} in \cref{appendix_controlledEM} (see also \cref{appendix_rem_controlledEM}), there exists a unique (up to $\bP$-indistinguishability) $\bR^n$-valued continuous and $\bF$-adapted process $\hX^\pi=(\hX^\pi(t))_{t\in[0,\infty)}$ on $(\Omega,\cF,\bP)$ satisfying \eqref{eq_controlledEM}. We call $\hX^\pi$ the \emph{controlled Euler--Maruyama scheme} associated with $\weaksol$ and $(T,\pi,\lambda,\Delta,\vec{R})$.


\begin{rem}
\begin{itemize}
\item
Notice that \eqref{eq_controlledEM} is an SFDE defined on the filtered probability space and $d$-dimensional Brownian motion appearing in a given weak solution $\weaksol$ of the original SFDE, and it involves the stopping time $\tau^\pi$ which depends on the solution $\hX^\pi$ itself. Furthermore, we do not assume that the coefficient $b$ or $\sigma$ is globally Lipschitz continuous. Hence, the existence and uniqueness of the solution $\hX^\pi$ is a non-trivial issue. However, thanks to the appearance of the time-discretization in $b(\pi(t),\poly[\hX^\pi])$ and $\sigma(\pi(t),\poly[\hX^\pi])$, we can construct the solution $\hX^\pi$ by the step-by-step argument. For more details, see \cref{appendix_controlledEM}.
\item
The controlled Euler--Maruyama scheme $\hX^\pi$ itself can not be used for the approximation purpose since it involves the true solution $X$. We will use it just as an auxiliary process.
\item
The additional drift term $\frac{\lambda}{\Delta}(X(t)-\hX^\pi(t))\1_{[0,T\wedge\tau^\pi\wedge\zeta)}(t)$ in \eqref{eq_controlledEM} is regarded as a control process, which plays a role of ``dissipation'' of the system and is valid until the stopping time $T\wedge\tau^\pi\wedge\zeta$. Namely, if $\hX^\pi(t)=X(t)+\kappa v$ for some $\kappa\in(0,\Delta)$ and $v\in\bR^n$ with $|v|=1$ at time $t\in[0,T\wedge\tau^\pi\wedge\zeta)$, then the control process attempts to move $\hX^\pi$ to the direction $-v$ with size $\lambda\times\frac{\kappa}{\Delta}$ in the infinitesimal time duration $[t,t+\diff t]$. The control parameters $\lambda$, $\Delta$ and $\vec{R}=(R_{\growth,b},R_{\growth,\sigma},R_{\conti,b},R_{\conti,\sigma},R_\ellip)$ have the following interpretations:
\begin{itemize}
\item
The parameter $\lambda$ can be seen as an intensity of the dissipation;
\item
The parameter $\Delta$ can be seen as a benchmark of the (pathwise) error between $X$ and $\hX^\pi$;
\item
The vector-parameter $\vec{R}$ can be seen as a benchmark of the ``locality'' of the growth, continuity and ellipticity conditions specified in \cref{assum}.
\end{itemize}
Intuitively speaking, the larger the intensity parameter $\lambda$ is, the closer the controlled Euler--Maruyama scheme $\hX^\pi$ is to the true (weak) solution $X$ in the pathwise sense. On the other hand, the smaller the intensity parameter $\lambda$ is, the closer $\hX^\pi$ is to the original Euler--Maruyama scheme $X^\pi$ in the sense of probability laws. In order to understand the latter idea, consider the case of $\lambda=0$ (at least formally). In this case, the system \eqref{eq_controlledEM} is nothing but a continuous time analogue of the Euler--Maruyama scheme \eqref{eq_EM} defined on the probability basis $(W,\Omega,\cF,\bF,\bP)$.
\end{itemize}
\end{rem}

The important thing is to seek for ``optimal control parameters'' $\lambda$, $\Delta$ and $\vec{R}$ which minimize the corrective error term
\begin{equation*}
	d_\LP\big(\Law_\bP(X_T),\Law_\bP(\poly[\hX^\pi])\big)+d_\LP\big(\Law_\bP(\poly[\hX^\pi]),\Law_{\bP^\pi}(\poly[X^\pi])\big).
\end{equation*}
To do so, the first task is to formulate an appropriate ``control problem'' by estimating the above two terms separately for each fixed control parameters; see \cref{subsec_X-hXpi} and \cref{subsec_hXpi-Xpi} below. After that, solving the control problem, we obtain the assertions in \cref{theo_main}; see \cref{subsec_proof-main} below.

Notice that, by the definitions of the stopping times $\tau^\pi$ and $\zeta$,
\begin{equation*}
	B_{X(t)}(\Delta)\subset D_T(\vec{R}) \ \ \text{for any $t\in[0,\zeta)$, and}\ \ \hX^\pi(t)\in B_{X(t)}(\Delta)\ \ \text{for any $t\in[0,\tau^\pi)$},
\end{equation*}
where $B_{X(t)}(\Delta)$ denotes the open ball in $\bR^n$ with center $X(t)$ and radius $\Delta$. From the above observation, together with the facts that $\hX^\pi(0)=X(0)\in\supp\mu_0$ $\bP$-a.s.\ and that $D_T(\vec{R})$ is convex, we see that
\begin{equation}\label{eq_stopping}
\begin{split}
	&X_t,\poly[X]_{\pi(t)}\in\cC^n_T\big[\supp\mu_0;D_T(\vec{R})\big]\ \ \text{for any $t\in[0,T\wedge\zeta)$, $\bP$-a.s., and}\\
	&\hX^\pi_t,\poly[\hX^\pi]_{\pi(t)}\in\cC^n_T\big[\supp\mu_0;D_T(\vec{R})\big]\ \ \text{for any $t\in[0,T\wedge\tau^\pi\wedge\zeta)$ $\bP$-a.s.}
\end{split}
\end{equation}


\subsection{Error estimate between the weak solution and controlled Euler--Maruyama scheme}\label{subsec_X-hXpi}

First, we show a probabilistic estimate for the pathwise error $\|X_T-\poly[\hX^\pi]\|_\infty$ in terms of the control parameters. To do so, we need the following standard lemma:


\begin{lemm}\label{lemm_polylinear-operator}
Let $T\in(0,\infty)$ and $\pi=(t_k)^m_{k=0}\in\Pi_T$ be fixed. Then, for any $x,y\in\cC^n$ and $k\in\{0,\dots,m\}$,
\begin{equation}\label{eq_polylinear-operator1}
	\|x_{t_k}-\poly[x]_{t_k}\|_\infty\leq\varpi(x_{t_k};|\pi|)
\end{equation}
and
\begin{equation}\label{eq_polylinear-operator2}
	\|\poly[x]_{t_k}-\poly[y]_{t_k}\|_\infty\leq\|x_{t_k}-y_{t_k}\|_\infty.
\end{equation}
\end{lemm}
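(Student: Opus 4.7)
The plan is to prove both inequalities by decomposing the stopped paths according to the partition $\pi$ and analyzing each subinterval $[t_j, t_{j+1}]$ for $j \in \{0,\dots,k-1\}$ separately, using the convex-combination structure of the linear interpolation in \eqref{eq_poly}. Since both $x_{t_k}$ and $\poly[x]_{t_k}$ are constant on $[t_k, \infty)$ and agree there (they both equal $x(t_k)$), the supremum over $[0,\infty)$ reduces to the supremum over $[0,t_k]$ in \eqref{eq_polylinear-operator1}. For \eqref{eq_polylinear-operator2}, a similar reduction works and the endpoint values $x(t_k)$ and $y(t_k)$ are automatically controlled by $\|x_{t_k}-y_{t_k}\|_\infty$.

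For \eqref{eq_polylinear-operator1}, the key step is the identity
\begin{equation*}
    x(t) - \poly[x](t) = \frac{t_{j+1}-t}{t_{j+1}-t_j}\bigl(x(t)-x(t_j)\bigr) + \frac{t-t_j}{t_{j+1}-t_j}\bigl(x(t)-x(t_{j+1})\bigr)
\end{equation*}
for $t\in[t_j,t_{j+1}]$, which is obtained by writing $x(t)$ as the trivial convex combination of itself with the same coefficients. Taking absolute values, using the triangle inequality, and bounding both $|x(t)-x(t_j)|$ and $|x(t)-x(t_{j+1})|$ by $\varpi(x_{t_k};t_{j+1}-t_j)\leq\varpi(x_{t_k};|\pi|)$ yields the claim, since the two convex-combination weights sum to $1$.

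For \eqref{eq_polylinear-operator2}, I note that on each $[t_j,t_{j+1}]$ with $j<k$, $\poly[x](t)-\poly[y](t)$ is the convex combination (with the same weights as above) of $x(t_j)-y(t_j)$ and $x(t_{j+1})-y(t_{j+1})$. Applying the triangle inequality and the fact that the convex-combination weights sum to $1$, each such term is bounded by $\max\{|x(t_j)-y(t_j)|,|x(t_{j+1})-y(t_{j+1})|\}\leq\|x_{t_k}-y_{t_k}\|_\infty$. The contribution from $t\geq t_k$ is just $|x(t_k)-y(t_k)|$, which is again dominated by $\|x_{t_k}-y_{t_k}\|_\infty$.

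I do not anticipate any real obstacle here; the lemma is purely algebraic and holds for arbitrary continuous paths without reference to the probabilistic structure. The only care needed is to handle the constancy of the stopped functions on $[t_k,\infty)$ cleanly so that the supremum in $\|\cdot\|_\infty$ is indeed attained (or approached) on $[0,t_k]$.
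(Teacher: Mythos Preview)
Your proposal is correct and follows essentially the same approach as the paper's proof: both arguments rest on the convex-combination identity for $x(t)-\poly[x](t)$ on each subinterval $[t_j,t_{j+1}]$, apply the triangle inequality, and bound the resulting increments by $\varpi(x_{t_k};|\pi|)$ for \eqref{eq_polylinear-operator1} and by endpoint differences $|x(t_j)-y(t_j)|$ for \eqref{eq_polylinear-operator2}. The handling of the constant tail on $[t_k,\infty)$ is also the same.
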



\begin{proof}
Notice that, for any $z\in\cC^n$,
\begin{equation*}
	\poly[z]_{t_k}(\cdot)=\sum^{k-1}_{j=0}\left\{\frac{t_{j+1}-\cdot}{t_{j+1}-t_j}z(t_j)+\frac{\cdot-t_j}{t_{j+1}-t_j}z(t_{j+1})\right\}\1_{[t_j,t_{j+1})}(\cdot)+z(t_k)\1_{[t_k,\infty)}(\cdot).
\end{equation*}
On the one hand,
\begin{align*}
	\|x_{t_k}-\poly[x]_{t_k}\|_\infty&=\max_{j\in\{0,\dots,k-1\}}\sup_{s\in[t_j,t_{j+1})}\left|\frac{t_{j+1}-s}{t_{j+1}-t_j}\big(x(s)-x(t_j)\big)+\frac{s-t_j}{t_{j+1}-t_j}\big(x(s)-x(t_{j+1})\big)\right|\\
	&\leq\max_{j\in\{0,\dots,k-1\}}\sup_{s\in[t_j,t_{j+1})}\left\{\frac{t_{j+1}-s}{t_{j+1}-t_j}\big|x(s)-x(t_j)\big|+\frac{s-t_j}{t_{j+1}-t_j}\big|x(s)-x(t_{j+1})\big|\right\}\\
	&\leq\max_{j\in\{0,\dots,k-1\}}\varpi(x_{t_{j+1}};t_{j+1}-t_j)\\
	&\leq\varpi(x_{t_k};|\pi|).
\end{align*}
Hence, \eqref{eq_polylinear-operator1} holds. On the other hand,
\begin{align*}
	&\|\poly[x]_{t_k}-\poly[y]_{t_k}\|_\infty\\
	&=\max_{j\in\{0,\dots,k-1\}}\sup_{s\in[t_j,t_{j+1})}\left|\frac{t_{j+1}-s}{t_{j+1}-t_j}\big(x(t_j)-y(t_j)\big)+\frac{s-t_j}{t_{j+1}-t_j}\big(x(t_{j+1})-y(t_{j+1})\big)\right|\vee|x(t_k)-y(t_k)|\\
	&\leq\max_{j\in\{0,\dots,k-1\}}\sup_{s\in[t_j,t_{j+1})}\left\{\frac{t_{j+1}-s}{t_{j+1}-t_j}\big|x(t_j)-y(t_j)\big|+\frac{s-t_j}{t_{j+1}-t_j}\big|x(t_{j+1})-y(t_{j+1})\big|\right\}\vee|x(t_k)-y(t_k)|\\
	&\leq\max_{j\in\{0,\dots,k\}}|x(t_j)-y(t_j)|\\
	&\leq\|x_{t_k}-y_{t_k}\|_\infty.
\end{align*}
Hence, \eqref{eq_polylinear-operator2} holds.
\end{proof}

Using the above standard lemma and fundamental results in stochastic calculus shown in \cref{appendix_prob}, we provide the following estimate between the weak solution and controlled Euler--Maruyama scheme.


\begin{prop}\label{prop_X-hXpi}
Fix a data $\data$ satisfying \cref{assum}. Suppose that we are given a weak solution $\weaksol$ of the SFDE \eqref{eq_SFDE} associated with $\data$. Fix $T\in(0,\infty)$, $\pi\in\Pi_T$, $\lambda\in(0,\infty)$, $\Delta\in(0,1]$ and $\vec{R}=(R_{\growth,b},R_{\growth,\sigma},R_{\conti,b},R_{\conti,\sigma},R_\ellip)\in[1,\infty)^5$, and assume that the parameters satisfy the constraint
\begin{equation}\label{eq_constraint}
	\lambda\geq2^{12}n\max\left\{K_{\conti,b;T}R_{\conti,b}\cE^{\alpha_b},\frac{\left(K_{\conti,\sigma;T}R_{\conti,\sigma}\cE^{\alpha_\sigma}\right)^2}{\Delta}\left(\log\frac{1}{\Delta}+\log\left(1\vee\frac{2T\lambda}{\Delta}\right)\right)\right\},
\end{equation}
where $\cE=\cE(|\pi|,\Delta,R_{\growth,b},R_{\growth,\sigma})\in(0,\infty)$ is defined by \eqref{eq_cE}. Let $\hX^\pi$ be the controlled Euler--Maruyama scheme, given by \eqref{eq_controlledEM}, associated with $\weaksol$ and $(T,\pi,\lambda,\Delta,\vec{R})$. Then, it holds that
\begin{equation}\label{eq_X-hXpi-prob}
	\bP\Big(\|X_T-\poly[\hX^\pi]\|_\infty\geq8\sqrt{n}\cE\Big)\leq\bP\left(\inf_{t\in[0,T]}\dist\!\left(X(t),\bR^n\setminus D_T(\vec{R})\right)\leq\Delta\right)+8n\frac{|\pi|}{T}+219\Delta.
\end{equation}
In particular, it holds that
\begin{equation}\label{eq_X-hXpi}
	d_\LP\big(\Law_\bP(X_T),\Law_\bP(\poly[\hX^\pi])\big)\leq\bP\left(\inf_{t\in[0,T]}\dist\!\left(X(t),\bR^n\setminus D_T(\vec{R})\right)\leq\Delta\right)+\left(\frac{8n}{T}+219+8\sqrt{n}\right)\cE.
\end{equation}
\end{prop}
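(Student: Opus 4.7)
The plan is to prove the pathwise bound \eqref{eq_X-hXpi-prob} first, then deduce \eqref{eq_X-hXpi} from it through the Ky Fan inequality \eqref{eq_LP-KF}. The argument proceeds on the ``good event'' $G := \{\zeta > T\}$, whose complement contributes precisely the probability term in \eqref{eq_X-hXpi-prob}. I would split the analysis into three pieces that match the three terms on the right-hand side of \eqref{eq_X-hXpi-prob}.

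\textbf{Pathwise reduction.} First I will establish the deterministic bound that holds on $\{\tau^\pi > T\} \cap G$: for each $t \in [t_j, t_{j+1})$, the convex-combination definition of $\poly[\hX^\pi]$ and the triangle inequality give $|X(t) - \poly[\hX^\pi](t)| \leq \max_{k \in \{j,j+1\}}|X(t) - \hX^\pi(t_k)|$, and inserting $|X(t_k) - \hX^\pi(t_k)| < \Delta$ (since $\tau^\pi > T$) yields
\begin{equation*}
\|X_T - \poly[\hX^\pi]\|_\infty \leq \varpi(X_T;|\pi|) + \Delta.
\end{equation*}
Thus \eqref{eq_X-hXpi-prob} reduces to bounding $\bP(\varpi(X_{T\wedge\zeta};|\pi|) > c\sqrt n\,\cE)$ and $\bP(\tau^\pi \leq T,\,G)$ separately.

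\textbf{Modulus of $X$.} On $G$, the inclusion \eqref{eq_stopping} puts $X_t \in \cC^n_T[\supp\mu_0;D_T(\vec R)]$ for $t < \zeta$, so $|b(t,X)| \leq K_{\growth,b;T}R_{\growth,b}$ and $|\sigma(t,X)| \leq K_{\growth,\sigma;T}R_{\growth,\sigma}$ up to $\zeta$. An It\^o-process modulus-of-continuity estimate (supplied by \cref{appendix_prob}) then gives $\bP(\varpi(X_{T\wedge\zeta};|\pi|) > c\sqrt n\,\cE) \leq 8n\,|\pi|/T$, accounting for the second term on the right-hand side of \eqref{eq_X-hXpi-prob}. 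Combined with the pathwise reduction and $\Delta \leq \cE$, on the intersection of $G$, $\{\tau^\pi > T\}$, and this modulus event we get $\|X_T - \poly[\hX^\pi]\|_\infty \leq 8\sqrt n\,\cE$.

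\textbf{The main obstacle: forcing $\tau^\pi > T$.} The hardest step will be showing $\bP(\tau^\pi \leq T,\,G) \leq 219\Delta$. On $[0,\tilde\tau)$ with $\tilde\tau := T\wedge\tau^\pi\wedge\zeta$, the difference $Y(t) := X(t) - \hX^\pi(t)$ satisfies the dissipative linear SDE
\begin{equation*}
dY(t) = \beta(t)\,dt - \tfrac{\lambda}{\Delta} Y(t)\,dt + \Sigma(t)\,dW(t),
\end{equation*}
with $\beta := b(\cdot,X) - b(\pi(\cdot),\poly[\hX^\pi])$ and $\Sigma := \sigma(\cdot,X) - \sigma(\pi(\cdot),\poly[\hX^\pi])$. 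Splitting each through $\varphi(\pi(\cdot),X)$ and using \cref{assum}(C) together with \cref{lemm_polylinear-operator} controls $\|X_{\pi(t)} - \poly[\hX^\pi]_{\pi(t)}\|_\infty \leq \varpi(X_T;|\pi|) + \Delta$, yielding $|\beta| \lesssim K_{\conti,b;T}R_{\conti,b}\cE^{\alpha_b}$ and $|\Sigma| \lesssim K_{\conti,\sigma;T}R_{\conti,\sigma}\cE^{\alpha_\sigma}$ on the modulus event of the previous step. Variation of constants gives
\begin{equation*}
Y(t\wedge\tilde\tau) = \int_0^{t\wedge\tilde\tau} e^{-\lambda(t\wedge\tilde\tau - s)/\Delta}\beta(s)\,ds + \int_0^{t\wedge\tilde\tau} e^{-\lambda(t\wedge\tilde\tau - s)/\Delta}\Sigma(s)\,dW(s).
\end{equation*}
The deterministic integral is at most $\|\beta\|_\infty\Delta/\lambda \leq \Delta/(2^{12}n)$ by the first alternative in \eqref{eq_constraint}. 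The real work is controlling the supremum over $t \leq T$ of the second integral: it is Ornstein--Uhlenbeck-like with one-point variance at most $\|\Sigma\|_\infty^2\Delta/(2\lambda)$, and an exponential maximal inequality from \cref{appendix_prob} yields a sub-Gaussian tail whose scale $\sqrt{\|\Sigma\|_\infty^2\Delta/\lambda}$ is enhanced by the logarithmic factor $\log(2T\lambda/\Delta)$ coming from discretizing $[0,T]$ on the natural decorrelation time $\Delta/\lambda$. The precise self-referential shape of the second alternative in \eqref{eq_constraint}, $\lambda \geq 2^{12}n(K_{\conti,\sigma;T}R_{\conti,\sigma}\cE^{\alpha_\sigma})^2\Delta^{-1}[\log(1/\Delta) + \log(1 \vee 2T\lambda/\Delta)]$, is calibrated so that this supremum is less than $\Delta/2$ except with probability $O(\Delta)$. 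Consequently $\sup_{t\leq T}|Y(t\wedge\tilde\tau)| < \Delta$ on $G$ with probability $1-O(\Delta)$, which forces $\tau^\pi > T$. The key technical challenge is this supremum bound for the non-martingale OU-type stochastic integral — in particular the self-referential logarithmic factor is what makes everything tight.

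\textbf{Assembly.} Summing the three contributions delivers \eqref{eq_X-hXpi-prob}. To pass to \eqref{eq_X-hXpi}, I apply \eqref{eq_LP-KF} with $\epsilon := \bP(G^\complement) + (8n/T + 219 + 8\sqrt n)\cE$, and use $|\pi| \leq \cE$ (forced by $K_{\growth,b;T}, R_{\growth,b} \geq 1$ in the definition \eqref{eq_cE}) together with $\Delta \leq \cE$ to verify $\bP(\|X_T - \poly[\hX^\pi]\|_\infty > \epsilon) < \epsilon$.
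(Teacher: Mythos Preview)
Your proposal is correct and mirrors the paper's three-term decomposition (the rare event $\{\zeta\leq T\}$, the modulus-of-continuity event, and the event $\{\tau^\pi\leq T\}$), using the same appendix tools. The one genuine difference is in how you handle the hardest step: the paper applies It\^o's formula to $Z=|X-\hX^\pi|^2$ and invokes \cref{appendix_lemm_Kulik--Scheutzou} on the scalar process $Z$, whereas you propose variation of constants directly on the $n$-dimensional process $Y=X-\hX^\pi$ and then apply an Ornstein--Uhlenbeck maximal estimate. These routes are essentially equivalent (indeed, \cref{appendix_lemm_Kulik--Scheutzou} is itself proved via variation of constants and reduction to \cref{appendix_lemm_OU-exp}), but the paper's squaring trick is slightly cleaner in dimension $n$: it reduces to a single scalar process whose quadratic-variation bound $4\Delta^2|\Sigma|^2$ absorbs the dimension automatically, whereas your direct approach needs either a component-wise union bound or a time-change argument for the vector-valued stochastic convolution. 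One implementation detail you leave implicit: to get \emph{deterministic} bounds on $\beta$ and $\Sigma$ (rather than bounds that only hold on the modulus event) so that the OU tail estimate applies, the paper introduces the modulus stopping time $\eta^\pi:=\inf\{t:\varpi(X_t;|\pi|)\geq K_{\growth,b;T}R_{\growth,b}|\pi|+4K_{\growth,\sigma;T}R_{\growth,\sigma}\sqrt{2n|\pi|\log(T/|\pi|)}\}$ and localizes everything to $[0,T\wedge\tau^\pi\wedge\zeta\wedge\eta^\pi)$; your argument would need the same localization to be rigorous.
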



\begin{proof}
The estimate \eqref{eq_X-hXpi} follows from \eqref{eq_X-hXpi-prob}. Indeed, by the fundamental inequality \eqref{eq_LP-KF} for the L\'evy--Prokhorov metric, we have
\begin{equation*}
	d_\LP\big(\Law_\bP(X_T),\Law_\bP(\poly[\hX^\pi])\big)\leq\inf\left\{\ep>0\relmiddle|\bP\big(\|X_T-\poly[\hX^\pi]\|_\infty>\ep\big)<\ep\right\}.
\end{equation*}
Then, the estimate \eqref{eq_X-hXpi-prob} and the definition \eqref{eq_cE} of $\cE$ yield that the inequality in the infimum in the right-hand side above holds by taking
\begin{equation*}
	\ep=\bP\left(\inf_{t\in[0,T]}\dist\!\left(X(t),\bR^n\setminus D_T(\vec{R})\right)\leq\Delta\right)+\left(\frac{8n}{T}+219+8\sqrt{n}\right)\cE.
\end{equation*}
In the following, we show that the estimate \eqref{eq_X-hXpi-prob} holds under the constraint \eqref{eq_constraint} on the parameters. Without loss of generality, we may assume that $\log\frac{T}{|\pi|}\geq1$ and $\log\frac{1}{\Delta}\geq1$; otherwise $8n\frac{|\pi|}{T}>\frac{8n}{e}>1$ or $219\Delta>\frac{219}{e}>1$, and hence the estimate \eqref{eq_X-hXpi-prob} becomes trivial.

Define a stopping time $\eta^\pi$ by
\begin{equation*}
	\eta^\pi:=\inf\left\{t\geq0\relmiddle|\varpi\big(X_t;|\pi|\big)\geq K_{\growth,b;T}R_{\growth,b}|\pi|+4K_{\growth,\sigma;T}R_{\growth,\sigma}\sqrt{2n|\pi|\log\frac{T}{|\pi|}}\right\}.
\end{equation*}
Then, by the definitions of the stopping times $\tau^\pi$ and $\eta^\pi$,
\begin{align*}
	&\Big\{T<\zeta\wedge\eta^\pi\Big\}\cap\left\{
	\sup_{t\in[0,T\wedge\tau^\pi\wedge\zeta\wedge\eta^\pi]}\big|X(t)-\hX^\pi(t)\big|<\Delta\right\}\\
	&\subset\Big\{T<\eta^\pi\Big\}\cap\left\{
	\sup_{t\in[0,T\wedge\tau^\pi]}\big|X(t)-\hX^\pi(t)\big|<\Delta\right\}\\
	&=\left\{\varpi\big(X_T;|\pi|\big)<K_{\growth,b;T}R_{\growth,b}|\pi|+4K_{\growth,\sigma;T}R_{\growth,\sigma}\sqrt{2n|\pi|\log\frac{T}{|\pi|}}\right\}\cap\Big\{\|X_T-\hX^\pi_T\|_\infty<\Delta\Big\}.
\end{align*}
Furthermore, by \cref{lemm_polylinear-operator}, we have
\begin{equation*}
	\|X_T-\poly[\hX^\pi]\|_\infty\leq\|X_T-\poly[X]\|_\infty+\|\poly[X]-\poly[\hX^\pi]\|_\infty\leq\varpi\big(X_T;|\pi|\big)+\|X_T-\hX^\pi_T\|_\infty.
\end{equation*}
Therefore, noting the definition \eqref{eq_cE} of $\cE=\cE(|\pi|,\Delta,R_{\growth,b},R_{\growth,\sigma})\in(0,\infty)$, we obtain 
\begin{align*}
	&\Big\{T<\zeta\wedge\eta^\pi\Big\}\cap\left\{
	\sup_{t\in[0,T\wedge\tau^\pi\wedge\zeta\wedge\eta^\pi]}\big|X(t)-\hX^\pi(t)\big|<\Delta\right\}\\
	&\subset\left\{\|X_T-\poly[\hX^\pi]\|_\infty<K_{\growth,b;T}R_{\growth,b}|\pi|+4K_{\growth,\sigma;T}R_{\growth,\sigma}\sqrt{2n|\pi|\log\frac{T}{|\pi|}}+\Delta\right\}\\
	&\subset\Big\{\|X_T-\poly[\hX^\pi]\|_\infty<8\sqrt{n}\cE\Big\}.
\end{align*}
Hence,
\begin{equation*}
	\Big\{\|X_T-\poly[\hX^\pi]\|_\infty\geq8\sqrt{n}\cE\Big\}\subset\Big\{\zeta\leq T\Big\}\cup\Big\{\eta^\pi\leq T\wedge\zeta\Big\}\cup\left\{
	\sup_{t\in[0,T\wedge\tau^\pi\wedge\zeta\wedge\eta^\pi]}\big|X(t)-\hX^\pi(t)\big|\geq\Delta\right\},
\end{equation*}
which implies that
\begin{equation}\label{eq_prob}
	\bP\Big(\|X_T-\poly[\hX^\pi]\|_\infty\geq8\sqrt{n}\cE\Big)\leq P_1+P_2+P_3,
\end{equation}
where
\begin{equation*}
	P_1:=\bP\Big(\zeta\leq T\Big),\ \ P_2:=\bP\Big(\eta^\pi\leq T\wedge\zeta\Big),\ \ \text{and}\ \ P_3:=\bP\left(\sup_{t\in[0,T\wedge\tau^\pi\wedge\zeta\wedge\eta^\pi]}\big|X(t)-\hX^\pi(t)\big|\geq\Delta\right).
\end{equation*}
We estimate the three terms in the right-hand side of \eqref{eq_prob}.

\underline{Step 1. Estimate of $P_1$.} By the definition of the stopping time $\zeta$, we have
\begin{equation}\label{eq_prob-1}
	P_1=\bP\Big(\zeta\leq T\Big)=\bP\left(\inf_{t\in[0,T]}\dist\!\left(X(t),\bR^n\setminus D_T(\vec{R})\right)\leq\Delta\right).
\end{equation}

\underline{Step 2. Estimate of $P_2$.} By the definition of the stopping time $\eta^\pi$, we have
\begin{equation}\label{eq_eta1}
	P_2=\bP\Big(\eta^\pi\leq T\wedge\zeta\Big)=\bP\left(\varpi\big(X_{T\wedge\zeta};|\pi|\big)\geq K_{\growth,b;T}R_{\growth,b}|\pi|+4K_{\growth,\sigma;T}R_{\growth,\sigma}\sqrt{2n|\pi|\log\frac{T}{|\pi|}}\right).
\end{equation}
Observe that
\begin{equation}\label{eq_eta2}
	\varpi\big(X_{T\wedge\zeta};|\pi|\big)\leq\varpi(V_T;|\pi|)+\varpi(M_T;|\pi|),
\end{equation}
where
\begin{equation*}
	V(t):=\int^t_0b(s,X)\1_{[0,T\wedge\zeta)}(s)\,\diff s,\ \ M(t):=\int^t_0\sigma(s,X)\1_{[0,T\wedge\zeta)}(s)\,\diff W(s),\ \ t\in[0,\infty).
\end{equation*}
By \eqref{eq_stopping}, we see that $X_s\in\cC^n_T[\supp\mu_0;D_{\growth,b;T}(R_{\growth,b})]\cap\cC^n_T[\supp\mu_0;D_{\growth,\sigma;T}(R_{\growth,\sigma})]$ for any $s\in[0,T\wedge\zeta)$ $\bP$-a.s. Thus, by the condition (G) in \cref{assum}, we have
\begin{equation}\label{eq_eta3}
	|b(s,X)|\leq K_{\growth,b;T}R_{\growth,b}\ \ \text{and}\ \ |\sigma(s,X)|\leq K_{\growth,\sigma;T}R_{\growth,\sigma}\ \ \text{for any $s\in[0,T\wedge\zeta)$ $\bP$-a.s.}
\end{equation}
Hence, by \eqref{eq_eta1}, \eqref{eq_eta2} and the first estimate in \eqref{eq_eta3}, we have
\begin{equation*}
	P_2\leq\bP\left(\varpi(M_T;|\pi|)\geq4K_{\growth,\sigma;T}R_{\growth,\sigma}\sqrt{2n|\pi|\log\frac{T}{|\pi|}}\right).
\end{equation*}
Then, noting the second estimate in \eqref{eq_eta3}, applying \cref{appendix_lemm_mod-of-conti} to the $n$-dimensional continuous local $\bP$-martingale $M(t)=\int^t_0\sigma(s,X)\1_{[0,T\wedge\zeta)}(s)\,\diff W(s)$ and constants $\delta=|\pi|$, $\kappa=(K_{\growth,\sigma;T}R_{\growth,\sigma})^2$ and $\theta=\sqrt{\log\frac{T}{|\pi|}}\geq1$, we obtain
\begin{equation}\label{eq_prob-2}
	P_2\leq8n\frac{|\pi|}{T}.
\end{equation}

\underline{Step 3. Estimate of $P_3$.} We will apply \cref{appendix_lemm_Kulik--Scheutzou} to the nonnegative It\^{o} process $Z(t)=|X(t)-\hX^\pi(t)|^2$ and nonnegative random variable $\varsigma=T\wedge\tau^\pi\wedge\zeta\wedge\eta^\pi$, together with constants $\kappa$, $A$, $B$ and $\theta$ to be determined in the following observation. By using It\^{o}'s formula, we have
\begin{equation}\label{eq_Ito}
	\big|X(t)-\hX^\pi(t)\big|^2=\int^t_0v^\pi(s)\,\diff s+N^\pi(t),\ \ t\in[0,\infty),
\end{equation}
where, for each $t\in[0,\infty)$,
\begin{align*}
	v^\pi(t)&:=-\frac{2\lambda}{\Delta}\big|X(t)-\hX^\pi(t)\big|^2\1_{[0,T\wedge\tau^\pi\wedge\zeta)}(t)\\
	&\hspace{1cm}+2\big\langle X(t)-\hX^\pi(t),b(t,X)-b\big(\pi(t),\poly[\hX^\pi]\big)\big\rangle+\big|\sigma(t,X)-\sigma\big(\pi(t),\poly[\hX^\pi]\big)\big|^2
\end{align*}
and
\begin{equation*}
	N^\pi(t):=2\int^t_0\Big\langle X(s)-\hX^\pi(s),\Big(\sigma(s,X)-\sigma\big(\pi(s),\poly[\hX^\pi]\big)\Big)\,\diff W(s)\Big\rangle.
\end{equation*}
For each $\varphi\in\{b,\sigma\}$ and $t\in[0,\infty)$, define
\begin{equation*}
	v^\pi_\varphi(t):=\big|\varphi(t,X)-\varphi\big(\pi(t),\poly[\hX^\pi]\big)\big|.
\end{equation*}
By the definition of the stopping time $\tau^\pi$, we have
\begin{equation}\label{eq_v0}
	v^\pi(t)\leq-\frac{2\lambda}{\Delta}\big|X(t)-\hX^\pi(t)\big|^2+2\Delta v^\pi_b(t)+v^\pi_\sigma(t)^2,\ \ t\in[0,T\wedge\tau^\pi\wedge\zeta).
\end{equation}
Also, notice that $N^\pi$ is a one-dimensional continuous local martingale on $(\Omega,\cF,\bF,\bP)$ with the quadratic variation satisfying
\begin{equation}\label{eq_N0}
	\frac{\diff\langle N^\pi\rangle(t)}{\diff t}\leq4\big|X(t)-\hX^\pi(t)\big|^2\,\big|\sigma(t,X)-\sigma\big(\pi(t),\poly[\hX^\pi]\big)\big|^2\leq4\Delta^2v^\pi_\sigma(t)^2,\ \ t\in[0,T\wedge\tau^\pi).
\end{equation}

Now we estimate the terms $v^\pi_b(t)$ and $v^\pi_\sigma(t)$ for $t\in[0,T\wedge\tau^\pi\wedge\zeta\wedge\eta^\pi)$. Notice that, by \eqref{eq_stopping},
\begin{equation*}
	X_t,\,\poly[X]_{\pi(t)},\,\poly[\hX^\pi]_{\pi(t)}\in\cC^n_T\big[\supp\mu_0;D_{\conti,\varphi;T}(R_{\conti,\varphi})\big]\ \ \text{for any $t\in[0,T\wedge\tau^\pi\wedge\zeta)$ $\bP$-a.s.},\ \ \varphi\in\{b,\sigma\}.
\end{equation*}
Hence, thanks to the condition (C) in \cref{assum}, we have
\begin{align}
	\nonumber
	v^\pi_\varphi(t)&\leq\big|\varphi(t,X)-\varphi\big(\pi(t),X\big)\big|+\big|\varphi\big(\pi(t),X\big)-\varphi\big(\pi(t),\poly[X]\big)\big|+\big|\varphi\big(\pi(t),\poly[X]\big)-\varphi\big(\pi(t),\poly[\hX^\pi]\big)\big|\\
	\label{eq_v123}
	&\leq K_{\conti,\varphi;T}R_{\conti,\varphi}\big\{V^\pi_1(t)^{\alpha_\varphi}+V^\pi_2(t)^{\alpha_\varphi}+V^\pi_3(t)^{\alpha_\varphi}\big\}\ \ \text{for any $t\in[0,T\wedge\tau^\pi\wedge\zeta\wedge\eta^\pi)$ $\bP$-a.s.},\ \ \varphi\in\{b,\sigma\},
\end{align}
where
\begin{align*}
	V^\pi_1(t)&:=\Big\{\varpi\big(X_t;t-\pi(t)\big)+\big(t-\pi(t)\big)^{1/2}\Big\}\1_{[0,T\wedge\eta^\pi)}(t),\\
	V^\pi_2(t)&:=\big\|X_{\pi(t)}-\poly[X]_{\pi(t)}\big\|_\infty\1_{[0,\eta^\pi)}(t),\\
	V^\pi_3(t)&:=\big\|\poly[X]_{\pi(t)}-\poly[\hX^\pi]_{\pi(t)}\big\|_\infty\1_{[0,\tau^\pi)}(t).
\end{align*}
As for $V^\pi_1$, we have
\begin{align}
	\nonumber
	V^\pi_1(t)&\leq\Big\{\varpi\big(X_t;|\pi|\big)+\sqrt{|\pi|}\Big\}\1_{[0,\eta^\pi)}(t)\\
	\nonumber
	&\leq K_{\growth,b;T}R_{\growth,b}|\pi|+4K_{\growth,\sigma;T}R_{\growth,\sigma}\sqrt{2n|\pi|\log\frac{T}{|\pi|}}+\sqrt{|\pi|}\\
	\label{eq_v1}
	&\leq8\sqrt{n}\cE,
\end{align}
where we used the fact that $0\leq t-\pi(t)\leq|\pi|$ for any $t\in[0,T]$ in the first inequality, the definition of $\eta^\pi$ in the second inequality, and the definition \eqref{eq_cE} of $\cE=\cE(|\pi|,\Delta,R_{\growth,b},R_{\growth,\sigma})$ together with the assumptions that $\log\frac{T}{|\pi|}\geq1$ (imposed in this proof) and $K_{\growth,\sigma;T}R_{\growth,\sigma}\geq1$ in the last inequality. Similarly, as for $V^\pi_2$, by using \eqref{eq_polylinear-operator1} in \cref{lemm_polylinear-operator} and noting the definition of the stopping time $\eta^\pi$, we have
\begin{align}
	\nonumber
	V^\pi_2(t)&\leq\varpi(X_{\pi(t)};|\pi|)\1_{[0,\eta^\pi)}(t)\\
	\nonumber
	&\leq K_{\growth,b;T}R_{\growth,b}|\pi|+4K_{\growth,\sigma;T}R_{\growth,\sigma}\sqrt{2n|\pi|\log\frac{T}{|\pi|}}\\
	\label{eq_v2}
	&\leq7\sqrt{n}\cE.
\end{align}
As for $V^\pi_3$, by using \eqref{eq_polylinear-operator2} in \cref{lemm_polylinear-operator} and noting the definition of the stopping time $\tau^\pi$, we have
\begin{equation}\label{eq_v3}
	V^\pi_3(t)\leq\|X_{\pi(t)}-\hX^\pi_{\pi(t)}\|_\infty\1_{[0,\tau^\pi)}(t)\leq\Delta\leq\cE.
\end{equation}
By \eqref{eq_v123}, \eqref{eq_v1}, \eqref{eq_v2} and \eqref{eq_v3}, we get
\begin{equation}\label{eq_v-phi}
	v^\pi_\varphi(t)\leq16\sqrt{n}K_{\conti,\varphi;T}R_{\conti,\varphi}\cE^{\alpha_\varphi}\ \ \text{for any $t\in[0,T\wedge\tau^\pi\wedge\zeta\wedge\eta^\pi)$ $\bP$-a.s.},\ \ \varphi\in\{b,\sigma\}.
\end{equation}

By \eqref{eq_v0} and \eqref{eq_v-phi}, we obtain
\begin{equation}\label{eq_v}
\begin{split}
	&v^\pi(t)\leq-\frac{2\lambda}{\Delta}\big|X(t)-\hX^\pi(t)\big|^2+2^5\sqrt{n}K_{\conti,b;T}R_{\conti,b}\Delta\cE^{\alpha_b}+2^8n\big(K_{\conti,\sigma;T}R_{\conti,\sigma}\cE^{\alpha_\sigma}\big)^2\\
	&\hspace{3cm}\text{for any $t\in[0,T\wedge\tau^\pi\wedge\zeta\wedge\eta^\pi)$ $\bP$-a.s.}
\end{split}
\end{equation}
Also, by \eqref{eq_N0} and \eqref{eq_v-phi} with $\varphi=\sigma$, we obtain
\begin{equation}\label{eq_N}
	\frac{\diff\langle N^\pi\rangle(t)}{\diff t}\leq2^{10}n\left(K_{\conti,\sigma;T}R_{\conti,\sigma}\Delta\cE^{\alpha_\sigma}\right)^2\ \ \text{for any $t\in[0,T\wedge\tau^\pi\wedge\zeta\wedge\eta^\pi)$ $\bP$-a.s.}
\end{equation}
Now we apply \cref{appendix_lemm_Kulik--Scheutzou} to the nonnegative It\^{o} process
\begin{equation}\label{eq_appendix-Z}
	Z(t)=|X(t)-\hX^\pi(t)|^2,
\end{equation}
nonnegative random variable
\begin{equation}\label{eq_appendix-varsigma}
	\varsigma=T\wedge\tau^\pi\wedge\zeta\wedge\eta^\pi,
\end{equation}
and constants
\begin{equation}\label{eq_appendix-muAB}
	\kappa=\frac{2\lambda}{\Delta},\ \ A=2^5\sqrt{n}K_{\conti,b;T}R_{\conti,b}\Delta\cE^{\alpha_b}+2^8n\big(K_{\conti,\sigma;T}R_{\conti,\sigma}\cE^{\alpha_\sigma}\big)^2,\ \ B=2^{10}n\left(K_{\conti,\sigma;T}R_{\conti,\sigma}\Delta\cE^{\alpha_\sigma}\right)^2,
\end{equation}
and
\begin{equation}\label{eq_appendix-theta}
	\theta=\sqrt{\log\frac{1}{\Delta}}.
\end{equation}
By \eqref{eq_Ito}, \eqref{eq_v} and \eqref{eq_N}, we see that $Z$, $\varsigma$, $\kappa$, $A$ and $B$ specified in \eqref{eq_appendix-Z}, \eqref{eq_appendix-varsigma} and \eqref{eq_appendix-muAB} satisfy
\begin{equation*}
	\diff Z(t)=v^\pi(t)\diff t+\diff N^\pi(t),\ \ t\in[0,\infty),\ \ Z(0)=0,
\end{equation*}
and
\begin{equation*}
	v^\pi(t)\leq-\kappa Z(t)+A,\ \ \frac{\diff\langle N^\pi\rangle(t)}{\diff t}\leq B,\ \ \text{for any $t\in[0,\varsigma)$ $\bP$-a.s.}
\end{equation*}
Furthermore, under the constraint \eqref{eq_constraint} on the parameters and the assumption that $\log\frac{1}{\Delta}\geq1$ imposed in this proof, we see that the constants $\kappa,A,B$ and $\theta$ specified in \eqref{eq_appendix-muAB} and \eqref{eq_appendix-theta} satisfy
\begin{equation*}
	\frac{A}{\kappa}+\sqrt{\frac{2B\big(\theta^2+\log(1\vee(T\kappa))\big)}{\kappa}}\leq\Delta^2.
\end{equation*}
Hence, by \cref{appendix_lemm_Kulik--Scheutzou}, we have
\begin{align}
	\nonumber
		P_3&=\bP\left(\sup_{t\in[0,T\wedge\tau^\pi\wedge\zeta\wedge\eta^\pi]}\big|X(t)-\hX^\pi(t)\big|\geq\Delta\right)\\
	\nonumber
	&=\bP\left(\sup_{t\in[0,\varsigma]}Z(t)\geq\Delta^2\right)\\
	\nonumber
	&\leq\bP\left(\sup_{t\in[0,\varsigma]}Z(t)\geq\frac{A}{\kappa}+\sqrt{\frac{2B\big(\theta^2+\log(1\vee(T\kappa))\big)}{\kappa}}\right)\\
	\nonumber
	&\leq219\exp\left(-\theta^2\right)\\
	\label{eq_prob-3}
	&=219\Delta.
\end{align}
By \eqref{eq_prob}, \eqref{eq_prob-1}, \eqref{eq_prob-2} and \eqref{eq_prob-3}, we get the estimate \eqref{eq_X-hXpi-prob}. This completes the proof.
\end{proof}


\subsection{Error estimate between the controlled and true Euler–Maruyama schemes}\label{subsec_hXpi-Xpi}

Next, we provide an estimate for the error-in-law between the controlled Euler--Maruyama scheme $\hX^\pi$ and the true Euler--Maruyama scheme $X^\pi$. Before that, we recall some standard facts on several metrics on the space of probability measures. The total variation distance $d_\TV(\mu,\nu)$ between two probability measures $\mu,\nu\in\cP(S)$ on a measurable space $(S,\cS)$ is defined by
\begin{equation*}
	d_\TV(\mu,\nu):=\sup_{A\in\cS}|\mu(A)-\nu(A)|.
\end{equation*}
Also, for any $\mu,\nu\in\cP(S)$ such that $\mu\ll\nu$, the so-called Kullback--Leibler divergence of $\mu$ from $\nu$ is defined by
\begin{equation*}
	D_\KL(\mu\|\nu):=\int_S\log\frac{\diff\mu}{\diff\nu}\,\diff\mu.
\end{equation*}
The following are fundamental facts:
\begin{itemize}
\item
Let $(S,d_S)$ be a separable metric space and $\cS=\cB(S)$ be the corresponding Borel $\sigma$-algebra. Observe that the property in the infimum in the definition \eqref{eq_LPdef} of the L\'{e}vy--Prokhorov metric holds by choosing $\ep=\sup_{A\in\cS}|\mu(A)-\nu(A)|$, and hence we have $d_\LP(\mu,\nu)\leq d_\TV(\mu,\nu)$ for any $\mu,\nu\in\cP(S)$.
\item
It is clear from the definition that
\begin{equation*}
	d_{\TV,S'}(\mu\circ f^{-1},\nu\circ f^{-1})\leq d_{\TV,S}(\mu,\nu)
\end{equation*}
for any $\mu,\nu\in\cP(S)$ and any measurable map $f$ from $(S,\cS)$ to another measurable space $(S',\cS')$, where $d_{\TV,S'}$ and $d_{\TV,S}$ denote the total variation distances on $(S',\cS')$ and $(S,\cS)$, respectively.
\item
The well-known Pinsker's inequality (cf.\ \cite[Lemma 2.5 (i)]{Ts08}) shows that
\begin{equation*}
	d_\TV(\mu,\nu)\leq\sqrt{\frac{1}{2}D_\KL(\mu\|\nu)}
\end{equation*}
for any $\mu,\nu\in\cP(S)$ such that $\mu\ll\nu$.
\end{itemize}


\begin{prop}\label{prop_hXpi-Xpi}
Fix a data $\data$ satisfying \cref{assum}. Suppose that we are given a weak solution $\weaksol$ of the SFDE \eqref{eq_SFDE} associated with $\data$. Fix $T\in(0,\infty)$, $\pi\in\Pi_T$, $\lambda\in(0,\infty)$, $\Delta\in(0,1]$ and $\vec{R}=(R_{\growth,b},R_{\growth,\sigma},R_{\conti,b},R_{\conti,\sigma},R_\ellip)\in[1,\infty)^5$. Let $X^\pi$ be the Euler--Maruyama scheme defined on $(\Omega^\pi,\cF^\pi,\bP^\pi)$ and given by \eqref{eq_EM} with initial distribution $\mu_0$. Let $\hX^\pi$ be the controlled Euler--Maruyama scheme, given by \eqref{eq_controlledEM}, associated with $\weaksol$ and $(T,\pi,\lambda,\Delta,\vec{R})$. Then, there exists a probability measure $\widehat{\bP}^\pi\sim\bP$ on $(\Omega,\cF_T)$ such that
\begin{equation}\label{eq_hatP-Law}
	\Law_{\widehat{\bP}^\pi}(\poly[\hX^\pi])=\Law_{\bP^\pi}(\poly[X^\pi])
\end{equation}
and
\begin{equation}\label{eq_hatP-KL}
	D_\KL(\widehat{\bP}^\pi\|\bP)\leq\frac{T}{2}\big(K_{\ellip;T}R_\ellip\lambda\big)^2.
\end{equation}
In particular, it holds that
\begin{equation}\label{eq_hXpi-Xpi}
	d_\LP\big(\Law_\bP(\poly[\hX^\pi]),\Law_{\bP^\pi}(\poly[X^\pi])\big)\leq\frac{\sqrt{T}}{2}K_{\ellip;T}R_\ellip\lambda.
\end{equation}
\end{prop}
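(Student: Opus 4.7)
My plan is to absorb the control drift $\frac{\lambda}{\Delta}(X(t)-\hX^\pi(t))\1_{[0,T\wedge\tau^\pi\wedge\zeta)}(t)$ appearing in \eqref{eq_controlledEM} via a Girsanov change of measure. The resulting equivalent probability $\widehat{\bP}^\pi$ should make $\hX^\pi$ evolve as a driftless continuous-time Euler--Maruyama equation, whose skeleton $(\hX^\pi(t_k))_{k=0}^m$ automatically obeys the same recursion \eqref{eq_EM} as $(X^\pi(t_k))_{k=0}^m$ under $\bP^\pi$. The estimate \eqref{eq_hXpi-Xpi} then follows from controlling the cost of this change of measure in total variation through a KL/Pinsker argument.

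\textbf{Construction of the control process and identification of the law.} By \eqref{eq_stopping}, on $[0,T\wedge\zeta)$ the argument $\poly[\hX^\pi]_{\pi(t)}$ lies in $\cC^n_T[\supp\mu_0;D_{\ellip;T}(R_\ellip)]$, so condition (E) makes $\sigma\sigma^\top$ invertible there with all eigenvalues at least $(K_{\ellip;T}R_\ellip)^{-2}$. I would therefore define the $\bR^d$-valued progressively measurable process
\[
u(t):=-\sigma(\pi(t),\poly[\hX^\pi])^\top\bigl[\sigma(\pi(t),\poly[\hX^\pi])\sigma(\pi(t),\poly[\hX^\pi])^\top\bigr]^{-1}\frac{\lambda}{\Delta}\bigl(X(t)-\hX^\pi(t)\bigr)\1_{[0,T\wedge\tau^\pi\wedge\zeta)}(t)
\]
on $[0,T\wedge\tau^\pi\wedge\zeta)$ (well-posed since $\sigma\sigma^\top$ is invertible there) and $u(t):=0$ elsewhere, so that $\sigma(\pi(t),\poly[\hX^\pi])u(t)$ is the negative of the control drift. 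Using the ellipticity bound together with $|X(t)-\hX^\pi(t)|\le\Delta$ on $[0,\tau^\pi)$, one obtains the pathwise estimate $|u(t)|\le K_{\ellip;T}R_\ellip\lambda$ for every $(t,\omega)\in[0,T]\times\Omega$. This uniform bound trivially verifies Novikov's condition, so $Z^\pi_T:=\exp\bigl(\int_0^T u\cdot dW-\tfrac{1}{2}\int_0^T|u|^2\,ds\bigr)$ is a true mean-one $\bP$-martingale, and I set $d\widehat{\bP}^\pi/d\bP:=Z^\pi_T$ on $\cF_T$. By Girsanov's theorem, $\widetilde{W}(t):=W(t)-\int_0^t u(s)\,ds$ is a $d$-dimensional Brownian motion on $(\Omega,\cF,\bF,\widehat{\bP}^\pi)$, and substituting $dW=d\widetilde{W}+u\,dt$ into \eqref{eq_controlledEM} cancels the control drift exactly, leaving
\[
d\hX^\pi(t)=b(\pi(t),\poly[\hX^\pi])\,dt+\sigma(\pi(t),\poly[\hX^\pi])\,d\widetilde{W}(t).
\]
Reading this equation on each $[t_k,t_{k+1})$ gives the Euler--Maruyama recursion \eqref{eq_EM} driven by the i.i.d.\ $N(0,(t_{k+1}-t_k)I_{d\times d})$ increments of $\widetilde{W}$, starting from $\hX^\pi(0)=X(0)$ whose $\widehat{\bP}^\pi$-law is $\mu_0$ (since $\bE[Z^\pi_T\mid\cF_0]=1$). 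This identifies $\Law_{\widehat{\bP}^\pi}((\hX^\pi(t_k))_{k=0}^m)=\Law_{\bP^\pi}((X^\pi(t_k))_{k=0}^m)$, and \eqref{eq_hatP-Law} follows by applying the continuous map $\poly$.

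\textbf{KL estimate and conclusion.} For \eqref{eq_hatP-KL}, using $dW=d\widetilde{W}+u\,dt$ I compute
\begin{align*}
D_\KL(\widehat{\bP}^\pi\|\bP)
&=\bE_{\widehat{\bP}^\pi}\!\left[\int_0^T u\cdot dW-\tfrac{1}{2}\int_0^T|u|^2\,ds\right]\\
&=\bE_{\widehat{\bP}^\pi}\!\left[\int_0^T u\cdot d\widetilde{W}\right]+\tfrac{1}{2}\bE_{\widehat{\bP}^\pi}\!\left[\int_0^T|u|^2\,ds\right].
\end{align*}
The first term vanishes because the boundedness of $u$ makes $\int_0^\cdot u\cdot d\widetilde{W}$ a genuine $\widehat{\bP}^\pi$-martingale, and the second is bounded by $\tfrac{T}{2}(K_{\ellip;T}R_\ellip\lambda)^2$ via the pathwise estimate on $|u|$. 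The final bound \eqref{eq_hXpi-Xpi} then follows from the standard chain $d_\LP\le d_\TV\le\sqrt{\tfrac{1}{2}D_\KL}$ (the first inequality by taking $\varepsilon=d_\TV$ in \eqref{eq_LPdef}, the second by Pinsker), together with the fact that total variation does not increase under pushforwards and the identification \eqref{eq_hatP-Law}. The only genuine technical points are (i) the well-definedness of the right inverse of $\sigma$ uniformly in $(t,\omega)$ on the active set, which is precisely what the $\zeta$-stopping and assumption (E) are engineered to ensure, and (ii) the true-martingale (rather than merely local-martingale) property of $Z^\pi$, which is immediate from the deterministic bound on $|u|$; once these are in hand the argument is just a bookkeeping application of Girsanov and the classical comparisons among $d_\LP$, $d_\TV$ and $D_\KL$.
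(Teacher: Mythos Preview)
Your proposal is correct and follows essentially the same Girsanov-plus-Pinsker argument as the paper: define the control $u$ via the Moore--Penrose inverse of $\sigma(\pi(t),\poly[\hX^\pi])$ (bounded by ellipticity), use the pathwise bound $|u|\le K_{\ellip;T}R_\ellip\lambda$ to verify Novikov, change measure so that $\hX^\pi$ satisfies the uncontrolled Euler recursion driven by the new Brownian motion, and then chain $d_\LP\le d_\TV\le\sqrt{\tfrac12 D_\KL}$. The only slip is that the ellipticity of $\sigma(\pi(t),\poly[\hX^\pi])$ is guaranteed by \eqref{eq_stopping} on $[0,T\wedge\tau^\pi\wedge\zeta)$ rather than on $[0,T\wedge\zeta)$ as you write, but this is harmless since your definition of $u$ already carries the indicator $\1_{[0,T\wedge\tau^\pi\wedge\zeta)}$.
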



\begin{proof}
The inequality \eqref{eq_hXpi-Xpi} follows from \eqref{eq_hatP-Law} and \eqref{eq_hatP-KL}. Indeed,
\begin{align*}
	d_\LP\big(\Law_\bP(\poly[\hX^\pi]),\Law_{\bP^\pi}(\poly[X^\pi])\big)&=d_\LP\big(\Law_\bP(\poly[\hX^\pi]),\Law_{\widehat{\bP}^\pi}(\poly[\hX^\pi])\big)\\
	&\leq d_\TV\big(\Law_\bP(\poly[\hX^\pi]),\Law_{\widehat{\bP}^\pi}(\poly[\hX^\pi])\big)\\
	&\leq d_\TV\big(\bP,\widehat{\bP}^\pi\big)\\
	&\leq\sqrt{\frac{1}{2}D_\KL\big(\widehat{\bP}^\pi\|\bP\big)}\\
	&\leq\frac{\sqrt{T}}{2}K_{\ellip;T}R_\ellip\lambda,
\end{align*}
where the first equality follows from \eqref{eq_hatP-Law}, and the last inequality follows from \eqref{eq_hatP-KL}; the second, third and fourth inequalities follow from the fundamental properties of metrics on the space of probability measures summarized above. In the following, we construct a probability measure $\widehat{\bP}^\pi\sim\bP$ on $(\Omega,\cF_T)$ such that \eqref{eq_hatP-Law} and \eqref{eq_hatP-KL} hold.

By \eqref{eq_stopping}, we see that $\poly[\hX^\pi]_{\pi(t)}\in\cC^n_T[\supp\mu_0;D_{\ellip;T}(R_\ellip)]$ for any $t\in[0,T\wedge\tau^\pi\wedge\zeta)$ outside a $\bP$-null set $N\in\cF$. Thus, by the condition (E) in \cref{assum}, we have
\begin{equation*}
	\big\langle\sigma\big(\pi(t),\poly[\hX^\pi]\big)\sigma\big(\pi(t),\poly[\hX^\pi]\big)^\top\xi,\xi\big\rangle\geq\frac{|\xi|^2}{K_{\ellip;T}^2R_\ellip^2}
\end{equation*}
for any $t\in[0,T\wedge\tau^\pi\wedge\zeta)$ and $\xi\in\bR^n$ on $\Omega\setminus N$. In particular, on $\Omega\setminus N$, for any $t\in[0,T\wedge\tau^\pi\wedge\zeta)$, we can define the pseudo-inverse $\sigma\big(\pi(t),\poly[\hX^\pi]\big)^\dagger\in\bR^{d\times n}$ of the matrix $\sigma\big(\pi(t),\poly[\hX^\pi]\big)\in\bR^{n\times d}$, that is,
\begin{equation*}
	\sigma\big(\pi(t),\poly[\hX^\pi]\big)^\dagger:=\sigma\big(\pi(t),\poly[\hX^\pi]\big)^\top\Big(\sigma\big(\pi(t),\poly[\hX^\pi]\big)\sigma\big(\pi(t),\poly[\hX^\pi]\big)^\top\Big)^{-1}.
\end{equation*}
Notice that
\begin{equation}\label{eq_pseudo-inverse1}
	\sigma\big(\pi(t),\poly[\hX^\pi]\big)\sigma\big(\pi(t),\poly[\hX^\pi]\big)^\dagger=I_{n\times n},\ \ \text{for $t\in[0,T\wedge\tau^\pi\wedge\zeta)$ on $\Omega\setminus N$},
\end{equation}
and
\begin{equation}\label{eq_pseudo-inverse2}
	\|\sigma\big(\pi(t),\poly[\hX^\pi]\big)^\dagger\|_\op\leq K_{\ellip;T}R_\ellip\ \ \text{for $t\in[0,T\wedge\tau^\pi\wedge\zeta)$ on $\Omega\setminus N$},
\end{equation}
where $\|\cdot\|_\op$ denotes the operator norm. Define an $\bR^d$-valued $\bF$-progressively measurable process $u^\pi=(u^\pi(t))_{t\in[0,T]}$ on $(\Omega,\cF_T)$ by
\begin{equation*}
	u^\pi(t):=\frac{\lambda}{\Delta}\sigma\big(\pi(t),\poly[\hX^\pi]\big)^\dagger\big(X(t)-\hX^\pi(t)\big)\1_{[0,T\wedge\tau^\pi\wedge\zeta)}(t)\1_{\Omega\setminus N},\ \ t\in[0,T].
\end{equation*}
Then, by \eqref{eq_pseudo-inverse2} and the definition of the stopping time $\tau^\pi$, we have
\begin{align}
	\nonumber
	\int^T_0\big|u^\pi(t)\big|^2\,\diff t&\leq\frac{\lambda^2}{\Delta^2}\int^{T\wedge\tau^\pi\wedge\zeta}_0\big\|\sigma\big(\pi(t),\poly[\hX^\pi]\big)^\dagger\big\|^2_\op\,\big|X(t)-\hX^\pi(t)\big|^2\,\diff t\\
	\nonumber
	&\leq\frac{\big(K_{\ellip;T}R_\ellip\lambda\big)^2}{\Delta^2}\int^{T\wedge\tau^\pi}_0\big|X(t)-\hX^\pi(t)\big|^2\,\diff t\\
	\label{eq_u^pi}
	&\leq T\big(K_{\ellip;T}R_\ellip\lambda\big)^2\ \ \text{$\bP$-a.s.}
\end{align}
Now we define a new measure $\widehat{\bP}^\pi\sim\bP$ on $(\Omega,\cF_T)$ by
\begin{equation*}
	\frac{\diff\widehat{\bP}^\pi}{\diff\bP}:=\exp\Big(-\int^T_0\big\langle u^\pi(t),\diff W(t)\big\rangle-\frac{1}{2}\int^T_0\big|u^\pi(t)\big|^2\,\diff t\Big).
\end{equation*}
By \eqref{eq_u^pi}, Novikov's condition is satisfied, and hence $\widehat{\bP}^\pi$ is a probability measure on $(\Omega,\cF_T)$. We show that $\widehat{\bP}^\pi$ satisfies \eqref{eq_hatP-Law} and \eqref{eq_hatP-KL}.

By Girsanov's theorem, the process
\begin{equation*}
	\widehat{W}^\pi(t):=W(t)+\int^t_0u^\pi(s)\,\diff s,\ \ t\in[0,T],
\end{equation*}
is a $d$-dimensional Brownian motion on $(\Omega,\cF_T,\widehat{\bP}^\pi)$ relative to $\bF$. Furthermore, by \eqref{eq_pseudo-inverse1} and the definition of $u^\pi$, we see that $\hX^\pi$ solves the following SFDE $\widehat{\bP}^\pi$-a.s.:
\begin{equation*}
	\begin{dcases}
	\diff \hX^\pi(t)=b\big(\pi(t),\poly[\hX^\pi]\big)\,\diff t+\sigma\big(\pi(t),\poly[\hX^\pi]\big)\,\diff\widehat{W}^\pi(t),\ \ t\in[0,T],\\
	\hX^\pi(0)=X(0).
	\end{dcases}
\end{equation*}
In particular, denoting $\pi=(t_k)^m_{k=0}$,
\begin{equation*}
	\begin{dcases}
	\hX^\pi(t_{k+1})=\hX^\pi(t_k)+b\big(t_k,\poly[\hX^\pi]\big)(t_{k+1}-t_k)+\sigma\big(t_k,\poly[\hX^\pi]\big)\widehat{Z}^\pi_k,\ \ k\in\{0,\dots,m-1\},\\
	\hX^\pi(t_0)=X(0),
	\end{dcases}
\end{equation*}
where $\widehat{Z}^\pi_k:=\widehat{W}^\pi(t_{k+1})-\widehat{W}^\pi(t_k)$ for each $k\in\{0,\dots,m-1\}$.
Therefore, $(\hX^\pi(t_k))^m_{k=0}$ satisfies \eqref{eq_EM} on the probability space $(\Omega,\cF_T,\widehat{\bP}^\pi)$ with $(\xi^\pi,Z^\pi_0,\dots,Z^\pi_m)$ replaced by $(X(0),\widehat{Z}^\pi_0,\dots,\widehat{Z}^\pi_{m-1})$. Thus, we have $(\hX^\pi(t_k))^m_{k=0}=\Psi^\pi(X(0),\widehat{Z}^\pi_0,\dots,\widehat{Z}^\pi_{m-1})$, where $\Psi^\pi:\bR^n\times(\bR^d)^m\to(\bR^n)^{m+1}$ is the measurable map appearing in the argument just after \eqref{eq_EM}. Notice that $\widehat{\bP}^\pi\circ X(0)^{-1}=\bP\circ X(0)^{-1}=\mu_0$, $\widehat{\bP}^\pi\circ(\widehat{Z}^\pi_k)^{-1}=N(0,(t_{k+1}-t_k)I_{d\times d})$ for each $k\in\{0,\dots,m-1\}$, and the random variables $X(0),\widehat{Z}^\pi_0,\dots,\widehat{Z}^\pi_{m-1}$ are independent under $\widehat{\bP}^\pi$. On the other hand, recall that $X^\pi=(X^\pi(t_k))^m_{k=0}=\Psi^\pi(\xi^\pi,Z^\pi_0,\dots,Z^\pi_{m-1})$, $\bP^\pi\circ(\xi^\pi)^{-1}=\mu_0$ by the assumption, $\bP^\pi\circ(Z^\pi_k)^{-1}=N(0,(t_{k+1}-t_k)I_{d\times d})$ for each $k\in\{0,\dots,m-1\}$, and the random variables $\xi^\pi,Z^\pi_0,\dots,Z^\pi_{m-1}$ are independent under $\bP^\pi$. Therefore,
\begin{equation*}
	\Law_{\widehat{\bP}^\pi}(\poly[\hX^\pi])=\left(\mu_0\otimes\bigotimes^{m-1}_{k=0}N\left(0,(t_{k+1}-t_k)I_{d\times d}\right)\right)\circ(\Psi^\pi)^{-1}\circ(\poly)^{-1}=\Law_{\bP^\pi}(\poly[X^\pi]),
\end{equation*}
and hence the equality \eqref{eq_hatP-Law} holds. Furthermore, denoting by $\bE_{\widehat{\bP}^\pi}[\cdot]$ the expectation operator on $(\Omega,\cF_T,\widehat{\bP}^\pi)$, the estimate \eqref{eq_u^pi} and the fact that $\widehat{\bP}^\pi\sim\bP$ yield that
\begin{align*}
	D_\KL(\widehat{\bP}^\pi\|\bP)&=\bE_{\widehat{\bP}^\pi}\Big[-\int^T_0\big\langle u^\pi(t),\diff W(t)\big\rangle-\frac{1}{2}\int^T_0\big|u^\pi(t)\big|^2\,\diff t\Big]\\
	&=\bE_{\widehat{\bP}^\pi}\Big[-\int^T_0\big\langle u^\pi(t),\diff\widehat{W}^\pi(t)\big\rangle+\frac{1}{2}\int^T_0\big|u^\pi(t)\big|^2\,\diff t\Big]\\
	&=\frac{1}{2}\bE_{\widehat{\bP}^\pi}\Big[\int^T_0\big|u^\pi(t)\big|^2\,\diff t\Big]\\
	&\leq\frac{T}{2}\big(K_{\ellip;T}R_\ellip\lambda\big)^2.
\end{align*}
Thus, the estimate \eqref{eq_hatP-KL} holds. This completes the proof.
\end{proof}


\subsection{Optimal control and completion of the proof of the main result}\label{subsec_proof-main}

Fix a data $\data$ satisfying \cref{assum}. Suppose that we are given a weak solution $\weaksol$ of the SFDE \eqref{eq_SFDE} associated with $\data$. For each $T\in(0,\infty)$ and $\pi\in\Pi_T$, let $X^\pi$ be the Euler--Maruyama scheme defined on $(\Omega^\pi,\cF^\pi,\bP^\pi)$ and given by \eqref{eq_EM} with initial distribution $\mu_0$.  For each $T\in(0,\infty)$, $\pi\in\Pi_T$, $\lambda\in(0,\infty)$, $\Delta\in(0,1]$ and $\vec{R}=(R_{\growth,b},R_{\growth,\sigma},R_{\conti,b},R_{\conti,\sigma},R_\ellip)\in[1,\infty)^5$, let $\hX^\pi$ be the controlled Euler--Maruyama scheme, given by \eqref{eq_controlledEM}, associated with $\weaksol$ and $(T,\pi,\lambda,\Delta,\vec{R})$. By the triangle inequality, we have
\begin{equation*}
	d_\LP\big(\Law_\bP(X_T),\Law_{\bP^\pi}(\poly[X^\pi])\big)\leq d_\LP\big(\Law_\bP(X_T),\Law_\bP(\poly[\hX^\pi])\big)+d_\LP\big(\Law_\bP(\poly[\hX^\pi]),\Law_{\bP^\pi}(\poly[X^\pi])\big).
\end{equation*}
Thus, \cref{prop_X-hXpi} and \cref{prop_hXpi-Xpi} yield that
\begin{equation}\label{eq_error'}
\begin{split}
	&d_\LP\big(\Law_\bP(X_T),\Law_{\bP^\pi}(\poly[X^\pi])\big)\\
	&\leq\bP\left(\inf_{t\in[0,T]}\dist\!\left(X(t),\bR^n\setminus D_T(\vec{R})\right)\leq\Delta\right)+\left(\frac{8n}{T}+219+8\sqrt{n}\right)\cE+\frac{\sqrt{T}}{2}K_{\ellip;T}R_\ellip\lambda,
\end{split}
\end{equation}
whenever the parameters satisfy the constraint \eqref{eq_constraint}. Notice that the left-hand side of \eqref{eq_error'} does not depend on the choice of the control parameter $\lambda$, $\Delta$, or $\vec{R}=(R_{\growth,b},R_{\growth,\sigma},R_{\conti,b},R_{\conti,\sigma},R_\ellip)\in[1,\infty)^5$. In order to show the convergence of $d_\LP(\Law_\bP(X_T),\Law_{\bP^\pi}(\poly[X^\pi]))$ as $|\pi|\downarrow0$ with convergence speed as fast as possible, we focus on the minimization problem for the right-hand side of \eqref{eq_error'} in terms of the control parameters satisfying the constraint \eqref{eq_constraint}. This is the main idea of the proof of \cref{theo_main}. In the following proof, we first show the assertion (ii) by choosing suitable intensity parameter $\lambda$ which makes the right-hand side of \eqref{eq_error'} as small as possible (in view of the convergence order) for fixed $T$, $\pi$, $\Delta$ and $\vec{R}$. Then, the assertion (i) follows from (ii) by taking the limits $|\pi|\downarrow0$, $\Delta\downarrow0$ and $R_{\growth,b},R_{\growth,\sigma},R_{\conti,b},R_{\conti,\sigma},R_\ellip\uparrow\infty$ in this order. Lastly, based on the estimate derived in the assertion (ii), together with the additional assumption \eqref{eq_rare-event} on the decay rate of the probability of the ``rare event'', we solve an optimal control problem with respect to $\Delta$ and $\vec{R}$ and obtain the (nearly) optimal convergence rate in terms of $|\pi|$, showing the assertion (iii).


\begin{rem}\label{rem_proof-main}
Under the constraint \eqref{eq_constraint}, the last term in the right-hand side of \eqref{eq_error'} must be greater than
\begin{equation*}
	2^{11}n\sqrt{T}K_{\ellip;T}R_\ellip\max\left\{K_{\conti,b;T}R_{\conti,b}\cE^{\alpha_b},\frac{\left(K_{\conti,\sigma;T}R_{\conti,\sigma}\cE^{\alpha_\sigma}\right)^2}{\Delta}\log\frac{1}{\Delta}\right\},
\end{equation*}
which is a benchmark for the error estimate of $d_\LP(\Law_\bP(X_T),\Law_{\bP^\pi}(\poly[X^\pi]))$; notice that, by the definition \eqref{eq_cE} of $\cE=\cE(|\pi|,\Delta,R_{\growth,b},R_{\growth,\sigma})\in(0,\infty)$, the benchmark-term above converges to zero as $|\pi|\downarrow0$ and $\Delta\downarrow0$ if and only if $\alpha_\sigma>\frac{1}{2}$, which is the standing assumption imposed in \cref{assum}.
\end{rem}

With the above observations in mind, we provide a proof of our main result.


\begin{proof}[Proof of \cref{theo_main}]
We first show the assertion (ii). The assertions (i) and (iii) follow from (ii).

\underline{Proof of (ii).} Fix $T\in(0,\infty)$, $\pi\in\Pi_T$, $\Delta\in(0,1]$ and $\vec{R}=(R_{\growth,b},R_{\growth,\sigma},R_{\conti,b},R_{\conti,\sigma},R_\ellip)\in[1,\infty)^5$. We prove the estimate \eqref{eq_error}. Without loss of generality, we may assume that each term in the right-hand side of \eqref{eq_error} is less than $1$; otherwise the estimate becomes trivial. In particular, noting that $K_{\ellip;T}R_{\ellip}\geq1$, we may assume that
\begin{equation}\label{eq_wlog}
	2^{14}nT\max\left\{K_{\conti,b;T}R_{\conti,b}\cE^{\alpha_b},\frac{\left(K_{\conti,\sigma;T}R_{\conti,\sigma}\cE^{\alpha_\sigma}\right)^2}{\Delta}\log\frac{1}{\Delta}\right\}\leq1.
\end{equation}
As discussed above, the estimate \eqref{eq_error'} holds whenever the parameters satisfy the constraint \eqref{eq_constraint}. We choose $\lambda\in(0,\infty)$ as a function of $|\pi|$, $\Delta$ and $\vec{R}$ such that the constraint \eqref{eq_constraint} holds and the right-hand side of \eqref{eq_error'} becomes as small as possible. With the observations in \cref{rem_proof-main} in mind, we take
\begin{equation*}
	\lambda=2^{13}n\max\left\{K_{\conti,b;T}R_{\conti,b}\cE^{\alpha_b},\frac{\left(K_{\conti,\sigma;T}R_{\conti,\sigma}\cE^{\alpha_\sigma}\right)^2}{\Delta}\log\frac{1}{\Delta}\right\}.
\end{equation*}
Notice that the assumption \eqref{eq_wlog} implies that $2T\lambda\leq1$, and hence $\log(1\vee\frac{2T\lambda}{\Delta})\leq\log\frac{1}{\Delta}$. Therefore, the above $\lambda$ satisfies the constraint \eqref{eq_constraint}, and the estimate \eqref{eq_error'} yields that
\begin{align*}
	&d_\LP\big(\Law_\bP(X_T),\Law_{\bP^\pi}(\poly[X^\pi])\big)\\
	&\leq\bP\left(\inf_{t\in[0,T]}\dist\!\left(X(t),\bR^n\setminus D_T(\vec{R})\right)\leq\Delta\right)\\
	&\hspace{0.5cm}+\left(\frac{8n}{T}+219+8\sqrt{n}+2^{12}n\sqrt{T}\right)K_{\ellip,T}R_\ellip\max\left\{\cE,K_{\conti,b;T}R_{\conti,b}\cE^{\alpha_b},\frac{\left(K_{\conti,\sigma;T}R_{\conti,\sigma}\cE^{\alpha_\sigma}\right)^2}{\Delta}\log\frac{1}{\Delta}\right\}.
\end{align*}
Since $\frac{8n}{T}+219+8\sqrt{n}+2^{12}n\sqrt{T}\leq2^{14}n(T+\frac{1}{T})$, we get the estimate \eqref{eq_error}.

\underline{Proof of (i).} Let $T\in(0,\infty)$ be given. We take the limits $|\pi|\downarrow0$, $\Delta\downarrow0$ and $R_{\growth,b},R_{\growth,\sigma},R_{\conti,b},R_{\conti,\sigma},R_\ellip\uparrow\infty$ in \eqref{eq_error} in this order. First, observe that the number $\cE=\cE(|\pi|,\Delta,R_{\growth,b},R_{\growth,\sigma})\in(0,\infty)$ defined by \eqref{eq_cE} tends to $\Delta$ as $|\pi|\downarrow0$, and hence the estimate \eqref{eq_error} yields that
\begin{align*}
	&\limsup_{|\pi|\downarrow0}d_\LP\big(\Law_\bP(X_T),\Law_{\bP^\pi}(\poly[X^\pi])\big)\\
	&\leq\bP\left(\inf_{t\in[0,T]}\dist\!\left(X(t),\bR^n\setminus D_T(\vec{R})\right)\leq\Delta\right)\\
	&\hspace{0.5cm}+2^{14}n\left(T+\frac{1}{T}\right)K_{\ellip;T}R_\ellip\max\left\{\Delta,K_{\conti,\sigma;T}R_{\conti,b}\Delta^{\alpha_b},K_{\conti,\sigma;T}^2R_{\conti,\sigma}^2\Delta^{2\alpha_\sigma-1}\log\frac{1}{\Delta}\right\}.
\end{align*}
Then, noting that $2\alpha_\sigma-1>0$, by taking the limit $\Delta\downarrow0$, we have
\begin{equation*}
	\limsup_{|\pi|\downarrow0}d_\LP\big(\Law_\bP(X_T),\Law_{\bP^\pi}(\poly[X^\pi])\big)\leq\bP\left(\inf_{t\in[0,T]}\dist\!\left(X(t),\bR^n\setminus D_T(\vec{R})\right)=0\right).
\end{equation*}
Lastly, we take the limits $R_{\growth,b},R_{\growth,\sigma},R_{\conti,b},R_{\conti,\sigma},R_\ellip\to\infty$ in the right-hand side of the above inequality. Notice that the set
\begin{equation*}
	D_T(\vec{R})=D_{\growth,b;T}(R_{\growth,b})\cap D_{\growth,\sigma;T}(R_{\growth,\sigma})\cap D_{\conti,b;T}(R_{\conti,b})\cap D_{\conti,\sigma;T}(R_{\conti,\sigma})\cap D_{\ellip;T}(R_\ellip)
\end{equation*}
is open in $\bR^n$ for each $\vec{R}=(R_{\growth,b},R_{\growth,\sigma},R_{\conti,b},R_{\conti,\sigma},R_\ellip)\in[1,\infty)^5$ and converges increasingly to $\bigcup_{\vec{R}\in[1,\infty)^5}D_T(\vec{R})=D$ as $R_{\growth,b},R_{\growth,\sigma},R_{\conti,b},R_{\conti,\sigma},R_\ellip\to\infty$. Notice also that, for $\bP$-a.e.\ $\omega\in\Omega$, the path $t\mapsto X(\omega,t)$ is continuous and hence the set $\{X(\omega,t)\,|\,t\in[0,T]\}$ is compact in $\bR^n$. From these observations, we see that
\begin{align*}
	&\lim_{R_{\growth,b},R_{\growth,\sigma},R_{\conti,b},R_{\conti,\sigma},R_\ellip\to\infty}\bP\left(\inf_{t\in[0,T]}\dist\!\left(X(t),\bR^n\setminus D_T(\vec{R})\right)=0\right)\\
	&=\lim_{R_{\growth,b},R_{\growth,\sigma},R_{\conti,b},R_{\conti,\sigma},R_\ellip\to\infty}\bP\left(\left\{\omega\in\Omega\relmiddle|\big\{X(\omega,t)\,\big|\,t\in[0,T]\big\}\subset D_T(\vec{R})\right\}^\complement\right)\\
	&=\bP\left(\left\{\omega\in\Omega\relmiddle|\big\{X(\omega,t)\,\big|\,t\in[0,T]\big\}\subset D_T(\vec{R})\ \text{for some $\vec{R}\in[1,\infty)^5$}\right\}^\complement\right)\\
	&=\bP\left(\left\{\omega\in\Omega\relmiddle|\big\{X(\omega,t)\,\big|\,t\in[0,T]\big\}\subset D\right\}^\complement\right)\\
	&=\bP\Big(X(t)\notin D\ \text{for some $t\in[0,T]$}\Big).
\end{align*}
However, by the definition of the weak solution of the SFDE \eqref{eq_SFDE} associated with $\data$, we have $X(t)\in D$ for any $t\in[0,\infty)$ $\bP$-a.s., and hence the last term above is zero. Consequently, we get
\begin{equation*}
	\lim_{|\pi|\downarrow0}d_\LP\big(\Law_\bP(X_T),\Law_{\bP^\pi}(\poly[X^\pi])\big)=0,
\end{equation*}
and hence $\poly[X^\pi]\to X_T$ weakly on $\cC^n_T$ as $|\pi|\downarrow0$. This result indicates that, for any weak solution $\weaksol$ of the SFDE \eqref{eq_SFDE} associated with the data $\data$, the law of $X_T$ on $\cC^n_T$ is characterized as the weak limit of the law of $\poly[X^\pi]$ on $\cC^n_T$. Since $T\in(0,\infty)$ is arbitrary, we see that uniqueness in law holds for the SFDE \eqref{eq_SFDE} associated with $\data$.

\underline{Proof of (iii).} Since $\alpha_\sigma>\frac{1}{2}$, we can take a number $\ep\in(0,2\alpha_\sigma-1)$. Then, noting that $\log x\leq\frac{1}{\ep}x^\ep$ for any $x\in[1,\infty)$, we obtain the following simpler (but slightly weaker) version of the assertion (ii): For any $T\in(0,\infty)$, $\pi\in\Pi_T$, $\Delta\in(0,1]$ and $\vec{R}=(R_{\growth,b},R_{\growth,\sigma},R_{\conti,b},R_{\conti,\sigma},R_{\ellip})\in[1,\infty)^5$, it holds that
\begin{equation}\label{eq_error-nu}
\begin{split}
	&d_\LP\big(\Law_\bP(X_T),\Law_{\bP^\pi}(\poly[X^\pi])\big)\\
	&\leq\bP\left(\inf_{t\in[0,T]}\dist\!\left(X(t),\bR^n\setminus D_T(\vec{R})\right)\leq\Delta\right)+n\widehat{\cK}_{T,\ep}R_{\ellip}\max\left\{R_{\conti,b}\cE_\ep^{\alpha_b},\frac{R_{\conti,\sigma}^2\cE_\ep^{2\alpha_\sigma}}{\Delta^{1+\ep}}\right\},
\end{split}
\end{equation}
where the constant $\cE_\ep=\cE_\ep(|\pi|,\Delta,R_{\growth,b},R_{\growth,\sigma})\in(0,\infty)$ is defined by
\begin{equation*}
	\cE_\ep:=\max\left\{\Delta,R_{\growth,b}|\pi|,R_{\growth,\sigma}|\pi|^{(1-\ep)/2}\right\},
\end{equation*}
and the constant $\widehat{\cK}_{T,\ep}\in[1,\infty)$, which depends only on $T,K_{\growth,b;T},K_{\growth,\sigma;T},K_{\conti,b;T},K_{\conti,\sigma;T},K_{\ellip;T},\alpha_b,\alpha_\sigma$ and $\ep$, is defined by
\begin{equation*}
	\widehat{\cK}_{T,\ep}:=2^{14}\left(T+\frac{1}{T}\right)K_{\ellip;T}\max\left\{\widetilde{\cK}_{T,\ep},K_{\conti,b;T}\widetilde{\cK}_{T,\ep}^{\alpha_b},\frac{1}{\ep}K_{\conti,\sigma;T}^2\widetilde{\cK}_{T,\ep}^{2\alpha_\sigma}\right\}
\end{equation*}
with
\begin{equation*}
	\widetilde{\cK}_{T,\ep}:=\max\left\{K_{\growth,b;T},\sqrt{\frac{T^\ep}{\ep}}K_{\growth,\sigma;T}\right\}.
\end{equation*}
Here, we used the facts that the left-hand side of \eqref{eq_error-nu} is less than or equal to $1$ and that $\cE_\ep\wedge1\leq R_{\conti,b}\cE_\ep^{\alpha_b}$; the latter is due to $\alpha_b\leq1$ and $R_{\conti,b}\geq1$. Combining \eqref{eq_error-nu} and the assumption \eqref{eq_rare-event} on the decay rate of the probability of the ``rare event'', we will show that, for each $\ep\in(0,2\alpha_\sigma-1)$, $T\in(0,\infty)$ and $\pi\in\Pi_T$,
\begin{equation}\label{eq_error-optimal-nu}
	d_\LP\big(\Law_\bP(X_T),\Law_{\bP^\pi}(\poly[X^\pi])\big)\leq\big(\widehat{C}+n\widehat{\cK}_{T,\ep}\big)|\pi|^{\gamma_\ep},
\end{equation}
where the exponent $\gamma_\ep=\gamma_\ep(\alpha_b,\alpha_\sigma,\vec{\beta})\in(0,\infty)$ is determined in \eqref{eq_optimization} below. After that, we will take the limit $\ep\downarrow0$ and obtain the statement of (iii).

Let $\ep\in(0,2\alpha_\sigma-1)$, $T\in(0,\infty)$ and $\pi\in\Pi_T$ be fixed. In order to show \eqref{eq_error-optimal-nu}, without loss of generality, we may assume that $|\pi|\leq1$; otherwise the estimate becomes trivial. We apply the estimate \eqref{eq_error-nu} and the assumption \eqref{eq_rare-event} to the parameters $\Delta\in(0,1]$ and $\vec{R}\in[1,\infty)^5$ of the following forms:
\begin{equation*}
	\Delta=|\pi|^\delta,\ R_{\growth,b}=|\pi|^{-r_{\growth,b}},\ R_{\growth,\sigma}=|\pi|^{-r_{\growth,\sigma}},\ R_{\conti,b}=|\pi|^{-r_{\conti,b}},\ R_{\conti,\sigma}=|\pi|^{-r_{\conti,\sigma}},\ R_\ellip=|\pi|^{-r_\ellip},
\end{equation*}
for some constants $\delta,r_{\growth,b},r_{\growth,\sigma},r_{\conti,b},r_{\conti,\sigma},r_\ellip\in[0,\infty)$. Then, we obtain
\begin{equation*}
	d_\LP\big(\Law_\bP(X_T),\Law_{\bP^\pi}(\poly[X^\pi])\big)\leq\big(\widehat{C}+n\widehat{\cK}_{T,\ep}\big)|\pi|^{\Gamma_\ep(\delta,r_{\growth,b},r_{\growth,\sigma},r_{\conti,b},r_{\conti,\sigma},r_\ellip)},
\end{equation*}
where
\begin{align*}
	\Gamma_\ep(\delta,r_{\growth,b},r_{\growth,\sigma},r_{\conti,b},r_{\conti,\sigma},r_\ellip)&:=\min\Big\{\beta_0\delta,\beta_{\growth,b}r_{\growth,b},\beta_{\growth,\sigma}r_{\growth,\sigma},\beta_{\conti,b}r_{\conti,b},\beta_{\conti,\sigma}r_{\conti,\sigma},\beta_\ellip r_\ellip,\\
	&\hspace{2cm}-r_\ellip-r_{\conti,b}+\alpha_b\min\Big\{\delta,1-r_{\growth,b},\frac{1-\ep}{2}-r_{\growth,\sigma}\Big\},\\
	&\hspace{2cm}-r_\ellip-2r_{\conti,\sigma}+2\alpha_\sigma \min\Big\{\delta,1-r_{\growth,b},\frac{1-\ep}{2}-r_{\growth,\sigma}\Big\}-(1+\ep)\delta\Big\}.
\end{align*}
Since the parameters $\delta,r_{\growth,b},r_{\growth,\sigma},r_{\conti,b},r_{\conti,\sigma},r_\ellip\in[0,\infty)$ are arbitrary, we see that \eqref{eq_error-optimal-nu} holds with exponent $\gamma_\ep=\gamma_\ep(\alpha_b,\alpha_\sigma,\vec{\beta})$ given by
\begin{equation}\label{eq_optimization}
	\gamma_\ep:=\sup_{\delta,r_{\growth,b},r_{\growth,\sigma},r_{\conti,b},r_{\conti,\sigma},r_\ellip\in[0,\infty)}\Gamma_\ep(\delta,r_{\growth,b},r_{\growth,\sigma},r_{\conti,b},r_{\conti,\sigma},r_\ellip).
\end{equation}
Notice that, extending the domain of the function $\Gamma_\ep$ to $\bR^6$, we see that $\Gamma_\ep:\bR^6\to\bR$ is continuous and $\Gamma_\ep\leq0$ on $\bR^6\setminus(0,1)^6$. Furthermore, thanks to $2\alpha_\sigma-1-\ep>0$, we can easily show that $\Gamma_\ep$ takes positive values on a compact subset of $(0,1)^6$. Therefore, the supremum in \eqref{eq_optimization} over $[0,\infty)^6$ is a maximum over $\bR^6$, and there exists an optimizer in $(0,1)^6$ which attains the maximal value $\gamma_\ep>0$.

In the following, we solve the optimization problem \eqref{eq_optimization} and provide a more precise expression of the maximal value $\gamma_\ep$. Observe that
\begin{equation}\label{eq_optimization'}
	\gamma_\ep=\max_{\delta,r_{\growth,b},r_{\growth,\sigma}\in\bR}\min\Big\{\beta_0\delta,\beta_{\growth,b}r_{\growth,b},\beta_{\growth,\sigma}r_{\growth,\sigma},\gamma^{(1)}_\ep(\delta,r_{\growth,b},r_{\growth,\sigma})\Big\},
\end{equation}
where
\begin{align*}
	&\gamma^{(1)}_\ep(\delta,r_{\growth,b},r_{\growth,\sigma}):=\max_{r_\ellip\in\bR}\min\left\{\beta_\ellip r_\ellip,\gamma^{(2)}_\ep(\delta,r_{\growth,b},r_{\growth,\sigma},r_\ellip),\gamma^{(3)}_\ep(\delta,r_{\growth,b},r_{\growth,\sigma},r_\ellip)\right\},\\
	&\gamma^{(2)}_\ep(\delta,r_{\growth,b},r_{\growth,\sigma},r_\ellip):=\max_{r_{\conti,b}\in\bR}\min\left\{\beta_{\conti,b}r_{\conti,b},-r_\ellip-r_{\conti,b}+\alpha_b\min\Big\{\delta,1-r_{\growth,b},\frac{1-\ep}{2}-r_{\growth,\sigma}\Big\}\right\},\\
	&\gamma^{(3)}_\ep(\delta,r_{\growth,b},r_{\growth,\sigma},r_\ellip):=\max_{r_{\conti,\sigma}\in\bR}\min\left\{\beta_{\conti,\sigma}r_{\conti,\sigma},-r_\ellip-2r_{\conti,\sigma}+2\alpha_\sigma\min\Big\{\delta,1-r_{\growth,b},\frac{1-\ep}{2}-r_{\growth,\sigma}\Big\}-(1+\ep)\delta\right\}.
\end{align*}
By elementary calculus, we see that
\begin{align*}
	&\gamma^{(2)}_\ep(\delta,r_{\growth,b},r_{\growth,\sigma},r_\ellip)=\frac{\beta_{\conti,b}}{\beta_{\conti,b}+1}\Big(-r_\ellip+\alpha_b\min\Big\{\delta,1-r_{\growth,b},\frac{1-\ep}{2}-r_{\growth,\sigma}\Big\}\Big),\\
	&\gamma^{(3)}_\ep(\delta,r_{\growth,b},r_{\growth,\sigma},r_\ellip)=\frac{\beta_{\conti,\sigma}}{\beta_{\conti,\sigma}+2}\Big(-r_\ellip+2\alpha_\sigma\min\Big\{\delta,1-r_{\growth,b},\frac{1-\ep}{2}-r_{\growth,\sigma}\Big\}-(1+\ep)\delta\Big).
\end{align*}
Hence,
\begin{align*}
	\gamma^{(1)}_\ep(\delta,r_{\growth,b},r_{\growth,\sigma})&=\max_{r_\ellip\in\bR}\min\left\{\beta_\ellip r_\ellip,\frac{\beta_{\conti,b}}{\beta_{\conti,b}+1}\Big(-r_\ellip+\alpha_b\min\Big\{\delta,1-r_{\growth,b},\frac{1-\ep}{2}-r_{\growth,\sigma}\Big\}\Big),\right.\\
	&\left.\hspace{2cm}\frac{\beta_{\conti,\sigma}}{\beta_{\conti,\sigma}+2}\Big(-r_\ellip+2\alpha_\sigma\min\Big\{\delta,1-r_{\growth,b},\frac{1-\ep}{2}-r_{\growth,\sigma}\Big\}-(1+\ep)\delta\Big)\right\}.
\end{align*}
In order to compute the above term, we use the following elementary fact; for any $\kappa_0,\kappa_1,\kappa_2>0$ and $\theta_1,\theta_2\in\bR$,
\begin{equation}\label{eq_elementary-optimization}
	\max_{x\in\bR}\min\Big\{\kappa_0x,-\kappa_1x+\theta_1,-\kappa_2x+\theta_2\Big\}=\min\left\{\frac{\kappa_0}{\kappa_0+\kappa_1}\theta_1,\frac{\kappa_0}{\kappa_0+\kappa_2}\theta_2\right\}.
\end{equation}
From the above fact, we get
\begin{align*}
	\gamma^{(1)}_\ep(\delta,r_{\growth,b},r_{\growth,\sigma})&=\min\left\{\frac{1}{1+\beta_\ellip^{-1}+\beta_{\conti,b}^{-1}}\alpha_b\min\Big\{\delta,1-r_{\growth,b},\frac{1-\ep}{2}-r_{\growth,\sigma}\Big\},\right.\\
	&\left.\hspace{2cm}\frac{1}{1+\beta_\ellip^{-1}+2\beta_{\conti,\sigma}^{-1}}\Big(2\alpha_\sigma\min\Big\{\delta,1-r_{\growth,b},\frac{1-\ep}{2}-r_{\growth,\sigma}\Big\}-(1+\ep)\delta\Big)\right\}.
\end{align*}
Inserting the above expression to \eqref{eq_optimization'}, we obtain
\begin{equation}\label{eq_optimization''}
	\gamma_\ep=\max_{\delta\in\bR}\min\Big\{\beta_\ep\delta,\gamma^{(4)}_\ep(\delta),\gamma^{(5)}_\ep(\delta)\Big\},
\end{equation}
where
\begin{equation}\label{eq_beta-nu}
	\beta_\ep:=\min\left\{\beta_0,\frac{\alpha_b}{1+\beta_\ellip^{-1}+\beta_{\conti,b}^{-1}},\frac{2\alpha_\sigma-1-\ep}{1+\beta_\ellip^{-1}+2\beta_{\conti,\sigma}^{-1}}\right\}>0
\end{equation}
and
\begin{align*}
	&\gamma^{(4)}_\ep(\delta):=\max_{r_{\growth,b}\in\bR}\min\left\{\beta_{\growth,b}r_{\growth,b},\frac{\alpha_b(1-r_{\growth,b})}{1+\beta_\ellip^{-1}+\beta_{\conti,b}^{-1}},\frac{2\alpha_\sigma(1-r_{\growth,b})-(1+\ep)\delta}{1+\beta_\ellip^{-1}+2\beta_{\conti,\sigma}^{-1}}\right\},\\
	&\gamma^{(5)}_\ep(\delta):=\max_{r_{\growth,\sigma}\in\bR}\min\left\{\beta_{\growth,\sigma}r_{\growth,\sigma},\frac{\alpha_b\Big(\frac{1-\ep}{2}-r_{\growth,\sigma}\Big)}{1+\beta_\ellip^{-1}+\beta_{\conti,b}^{-1}},\frac{2\alpha_\sigma\Big(\frac{1-\ep}{2}-r_{\growth,\sigma}\Big)-(1+\ep)\delta}{1+\beta_\ellip^{-1}+2\beta_{\conti,\sigma}^{-1}}\right\}.
\end{align*}
Again by using the elementary calculus \eqref{eq_elementary-optimization}, we have
\begin{align*}
	&\gamma^{(4)}_\ep(\delta)=\min\left\{\frac{\alpha_b}{1+\beta_\ellip^{-1}+\beta_{\conti,b}^{-1}+\beta_{\growth,b}^{-1}\alpha_b},\frac{2\alpha_\sigma-(1+\ep)\delta}{1+\beta_\ellip^{-1}+2\beta_{\conti,\sigma}^{-1}+2\beta_{\growth,b}^{-1}\alpha_\sigma}\right\},\\
	&\gamma^{(5)}_\ep(\delta)=\min\left\{\frac{1-\ep}{2}\cdot\frac{\alpha_b}{1+\beta_\ellip^{-1}+\beta_{\conti,b}^{-1}+\beta_{\growth,\sigma}^{-1}\alpha_b},\frac{(1-\ep)\alpha_\sigma-(1+\ep)\delta}{1+\beta_\ellip^{-1}+2\beta_{\conti,\sigma}^{-1}+2\beta_{\growth,\sigma}^{-1}\alpha_\sigma}\right\}.
\end{align*}
Inserting the above expressions to \eqref{eq_optimization''}, we have
\begin{equation}\label{eq_optimization'''}
	\gamma_\ep=\min\left\{\frac{\alpha_b}{1+\beta_\ellip^{-1}+\beta_{\conti,b}^{-1}+\beta_{\growth,b}^{-1}\alpha_b},\frac{1-\ep}{2}\cdot\frac{\alpha_b}{1+\beta_\ellip^{-1}+\beta_{\conti,b}^{-1}+\beta_{\growth,\sigma}^{-1}\alpha_b},\gamma^{(6)}_\ep\right\},
\end{equation}
where
\begin{equation*}
	\gamma^{(6)}_\ep:=\max_{\delta\in\bR}\min\left\{\beta_\ep\delta,\frac{2\alpha_\sigma-(1+\ep)\delta}{1+\beta_\ellip^{-1}+2\beta_{\conti,\sigma}^{-1}+2\beta_{\growth,b}^{-1}\alpha_\sigma},\frac{(1-\ep)\alpha_\sigma-(1+\ep)\delta}{1+\beta_\ellip^{-1}+2\beta_{\conti,\sigma}^{-1}+2\beta_{\growth,\sigma}^{-1}\alpha_\sigma}\right\}.
\end{equation*}
Using the elementary calculus \eqref{eq_elementary-optimization}, we have
\begin{equation*}
	\gamma^{(6)}_\ep=\min\left\{\frac{2\alpha_\sigma}{1+\beta_\ellip^{-1}+2\beta_{\conti,\sigma}^{-1}+2\beta_{\growth,b}^{-1}\alpha_\sigma+(1+\ep)\beta_\ep^{-1}},\frac{(1-\ep)\alpha_\sigma}{1+\beta_\ellip^{-1}+2\beta_{\conti,\sigma}^{-1}+2\beta_{\growth,\sigma}^{-1}\alpha_\sigma+(1+\ep)\beta_\ep^{-1}}\right\}.
\end{equation*}
Inserting the above expression to \eqref{eq_optimization'''}, we obtain
\begin{equation}\label{eq_gamma-nu}
\begin{split}
	\gamma_\ep&=\min\left\{\frac{\alpha_b}{1+\beta_\ellip^{-1}+\beta_{\conti,b}^{-1}+\beta_{\growth,b}^{-1}\alpha_b},\frac{1-\ep}{2}\cdot\frac{\alpha_b}{1+\beta_\ellip^{-1}+\beta_{\conti,b}^{-1}+\beta_{\growth,\sigma}^{-1}\alpha_b},\right.\\
	&\left.\hspace{1.5cm}\frac{2\alpha_\sigma}{1+\beta_\ellip^{-1}+2\beta_{\conti,\sigma}^{-1}+2\beta_{\growth,b}^{-1}\alpha_\sigma+(1+\ep)\beta_\ep^{-1}},\frac{(1-\ep)\alpha_\sigma}{1+\beta_\ellip^{-1}+2\beta_{\conti,\sigma}^{-1}+2\beta_{\growth,\sigma}^{-1}\alpha_\sigma+(1+\ep)\beta_\ep^{-1}}\right\}.
\end{split}
\end{equation}

To summarize the above arguments, the estimate \eqref{eq_error-optimal-nu} holds for any $\ep\in(0,2\alpha_\sigma-1)$, $T\in(0,\infty)$ and $\pi\in\Pi_T$, where the exponent $\gamma_\ep>0$ is defined as the maximal value of the optimization problem \eqref{eq_optimization} and given by the expression \eqref{eq_gamma-nu}, and the number $\beta_\ep>0$ is given by \eqref{eq_beta-nu}. Also, notice that $\beta_\ep$ and $\gamma_\ep$ are monotonically decreasing with respect to $\ep\in(0,2\alpha_\sigma-1)$ (that is, monotonically increasing as $\ep\downarrow0$), and
\begin{equation*}
	\sup_{\ep\in(0,2\alpha_\sigma-1)}\beta_\ep=\lim_{\ep\downarrow0}\beta_\ep=\beta_*,\ \ \sup_{\ep\in(0,2\alpha_\sigma-1)}\gamma_\ep=\lim_{\ep\downarrow0}\gamma_\ep=\gamma_*,
\end{equation*}
where $\beta_*$ and $\gamma_*$ are given by \eqref{eq_beta*} and \eqref{eq_gamma*}, respectively. Moreover, the convergences above are uniform with respect to $\vec{\beta}=(\beta_0,\beta_{\growth,b},\beta_{\growth,\sigma},\beta_{\conti,b},\beta_{\conti,\sigma},\beta_\ellip)$. Therefore, for any $\gamma\in(0,\gamma_*)$, we can take $\ep\in(0,2\alpha_\sigma-1)$, which depends only on $\alpha_b$, $\alpha_\sigma$ and $\gamma_*-\gamma$, such that $\gamma<\gamma_\ep<\gamma_*$. Hence, defining $\cK_{T,\gamma}:=\widehat{\cK}_{T,\ep}$ for such an $\ep$, we obtain the conclusion of (iii). This completes the proof.
\end{proof}


\section{Applications to examples}\label{section_example}

In this section, we apply our general results (\cref{theo_main} and \cref{cor_rate-polynomial}) to some concrete examples of Markovian and non-Markovian models appearing in physics, chemistry, mathematical finance, economics, population biology, and so on, and provide precise weak error estimates for them.
To be specific, we pick up the following ten examples which have different features and difficulties from each other:

\begin{itemize}
\item
\cref{subsec_Lorenz}: Stochastic Lorenz model
\item
\cref{subsec_Brusselator}: Stochastic Brusselator in the well-stirred case
\item
\cref{subsec_AitSahalia}: A\"it--Sahalia-type interest rate model with delay
\item
\cref{subsec_neoclassical}: Stochastic delay differential neoclassical growth model
\item
\cref{subsec_ROU}: Reflected Ornstein--Uhlenbeck process
\item
\cref{subsec_Duffing}: Stochastic Duffing--van der Pol oscillator
\item
\cref{subsec_WF}: Wright--Fisher diffusion with seed bank
\item
\cref{subsec_polydiff}: Multi-dimensional polynomial diffusion
\item
\cref{subsec_3/2}: Volatility process in the $3/2$-stochastic volatility model
\item
\cref{subsec_Dyson}: Dyson's Brownian motion
\end{itemize}
For the first six examples, we can use \cref{cor_rate-polynomial} (i) directly since, as we will see below, all of them satisfy the required moment condition \eqref{eq_moment}. The last four examples are more complicated; by means of a careful moment estimate related to the ``rare event'' in \eqref{eq_rare-event}, we apply \cref{theo_main} (iii) and obtain a weak convergence rate depending on the parameters for each model.


\subsection{Stochastic Lorenz model}\label{subsec_Lorenz}

The Lorenz model is one of the most famous nonlinear models of chaos and has been extensively studied in a wide range of areas since the original study by Lorenz \cite{Lo63} on a simplified mathematical model of atmospheric convection.
Here, as a special case of the model considered in \cite{Ke96}, we consider the stochastic Lorenz model with additive noise, which is described by the following three dimensional coupled nonlinear system:
\begin{equation}\label{eq_Lorenz}
\begin{split}
	&\diff X_1(t)=\kappa_1\Big\{X_2(t)-X_1(t)\Big\}\,\diff t+\theta_1\,\diff W_1(t),\\
	&\diff X_2(t)=\Big\{\kappa_2X_1(t)-X_2(t)-X_1(t)X_3(t)\Big\}\,\diff t+\theta_2\,\diff W_2(t),\\
	&\diff X_3(t)=\Big\{X_1(t)X_2(t)-\kappa_3X_3(t)\Big\}\,\diff t+\theta_3\,\diff W_3(t),\ \ t\in[0,\infty),
\end{split}
\end{equation}
where $\kappa_1,\kappa_2,\kappa_3,\theta_1,\theta_2,\theta_3\in\bR$ are given parameters. The above system can be seen as a Markovian SDE \eqref{eq_Markov} with $d=n=3$, $D=\bR^3$ and coefficients $\bar{b}:[0,\infty)\times\bR^3\to\bR^3$ and $\bar{\sigma}:[0,\infty)\times\bR^3\to\bR^{3\times 3}$ defined by
\begin{equation*}
	\bar{b}(t,\bar{x})=
	\begin{pmatrix}
	\kappa_1(\bar{x}_2-\bar{x}_1)\\
	\kappa_2\bar{x}_1-\bar{x}_2-\bar{x}_1\bar{x}_3\\
	\bar{x}_1\bar{x}_2-\kappa_3\bar{x}_3
	\end{pmatrix},
	\ \ 
	\bar{\sigma}(t,\bar{x})=
	\begin{pmatrix}
	\theta_1&0&0\\
	0&\theta_2&0\\
	0&0&\theta_3
	\end{pmatrix},
\end{equation*}
for $t\in[0,\infty)$ and $\bar{x}=(\bar{x}_1,\bar{x}_2,\bar{x}_3)^\top\in\bR^3$. It is known that the system \eqref{eq_Lorenz} admits a unique strong solution $X=(X_1,X_2,X_3)^\top$ for any initial condition $X(0)=x(0)\in\bR^3$ (see \cite[Theorem 4.4]{Ke96}).

In view of the numerical study for the stochastic Lorenz model \eqref{eq_Lorenz}, there is a difficulty that the drift coefficient $\bar{b}$ is super-linearly growing. This indicates that the standard Euler--Maruyama scheme corresponding to \eqref{eq_Lorenz} might diverge in the $L^p$ sense for any $p\in(0,\infty)$ (see \cite{HuJeKl11} and \cref{subsubsec_sharp}).
In order to overcome this difficulty, Hutzenthaler and Jentzen \cite{HuJe15} study an appropriately modified Euler--Maruyama scheme called increment-tamed Euler--Maruyama scheme and show its converge to the solution $X=(X_1,X_2,X_{3})^{\top}$ of the Markovian SDE \eqref{eq_Lorenz} at each fixed time $T\in(0,\infty)$ in the $L^{p}$ sense for any $p\in(0,\infty)$.
However, they do not obtain any convergence rate for it.
Compared with \cite{HuJe15}, we consider functional type weak convergence of the standard Euler--Maruyama scheme \eqref{eq_EM-Markov} and give a convergence rate in terms of the L\'evy--Prokhorov metric.

By the same arguments as in \cref{subsubsec_Markov}, we see that the system \eqref{eq_Lorenz} satisfies the conditions (G'), (C') (with $\alpha_b=\alpha_\sigma=1$) and (E') in \cref{cor_rate-polynomial} under the non-degeneracy condition $\theta_1\theta_2\theta_3\neq0$. Furthermore, by \cite[Corollary 4.5 (ii)]{Ke96}, the solution $X$ with fixed initial condition $X(0)=x(0)\in\bR^3$ satisfies $\bE\big[\|X_T\|_\infty^p\big]<\infty$ for any $T\in(0,\infty)$ and $p\in[1,\infty)$ (which follows also from \cref{appendix_lemm_LG} (i)). Therefore, the moment condition \eqref{eq_moment} in \cref{cor_rate-polynomial} (i) holds, and we immediately obtain the following result.


\begin{theo}
Fix $\kappa_1,\kappa_2,\kappa_3,\theta_1,\theta_2,\theta_3\in\bR$, and assume that $\theta_1\theta_2\theta_3\neq0$. Let $\weaksol$ be the weak solution of the SDE \eqref{eq_Lorenz} on $D=\bR^3$ with initial condition $X(0)=x(0)\in\bR^3$.
For each $T\in(0,\infty)$ and $\pi\in\Pi_T$, let $X^\pi$ be the Euler--Maruyama scheme defined on $(\Omega^\pi,\cF^\pi,\bP^\pi)$ and given by \eqref{eq_EM-Markov} with initial condition $X^\pi(t_0)=x(0)$.
Then, for any $\gamma\in(0,\frac{1}{2})$, there exists a constant $C_\gamma\in(0,\infty)$ such that
\begin{equation*}
	d_\LP\big(\Law_\bP(X_T),\Law_{\bP^\pi}(\poly[X^\pi])\big)\leq C_\gamma|\pi|^\gamma
\end{equation*}
for any $\pi\in\Pi_{T}$.
\end{theo}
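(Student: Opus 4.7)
The plan is to apply \cref{cor_rate-polynomial} (i) directly, after recasting the stochastic Lorenz system \eqref{eq_Lorenz} as a Markovian SFDE as in \cref{subsubsec_Markov}. Since both the Hölder exponents will turn out to be $\alpha_b=\alpha_\sigma=1$, the supremum of attainable convergence orders provided by \cref{cor_rate-polynomial} (i) is $\frac{\alpha_b}{2}\wedge(\alpha_\sigma-\frac{1}{2})=\frac{1}{2}$, which matches the target rate. Thus the entire task reduces to (a) checking the polynomial conditions (G'), (C'), (E') in \cref{cor_rate-polynomial}, and (b) verifying the moment bound \eqref{eq_moment}.

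The first task is essentially bookkeeping. The drift $\bar{b}$ has components that are at most quadratic in $\bar{x}=(\bar{x}_1,\bar{x}_2,\bar{x}_3)^\top$, so (G') holds with $\alpha_b=1$ and some $p\geq2$, and the elementary bound $|\bar{b}(\bar{x})-\bar{b}(\bar{y})|\leq C(1+|\bar{x}|+|\bar{y}|)|\bar{x}-\bar{y}|$ yields (C') for $b$ with the same exponent. The diffusion $\bar{\sigma}$ is constant, so (C') for $\sigma$ is trivial with $\alpha_\sigma=1$ (and one may take $p=0$ on the $\sigma$-side). The ellipticity condition (E') holds because $\bar{\sigma}\bar{\sigma}^\top=\mathrm{diag}(\theta_1^2,\theta_2^2,\theta_3^2)$ is positive definite under the standing assumption $\theta_1\theta_2\theta_3\neq0$. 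Translating these ``$\bar{\cdot}$''-level estimates into the path-dependent formalism is done exactly as in \cref{subsubsec_Markov}.

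The main step of substance is the moment condition \eqref{eq_moment}, i.e.\ $\mathbb{E}[\|X_T\|_\infty^q]<\infty$ for every $q\in[1,\infty)$ and $T\in(0,\infty)$. This does not follow from linear growth of $\bar{b}$ (which fails), but from a crucial algebraic cancellation in the dissipativity inner product:
\begin{equation*}
	\langle\bar{b}(\bar{x}),\bar{x}\rangle=\kappa_1(\bar{x}_2-\bar{x}_1)\bar{x}_1+(\kappa_2\bar{x}_1-\bar{x}_2-\bar{x}_1\bar{x}_3)\bar{x}_2+(\bar{x}_1\bar{x}_2-\kappa_3\bar{x}_3)\bar{x}_3=(\kappa_1+\kappa_2)\bar{x}_1\bar{x}_2-\kappa_1\bar{x}_1^2-\bar{x}_2^2-\kappa_3\bar{x}_3^2,
\end{equation*}
where the two cubic terms $\pm\bar{x}_1\bar{x}_2\bar{x}_3$ cancel. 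Hence there exists $C\in(0,\infty)$ with $\langle\bar{b}(\bar{x}),\bar{x}\rangle\leq C(1+|\bar{x}|^2)$ for every $\bar{x}\in\bR^3$, i.e.\ the one-sided linear growth condition. Combined with the constant (and hence linearly growing) diffusion, this feeds directly into the generic moment lemma of \cref{appendix_LG} (or, equivalently, \cite[Corollary 4.5(ii)]{Ke96}) to yield finite moments of all orders.

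With all hypotheses verified, \cref{cor_rate-polynomial} (i) applies and delivers, for every $\gamma\in(0,\frac{1}{2})$, a constant $C_\gamma\in(0,\infty)$ with the claimed estimate. I expect the only mildly nontrivial step to be the cancellation computation above; once it is in place, everything else is straightforward verification. Note also that no additional work is needed to check non-emptiness or existence of the weak solution, since a strong solution exists by \cite[Theorem 4.4]{Ke96}, and the Euler--Maruyama scheme is the standard one of \eqref{eq_EM-Markov} because $D=\bR^3$.
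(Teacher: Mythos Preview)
Your proposal is correct and follows essentially the same route as the paper: verify (G'), (C') with $\alpha_b=\alpha_\sigma=1$, and (E') for the Markovian data as in \cref{subsubsec_Markov}, establish the moment condition \eqref{eq_moment} via the one-sided linear growth of the drift (the paper cites \cite[Corollary 4.5 (ii)]{Ke96} and notes that \cref{appendix_lemm_LG} (i) gives the same), and then apply \cref{cor_rate-polynomial} (i). Your explicit display of the cubic cancellation in $\langle\bar{b}(\bar{x}),\bar{x}\rangle$ is exactly the mechanism underlying the one-sided linear growth, so the argument is complete.
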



\subsection{Stochastic Brusselator in the well-stirred case}\label{subsec_Brusselator}

The Brusselator introduced by Prigogine and Lefever \cite{PrLe68} is one of theoretical models to describe the evolution of the concentration of the reactants in chemistry. In the well-stirred case, Dawson \cite{Da80} proposed a stochastic Brusselator (see also \cite{Sc86}), which is described by the following two dimensional coupled nonlinear system with multiplicative noise:
\begin{equation}\label{eq_Brusselator}
\begin{split}
	&\diff X_1(t)=\Big\{\kappa_1-(\kappa_2+1)X_1(t)+X_2(t)X_1(t)^2\Big\}\,\diff t+\theta_1(X_1(t))\,\diff W_1(t),\\
	&\diff X_2(t)=\Big\{\kappa_2X_1(t)-X_2(t)X_1(t)^2\Big\}\,\diff t+\theta_2(X_2(t))\,\diff W_2(t),\ \ t\in[0,\infty),
\end{split}
\end{equation}
where $\kappa_1,\kappa_2\in(0,\infty)$ are fixed constants, and $\theta_1,\theta_2:[0,\infty)\to\bR$ are globally Lipschitz continuous functions with $\theta_1(0)=\theta_2(0)=0$ and $\sup_{x\in[0,\infty)}|\theta_2(x)|<\infty$. The above system can be seen as a Markovian SDE \eqref{eq_Markov} with $d=n=2$, $D=(0,\infty)^2$ and coefficients $\bar{b}:(0,\infty)\times D\to\bR^2$ and $\bar{\sigma}:[0,\infty)\times D\to\bR^{2\times 2}$ defined by
\begin{equation*}
	\bar{b}(t,\bar{x})=
	\begin{pmatrix}
	\kappa_1-(\kappa_2+1)\bar{x}_1+\bar{x}_2\bar{x}_1^2\\
	\kappa_2\bar{x}_1-\bar{x}_2\bar{x}_1^2
	\end{pmatrix},
	\ \ 
	\bar{\sigma}(t,\bar{x})=
	\begin{pmatrix}
	\theta_1(\bar{x}_1)&0\\
	0&\theta_2(\bar{x}_2)
	\end{pmatrix},
\end{equation*}
for $t\in[0,\infty)$ and $\bar{x}=(\bar{x}_1,\bar{x}_2)^\top\in D$. By \cite[Theorem 2.1 a)]{Sc86}, for each initial condition $X(0)=x(0)\in[0,\infty)^2$, there exists a unique strong solution $X=(X_1,X_2)^\top$ to the SDE \eqref{eq_Brusselator} such that $X_1(t)\geq0$ and $X_2(t)\geq0$ for any $t\in[0,\infty)$ a.s. Furthermore, \cref{lemm_Brusselator-moment} below shows that the solution never hits the boundary of $D=(0,\infty)^2$ if $x(0)\in D$.

As in the case of the stochastic Lorenz model \eqref{eq_Lorenz}, due to the super-linear terms in the drift coefficient, the standard Euler--Maruyama scheme of the above SDE might diverge in the $L^p$ sense for any $p\in(0,\infty)$ (see \cite{HuJeKl11} and \cref{subsubsec_sharp}).
Hutzenthaler and Jentzen \cite{HuJe15} then consider the increment-tamed Euler--Maruyama scheme for \eqref{eq_Brusselator} and show its convergence to the true solution $X$ at each fixed time $T\in(0,\infty)$ in the $L^{p}$ sense for any $p\in(0,\infty)$.
However, they do not obtain any convergence rate for it. Compared with \cite{HuJe15}, we consider functional type weak convergence of the standard Euler--Maruyama scheme \eqref{eq_EM-Markov}.

We will apply \cref{cor_rate-polynomial} (i) to this setting. To do so, we need to check the moment condition \eqref{eq_moment}.
Notice that the coefficients of the SDE \eqref{eq_Brusselator} satisfy neither \eqref{appendix_eq_LG} nor \eqref{appendix_eq_LG-negative} in \cref{appendix_lemm_LG}. Nevertheless, the following lemma ensures that the desired moment estimate \eqref{eq_moment} holds in this setting.


\begin{lemm}\label{lemm_Brusselator-moment}
Let $\kappa_1,\kappa_2\in(0,\infty)$ be fixed constants, and let $\theta_1,\theta_2:[0,\infty)\to\bR$ be two globally Lipschitz continuous functions with $\theta_1(0)=\theta_2(0)=0$ and $\sup_{x\in[0,\infty)}|\theta_2(x)|<\infty$. Then, there exists a unique strong solution $X=(X_1,X_2)^\top$ of the SDE \eqref{eq_Brusselator} on $D=(0,\infty)^2$ with initial condition $X(0)=x(0)\in D$. Furthermore, for any $T\in(0,\infty)$ and $p\in[1,\infty)$, it holds that
\begin{equation*}
	\bE\big[\|X_T\|_\infty^p+\|X^{-1}_T\|_\infty^p\big]<\infty,
\end{equation*}
where $X^{-1}(\cdot):=(X_1(\cdot)^{-1},X_2(\cdot)^{-1})^\top$.
\end{lemm}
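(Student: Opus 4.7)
The plan is to establish strong existence and uniqueness on $D=(0,\infty)^2$ up to an explosion time (via the local Lipschitz continuity of $\bar{b},\bar{\sigma}$ on $D$), and then to prove polynomial moment estimates for $X_1$, $X_2$, $X_1^{-1}$ and $X_2^{-1}$ that simultaneously rule out explosion to infinity and absorption into $\partial D$. The upper bound exploits the cancellation occurring in the sum: setting $S:=X_1+X_2$ one obtains $\diff S=(\kappa_1-X_1)\,\diff t+\theta_1(X_1)\,\diff W_1+\theta_2(X_2)\,\diff W_2$, whose drift is uniformly bounded above by $\kappa_1$ (since $X_1\geq 0$) and whose diffusion coefficient, using $\theta_1(0)=0$ together with Lipschitz continuity of $\theta_1$ and boundedness of $\theta_2$, has at most linear growth in $S$. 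It\^o applied to $(1+S)^p$, Gronwall, and the Burkholder--Davis--Gundy (BDG) inequality then yield $\bE[\|S_T\|_\infty^p]<\infty$ for every $p\in[1,\infty)$, whence $\bE[\|X_T\|_\infty^p]<\infty$.

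For the lower bound on $X_1$, the constant $\kappa_1$ in the drift supplies a dissipative contribution after inversion. Setting $U:=1/X_1$ and applying It\^o, the cross-term $X_2 X_1^2$ becomes $-X_2\leq 0$, the Lipschitz bound $|\theta_1(x)|\leq L x$ gives $U^3\theta_1(1/U)^2\leq L^2 U$, and altogether
\[ \diff U \leq \bigl(-\kappa_1 U^2+CU\bigr)\,\diff t+\diff M_1, \]
with $M_1$ a local martingale whose quadratic variation density is dominated by $L^2 U^2$. The negative quadratic drift dominates the linear one for large $U$, so a standard Lyapunov argument on $U^q$ (taking expectation and invoking Jensen to derive a Bernoulli-type differential inequality) combined with BDG yields $\bE[\sup_{t\in[0,T]}X_1(t)^{-q}]<\infty$ for every $q\in[1,\infty)$.

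The lower bound on $X_2$ will be the hard step. Setting $V:=1/X_2$, It\^o produces
\[ \diff V \leq \bigl(-\kappa_2 V^2 X_1+X_1^2 V+L^2 V\bigr)\,\diff t+\diff M_2, \]
and the cross-term $X_1^2 V$ couples the boundary behaviour of $X_2$ to the quadratic growth of $X_1$. The naive approaches---variation of parameters, which would give $X_2(t)=X_2(0)\exp(-\int_0^t X_1^2\,\diff s)+\cdots$, or a direct $\log X_2$ bound---both demand exponential moments of $\int_0^T X_1^2\,\diff s$, which are not available under our hypotheses. My plan is instead to use Young's inequality to absorb the cross-term into the dissipative one. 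For $\diff V^p$ the analogous cross-term $p X_1^2 V^p$ factors exactly as $p\,(V^{p+1}X_1)^{p/(p+1)}\cdot X_1^{(p+2)/(p+1)}$, and Young's inequality with conjugate exponents $(p+1)/p$ and $p+1$ then gives
\[ p X_1^2 V^p \leq \tfrac{p\kappa_2}{2}\,V^{p+1}X_1+C(p,\kappa_2)\,X_1^{p+2}. \]
Substituting into the It\^o expansion of $V^p$ kills half of the dissipative term and leaves a residual depending only on the already-bounded polynomial moments $\bE[X_1(t)^{p+2}]$ from the first step. Gronwall plus BDG then close the estimate to give $\bE[\sup_{t\in[0,T]}X_2(t)^{-p}]<\infty$ for every $p\in[1,\infty)$, and the moment bounds from all three steps simultaneously rule out explosion and yield the claimed estimate.
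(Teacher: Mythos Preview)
Your proposal is correct and follows essentially the same three-step strategy as the paper: bound $\bE[\|X_T\|_\infty^p]$ via the sum $S=X_1+X_2$ (whose drift $\kappa_1-X_1$ is bounded above and whose diffusion has linear growth), then bound $\bE[\sup X_1^{-p}]$ using the dissipative $\kappa_1$-term, and finally bound $\bE[\sup X_2^{-p}]$ via Young's inequality and the already-established moments of $X_1$. Two minor deviations are worth noting: for $X_1^{-1}$ the paper simply observes that the drift of $X_1^{-p}$ is bounded above (your Bernoulli/Jensen argument works but is more than needed), and for $X_2^{-1}$ the paper's Young inequality yields $G_p(x_1,x_2)\leq C(x_1^{p+2}+x_1^{-p})$, thereby also invoking the negative moments of $X_1$, whereas your splitting leaves only a $CX_1^{p+2}$ residual plus a $C'V^p$ term absorbed by Gronwall---a slightly cleaner closure.
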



\begin{proof}
By \cite[Theorem 2.1 a)]{Sc86}, there exists a unique strong solution $X$ to the SDE \eqref{eq_Brusselator} such that $X_1(t)\geq0$ and $X_2(t)\geq0$ for any $t\in[0,\infty)$ a.s. Let $T\in(0,\infty)$ and $p\in[1,\infty)$ be fixed. In this proof, we denote by $C$ a positive constant depending only on $p,T,\kappa_1,\kappa_2$ and the Lipschitz constants of $\theta_1$ and $\theta_2$, which varies from line to line.

We first show that $\bE[\|X_T\|^p_\infty]<\infty$. To do so, it suffices to show that $\bE[\sup_{t\in[0,T]}(X_1(t)+X_2(t))^p]<\infty$. Observe that
\begin{equation*}
	X_1(t)+X_2(t)=x_1(0)+x_2(0)+\int^t_0\big\{\kappa_1-X_1(s)\big\}\,\diff s+\int^t_0\theta_1(X_1(s))\,\diff W_1(s)+\int^t_0\theta_2(X_2(s))\,\diff W_2(s)
\end{equation*}
for any $t\in[0,T]$ a.s. For each $N\in\bN$, define $\tau_N:=\inf\{t\geq0\,|\,X_1(t)+X_2(t)\geq N\}$ and $a_N(t):=\bE[\sup_{s\in[0,t\wedge\tau_N]}(X_1(s)+X_2(s))^p]$ for $t\in[0,T]$. Noting that $\theta_1,\theta_2:[0,\infty)\to\bR$ are globally Lipschitz continuous, by using the Burkholder--Davis--Gundy inequality, we get
\begin{equation*}
	a_N(t)\leq C\left\{1+|x(0)|^p+\int^t_0a_N(s)\,\diff s\right\}
\end{equation*}
for any $t\in[0,T]$. By Gronwall's inequality, we get $a_N(T)\leq C\{1+|x(0)|^p\}$. Letting $N\to\infty$, Fatou's lemma yields that $\bE[\sup_{t\in[0,T]}(X_1(t)+X_2(t))^p]\leq C(1+|x(0)|^p)$, and hence $\bE[\|X_T\|^p_\infty]<\infty$.

Next, we show that $X_1(t)\in(0,\infty)$ for any $t\in[0,T]$ a.s.\ and that
\begin{equation}\label{eq_Brusselator-estimate1}
	\bE\left[\sup_{t\in[0,T]}X_1(t)^{-p}\right]<\infty.
\end{equation}
For each $N\in\bN$, define $\varsigma_{1,N}:=\inf\{t\geq0\,|\,X_1(t)\leq 1/N\}$. Applying It\^o's formula to $X_1(t\wedge\varsigma_{1,N})^{-p}$, we obtain
\begin{align*}
	X_1(t\wedge\varsigma_{1,N})^{-p}=x_1(0)^{-p}+p\int^{t\wedge\varsigma_{1,N}}_0F_p(X_1(s),X_2(s))\,\diff s-p\int^{t\wedge\varsigma_{1,N}}_0X_1(s)^{-p-1}\theta_1(X_1(s))\,\diff W_1(s)
\end{align*}
for any $t\in[0,T]$ a.s., where $F_p:(0,\infty)\times[0,\infty)\to\bR$ is defined by
\begin{equation*}
	F_p(x_1,x_2):=-\kappa_1x_1^{-p-1}+(\kappa_2+1)x_1^{-p}-x_2x_1^{-p+1}+\frac{p+1}{2}x_1^{-p-2}|\theta_1(x_1)|^2
\end{equation*}
for $(x_1,x_2)\in(0,\infty)\times[0,\infty)$. Since $\theta_1:[0,\infty)\to\bR$ is globally Lipschitz continuous and satisfies $\theta_1(0)=0$, we have $|\theta_1(x_1)|\leq Cx_1$ for any $x_1\in(0,\infty)$. From this and the positivity of the constant $\kappa_1$, we see that the function $F_p$ is bounded from above on $(0,\infty)\times[0,\infty)$. Thus, by the Burkholder--Davis--Gundy inequality and Young's inequality, we obtain
\begin{align*}
	\bE\left[\sup_{s\in[0,t\wedge\varsigma_{1,N}]}X_1(s)^{-p}\right]&\leq x_1(0)^{-p}+C+C\bE\left[\left(\int^{t\wedge\varsigma_{1,N}}_0X_1(s)^{-2p-2}|\theta_1(X_1(s))|^2\,\diff s\right)^{1/2}\right]\\
	&\leq x_1(0)^{-p}+C+C\bE\left[\left(\int^{t\wedge\varsigma_{1,N}}_0X_1(s)^{-2p}\,\diff s\right)^{1/2}\right]\\
	&\leq x_1(0)^{-p}+C+C\bE\left[\left(\int^{t\wedge\varsigma_{1,N}}_0X_1(s)^{-p}\,\diff s\right)^{1/2}\left(\sup_{s\in[t\wedge\varsigma_{1,N}]}X_1(s)^{-p}\right)^{1/2}\right]\\
	&\leq x_1(0)^{-p}+C+C\int^t_0\bE\left[\sup_{r\in[s\wedge\varsigma_{1,N}]}X_1(r)^{-p}\right]\,\diff s+\frac{1}{2}\bE\left[\sup_{s\in[0,t\wedge\varsigma_{1,N}]}X_1(s)^{-p}\right],
\end{align*}
and hence
\begin{equation*}
	\bE\left[\sup_{s\in[0,t\wedge\varsigma_{1,N}]}X_1(s)^{-p}\right]\leq C\left\{1+x_1(0)^{-p}+\int^t_0\bE\left[\sup_{r\in[s\wedge\varsigma_{1,N}]}X_1(r)^{-p}\right]\,\diff s\right\}
\end{equation*}
for any $t\in[0,T]$. By Gronwall's inequality, we get
\begin{equation*}
	\bE\left[\sup_{s\in[0,T\wedge\varsigma_{1,N}]}X_1(s)^{-p}\right]\leq C\big(1+x_1(0)^{-p}\big).
\end{equation*}
This implies that $\bP(\varsigma_{1,N}>T)\to1$ as $N\to\infty$, and hence $X_1(t)>0$ for any $t\in[0,T]$ a.s. Also, by letting $N\to\infty$ in the above estimate, Fatou's lemma yields that \eqref{eq_Brusselator-estimate1} holds.

Lastly, we show that $X_2(t)\in(0,\infty)$ for any $t\in[0,T]$ a.s.\ and that
\begin{equation}\label{eq_Brusselator-estimate2}
	\bE\left[\sup_{s\in[0,T]}X_2(s)^{-p}\right]\leq C\big(1+|x(0)^{-1}|^p+|x(0)|^{p+2}\big).
\end{equation}
Define $\varsigma_{2,N}:=\inf\{t\geq0\,|\,X_2(t)\leq1/N\}$ for each $N\in\bN$. Applying It\^o's formula to $X_2(t\wedge\varsigma_{2,N})^{-p}$, we have
\begin{equation*}
	X_2(t\wedge\varsigma_{2,N})^{-p}=x_2(0)^{-p}+p\int^{t\wedge\varsigma_{2,N}}_0G_p(X_1(s),X_2(s))\,\diff s-p\int^{t\wedge\varsigma_{2,N}}_0X_2(s)^{-p-1}\theta_2(X_2(s))\,\diff W_2(s)
\end{equation*}
for any $t\in[0,T]$ a.s., where $G_p:(0,\infty)\times(0,\infty)\to\bR$ is defined by
\begin{equation*}
	G_p(x_1,x_2):=-\kappa_2x_1x_2^{-p-1}+x_2^{-p}x_1^2+\frac{p+1}{2}x_2^{-p-2}|\theta_2(x_2)|^2
\end{equation*}
for $(x_1,x_2)\in(0,\infty)\times(0,\infty)$. Since $\kappa_2>0$ and $|\theta_2(x_2)|\leq C x_2$ for any $x_2\in(0,\infty)$, by using Young's inequality, we see that $G_p(x_1,x_2)\leq C(x_1^{p+2}+x_1^{-p})$ for any $(x_1,x_2)\in(0,\infty)\times(0,\infty)$. In the above arguments, we have shown that
\begin{equation*}
	\bE\left[\sup_{t\in[0,T]}X_1(t)^{p+2}\right]\leq C(1+|x(0)|^{p+2})\ \ \text{and}\ \ \bE\left[\sup_{t\in[0,T]}X_1(t)^{-p}\right]\leq C(1+x_1(0)^{-p}).
\end{equation*}
Therefore, by the same argument using the Burkholder--Davis--Gundy inequality as above, we get
\begin{equation*}
	\bE\left[\sup_{s\in[0,T\wedge\varsigma_{2,N}]}X_2(s)^{-p}\right]\leq C\big(1+|x(0)^{-1}|^p+|x(0)|^{p+2}\big).
\end{equation*}
This implies that $\bP(\varsigma_{2,N}>T)\to1$ as $N\to\infty$, and hence $X_2(t)>0$ for any $t\in[0,T]$ a.s. Also, by letting $N\to\infty$ in the above estimate, Fatou's lemma yields that \eqref{eq_Brusselator-estimate2} holds. This completes the proof.
\end{proof}

The following result is an immediate consequence of \cref{lemm_Brusselator-moment} and \cref{cor_rate-polynomial} (i). Here, we require that the functions $\theta_1,\theta_2:[0,\infty)\to\bR$ are non-degenerate on $(0,\infty)$ in a suitable sense as specified below.


\begin{theo}\label{theo_Brusselator}
Let $\kappa_1,\kappa_2\in(0,\infty)$ be fixed constants, and let $\theta_1,\theta_2:[0,\infty)\to\bR$ be two globally Lipschitz continuous functions with $\theta_1(0)=\theta_2(0)=0$ and $\sup_{x\in[0,\infty)}|\theta_2(x)|<\infty$. Assume that there exist constants $C,r\in(0,\infty)$ such that
\begin{equation*}
	|\theta_1(x)|\wedge|\theta_2(x)|\geq\frac{1}{C(1+x^r+x^{-r})}
\end{equation*}
for any $x\in(0,\infty)$. Let $\weaksol$ be the weak solution of the SDE \eqref{eq_Brusselator} on $D=(0,\infty)^2$ with initial condition $X(0)=x(0)\in D$.
For each $T\in(0,\infty)$ and $\pi\in\Pi_T$, let $X^\pi$ be the Euler--Maruyama scheme defined on $(\Omega^\pi,\cF^\pi,\bP^\pi)$ and given by \eqref{eq_EM-Markov} with initial condition $X^\pi(t_0)=x(0)$.
Then, for any $\gamma\in(0,\frac{1}{2})$, there exists a constant $C_\gamma\in(0,\infty)$ such that
\begin{equation*}
	d_\LP\big(\Law_\bP(X_T),\Law_{\bP^\pi}(\poly[X^\pi])\big)\leq C_\gamma|\pi|^\gamma
\end{equation*}
for any $\pi\in\Pi_{T}$.
\end{theo}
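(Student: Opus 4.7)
The plan is to apply \cref{cor_rate-polynomial} (i) to the Markovian SDE \eqref{eq_Brusselator}, viewed as an SFDE with data $(D,\delta_{x(0)},b,\sigma)$ where $D=(0,\infty)^2$ and $b,\sigma$ are the path-functionals induced from $\bar b,\bar\sigma$ as in \cref{subsubsec_Markov}. Existence of a (strong, hence weak) solution whose paths stay in $D$ is already supplied by \cref{lemm_Brusselator-moment}, so what remains is to verify the three polynomial conditions (G'), (C'), (E') with H\"{o}lder exponents $\alpha_b=\alpha_\sigma=1$, together with the moment bound \eqref{eq_moment}.

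For (G') and (C'), the key observation is that on $D=(0,\infty)^2$ the functional $\Psi(t,x)=\|x_t\|_\infty+\|x_t^{-1}\|_\infty$ dominates each coordinate and its reciprocal simultaneously. The worst term of the drift, $\bar x_2\bar x_1^2$, is therefore bounded by $\Psi(t,x)^3$, yielding (G') with $p=3$. The local Lipschitz part of (C') follows from the elementary factorization $\bar x_2\bar x_1^2-\bar y_2\bar y_1^2=(\bar x_1-\bar y_1)(\bar x_1+\bar y_1)\bar x_2+\bar y_1^2(\bar x_2-\bar y_2)$, which produces a polynomial weight of degree $2$; the time-oscillation part (C')(ii) reduces to the same factorization applied along the path. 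The diffusion $\bar\sigma$ is diagonal with globally Lipschitz entries vanishing at $0$, so $|\theta_i(\bar x_i)|\le L|\bar x_i|\le L\Psi(t,x)$ and (G'), (C') for $\sigma$ with $\alpha_\sigma=1$ are immediate.

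For (E'), I will invoke the non-degeneracy hypothesis $|\theta_i(x)|\ge\{C(1+x^r+x^{-r})\}^{-1}$. Since $\bar\sigma\bar\sigma^\top$ is diagonal with entries $|\theta_i(\bar x_i)|^2$, and since each $\bar x_i$ as well as $\bar x_i^{-1}$ is dominated by $\Psi(t,x)$, this gives
\begin{equation*}
	\langle\bar\sigma(\bar x)\bar\sigma(\bar x)^\top\xi,\xi\rangle\ge\frac{|\xi|^2}{\check C^2(1+\Psi(t,x)^r)^2},
\end{equation*}
which is (E') with exponent $r$. Taking $p:=\max\{3,r\}$ unifies the three polynomial conditions. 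The moment hypothesis \eqref{eq_moment} in the present setting reads $\bE[\|X_T\|_\infty^q+\|X_T^{-1}\|_\infty^q]<\infty$ for every $q\in[1,\infty)$, which is exactly the content of \cref{lemm_Brusselator-moment}. Assembling these ingredients, \cref{cor_rate-polynomial} (i) produces the claimed rate $|\pi|^\gamma$ for every $\gamma<\tfrac{\alpha_b}{2}\wedge(\alpha_\sigma-\tfrac12)=\tfrac12$.

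The genuine work is really concentrated in \cref{lemm_Brusselator-moment} itself, while the verifications of (G'), (C'), (E') amount to routine polynomial bookkeeping once $\Psi(t,x)$ is recognized as the right gauge. The main obstacle is thus not in the present theorem but in the underlying moment lemma, where the super-linear cross-term $X_1^2X_2$ blocks a direct one-sided linear growth argument, and where non-explosion of $X$ together with strict positivity of both components must be controlled by hand through It\^{o}'s formula applied to $X_i^{-p}$ and the cancellations produced by the negative drift contributions $-\kappa_1 x_1^{-p-1}$ and $-\kappa_2 x_1 x_2^{-p-1}$.
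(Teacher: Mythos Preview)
Your proposal is correct and follows exactly the route taken in the paper: the theorem is obtained as an immediate consequence of \cref{lemm_Brusselator-moment} and \cref{cor_rate-polynomial}~(i), and your explicit verification of (G'), (C'), (E') with $\alpha_b=\alpha_\sigma=1$ and $p=\max\{3,r\}$ simply spells out what the paper leaves implicit. The closing paragraph correctly identifies that the substantive analysis lies in the moment lemma rather than in the theorem itself.
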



\subsection{A\"it--Sahalia-type interest rate model with delay}\label{subsec_AitSahalia}

Consider the following A\"it--Sahalia-type interest rate model with delay
\begin{equation}\label{eq_AitSahalia}
	\begin{dcases}
	\diff X(t)=\big\{\kappa_{-1}X(t)^{-1}-\kappa_0+\kappa_1X(t)-\kappa_2X(t)^{\rho}\big\}\,\diff t+\eta(X(t-\tau))X(t)^\theta\,\diff W(t),\ \ t\in[0,\infty),\\
	X(t)=\xi(t),\ \ t\in[-\tau,0],
	\end{dcases}
\end{equation}
where $\xi:[-\tau,0]\to(0,\infty)$ is a given continuous function, $\kappa_{-1},\kappa_{0},\kappa_{1},\kappa_{2} \in (0,\infty)$ and $\rho,\theta\in(1,\infty)$ are parameters, $\eta:(0,\infty)\to(0,\infty)$ is a measurable function, and $\tau\in(0,\infty)$ is a constant which represents the length of delay. The term $\eta(X(t-\tau))$ represents the past-level-dependent volatility function. Observe that the equation \eqref{eq_AitSahalia} has strongly nonlinear terms $X(t)^{-1}$, $X(t)^\rho$ and $X(t)^\theta$ in both drift and diffusion coefficients as well as the delay term $\eta(X(t-\tau))$ in the diffusion coefficient. The equation \eqref{eq_AitSahalia} can be seen as an SDDE \eqref{eq_delay} on $D=(0,\infty)$ with $d=n=1$ and coefficients $\bar{b},\bar{\sigma}:D \times D \to \bR$ defined by
\begin{equation*}
	\bar{b}(x,y)=
		\kappa_{-1}x^{-1}-\kappa_0+\kappa_1x-\kappa_2x^{\rho},
		\ \ 
		\bar{\sigma}(x,y)=
		\eta(y)x^{\theta},\ \ x,y\in D.
\end{equation*}
Hence, the equation \eqref{eq_AitSahalia} fits into our framework; see \cref{subsubsec_delay}.

The standard A\"it--Sahalia model (that is, \eqref{eq_AitSahalia} with $\eta$ being constant) has been applied in the field of mathematical finance \cite{Ai96} as an interest rate model.
Coffie and Mao \cite{CoMa21} study A\"it--Sahalia model with delay \eqref{eq_AitSahalia} and prove that, under the assumptions that $\eta$ is bounded and measurable and that $1+\rho>2\theta$, the equation has unique solution which satisfies $X(t)\in D$ for any $t\in[0,\infty)$ $\bP$-a.s. (see \cite[Theorem 2.3]{CoMa21}).

Due to the strong nonlinear terms as well as the delay term, existing results on the standard Euler--Maruyama scheme can not be applied to \eqref{eq_AitSahalia}. In order to overcome this difficulty, in the standard A\"it--Sahalia model without delay, Szpruch, Mao, Higham and Pan \cite{SzMaHiPa10} and Neuenkirch and Szpruch \cite{NeSz14} consider backward(-type) Euler--Maruyama schemes, and Chassagneux, Jacquier and Mihaylov \cite{ChJaMi16} consider a truncated Euler--Maruyama scheme. In the case with delay, Coffie and Mao \cite{CoMa21} consider a truncated Euler--Maruyama scheme.

We will apply \cref{cor_rate-polynomial} (i) to this setting.
To do so, we need to check the moment condition \eqref{eq_moment}.


\begin{lemm}\label{lemm_AitSahalia-moment}
Let $\kappa_{-1},\kappa_0,\kappa_1,\kappa_2\in(0,\infty)$, $\rho,\theta\in(1,\infty)$ and $\tau\in(0,\infty)$ be fixed constants, let $\eta:(0,\infty)\to(0,\infty)$ be a bounded measurable function, and let $\xi:[-\tau,0]\to(0,\infty)$ be a given continuous function. Assume that $1+\rho>2\theta$. Then, there exists a unique strong solution $X$ of the SDDE \eqref{eq_AitSahalia} on $D=(0,\infty)$. Furthermore, for any $T\in(0,\infty)$ and $p\in[1,\infty)$, it holds that
\begin{equation*}
	\bE\big[\|X_T\|_\infty^p+\|X^{-1}_T\|_\infty^p\big]<\infty.
\end{equation*}
\end{lemm}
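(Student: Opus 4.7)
My plan is to combine a step-by-step existence argument with Lyapunov-type moment estimates, exploiting the boundedness of $\eta$ to treat $\eta(X(t-\tau))$ as a known bounded volatility factor on each subinterval of length $\tau$. For \emph{existence and uniqueness}, I would work inductively on $I_k=[(k-1)\tau,k\tau]$, $k\geq1$: on each $I_k$ the delay argument $X(\cdot-\tau)$ is already known from the previous step, so $t\mapsto\eta(X(t-\tau))$ is a bounded progressively measurable coefficient, and the equation on $I_k$ reduces to a non-delayed A\"it--Sahalia-type SDE with time-inhomogeneous bounded multiplicative volatility. Under the assumption $1+\rho>2\theta$, strong existence and uniqueness in $D=(0,\infty)$ on each $I_k$ follow from \cite[Theorem 2.3]{CoMa21}, and concatenating the pieces yields a unique global strong solution with strictly positive paths $\bP$-a.s.

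For the \emph{positive moments}, I would fix $p\geq2$ and apply It\^{o}'s formula to $X(t)^p$ up to the localizing time $\tau_N:=\inf\{t\geq 0\mid X(t)\notin[1/N,N]\}$. The bound $\eta(\cdot)^2\leq\|\eta\|_\infty^2$ gives the pointwise drift-plus-quadratic-variation estimate
\begin{equation*}
	pX^{p-1}b(X)+\tfrac{p(p-1)}{2}X^{p-2}\eta(X(t-\tau))^2 X^{2\theta}\leq p\kappa_{-1}X^{p-2}+p\kappa_1 X^p-p\kappa_2 X^{p+\rho-1}+\tfrac{p(p-1)}{2}\|\eta\|_\infty^2 X^{p+2\theta-2},
\end{equation*}
and the structural input $1+\rho>2\theta$ yields $p+\rho-1>p+2\theta-2$, so Young's inequality absorbs the diffusion and lower-order polynomial terms into $-\tfrac{p\kappa_2}{2}X^{p+\rho-1}+C_p$. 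Taking expectations kills the martingale, Gronwall's inequality produces $\sup_{t\in[0,T]}\bE[X(t\wedge\tau_N)^p]\leq C_{p,T}$, and sending $N\to\infty$ via Fatou followed by a BDG supremum upgrade (in the spirit of the proof of \cref{lemm_Brusselator-moment}) gives $\bE[\|X_T\|_\infty^p]<\infty$ for every $p\in[1,\infty)$.

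For the \emph{negative moments}, I would apply It\^{o}'s formula to $X(t)^{-p}$ on $[0,t\wedge\tau_N]$; the drift-plus-quadratic-variation term expands to
\begin{equation*}
	-p\kappa_{-1}X^{-p-2}+p\kappa_0 X^{-p-1}-p\kappa_1 X^{-p}+p\kappa_2 X^{-p+\rho-1}+\tfrac{p(p+1)}{2}\eta(X(t-\tau))^2 X^{-p-2+2\theta}.
\end{equation*}
Near zero the strictly negative $-p\kappa_{-1}X^{-p-2}$ dominates the diffusion contribution $\tfrac{p(p+1)}{2}\|\eta\|_\infty^2 X^{-p-2+2\theta}$ via Young's inequality, since $2\theta>0$ gives the exponent ordering $-p-2<-p-2+2\theta$; the problematic positive power $p\kappa_2 X^{-p+\rho-1}$ for large $X$ is controlled by $p\kappa_2(1+\|X_T\|_\infty^{p+\rho})$, which has finite expectation by the positive-moment step. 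Taking expectation, Gronwall, Fatou and a BDG supremum upgrade then yield $\bE[\|X_T^{-1}\|_\infty^p]<\infty$. The main obstacle — and the reason the two steps must be ordered this way — is precisely the bad sign of the cross term $+p\kappa_2 X^{-p+\rho-1}$ in the $X^{-p}$ expansion: the hypothesis $1+\rho>2\theta$ is used directly in the positive-moment step to make dissipation dominate, and indirectly in the negative-moment step through the positive-moment bound needed to absorb this cross term. The delay itself is essentially free because $\eta$ is bounded; without that boundedness the two estimates would couple non-trivially through the past of the process and a separate delay-induced Gronwall argument would be required.
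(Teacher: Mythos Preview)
Your proposal is correct and follows the same Itô--Young--Gronwall--BDG template as the paper. The only difference is one of packaging: the paper invokes \cite[Theorem 2.3]{CoMa21} directly for existence/uniqueness (so your inductive step-by-step construction on $[(k-1)\tau,k\tau]$ is redundant) and cites \cite[Lemmas 2.4 and 2.5]{CoMa21} for $\bE[\sup_{t\in[0,T]}X(t)^p]$, $\sup_t\bE[X(t)^p]$ and $\sup_t\bE[X(t)^{-p}]$, leaving only the upgrade to $\bE[\sup_{t\in[0,T]}X(t)^{-p}]$ via Itô and BDG. Your argument simply unfolds those citations, using the same structural input $1+\rho>2\theta$ to make the $-\kappa_2X^{p+\rho-1}$ term dominate in the positive-moment Lyapunov computation and the $-\kappa_{-1}X^{-p-2}$ term dominate near zero in the negative-moment one. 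One small point of care: in your BDG upgrade for the negative moment, the quadratic variation involves $X^{-2p-2+2\theta}$, so when $\theta<p+1$ you need the fixed-time negative moment at the larger exponent $2p+2-2\theta$; this is fine provided you first establish fixed-time negative moments for \emph{all} orders before doing any sup-upgrade, which your sketch implicitly does.
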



\begin{proof}
By \cite[Theorem 2.3]{CoMa21}, there exists a unique strong solution $X$ of the SDDE \eqref{eq_AitSahalia} on $D=(0,\infty)$. Furthermore, \cite[Lemmas 2.4 and 2.5]{CoMa21} show that
\begin{equation}\label{eq_AitSahalia-estimate}
	\bE\left[\sup_{t\in[0,T]}X(t)^p\right]+\sup_{t\in[0,\infty)}\bE\big[X(t)^p\big]+\sup_{t\in[0,\infty)}\bE\big[X(t)^{-p}\big]<\infty\ \ \text{for any $T\in(0,\infty)$ and $p\in[1,\infty)$}.
\end{equation}
Thus, it remains to show that
\begin{equation}\label{eq_AitSahalia-estimate'}
	\bE\left[\sup_{t\in[0,T]}X(t)^{-p}\right]<\infty
\end{equation}
for any $T\in(0,\infty)$ and $p\in[1,\infty)$.

Let $T\in(0,\infty)$ and $p\in[1,\infty)$ be fixed. Then, by It\^{o}'s formula, we get
\begin{equation}\label{eq_AitSahalia-Ito}
	X(t)^{-p}=\xi(0)^{-p}+p\int^t_0F_p(X(s),X(s-\tau))\,\diff s-p\int^t_0\eta(X(s-\tau))X(s)^{-p-1+\theta}\,\diff s
\end{equation}
for $t\in[0,T]$, where
\begin{equation*}
	F_p(x,y):=-\kappa_1x^{-p-2}+\kappa_0x^{-p-1}-\kappa_1x^{-p}+\kappa_2x^{-p-1+\rho}+\frac{p+1}{2}\eta(y)^2x^{-p-2+2\theta}
\end{equation*}
for $(x,y)\in(0,\infty)\times(0,\infty)$. Applying the Burkholder--Davis--Gundy inequality to \eqref{eq_AitSahalia-Ito} and noting that $\eta$ is bounded, we obtain \eqref{eq_AitSahalia-estimate'} from the estimate \eqref{eq_AitSahalia-estimate}. This completes the proof.
\end{proof}

By \cref{lemm_AitSahalia-moment} and \cref{cor_rate-polynomial} (i), we immediately obtain the following result.


\begin{theo}\label{theo_AitSahalia}
Let $\kappa_{-1},\kappa_0,\kappa_1,\kappa_2\in(0,\infty)$, $\rho,\theta\in(1,\infty)$ and $\tau\in(0,\infty)$ be fixed constants, let $\eta:(0,\infty)\to(0,\infty)$ be a bounded measurable function, and let $\xi:[-\tau,0]\to(0,\infty)$ be a given continuous function. Assume that
$1+\rho>2\theta$. Furthermore, assume that there exist constants $C,r\in(0,\infty)$ and $\alpha_\eta\in(\frac{1}{2},1]$ such that
\begin{equation*}
	|\xi(t)-\xi(s)|\leq C(t-s)^{1/2}
\end{equation*}
for any $-\tau\leq s\leq t\leq0$, and
\begin{equation*}
	|\eta(x)-\eta(y)|\leq C\left(1+x^r+x^{-r}+y^r+y^{-r}\right)|x-y|^{\alpha_\eta},\ \ \eta(x)\geq\frac{1}{C\left(1+x^r+x^{-r}\right)},
\end{equation*}
for any $x,y\in(0,\infty)$. Let $\weaksol$ be the weak solution of the SDDE \eqref{eq_AitSahalia} on $D=(0,\infty)$.
For each $T\in(0,\infty)$ and $\pi\in\Pi_T$, let $X^\pi$ be the Euler--Maruyama scheme defined on $(\Omega^\pi,\cF^\pi,\bP^\pi)$ and given by \eqref{eq_EM} with initial condition $X^\pi(t_0)=\xi(0)$.
Then, for any $\gamma\in(0,\alpha_\eta-\frac{1}{2})$, there exists a constant $C_\gamma\in(0,\infty)$ such that
\begin{equation*}
	d_\LP\big(\Law_\bP(X_T),\Law_{\bP^\pi}(\poly[X^\pi])\big)\leq C_\gamma|\pi|^\gamma
\end{equation*}
for any $\pi\in\Pi_{T}$.
\end{theo}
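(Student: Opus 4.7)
The plan is to apply \cref{cor_rate-polynomial}(i) directly, viewing the SDDE \eqref{eq_AitSahalia} as an SFDE on $D=(0,\infty)$ via the embedding spelled out in \cref{subsubsec_delay}. The moment condition \eqref{eq_moment} is already supplied by \cref{lemm_AitSahalia-moment}, so the remaining task is to verify the polynomial conditions (G'), (C'), and (E') for
\begin{equation*}
\bar{b}(x,y)=\kappa_{-1}x^{-1}-\kappa_0+\kappa_1 x-\kappa_2 x^\rho,\qquad \bar{\sigma}(x,y)=\eta(y)x^\theta,
\end{equation*}
interpreted through \eqref{eq_coefficient-delay}, and to identify the resulting H\"older exponents $\alpha_b$ and $\alpha_\sigma$.

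Conditions (G') and (E') are essentially immediate. Since $\eta$ is bounded above and $\Psi(t,x)=\|x_t\|_\infty+\|x_t^{-1}\|_\infty$, every monomial $x^{-1},x,x^\rho,x^\theta$ is dominated by $C(1+\Psi(t,x)^p)$ for $p=\max\{\rho,\theta,r,1\}$, which gives (G'). For (E'), writing $|\sigma(t,x)|^2\geq C^{-1}(1+x(t-\tau)^r+x(t-\tau)^{-r})^{-2}x(t)^{2\theta}$ and combining the elementary bounds $x(t)^{2\theta}\geq\|x_t^{-1}\|_\infty^{-2\theta}$ with $x(t-\tau)^{\pm r}\leq\Psi(t,x)^r$ produces a polynomial lower bound of the form demanded by (E').

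The main work is verifying (C'). Smoothness of $x\mapsto x^{-1},x,x^\rho$ on $(0,\infty)$ together with the mean value theorem gives (C1') for $b$ with polynomial weights in $\Psi$ and exponent $\alpha_b=1$. For $\sigma$, the factor $x(t)^\theta$ is locally Lipschitz in $x$, while the delayed factor $\eta(x(t-\tau))$ is locally $\alpha_\eta$-H\"older by hypothesis; one downgrades the Lipschitz piece to $\alpha_\eta$-H\"older by splitting $|z|=|z|^{\alpha_\eta}|z|^{1-\alpha_\eta}$ and absorbing $|z|^{1-\alpha_\eta}\leq(\|x_t\|_\infty+\|y_t\|_\infty)^{1-\alpha_\eta}$ into the polynomial multiplier, which gives $\alpha_\sigma=\alpha_\eta$. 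The more delicate step — and the reason the $1/2$-H\"older hypothesis on $\xi$ is imposed — is the time-regularity \eqref{eq_C2} for $\sigma$. Following the three-case template of \cref{subsubsec_delay} ($0\leq s\leq t\leq\tau$, $0\leq s\leq\tau<t$, $\tau<s\leq t$), the variation of the delayed argument is controlled in the first case by $|\xi(t-\tau)-\xi(s-\tau)|\leq C(t-s)^{1/2}$, producing a $(t-s)^{\alpha_\eta/2}$ contribution that is absorbed into $\{\varpi(x_t;t-s)+(t-s)^{1/2}\}^{\alpha_\eta}$; in the other two cases the matching value $x(0)=\xi(0)$ at the junction together with the modulus of continuity $\varpi(x_t;t-s)$ handles the variation of $x(\cdot-\tau)$.

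Having checked (G'), (C'), (E') with $\alpha_b=1\in(0,1]$ and $\alpha_\sigma=\alpha_\eta\in(1/2,1]$, and with the moment bound from \cref{lemm_AitSahalia-moment} in hand, \cref{cor_rate-polynomial}(i) delivers the convergence rate $|\pi|^\gamma$ for every $\gamma<\frac{\alpha_b}{2}\wedge(\alpha_\sigma-\tfrac{1}{2})=\alpha_\eta-\tfrac{1}{2}$ (using $\alpha_\eta\leq1$), which is precisely the conclusion of \cref{theo_AitSahalia}. The principal obstacle is the bookkeeping in the time-H\"older estimate for $\sigma$ across $t=\tau$, but this is a direct replay of the general computation in \cref{subsubsec_delay} adapted to the explicit structure of the A\"it--Sahalia coefficients.
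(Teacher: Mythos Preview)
Your proposal is correct and matches the paper's approach exactly: the paper states that \cref{theo_AitSahalia} follows immediately from \cref{lemm_AitSahalia-moment} and \cref{cor_rate-polynomial}(i), leaving the verification of (G'), (C'), (E') implicit via the general SDDE framework of \cref{subsubsec_delay}. Your write-up simply makes explicit the routine checks (including the downgrade of the Lipschitz factor $x^\theta$ to $\alpha_\eta$-H\"older and the role of the $1/2$-H\"older hypothesis on $\xi$ in the time-regularity estimate) that the paper takes for granted.
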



\subsection{Stochastic delay differential neoclassical growth model}\label{subsec_neoclassical}

The examination of economic growth models is one of the most frequently discussed issues in mathematical economics. Here, we consider the following $n$-connected stochastic delay differential neoclassical growth model:
\begin{equation}\label{eq_neoclassical}
	\begin{dcases}
	\diff X_i(t)=\left\{-a_iX_i(t)+\sum_{j\neq i}b_{i,j}X_j(t)+c_iX_i(t-\tau_i)^{\gamma_i}e^{-\delta_iX_i(t-\tau_i)}\right\}\,\diff t+\theta_iX_i(t)\,\diff W_i(t),\ \ t\in[0,\infty),\\
	X_i(t)=\xi_i(t),\ \ t\in[-\tau_i,0],\ \ i\in\{1,\dots,n\},
	\end{dcases}
\end{equation}
where $a_i,b_{i,j},c_i,\gamma_i,\delta_i,\tau_i,\theta_i\in(0,\infty)$ are given parameters, and $\xi_i:[-\tau_i,0]\to(0,\infty)$ is a given continuous function for each $i\in\{1,\dots,n\}$. The stochastic delay differential neoclassical growth model \eqref{eq_neoclassical} is studied in \cite{WaCh19} for the case of $n=1$, in \cite{Sh19} for the case of $n=2$ and in \cite{AlKh22} for the case of $n=3$. In the model \eqref{eq_neoclassical}, $X_i(t)$ stands for the capital per labor at time $t$ in the patch $i$, $a_i$ is the sum of labor growth rate and capital depreciation rate multiplied by average saving rate, $b_{i,j}$ is the dispersal coefficient of the capital from patch $j$ to patch $i$, $\theta_i$ denotes noise intensity, and the term $c_iX_i(t-\tau_i)^{\gamma_i}e^{-\delta_iX_i(t-\tau_i)}$ describes the delayed reproduction function for patch $i$.
For more detailed economic interpretations, see for example \cite{AlKh22,Sh19,WaCh19}.
When $\gamma_i=1$, the model \eqref{eq_neoclassical} reduces to the famous Nicholson's blowflies model \cite{YiLi19}.

The stochastic delay differential neoclassical growth model \eqref{eq_neoclassical} can be seen as an SFDE \eqref{eq_SFDE} with data $(D,\mu_0,b,\sigma)=((0,\infty)^n,\delta_{\xi(0)},b,\sigma)$, where $b:[0,\infty)\times\cC^n[\{\xi(0)\};D]\to\bR^n$ and $\sigma:[0,\infty)\times\cC^n[\{\xi(0)\};D]\to\bR^{n\times n}$ are given by
\begin{align*}
	&b_i(t,x)=\bar{b}_i(x(t),\xi_i(t-\tau_i))\1_{[0,\tau_i)}(t)+\bar{b}_i(x(t),x(t-\tau_i))\1_{[\tau_i,\infty)}(t),\ \ i\in\{1,\dots,n\},\\
	&
	\sigma_{i,j}(t,x)=\1_{i=j}\theta_ix_i(t),\ \ i,j\in\{1,\dots,n\},
\end{align*}
for $(t,x)\in[0,\infty)\times\cC^n[\{\xi(0)\};D]$, and $\bar{b}=(\bar{b}_{1},\ldots,\bar{b}_{n})^{\top}:(0,\infty)^{n} \times (0,\infty)^{n} \to \bR^{n}$ is defined by
\begin{align*}
\bar{b}_{i}(\bar{x},\bar{y})
=
-a_i\bar{x}_{i}
+
\sum_{j\neq i}b_{i,j}\bar{x}_{j}
+
c_i \bar{y}_{i}^{\gamma_i}e^{-\delta_i \bar{y}_{i}}
,\ \ \bar{x},\bar{y} \in (0,\infty)^{n},\ \ i\in\{1,\dots,n\}.
\end{align*}
To the best of our knowledge, there is no existing work on numerical approximations for the stochastic delay differential neoclassical growth model \eqref{eq_neoclassical}.

Notice that \eqref{eq_neoclassical} has multiple delay terms. Nevertheless, similar arguments as in \cref{subsubsec_delay} are applicable to this setting. In particular, we can easily show that the conditions (G'), (C') and (E') in \cref{cor_rate-polynomial} hold with $D=(0,\infty)^n$. We will apply \cref{cor_rate-polynomial} (i) to this setting.
To do so, we need to check the moment condition \eqref{eq_moment}.


\begin{lemm}\label{lemm_neoclassical-moment}
Let $n\in\bN$ and $a_i,b_{i,j},c_i,\gamma_i,\delta_i,\tau_i,\theta_i\in(0,\infty)$ with $i,j\in\{1,\dots,n\}$ be fixed, and let $\xi_i:[-\tau_i,0]\to(0,\infty)$ be a continuous function for each $i\in\{1,\dots,n\}$. Then, there exists a unique strong solution $X=(X_1,\dots,X_N)^\top$ of the SDDE \eqref{eq_neoclassical} on $D=(0,\infty)^n$. Furthermore, for any $p\in[2,\infty)$, there exists a constant $C_p\in(0,\infty)$, which depends only on $a_i,b_{i,j},c_i,\gamma_i,\delta_i,\theta_i$ with $i,j\in\{1,\dots,n\}$ as well as $p$, such that
\begin{equation}\label{eq_neoclassical-estimate}
	\bE\big[\|X_T\|_\infty^p\big]\leq C_pe^{C_pT}\big(1+|\xi(0)|^p\big)\ \ \text{and}\ \ \bE\big[\|X^{-1}_T\|_\infty^p\big]\leq C_pe^{C_pT}\big(1+|\xi(0)^{-1}|^p\big).
\end{equation}
for any $T\in(0,\infty)$.
\end{lemm}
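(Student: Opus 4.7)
My plan is to establish existence, uniqueness and strict positivity of the solution by a step-by-step construction on the delay intervals, and then derive the two moment bounds by applying It\^o's formula to the Lyapunov functions $V_+(x) := \sum_{i=1}^n x_i^p$ and $V_-(x) := \sum_{i=1}^n x_i^{-p}$ respectively. The crucial structural observation used throughout is that the Ricker-type reproduction term $g_i(y) := c_i y^{\gamma_i} e^{-\delta_i y}$ is globally bounded on $[0,\infty)$ by $M_i := c_i (\gamma_i/(\delta_i e))^{\gamma_i}$, so the delayed contribution to the drift always acts as an $L^\infty$ perturbation, in sharp contrast to the A\"it--Sahalia case treated just above.

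For existence, uniqueness and positivity, I would order the delay breakpoints $0 < \tau_{(1)} < \tau_{(2)} < \dots$ obtained from $\{k\tau_i\}_{i,k}$ and proceed inductively on $[\tau_{(\ell)},\tau_{(\ell+1)}]$. On each such subinterval the delayed arguments become known (continuous) inputs from the previous step, so the equation reduces to a linear multi-dimensional SDE with geometric diffusion and nonnegative bounded affine forcing, for which strong existence, uniqueness, and all moments follow from standard theory. Componentwise, each $X_i$ satisfies $\diff X_i = (-a_i X_i + h_i(t))\,\diff t + \theta_i X_i\,\diff W_i$ with a progressively measurable $h_i(t) \geq 0$; the variation-of-constants formula then gives a strictly positive explicit expression in terms of a stochastic exponential, which yields $X_i(t) > 0$ on the subinterval and allows the induction to extend the solution to all of $[0,\infty)$.

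For the upper estimate, It\^o's formula applied to $V_+(X(t))$ produces three kinds of drift contributions. The diagonal terms $-p a_i X_i^p$ and $\tfrac{p(p-1)\theta_i^2}{2} X_i^p$ combine linearly in $V_+$; the off-diagonal coupling $p X_i^{p-1}\sum_{j\neq i} b_{i,j} X_j$ is controlled by Young's inequality $X_i^{p-1} X_j \leq \tfrac{p-1}{p} X_i^p + \tfrac{1}{p} X_j^p$ and therefore by $C_p V_+$; and the delayed term $p X_i^{p-1} g_i(X_i(t-\tau_i))$ is bounded by $p M_i X_i^{p-1} \leq C_p(1 + X_i^p)$ thanks to the uniform bound on $g_i$. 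The martingale part has quadratic variation $\sum_i p^2\theta_i^2 X_i^{2p}\,\diff t$, so the Burkholder--Davis--Gundy inequality combined with the standard ``$\sup$-absorption'' trick and Gronwall's inequality yield $\bE[\sup_{t\in[0,T]} V_+(X(t))] \leq C_p e^{C_p T}(1 + V_+(X(0)))$, which is equivalent to the first inequality in \eqref{eq_neoclassical-estimate}.

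For the lower estimate I work componentwise, applying It\^o's formula to $X_i(t\wedge\varsigma_N^{(i)})^{-p}$ with the localizing stopping time $\varsigma_N^{(i)} := \inf\{t\geq 0 : X_i(t) \leq 1/N\}$. The decisive feature is that every nonlinear-in-state contribution to the drift of $X_i$ is nonnegative, so after multiplication by $-p X_i^{-p-1} < 0$ those terms contribute \emph{negatively} to the drift of $X_i^{-p}$ and can simply be discarded; only $-a_i X_i$ and the It\^o correction survive as positive contributions, giving the clean inequality
\begin{equation*}
\diff(X_i^{-p}) \leq K_p^{(i)} X_i^{-p}\,\diff t - p\theta_i X_i^{-p}\,\diff W_i,\quad K_p^{(i)} := pa_i + \tfrac{p(p+1)\theta_i^2}{2}.
\end{equation*}
Combining $\big(\int_0^{t\wedge\varsigma_N^{(i)}} X_i^{-2p}\,\diff s\big)^{1/2} \leq \big(\sup_{s} X_i^{-p}\big)^{1/2}\big(\int X_i^{-p}\,\diff s\big)^{1/2}$ with Young's inequality to absorb the supremum, BDG and Gronwall on $[0,t\wedge\varsigma_N^{(i)}]$ give a uniform-in-$N$ bound; this in turn forces $\bP(\varsigma_N^{(i)} \leq T) = O(N^{-p})$, so $\varsigma_N^{(i)} \to \infty$ almost surely, and Fatou yields $\bE[\sup_{t\leq T} X_i(t)^{-p}] \leq C_p e^{C_p T}(1 + X_i(0)^{-p})$; summing over $i$ closes the second estimate. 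The main obstacle is precisely this interlocking of positivity and the negative-moment bound: one cannot apply It\^o to $X_i^{-p}$ without a priori positivity, yet the cleanest route to positivity runs through an a priori bound on $X_i^{-p}$, so both arguments must be run simultaneously under the localization $\varsigma_N^{(i)}$.
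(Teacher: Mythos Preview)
Your proposal is correct and follows essentially the same route as the paper. The paper organizes the argument slightly differently: rather than running the It\^o/BDG/Gronwall machinery by hand, it verifies that the truncated data on $\{X\in(\tfrac{1}{N},N)^n\}$ satisfies the one-sided linear-growth conditions \eqref{appendix_eq_LG} and \eqref{appendix_eq_LG-negative} of \cref{appendix_lemm_LG}, with constants independent of $N$, and then invokes that lemma; but the content of \cref{appendix_lemm_LG} is precisely the It\^o computation on $|X|^p$ and $|X^{-1}|^p$ that you carry out. The two key structural observations---boundedness of $y\mapsto c_iy^{\gamma_i}e^{-\delta_iy}$ for the upper bound, and nonnegativity of the off-diagonal and delay contributions to the drift (since $b_{i,j},c_i>0$) for the lower bound---are identical in both arguments. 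Your step-by-step existence via variation of constants is more explicit than the paper's citation of a local-Lipschitz maximal solution, but both are standard.
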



\begin{proof}
Since the coefficients of the SDDE \eqref{eq_neoclassical} are locally Lipschitz continuous on $D=(0,\infty)^n$, by a standard truncation argument, we see that there exists a pathwise unique maximum local solution $X=(X_1,\dots,X_n)^\top$ up to the explosion time $\zeta:=\inf\{t\geq0\,|\,X(t)\notin D\}$; see \cite[Theorem 3.1]{Ma07}.

For each $N\in\bN$, define $\tau_N:=\inf\{t\geq0\,|\,X(t)\notin(\frac{1}{N},N)^n\}$ and $A_N:=[0,\infty)\times\cC^n[\{\xi(0)\};(\frac{1}{N},N)^n]$. Notice that $(X_{\tau_N},W,\Omega,\cF,\bF,\bP)$ is a weak solution of the SFDE with data $((0,\infty)^n,\delta_{\xi(0)},\1_{A_N}b,\1_{A_N}\sigma)$. Noting that the function $y\mapsto y^\gamma e^{-\delta y}$ is bounded on $(0,\infty)$ for any constants $\gamma,\delta>0$, straightforward calculations show that the coefficients $\1_{A_N}b$ and $\1_{A_N}\sigma$ satisfy the condition in \cref{appendix_lemm_LG} (i). Also, thanks to $b_{i,j},c_i>0$ for each $i,j\in\{1,\dots,n\}$, we see that the condition in \cref{appendix_lemm_LG} (ii) holds as well. Furthermore, the corresponding constant $\widehat{C}$ appearing in \cref{appendix_lemm_LG} depends only on $a_i,b_{i,j},c_i,\gamma_i,\delta_i,\theta_i$ with $i,j\in\{1,\dots,n\}$ but not on $N$. Hence, \cref{appendix_lemm_LG} (i) and (ii) yield that
\begin{equation}\label{eq_neoclassical-estimate'}
	\bE\big[\|X_{T\wedge\tau_N}\|_\infty^p\big]\leq C_pe^{C_pT}\big(1+|\xi(0)|^p\big)\ \ \text{and}\ \ \bE\big[\|X^{-1}_{T\wedge\tau_N}\|_\infty^p\big]\leq C_pe^{C_pT}\big(1+|\xi(0)^{-1}|^p\big)
\end{equation}
for any $T\in(0,\infty)$, $p\in[2,\infty)$ and $N\in\bN$. Here, the positive constant $C_p$ depends only on $a_i,b_{i,j},c_i,\gamma_i,\delta_i,\theta_i$ with $i,j\in\{1,\dots,n\}$ and $p$. The estimate \eqref{eq_neoclassical-estimate'} shows that $\zeta=\infty$ a.s. Moreover, letting $N\to\infty$ in \eqref{eq_neoclassical-estimate'} and using Fatou's lemma, we get the desired estimate \eqref{eq_neoclassical-estimate}. This completes the proof.
\end{proof}

By \cref{lemm_neoclassical-moment} and \cref{cor_rate-polynomial} (i), we immediately obtain the following result.


\begin{theo}\label{theo_neoclassical}
Let $n\in\bN$ and $a_i,b_{i,j},c_i,\gamma_i,\delta_i,\tau_i,\theta_i\in(0,\infty)$ with $i,j\in\{1,\dots,n\}$ be fixed. For each $i\in\{1,\dots,n\}$, let $\xi_i:[-\tau_i,0]\to(0,\infty)$ be a continuous function such that
\begin{equation*}
	|\xi_i(t)-\xi_i(s)|\leq C(t-s)^{1/2}
\end{equation*}
for any $-\tau_i\leq s\leq t\leq 0$ for some constant $C\in(0,\infty)$. Let $\weaksol$ be the weak solution of the SDDE \eqref{eq_neoclassical} on $D=(0,\infty)^n$.
For each $T\in(0,\infty)$ and $\pi\in\Pi_T$, let $X^\pi$ be the Euler--Maruyama scheme defined on $(\Omega^\pi,\cF^\pi,\bP^\pi)$ and given by \eqref{eq_EM} with initial condition $X^\pi(t_0)=\xi(0)$.
Then, for any $\gamma\in(0,\frac{1}{2})$, there exists a constant $C_\gamma\in(0,\infty)$ such that
\begin{equation*}
	d_\LP\big(\Law_\bP(X_T),\Law_{\bP^\pi}(\poly[X^\pi])\big)\leq C_\gamma|\pi|^\gamma
\end{equation*}
for any $\pi\in\Pi_{T}$.
\end{theo}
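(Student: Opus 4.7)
The strategy is to apply \cref{cor_rate-polynomial} (i) to the SDDE \eqref{eq_neoclassical}, which has already been identified as a special instance of the SFDE framework in \cref{subsubsec_delay} (the multi-delay case being a direct extension of the single-delay construction therein). The two inputs required for that corollary are (a) the polynomial structure conditions (G'), (C'), (E') with suitable H\"{o}lder exponents $\alpha_b$ and $\alpha_\sigma$, and (b) the moment condition \eqref{eq_moment}. The latter is precisely the content of \cref{lemm_neoclassical-moment}, so the bulk of the proof reduces to verifying (a) with exponents $\alpha_b=\alpha_\sigma=1$, which will yield the asserted rate range $\gamma\in(0,\frac{\alpha_b}{2}\wedge(\alpha_\sigma-\frac{1}{2}))=(0,\frac{1}{2})$.

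The first step is to express the coefficients $b,\sigma$ of the corresponding SFDE following the template of \eqref{eq_coefficient-delay}, with one indicator for each distinct delay $\tau_i$ splitting the history-dependent drift into the pre-delay phase (where $X_i(t-\tau_i)$ is replaced by the prescribed initial $\xi_i(t-\tau_i)$) and the post-delay phase. Since the diffusion is diagonal with $\sigma_{i,j}(t,x)=\1_{i=j}\theta_ix_i(t)$, it is entirely path-independent (modulo pointwise evaluation) and does not involve the delay. The next step is to verify the three structural conditions with $\Psi(t,x)=\|x_t\|_\infty+\|x_t^{-1}\|_\infty$. For (G'): the linear terms in the drift are controlled by $\|x_t\|_\infty$, the dispersal terms by $\|x_t\|_\infty$, and the reproduction terms $c_iy^{\gamma_i}e^{-\delta_iy}$ are uniformly bounded on $(0,\infty)$; the diffusion $\theta_ix_i(t)$ is bounded by $\theta_i\|x_t\|_\infty$. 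For (E'): the ellipticity constant is $\min_i\theta_i^2x_i(t)^2$, which is bounded below by $(\min_i\theta_i)^2/\|x_t^{-1}\|_\infty^2$, yielding the required polynomial lower bound in terms of $\Psi$. For (C'): on the locus where $\Psi(t,x)\leq R$, both the drift (which is $C^1$ on $(0,\infty)^n\times(0,\infty)^n$ because $y^\gamma e^{-\delta y}$ is smooth) and the diffusion are Lipschitz with constants polynomial in $\Psi$, so both the spatial and temporal H\"{o}lder bounds follow with $\alpha_b=\alpha_\sigma=1$.

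The main obstacle lies in the time-continuity clause of (C') for the drift $b$ across the delay transition instants $t=\tau_i$, where the expression switches from using the deterministic initial segment $\xi_i$ to the stochastic path $x(\cdot-\tau_i)$. This has to be handled by the three-case analysis already demonstrated in \cref{subsubsec_delay} (namely $0\leq s\leq t\leq\tau_i$, $0\leq s\leq\tau_i<t\leq T$, and $\tau_i<s\leq t\leq T$), and the argument relies crucially on the assumption that each initial function $\xi_i$ is $\frac{1}{2}$-H\"{o}lder continuous, which is exactly the hypothesis imposed in the statement. With this decomposition performed component by component for the $n$ different delays and aggregated by the triangle inequality, one obtains the required bound of the form $C(1+\Psi(t,x)^p)\{\varpi(x_t;t-s)+(t-s)^{1/2}\}$.

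Once all three structural conditions are verified, \cref{lemm_neoclassical-moment} provides the moment estimate $\bE[\Psi(T,X)^q]<\infty$ for every $q\in[1,\infty)$ (since both $\bE[\|X_T\|_\infty^q]$ and $\bE[\|X_T^{-1}\|_\infty^q]$ are finite by that lemma). Applying \cref{cor_rate-polynomial} (i) with $\alpha_b=\alpha_\sigma=1$ then yields, for every $\gamma\in(0,\frac{1}{2})$, a constant $C_\gamma\in(0,\infty)$ such that the L\'{e}vy--Prokhorov error is bounded by $C_\gamma|\pi|^\gamma$ uniformly over $\pi\in\Pi_T$, which is the claim.
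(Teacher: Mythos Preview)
Your proposal is correct and follows essentially the same approach as the paper: verify conditions (G'), (C'), (E') with $\alpha_b=\alpha_\sigma=1$ (using the multi-delay extension of the analysis in \cref{subsubsec_delay} and the $\frac{1}{2}$-H\"older continuity of each $\xi_i$), invoke \cref{lemm_neoclassical-moment} for the moment condition \eqref{eq_moment}, and then apply \cref{cor_rate-polynomial} (i). The paper's own proof is the single sentence ``By \cref{lemm_neoclassical-moment} and \cref{cor_rate-polynomial} (i), we immediately obtain the following result'', so your write-up simply makes explicit what the paper leaves to the reader.
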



\subsection{Reflected Ornstein--Uhlenbeck process}\label{subsec_ROU}

The reflected Ornstein--Uhlenbeck process is a mean-reversion model having a reflection and plays a crucial role in the field of queueing models with reneging or balking.
Here, as in \cite{RiSa87,WaGl03}, we consider the reflected Ornstein--Uhlenbeck process on $[0,\infty)$ which is defined as the solution of the following reflected SDE:
\begin{equation}\label{eq_ROU}
\diff \check{X}(t)
=
(\kappa_{1}-\kappa_{2}\check{X}(t))\,\diff t
+
\theta\,\diff W(t)
+
\diff \Phi_{\check{X}}(t)
,\ \ t\in[0,\infty),\ \ \check{X}(0)=\xi \in [0,\infty),
\end{equation}
where $\kappa_1\in \bR$ and $\kappa_2, \theta \in (0,\infty)$ are given parameters. By \cite[Theorem 3.1]{Zh94}, there exists a unique strong solution to the reflected SDE \eqref{eq_ROU}.
In the context of numerical approximations of reflected SDEs, L\'epingle \cite{Le95} study the rate of convergence in the $L^{2}$-sup norm on $[0,T]$ by means of the Euler--Maruyama scheme.

As mentioned in Section \ref{subsubsec_reflected}, the reflected SDE \eqref{eq_ROU} can be transformed to an SFDE involving the Skorokhod map $\Gamma$ given by \eqref{eq_reflected-Skorokhod}. The corresponding SFDE is of the following form:
\begin{align}\label{eq_ROU2}
\diff X(t)
=
\left\{\kappa_{1}-\kappa_{2}\Big(X(t)-\min_{s \in [0,t]}(X(s)\wedge 0)\Big)\right\}\,\diff t
+
\theta\,\diff W(t),\ \ t \in [0,\infty),\ \ X(0)=\xi.
\end{align}
The above can be seen as an SFDE \eqref{eq_SFDE} with data $(\bR,\delta_{\xi},b,\sigma)$, where the coefficients $b,\sigma:[0,\infty)\times\cC[\{\xi\};\bR]\to\bR$ are given by
\begin{equation*}
	b(t,x)=\kappa_1-\kappa_2\left(x(t)-\min_{s\in[0,t]}(x(s)\wedge0)\right),\ \ \sigma(t,x)=\theta,
\end{equation*}
for $(t,x)\in[0,\infty)\times\cC[\{\xi\};\bR]$. Clearly, the data of the SFDE \eqref{eq_ROU2} satisfies the conditions (G'), (C') and (E') in \cref{cor_rate-polynomial}. Furthermore, by using \cref{appendix_lemm_LG} (i), we see that the solution $X$ satisfies $\bE[\|X_T\|_\infty^p]<\infty$ for any $T\in(0,\infty)$ and $p\in[2,\infty)$. Therefore, the moment condition \eqref{eq_moment} in \cref{cor_rate-polynomial} (i) holds, and we immediately obtain the following result.


\begin{theo}
Let $\kappa_1\in \bR,\kappa_2, \theta \in (0,\infty)$ be fixed parameters, and let $\weaksol$ be the weak solution of the SFDE \eqref{eq_ROU2} with initial condition $X(0)=\xi \in [0,\infty)$.
For each $T\in(0,\infty)$ and $\pi\in\Pi_T$, let $X^\pi$ be the Euler--Maruyama scheme defined on $(\Omega^\pi,\cF^\pi,\bP^\pi)$ and given by \eqref{eq_EM-reflected} with initial condition $X^{\pi}(t_0)=\xi$.
Then, for any $\gamma\in(0,\frac{1}{2})$, there exists a constant $C_\gamma\in(0,\infty)$ such that
\begin{equation*}
d_\LP\big(\Law_\bP(X_T),\Law_{\bP^\pi}(\poly[X^\pi]\big)\leq C_\gamma|\pi|^\gamma
\end{equation*}
for any $\pi\in\Pi_{T}$.
\end{theo}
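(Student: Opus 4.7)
The plan is to reduce the claim to a direct application of \cref{cor_rate-polynomial}(i) with $D=\bR$, polynomial exponent $p=1$, and H\"older exponents $\alpha_b=\alpha_\sigma=1$; the admissible range $\gamma\in(0,\tfrac{\alpha_b}{2}\wedge(\alpha_\sigma-\tfrac12))=(0,\tfrac12)$ then matches the statement exactly, and the constant $C_\gamma$ is supplied by the corollary. Three things need verifying: the structural conditions (G'), (C'), (E') for the data $(\bR,\delta_\xi,b,\sigma)$ of \eqref{eq_ROU2}, and the moment condition \eqref{eq_moment} for the given weak solution $X$.

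For the structural conditions I would invoke the elementary bounds on the Skorokhod map $\Gamma$ recorded in \eqref{eq_reflected-Skorokhod-Lip}, namely $\Gamma 0=0$ (which gives $|(\Gamma x)(t)|\le 2\|x_t\|_\infty$), together with $|(\Gamma x)(t)-(\Gamma y)(t)|\le 2\|x_t-y_t\|_\infty$ and $|(\Gamma x)(t)-(\Gamma x)(s)|\le 2\varpi(x_t;t-s)$. Since $\sigma\equiv\theta\neq 0$ is constant, (G'), (C'), and (E') for the $\sigma$-part are immediate with $\alpha_\sigma=1$. For the drift $b(t,x)=\kappa_1-\kappa_2(\Gamma x)(t)$ the three Skorokhod estimates yield at once $|b(t,x)|\le|\kappa_1|+2\kappa_2\|x_t\|_\infty$, $|b(t,x)-b(t,y)|\le 2\kappa_2\|x_t-y_t\|_\infty$, and $|b(t,x)-b(s,x)|\le 2\kappa_2\varpi(x_t;t-s)$, which is (G') with $p=1$ and (C') with $\alpha_b=1$.

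The moment condition \eqref{eq_moment} follows from the linear growth of $b$ in $\|x_t\|_\infty$ together with the bounded (constant) diffusion: by Burkholder--Davis--Gundy, H\"older, and Gronwall one obtains $\bE[\|X_T\|_\infty^p]<\infty$ for every $p\in[2,\infty)$ and every $T\in(0,\infty)$. This is exactly the content of \cref{appendix_lemm_LG}(i) applied to the present coefficients, and in the case $D=\bR$ the quantity $\Psi(t,X)$ in the corollary reduces to $\|X_t\|_\infty$, so \eqref{eq_moment} is established. With (G'), (C'), (E'), and \eqref{eq_moment} in hand, \cref{cor_rate-polynomial}(i) yields the conclusion. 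There is no real obstacle beyond recording the three Skorokhod estimates above, because all the analytic work has been absorbed into \cref{theo_main} and \cref{cor_rate-polynomial}; the reflection, which at first looks like an extra difficulty, enters the argument only through the Lipschitz and H\"older bounds on $\Gamma$, and thus never leaves the ``path-dependent but Lipschitz'' regime that the corollary is tailored to.
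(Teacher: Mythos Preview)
Your proposal is correct and follows essentially the same route as the paper: verify (G'), (C'), (E') for the data $(\bR,\delta_\xi,b,\sigma)$ via the Skorokhod-map bounds \eqref{eq_reflected-Skorokhod-Lip}, check the moment condition \eqref{eq_moment} by invoking \cref{appendix_lemm_LG}(i), and then apply \cref{cor_rate-polynomial}(i). The paper's own argument is slightly terser (it simply declares that (G'), (C'), (E') ``clearly'' hold), but the logical structure is identical.
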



\subsection{Stochastic Duffing--van der Pol oscillator}\label{subsec_Duffing}
The Duffing--van der Pol oscillator is important in the theory of stability and bifurcations of nonlinear dynamical systems (see for example \cite{HoRa80}).
As in \cite{ScHo96}, we consider the system incorporating an affine-linear noise term.
To be specific, let $\kappa_1,\kappa_2,\kappa_3,\theta_1,\theta_2,\theta_3\in\bR$ be fixed parameters with $\kappa_3\geq0$, and consider the stochastic Duffing--van der Pol oscillator formally described by
\begin{equation*}
	\ddot{x}-\kappa_1x+\kappa_2\dot{x}+\kappa_3\dot{x}x^2+x^3=\theta_1x\dot{W}_1+\theta_2\dot{x}\dot{W}_2+\theta_3\dot{W}_3,\ \ (x(0),\dot{x}(0))=(\xi,\dot{\xi}),
\end{equation*}
where $\xi,\dot{\xi}\in\bR$ are initial conditions, and $\dot{W}_1,\dot{W}_2,\dot{W}_3$ are three independent one-dimensional white noises. The above is rewritten as the $2$-dimensional Markovian SDE for $(x,\dot{x})$:
\begin{equation}\label{eq_Duffing-SDE}
	\begin{dcases}
	\diff x(t)=\dot{x}(t)\,\diff t,\ \ t\in[0,\infty)\\
	\diff\dot{x}(t)=\left\{\kappa_1x(t)-\kappa_2\dot{x}(t)-\kappa_3\dot{x}(t)x(t)^2-x(t)^3\right\}\,\diff t\\
	\hspace{2cm}+\theta_1x(t)\,\diff W_1(t)+\theta_2\dot{x}(t)\,\diff W_2(t)+\theta_3\,\diff W_3(t),\ \ t\in[0,\infty),\\
	(x(0),\dot{x}(0))=(\xi,\dot{\xi}).
	\end{dcases}
\end{equation}
In view of numerical analysis, the SDE \eqref{eq_Duffing-SDE} has at least two difficulties, namely, the super-linearity of the drift coefficient and degeneracy of the diffusion coefficient. On the one hand, due to the super-linearity of the drift coefficient, the standard Euler--Maruyama scheme of the above SDE might diverge in the $L^p$ sense for any $p\in(0,\infty)$ (see \cite{HuJeKl11}). Hutzenthaler and Jentzen \cite{HuJe15} then consider an appropriately modified Euler--Maruyama scheme for \eqref{eq_Duffing-SDE} and show its convergence to the true solution $(x,\dot{x})^{\top}$ at each fixed time $T\in(0,\infty)$ in the $L^{p}$ sense for any $p\in(0,\infty)$. However, they do not obtain any convergence rate for it. On the other hand, due to the degeneracy of the diffusion coefficient, our main results can not be applied to the SDE \eqref{eq_Duffing-SDE} directly.

In order to overcome the second difficulty mentioned above, as in \cref{subsubsec_oscillator}, we consider the equivalent stochastic integro-differential equation \eqref{eq_oscillator2} for $X=\dot{x}$ which is of the following form:
\begin{equation}\label{eq_Duffing-SFDE}
	\begin{dcases}
	\diff X(t)=\left\{\kappa_1\left(\xi+\int^t_0X(s)\,\diff s\right)-\kappa_2X(t)-\kappa_3X(t)\left(\xi+\int^t_0X(s)\,\diff s\right)^2-\left(\xi+\int^t_0X(s)\,\diff s\right)^3\right\}\,\diff t\\
	\hspace{1.5cm}+\theta_1\left(\xi+\int^t_0X(s)\,\diff s\right)\,\diff W_1(t)+\theta_2X(t)\,\diff W_2(t)+\theta_3\diff W_3(t),\ \ t\in[0,\infty),\\
	X(0)=\dot{\xi}.
	\end{dcases}
\end{equation}
This transformed equation can be seen as an SFDE \eqref{eq_SFDE} with data $(D,\mu_0,b,\sigma)=(\bR,\delta_{\dot{\xi}},b,\sigma)$, where the coefficients $b:[0,\infty)\times\cC[\{\dot{\xi}\};\bR]\to\bR$ and $\sigma:[0,\infty)\times\cC[\{\dot{\xi}\};\bR]\to\bR^{1\times3}$ are given by
\begin{equation*}
	b(t,\dot{x})=\kappa_1\left(\xi+\int^t_0\dot{x}(s)\,\diff s\right)-\kappa_2\dot{x}(t)-\kappa_3\dot{x}(t)\left(\xi+\int^t_0\dot{x}(s)\,\diff s\right)^2-\left(\xi+\int^t_0\dot{x}(s)\,\diff s\right)^3
\end{equation*}
and
\begin{equation*}
	\sigma(t,\dot{x})=
	\begin{pmatrix}\displaystyle
	\theta_1\left(\xi+\int^t_0\dot{x}(s)\,\diff s\right)&\theta_2\dot{x}(t)&\theta_3
	\end{pmatrix}
\end{equation*}
for $(t,\dot{x})\in[0,\infty)\times\cC[\{\dot{\xi}\};\bR]$. Observe that the diffusion coefficient $\sigma$ satisfies the condition (E') in \cref{cor_rate-polynomial} if $\theta_3\neq0$. Also, the arguments in \cref{subsubsec_oscillator} show that the conditions (G') and (C') in \cref{cor_rate-polynomial} hold as well. We will apply \cref{cor_rate-polynomial} (i) to the stochastic integro-differential equation \eqref{eq_Duffing-SFDE} and show the functional type weak convergence of the Euler--Maruyama scheme \eqref{eq_EM-oscillator} to the law of the solution $X$. To do so, we need to check the moment condition \eqref{eq_moment}.
The following moment estimate is based on \cite[Corollary 3.17 and Section 4.3]{HuJe15}.


\begin{lemm}\label{lemm_Duffing-moment}
Let $\kappa_1,\kappa_2,\kappa_3,\theta_1,\theta_2,\theta_3\in\bR$ with $\kappa_3\geq0$ be fixed parameters. Then, for any $\xi,\dot{\xi}\in\bR$, there exists a unique strong solution $X$ to the stochastic integro-differential equation \eqref{eq_Duffing-SFDE} with initial condition $X(0)=\dot{\xi}$ on $D=\bR$. Furthermore, for any $T\in(0,\infty)$ and $p\in[1,\infty)$, it holds that
\begin{equation*}
	\bE\big[\|X_T\|^p_\infty\big]<\infty.
\end{equation*}
\end{lemm}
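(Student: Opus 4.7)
The plan is to exploit the one-to-one correspondence between solutions of the stochastic integro-differential equation \eqref{eq_Duffing-SFDE} and solutions of the $2$-dimensional Markovian SDE \eqref{eq_Duffing-SDE}, established in \cref{subsubsec_oscillator}. Namely, any solution $X$ of \eqref{eq_Duffing-SFDE} determines a solution $(x,\dot{x})=(\xi+\int_0^\cdot X(s)\,\diff s,X)$ of \eqref{eq_Duffing-SDE}, and vice versa. Since the coefficients of \eqref{eq_Duffing-SDE} are locally Lipschitz on $\bR^2$, standard theory (e.g.\ \cite[Chapter 2]{Ma07}) yields a pathwise unique maximal strong solution $(x,\dot{x})$ up to an explosion time $\zeta$. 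Thus the main task is to (i) prove non-explosion ($\zeta=\infty$ a.s.) and (ii) derive the desired supremum moment bounds on $|\dot{x}|$.

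Both (i) and (ii) will be handled simultaneously via a Lyapunov function. Following the Hamiltonian structure of the deterministic Duffing--van der Pol system, I would introduce
\begin{equation*}
	V(x,\dot{x}):=\tfrac{1}{2}\dot{x}^2+\tfrac{1}{4}x^4+ax^2+c,
\end{equation*}
with constants $a,c\in(0,\infty)$ chosen large enough so that $V\geq1$ on $\bR^2$ and $V\geq c_0(x^4+\dot{x}^2)$ outside a compact set for some $c_0>0$. The key computation is that in $\cL V$, where $\cL$ denotes the generator of the SDE \eqref{eq_Duffing-SDE}, the potentially dangerous cubic drift term $-x^3$ produces a contribution $-x^3\dot{x}\cdot\partial_{\dot{x}}V=-x^3\dot{x}$ that exactly cancels $\dot{x}\cdot\partial_xV=\dot{x}x^3+2a\dot{x}x$ up to the harmless cross term $2a\dot{x}x$. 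What remains is
\begin{equation*}
	\cL V=(2a+\kappa_1)\dot{x}x-\kappa_2\dot{x}^2-\kappa_3\dot{x}^2 x^2+\tfrac{1}{2}(\theta_1^2x^2+\theta_2^2\dot{x}^2+\theta_3^2),
\end{equation*}
and since $\kappa_3\geq0$, Young's inequality together with the $\tfrac{1}{4}x^4$ term in $V$ yields $\cL V\leq C(V+1)\leq 2CV$ for some $C=C(\kappa_1,\kappa_2,\theta_1,\theta_2,\theta_3,a,c)$. A further direct estimate gives $|\nabla V\cdot\bar{\sigma}|^2\leq C'V^2$, so for each $p\in[1,\infty)$ one obtains $\cL(V^p)\leq C_p V^p$.

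With these inequalities in hand, I would apply It\^o's formula to $V(x(t\wedge\tau_N),\dot{x}(t\wedge\tau_N))^p$ with $\tau_N:=\inf\{t\geq0\,|\,|x(t)|+|\dot{x}(t)|\geq N\}$, take expectations, and use Gronwall to deduce $\bE[V(x(t\wedge\tau_N),\dot{x}(t\wedge\tau_N))^p]\leq V(\xi,\dot{\xi})^pe^{C_pt}$. Since $V\geq c_0(x^2+\dot{x}^2)$ outside a compact set, the right-hand side is finite and independent of $N$, which forces $\tau_N\to\infty$ a.s.\ and hence $\zeta=\infty$ a.s. Uniqueness of $X$ as a solution of \eqref{eq_Duffing-SFDE} then follows from the uniqueness of $(x,\dot{x})$ in \eqref{eq_Duffing-SDE} via the correspondence. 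Finally, for the supremum moment bound, I would again use It\^o's formula for $V^p$ without stopping, estimate $\sup_{t\in[0,T]}V^p$ by the drift integral plus the martingale part, and apply the Burkholder--Davis--Gundy inequality to the martingale, absorbing $\bE[\sup_{t\in[0,T]}V^p]^{1/2}$ into the left-hand side. This yields $\bE[\sup_{t\in[0,T]}V(x(t),\dot{x}(t))^p]<\infty$, from which $\bE[\|X_T\|_\infty^p]=\bE[\sup_{t\in[0,T]}|\dot{x}(t)|^p]<\infty$ follows immediately.

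The main obstacle is the precise bookkeeping of the exponents in the inequality $\cL(V^p)\leq C_p V^p$: one must ensure that both the $V^{p-1}\cL V$ term and the $V^{p-2}|\nabla V\cdot\bar{\sigma}|^2$ term, which produces mixed polynomials like $\dot{x}^2x^2$, are genuinely controlled by $V^p$ (not by $V^{p+\varepsilon}$). This is where the choice of $V$ with a quartic growth in $x$ but only quadratic in $\dot{x}$ is crucial, and the nonnegativity assumption $\kappa_3\geq0$ plays a decisive role in dominating the $\dot{x}^2x^2$ contribution from $|\nabla V\cdot\bar{\sigma}|^2$ by the negative term $-\kappa_3\dot{x}^2x^2$ in $\cL V$.
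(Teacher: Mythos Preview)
Your approach is correct and self-contained, but it differs from the paper's. The paper outsources the heavy lifting to \cite[Section~4.3]{HuJe15}: it quotes from there both the existence of a pathwise-unique global solution of \eqref{eq_Duffing-SDE} and the pointwise moment bound $\sup_{t\in[0,T]}\bE[|x(t)|^p+|\dot{x}(t)|^p]<\infty$, and then obtains the supremum bound by the elementary step of writing $\dot{x}(t)=\dot{\xi}+\int_0^t(\cdots)\,\diff s+\int_0^t(\cdots)\,\diff W(s)$ and applying the Burkholder--Davis--Gundy inequality to the stochastic integral while controlling the drift integral via the already-known pointwise moments. Your Lyapunov argument with $V(x,\dot{x})=\tfrac12\dot{x}^2+\tfrac14 x^4+ax^2+c$ is essentially what underlies the reference \cite{HuJe15}, so you are re-proving that input from scratch and then handling the supremum directly; this buys a self-contained proof at the cost of more computation.

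One correction to your final paragraph: the role of $\kappa_3\geq0$ is \emph{not} to absorb the $\theta_1^2\dot{x}^2 x^2$ contribution from $|\nabla V\cdot\bar{\sigma}|^2$ into $-\kappa_3\dot{x}^2x^2$ (these appear with different powers of $V$ in $\cL(V^p)$, so such a cancellation is not uniform in $p$ when $\kappa_3=0$). In fact $|\nabla V\cdot\bar{\sigma}|^2\leq C'V^2$ holds outright, because $\dot{x}^2\leq 2V$ and $x^2\leq 2V^{1/2}$ give $\dot{x}^2x^2\leq 4V^{3/2}\leq 4V^2$ once $V\geq1$. The place where $\kappa_3\geq0$ is genuinely needed is the bound $\cL V\leq CV$ itself: if $\kappa_3<0$ then $-\kappa_3\dot{x}^2x^2$ is positive of order $V^{3/2}$, which is not dominated by $CV$. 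With this clarification your scheme goes through for all $p\geq1$.
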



\begin{proof}
As discussed in \cref{subsubsec_oscillator}, the Markovian SDE \eqref{eq_Duffing-SDE} and the stochatic integro-differential equation \eqref{eq_Duffing-SFDE} have one-to-one relation via $(x,\dot{x})=(\xi+\int^\cdot_0X(s)\,\diff s,X)$. By the arguments in \cite[Section 4.3]{HuJe15}, we see that there exists a pathwise unique global solution $(x,\dot{x})$ of the SDE \eqref{eq_Duffing-SDE}, and hence the same is true for the stochastic integro-differential equation \eqref{eq_Duffing-SFDE}. Furthermore, \cite[Section 4.3]{HuJe15} shows that
\begin{equation}\label{eq_Duffing-moment-fixedT}
	\sup_{t\in[0,T]}\bE\big[|x(t)|^p+|\dot{x}(t)|^p\big]<\infty\ \ \text{for any $T\in(0,\infty)$ and $p\in[1,\infty)$}.
\end{equation}
Let $T\in(0,\infty)$ and $p\in[1,\infty)$ be fixed. Observe that, by the second equation in \eqref{eq_Duffing-SDE},
\begin{align*}
	\sup_{t\in[0,T]}|\dot{x}(t)|^p&\leq3^{p-1}|\dot{\xi}|^p+3^{p-1}\left(\int^T_0\Big|\kappa_1x(s)-\kappa_2\dot{x}(s)-\kappa_3\dot{x}(s)x(s)^2-x(s)^3\Big|\,\diff s\right)^p\\
	&\hspace{1cm}+3^{p-1}\sup_{t\in[0,T]}\left|\int^t_0\theta_1x(s)\,\diff W_1(s)+\int^t_0\theta_2\dot{x}(s)\,\diff W_2(s)+\int^t_0\theta_3\,\diff W(s)\right|^p.
\end{align*}
Then, using the Burkholder--Davis--Gundy inequality and \eqref{eq_Duffing-moment-fixedT}, we obtain $\bE[\sup_{t\in[0,T]}|\dot{x}(t)|^p]<\infty$, and hence $\bE[\|X_T\|_\infty^p]<\infty$. This completes the proof.
\end{proof}

By \cref{lemm_Duffing-moment} and \cref{cor_rate-polynomial} (i), we immediately obtain the following result.


\begin{theo}\label{theo_Duffing}
Let $\kappa_1,\kappa_2,\kappa_3,\theta_1,\theta_2,\theta_3\in\bR$ with $\kappa_3\geq0$ be fixed parameters, and assume that $\theta_3\neq0$. Fix $\xi,\dot{\xi}\in\bR$, and let $\weaksol$ be the weak solution of the stochastic integro-differential equation \eqref{eq_Duffing-SFDE} with initial condition $X(0)=\dot{\xi}$ on $D=\bR$.
For each $T\in(0,\infty)$ and $\pi\in\Pi_T$, let $X^\pi$ be the Euler--Maruyama scheme defined on $(\Omega^\pi,\cF^\pi,\bP^\pi)$ and given by \eqref{eq_EM-oscillator} with initial condition $X^\pi(t_0)=\dot{\xi}$.
Then, for any $\gamma\in(0,\frac{1}{2})$, there exist constants $C_\gamma\in(0,\infty)$ such that
\begin{equation*}
	d_\LP\big(\Law_\bP(X_T),\Law_{\bP^\pi}(\poly[X^\pi]\big)\leq C_\gamma|\pi|^\gamma
\end{equation*}
for any $\pi\in\Pi_{T}$.
\end{theo}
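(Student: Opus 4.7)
The plan is to apply \cref{cor_rate-polynomial}~(i) directly to the SFDE \eqref{eq_Duffing-SFDE} viewed as an SFDE of the form \eqref{eq_SFDE} with data $(D,\mu_0,b,\sigma)=(\bR,\delta_{\dot{\xi}},b,\sigma)$, where the coefficients are the ones displayed just after \eqref{eq_Duffing-SFDE}. Three things need to be verified: the three structural conditions (G'), (C'), (E') of \cref{cor_rate-polynomial}; the moment condition \eqref{eq_moment}; and a compatibility check showing that the resulting H\"older exponents $\alpha_b,\alpha_\sigma$ match the claimed range $\gamma\in(0,\tfrac{1}{2})$.

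First, I would verify (G') and (C') with $\alpha_b=\alpha_\sigma=1$ and polynomial exponent $p=3$. Since $f(x_1,x_2)=-\kappa_1x_1+\kappa_2x_2+\kappa_3x_2x_1^2+x_1^3$ and $g(x_1,x_2)=(\theta_1 x_1,\theta_2 x_2,\theta_3)$ are polynomials of degree at most three (and one respectively), the general computation carried out in \cref{subsubsec_oscillator} for polynomial $f$ and $g$ applies with $V:[0,\infty)\to[1,\infty)$ of polynomial type. Using $|\xi+\int_0^t\dot{x}(s)\,\diff s|+|\dot{x}(t)|\leq|\xi|+(1+T)\Psi(t,\dot{x})$ for $t\in[0,T]$ and $\Psi(t,\dot{x})=\|\dot{x}_t\|_\infty$, this yields (G') and (C') on any time interval $[0,T]$ with constants depending on $|\xi|$ and $T$; note that the integral term $\xi+\int_0^t\dot{x}(s)\,\diff s$ is automatically $\tfrac{1}{2}$-H\"older in $t$ with H\"older constant controlled by $\|\dot{x}_t\|_\infty$, which takes care of the second half of (C').

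Next, for the ellipticity (E') the additive component $\theta_3\,\diff W_3(t)$ with $\theta_3\neq0$ gives
\begin{equation*}
	\sigma(t,\dot{x})\sigma(t,\dot{x})^\top=\theta_1^2\Big(\xi+\int_0^t\dot{x}(s)\,\diff s\Big)^2+\theta_2^2\dot{x}(t)^2+\theta_3^2\geq\theta_3^2>0
\end{equation*}
uniformly in $(t,\dot{x})$, so (E') holds with a constant depending only on $|\theta_3|^{-1}$. This is the one place where the assumption $\theta_3\neq0$ enters; it compensates for the degeneracy of the original Markovian SDE \eqref{eq_Duffing-SDE} at the level of $(x,\dot{x})$.

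Finally, with $\alpha_b=\alpha_\sigma=1$ the threshold in \cref{cor_rate-polynomial}~(i) becomes $\tfrac{\alpha_b}{2}\wedge(\alpha_\sigma-\tfrac{1}{2})=\tfrac{1}{2}$, matching the claimed range $\gamma\in(0,\tfrac{1}{2})$. The moment condition \eqref{eq_moment} is exactly \cref{lemm_Duffing-moment}, which was already established. Applying \cref{cor_rate-polynomial}~(i) then yields the stated weak convergence rate and completes the proof. I do not anticipate any serious obstacle: the bookkeeping in verifying (G') and (C') is elementary polynomial manipulation, and the crux of the argument, namely the moment estimate and the non-degeneracy induced by $\theta_3\neq0$, has either been prepared by \cref{lemm_Duffing-moment} or is immediate from the structure of $\sigma$.
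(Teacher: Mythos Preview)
Your proposal is correct and follows essentially the same approach as the paper: the paper simply states that the result follows immediately from \cref{lemm_Duffing-moment} and \cref{cor_rate-polynomial}~(i), having already noted in the preceding discussion that (G'), (C') hold via the arguments of \cref{subsubsec_oscillator} and that (E') holds thanks to $\theta_3\neq0$. Your write-up provides more explicit detail on the verification of (G'), (C'), (E') than the paper does, but the logical structure is identical.
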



\subsection{Wright--Fisher diffusion with seed bank}\label{subsec_WF}

The Wright--Fisher diffusion is a classical probabilistic object in population genetics, which describes the scaling limit of the fraction of the neutral allele in a large haploid population. 
Recently, Blath et al.\ \cite{BlCaKuWi16} derived the Wright--Fisher diffusion with seed bank, which incorporates the so-called strong seed-bank effect to the classical Wright--Fisher model.
Here, as in \cite{BlBuCaWi19}, we consider the Wright--Fisher diffusion with seed bank including mutation, which is defined as the $[0,1]^2$-valued strong Markov process $(X(t),Y(t))^\top$ solving the following SDE:
\begin{equation}\label{eq_WF}
	\begin{dcases}
	\diff X(t)=\big\{\!-\kappa_1X(t)+\kappa_2(1-X(t))+\kappa_3(Y(t)-X(t))\big\}\,\diff t+\theta\sqrt{X(t)(1-X(t))}\,\diff W(t),\\
	\diff Y(t)=\big\{\!-\kappa_1'Y(t)+\kappa_2'(1-Y(t))+\kappa_3'(X(t)-Y(t))\big\}\,\diff t,\ \ t\in[0,\infty),\\
	(X(0),Y(0))^\top=(x(0),y(0))^\top\in[0,1]^2,\\	
	\end{dcases}
\end{equation}
where $\kappa_1,\kappa_2,\kappa_3,\kappa_1',\kappa_2',\kappa_3'\in[0,\infty)$ and $\theta\in(0,\infty)$ are given parameters.
Notice that, when $\kappa_3=0$, the first line of the above system is reduced to the one-dimensional SDE for the classical Wright--Fisher diffusion.
By \cite[Theorem 3.2]{ShSh80}, there exists a unique strong solution $(X,Y)^\top$ to the SDE \eqref{eq_WF} such that $(X(t),Y(t))^\top\in[0,1]^2$ for any $t\in[0,\infty)$ a.s. Furthermore, \cite[Theorem 3.1]{BlBuCaWi19} shows that the first component $X$ of the solution started from $x(0)\in(0,1)$ will never hit $0$ or $1$ if and only if $2(\kappa_1\wedge\kappa_2)\geq\theta^2$.

Euler--Maruyama-type approximations for the classical Wright--Fisher diffusion (that is, \eqref{eq_WF} with $\kappa_3=0$) have been studied in several papers; Neuenkirch and Szpruch \cite{NeSz14} provide a rate of convergence in the $L^{p}$ sense by means of the Lamperti-backward Euler--Maruyama scheme for $p \in [2,\frac{4(\kappa_{1}\wedge \kappa_{2})}{3\theta^{2}}]$ under the assumption that $2(\kappa_{1} \wedge \kappa_{2}) \geq 3\theta^{2}$, and Mickel and Neuenkirch \cite{MiNe24} provide a rate of convergence in the $L^{1}$ sense by means of the truncated type Euler--Maruyama scheme under the assumption that $\kappa_{1} \wedge \kappa_{2}>0$ (in this case, the Wright--Fisher diffusion can hit $0$ and $1$). In the general case of the SDE \eqref{eq_WF} with $\kappa_3\neq0$, the pathwise uniqueness result \cite[Theorem 3.2]{ShSh80} together with \cite[Theorem D]{KaNa88} imply that the (standard) Euler--Maruyama scheme converges in the $L^{2}$-sup sense, but these results do not provide any convergence rates.

Notice that the system \eqref{eq_WF} is a degenerate $2$-dimensional Markovian SDE driven by $1$-dimensional Brownian motion $W$. The degeneracy prevents us to apply our main result to the system \eqref{eq_WF}. However, using the trick in \cite{BlBuCaWi19}, which is similar to the arguments discussed in \cref{subsubsec_oscillator} for the case of stochastic oscillator models, we can convert the system \eqref{eq_WF} to an SFDE which fits into our framework. Indeed, since the second line of \eqref{eq_WF} is a linear differential equation, the variation of constants formula shows that the second component $Y$ of the solution is represented in terms of the first component $X$:
\begin{equation}\label{eq_WF-Y}
	Y(t)=y(0)e^{-\kappa't}+\int^t_0e^{-\kappa'(t-s)}\big(\kappa_2'+\kappa_3'X(s)\big)\,\diff s,\ \ t\in[0,\infty),
\end{equation}
where $\kappa':=\kappa_1'+\kappa_2'+\kappa_3'$. Inserting this expression to the first equation in \eqref{eq_WF}, we get the following stochastic integro-differential equation for $X$:
\begin{equation}\label{eq_WF-SFDE}
\begin{split}
	\diff X(t)&=\kappa_3\left\{y(0)e^{-\kappa't}+\int^t_0e^{-\kappa'(t-s)}\big(\kappa_2'+\kappa_3'X(s)\big)\,\diff s\right\}\,\diff t\\
	&\hspace{0.5cm}+\big\{-(\kappa_1+\kappa_3)X(t)+\kappa_2(1-X(t))\big\}\,\diff t+\theta\sqrt{X(t)(1-X(t))}\,\diff W(t),\ \ t\in[0,\infty),
\end{split}
\end{equation}
with initial condition $X(0)=x(0)$. Notice that $(X,Y)$ solves the system \eqref{eq_WF} if and only if $X$ solves \eqref{eq_WF-SFDE} and $Y$ is given by \eqref{eq_WF-Y}. The delay term in \eqref{eq_WF-SFDE} reveals the underlying age structure of the model; see \cite{BlBuCaWi19}. Observe that the equation \eqref{eq_WF-SFDE} with initial condition $X(0)=x(0)\in(0,1)$ can be seen as an SFDE \eqref{eq_SFDE} with data $(D,\mu_0,b,\sigma)=((0,1),\delta_{x(0)},b,\sigma)$, where the coefficients $b,\sigma:[0,\infty)\times\cC[\{x(0)\};(0,1)]\to\bR$ are given by
\begin{equation}\label{eq_coefficient-WF}
\begin{split}
	&b(t,x)=\kappa_3\left\{y(0)e^{-\kappa't}+\int^t_0e^{-\kappa'(t-s)}\big(\kappa'_2+\kappa_3'x(s)\big)\,\diff s\right\}-(\kappa_1+\kappa_3)x(t)+\kappa_2(1-x(t)),\\
	&\sigma(t,x)=\theta\sqrt{x(t)(1-x(t))},
\end{split}
\end{equation}
for $(t,x)\in[0,\infty)\times\cC[\{x(0)\};(0,1)]$. The coefficients $b$ and $\sigma$ defined by \eqref{eq_coefficient-WF} are progressively measurable and satisfy \cref{assum} with the choices of the exponents $\alpha_{b}=\alpha_{\sigma}=1$, the sets
\begin{equation}\label{eq_WF-subset}
	D_{\growth,b;T}(R)=D_{\growth,\sigma;T}(R)=D_{\conti,b;T}(R)=D=(0,1),\ D_{\conti,\sigma;T}(R)=D_{\ellip;T}(R)=\left(\frac{1}{R},1-\frac{1}{R}\right),\ \ R\in[1,\infty),
\end{equation}
and some constants $K_{\growth,b;T},K_{\growth,\sigma;T},K_{\conti,b;T},K_{\conti,\sigma;T},K_{\ellip;T}$ depending only on $\kappa_1,\kappa_2,\kappa_3$ and $\theta$.

As mentioned above, if the parameters satisfy $2(\kappa_1\wedge\kappa_2)\geq\theta^2$ and $x(0)\in D=(0,1)$, then the SDE \eqref{eq_WF} and hence the stochastic integro-differential equation \eqref{eq_WF-SFDE} has a unique solution $X$ such that $X(t)\in D$ for any $t\in[0,\infty)$ a.s.; see \cite[Theorem 3.1]{BlBuCaWi19}. Hence, under the conditions that $2(\kappa_1\wedge\kappa_2)\geq\theta^2$ and $x(0)\in D$, \cref{theo_main} (i) yields that
\begin{equation}\label{eq_WF-weakconv}
	\poly[X^\pi]\to X_T\ \ \text{weakly on $\cC_T$ as $|\pi|\downarrow0$ along $\pi\in\Pi_T$ for any $T\in(0,\infty)$},
\end{equation}
where $X^\pi=\EM$ is the Euler--Maruyama scheme given by \eqref{eq_EM} with initial condition $X^\pi(t_0)=x(0)$. In order to get a weak convergence order with respect to the L\'{e}vy--Prokhorov metric, we use \cref{theo_main} (iii). To do so, we need to determine the parameters $\vec{\beta}=(\beta_0,\beta_{\growth,b},\beta_{\growth,\sigma},\beta_{\conti,b},\beta_{\conti,\sigma},\beta_\ellip)\in(0,\infty)^6$ appearing in the assumption \eqref{eq_rare-event} on the probability of the ``rare event''. The following moment estimates play crucial roles for such a purpose.


\begin{lemm}\label{lemm_WF-moment}
Let $\kappa_1,\kappa_2,\kappa_3,\kappa_1',\kappa_2',\kappa_3'\in[0,\infty)$ and $\theta\in(0,\infty)$ be given parameters with $2(\kappa_1\wedge\kappa_2)\geq\theta^2$. Fix $y(0)\in[0,1]$, and let $\weaksol$ be the weak solution of the stochastic integro-differential equation \eqref{eq_WF-SFDE} on $D=(0,1)$ with initial condition $X(0)=x(0)\in D$.
\begin{itemize}
\item[(i)]
Assume that $2\kappa_2>\theta^2$. Then, for any $p\in(0,\frac{2\kappa_2}{\theta^2}-1)$, there exists a constant $C_p\in(0,\infty)$, which depends only on $\kappa_1,\kappa_2,\kappa_3,\theta$ and $p$, such that, for any $T\in(0,\infty)$,
\begin{equation*}
	\bE\left[\sup_{t\in[0,T]}X(t)^{-p}+\int^T_0X(t)^{-p-1}\,\diff t\right]\leq C_pe^{C_p T}x(0)^{-p}.
\end{equation*}
\item[(ii)]
Assume that $2\kappa_1>\theta^2$. Then, for any $p\in(0,\frac{2\kappa_1}{\theta^2}-1)$, there exists a constant $C_p\in(0,\infty)$, which depends only on $\kappa_1,\kappa_2,\kappa_3,\theta$ and $p$, such that, for any $T\in(0,\infty)$,
\begin{equation*}
	\bE\left[\sup_{t\in[0,T]}\big(1-X(t)\big)^{-p}+\int^T_0\big(1-X(t)\big)^{-p-1}\,\diff t\right]\leq C_pe^{C_p T}\big(1-x(0)\big)^{-p}.
\end{equation*}
\end{itemize}
\end{lemm}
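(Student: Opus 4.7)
The strategy for both parts is identical: apply It\^o's formula to $V(x)=x^{-p}$ for (i) and to $V(x)=(1-x)^{-p}$ for (ii), localize by the stopping times $\tau_N:=\inf\{t\geq0\,|\,V(X(t))\geq N\}$, and extract both moment bounds simultaneously from the resulting identity by exploiting the Feller-type threshold condition that makes the coefficient of the most singular term $V(X)\cdot X^{-1}$ (resp.\ $V(X)\cdot(1-X)^{-1}$) strictly negative. Since the assumption $2(\kappa_1\wedge\kappa_2)\geq\theta^2$ guarantees via \cite[Theorem 3.1]{BlBuCaWi19} that $X$ never leaves $(0,1)$, we have $\tau_N\uparrow\infty$ almost surely, and Fatou's lemma at the end removes the localization.

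I would carry out part (i) first. It\^o's formula applied to $X(t)^{-p}$, together with $\diff\langle X\rangle(t)=\theta^2X(1-X)\diff t$, produces, after collecting terms by powers of $X$, a drift of the form $-p\kappa_3I(t)X^{-p-1}-c_1X^{-p-1}+c_2X^{-p}$, where $I(t):=y(0)e^{-\kappa't}+\int^t_0e^{-\kappa'(t-s)}(\kappa_2'+\kappa_3'X(s))\diff s\geq0$, $c_1:=p[\kappa_2-(p+1)\theta^2/2]>0$ by hypothesis, and $c_2:=c_1+p(\kappa_1+\kappa_3)$. Dropping the favourably signed delay term, taking expectations of the stopped equation, dropping the $-c_1$-term on the right, and applying Gr\"onwall's lemma yields $\bE[X(t\wedge\tau_N)^{-p}]\leq x(0)^{-p}e^{c_2t}$; substituting this back before the second drop produces the bound on $\bE\int^{t\wedge\tau_N}_0 X^{-p-1}\diff s$. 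For the supremum estimate, I would return to the pathwise inequality and apply the Burkholder--Davis--Gundy inequality to the stochastic integral, whose quadratic variation is bounded by $p^2\theta^2\int X^{-2p-1}\diff s$; the factorization $X^{-2p-1}=X^{-p}\cdot X^{-p-1}$ combined with Young's inequality absorbs half of $\bE\sup X^{-p}$ to the left-hand side, and the residual $\bE\int X^{-p-1}\diff s$ is already controlled.

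Part (ii) follows the same pattern for $Y(t):=1-X(t)$, but the analogous computation now yields a coefficient of $Y^{-p-1}$ equal to $p\kappa_3(I(t)-1)-p[\kappa_1-(p+1)\theta^2/2]$: the delay term now carries the \emph{wrong} sign and threatens to destroy the Feller threshold. The critical new ingredient is the uniform bound $I(t)\leq 1$, which I would derive from $y(0)\leq 1$, $X(s)\leq 1$, and the elementary relation $\kappa_2'+\kappa_3'\leq\kappa_1'+\kappa_2'+\kappa_3'=\kappa'$. Once this is in place, the coefficient of $Y^{-p-1}$ is dominated by $-p[\kappa_1-(p+1)\theta^2/2]<0$ under the assumption $p<2\kappa_1/\theta^2-1$, and the argument of part (i) applies verbatim with $X$ and $1-X$ exchanged. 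This sign issue for $I(t)$ is the main technical obstacle: a crude estimate such as $I(t)\leq 2$ would force an artificially stronger condition involving $\kappa_3$, and it is precisely the conservation-of-mass relation $\kappa_1'\geq 0$ built into the seed-bank dynamics that rescues the natural Feller threshold $2\kappa_1>\theta^2$.
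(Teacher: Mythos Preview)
Your argument for (i) is correct and matches the paper's: It\^o's formula, drop the favourably signed delay term (since $I(t)=Y(t)\geq0$), localize, then combine BDG, Young's inequality and Gronwall. The only difference is cosmetic ordering---you apply Gronwall first to $\bE[X(t\wedge\tau_N)^{-p}]$, then deduce the integral and supremum bounds, while the paper bounds the integral first, then the supremum via BDG, and applies Gronwall to the supremum at the end.

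For (ii) your direct computation with the bound $I(t)\leq1$ is correct, but the paper takes a shorter route: it simply observes that $1-X$ solves the same stochastic integro-differential equation \eqref{eq_WF-SFDE} with the parameters $(\kappa_1,\kappa_2,\kappa_3,\kappa_1',\kappa_2',\kappa_3',x(0),y(0))$ replaced by $(\kappa_2,\kappa_1,\kappa_3,\kappa_2',\kappa_1',\kappa_3',1-x(0),1-y(0))$, so (ii) follows from (i) with no further work. Your approach has the virtue of making explicit which structural feature rescues the Feller threshold---namely $I(t)=Y(t)\leq1$, which in turn rests on $\kappa_2'+\kappa_3'\leq\kappa'$---whereas the paper's symmetry argument packages this into the observation that the swapped system again has $\tilde y(0)=1-y(0)\in[0,1]$ and $\tilde X=1-X\in[0,1]$. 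Both arguments are ultimately using the same fact that $Y$ stays in $[0,1]$; the paper just avoids re-deriving it.
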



\begin{proof}
Notice that the process $1-X$ solves the stochastic integro-differential equation \eqref{eq_WF-SFDE} with the parameters $(\kappa_1,\kappa_2,\kappa_3,\kappa'_1,\kappa'_2,\kappa_3',\theta,x(0),y(0))$ replaced by $(\kappa_2,\kappa_1,\kappa_3,\kappa'_2,\kappa'_1,\kappa_3',\theta,1-x(0),1-y(0))$. Thus, the statements (i) and (ii) are parallel, and we only need to show (i).

Assume that $2\kappa_2>\theta^2$, and let $p\in(0,\frac{2\kappa_2}{\theta^2}-1)$ be fixed. In this proof, we denote by $C_p$ a positive constant which depends only on $\kappa_1,\kappa_2,\kappa_3,\theta$ and $p$ and varies from line to line. By It\^o's formula, it holds that
\begin{equation}\label{eq_WF-Ito}
\begin{split}
	&X(t)^{-p}+p\left\{\kappa_2-\frac{(p+1)\theta^2}{2}\right\}\int^t_0X(s)^{-p-1}\,\diff s+p\kappa_3\int^t_0X(s)^{-p-1}Y(s)\,\diff s\\
	&=x(0)^{-p}+p\left\{\kappa_1+\kappa_2+\kappa_3-\frac{(p+1)\theta^2}{2}\right\}\int^t_0X(s)^{-p}\,\diff s-p\theta\int^t_0X(s)^{-p-1/2}\sqrt{1-X(s)}\,\diff W(s)
\end{split}
\end{equation}
for any $t\in[0,\infty)$ $\bP$-a.s., where $Y$ is defined by \eqref{eq_WF-Y}. Notice that the three terms in the left-hand side of \eqref{eq_WF-Ito} are nonnegative.
For any $N \in \bN$, define a stopping time $\tau_N$ by $\tau_N:=\inf\{t\geq0\,|\,X(t)\leq 1/N\}$.
Notice that $\tau_N \to\infty$ as $N \to \infty$ $\bP$-a.s.\ since $X(t)\in D$ for any $t\in[0,\infty)$ $\bP$-a.s. Let $T\in(0,\infty)$, and take an arbitrary $T_1\in[0,T]$. On the one hand, letting $t=T_1\wedge\tau_N$ and taking expectations in \eqref{eq_WF-Ito}, we see that
\begin{align*}
	&p\left\{\kappa_2-\frac{(p+1)\theta^2}{2}\right\}\bE\left[\int^{T_1\wedge\tau_N}_0X(s)^{-p-1}\,\diff s\right]\\
	&\leq x(0)^{-p}+p\left\{\kappa_1+\kappa_2+\kappa_3-\frac{(p+1)\theta^2}{2}\right\}\bE\left[\int^{T_1\wedge\tau_N}_0X(s)^{-p}\,\diff s\right].
\end{align*}
Since $\kappa_2-\frac{(p+1)\theta^2}{2}>0$, we get
\begin{equation}\label{eq_WF-estimate1}
	\bE\left[\int^{T_1\wedge\tau_N}_0X(s)^{-p-1}\,\diff s\right]\leq C_p\left\{x(0)^{-p}+\bE\left[\int^{T_1\wedge\tau_N}_0X(s)^{-p}\,\diff s\right]\right\}.
\end{equation}
On the other hand, taking the supremum with respect to $t\in[0,T_1\wedge\tau_N]$ and then taking expectations in \eqref{eq_WF-Ito}, by the Burkholder--Davis--Gundy inequality, we get
\begin{align*}
	&\bE\left[\sup_{t\in[0,T_1]}X(t\wedge\tau_N)^{-p}\right]\\
	&\leq C_p\left\{x(0)^{-p}+\bE\left[\int^{T_1\wedge\tau_N}_0X(s)^{-p}\,\diff s\right]+\bE\left[\left(\int^{T_1\wedge\tau_N}_0X(s)^{-2p-1}\big(1-X(s)\big)\,\diff s\right)^{1/2}\right]\right\}.
\end{align*}
From this, together with the estimate 
\begin{align*}
	&C_p\bE\left[\left(\int^{T_1\wedge\tau_N}_0X(s)^{-2p-1}\big(1-X(s)\big)\,\diff s\right)^{1/2}\right]\\
	&\leq C_p\bE\left[\left(\sup_{s\in[0,T_1]}X(s\wedge\tau_N)^{-p}\right)^{1/2}\left(\int^{T_1\wedge\tau_N}_0X(s)^{-p-1}\,\diff s\right)^{1/2}\right]\\
	&\leq\frac{1}{2}\bE\left[\sup_{s\in[0,T_1]}X(s\wedge\tau_N)^{-p}\right]+C_p\bE\left[\int^{T_1\wedge\tau_N}_0X(s)^{-p-1}\,\diff s\right],
\end{align*}
we obtain
\begin{equation}\label{eq_WF-estimate2}
	\bE\left[\sup_{t\in[0,T_1]}X(t\wedge\tau_N)^{-p}\right]\leq C_p\left\{x(0)^{-p}+\bE\left[\int^{T_1\wedge\tau_N}_0X(s)^{-p}\,\diff s\right]+\bE\left[\int^{T_1\wedge\tau_N}_0X(s)^{-p-1}\,\diff s\right]\right\}.
\end{equation}
By \eqref{eq_WF-estimate1} and \eqref{eq_WF-estimate2}, we get
\begin{align*}
	\bE\left[\sup_{t\in[0,T_1]}X(t\wedge\tau_N)^{-p}\right]&\leq C_p\left\{x(0)^{-p}+\bE\left[\int^{T_1\wedge\tau_N}_0X(s)^{-p}\,\diff s\right]\right\}\\
	&\leq C_p\left\{x(0)^{-p}+\int^{T_1}_0\bE\left[\sup_{t\in[0,s]}X(t\wedge\tau_N)^{-p}\right]\,\diff s\right\}.
\end{align*}
Noting that $T_1\in[0,T]$ is arbitrary, Gronwall's inequality yields that
\begin{equation*}
	\bE\left[\sup_{t\in[0,T]}X(t\wedge\tau_N)^{-p}\right]\leq C_pe^{C_p T}x(0)^{-p}.
\end{equation*}
From this estimate and \eqref{eq_WF-estimate1} with $T_1=T$, letting $N\to\infty$ and using Fatou's lemma, we obtain the desired estimate in the assertion (i). This completes the proof.
\end{proof}

Combining the above estimates with \cref{theo_main} (iii), we get the following convergence order for the weak approximation \eqref{eq_WF-weakconv} under the slightly stronger condition $2(\kappa_1\wedge\kappa_2)>\theta^2$.


\begin{theo}\label{theo_WF}
Let $\kappa_1,\kappa_2,\kappa_3,\kappa_1',\kappa_2',\kappa_3'\in[0,\infty)$ and $\theta\in(0,\infty)$ be given parameters with $2(\kappa_1\wedge\kappa_2)>\theta^2$. Fix $y(0)\in[0,1]$, and let $\weaksol$ be the weak solution of the stochastic integro-differential equation \eqref{eq_WF-SFDE} on $D=(0,1)$ with initial condition $X(0)=x(0)\in D$.
For each $T\in(0,\infty)$ and $\pi\in\Pi_T$, let $X^\pi$ be the Euler--Maruyama scheme defined on $(\Omega^\pi,\cF^\pi,\bP^\pi)$ and given by \eqref{eq_EM} with initial condition $X^\pi(t_0)=x(0)$.
Then, for any
\begin{equation*}
	\gamma\in\left(0,\frac{1}{4} \cdot \frac{2(\kappa_1\wedge\kappa_2)-\theta^{2}}{(\kappa_1\wedge\kappa_2)+\theta^2}\right),
\end{equation*}
there exists constant $C_\gamma \in(0,\infty)$ such that
\begin{equation*}
	d_\LP\big(\Law_\bP(X_T),\Law_{\bP^\pi}(\poly[X^\pi])\big)
	\leq C_\gamma|\pi|^\gamma
\end{equation*}
for any $\pi\in\Pi_{T}$.
\end{theo}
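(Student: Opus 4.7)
The strategy is to apply \cref{theo_main}~(iii) to the stochastic integro-differential equation \eqref{eq_WF-SFDE}, after verifying the ``rare event'' estimate \eqref{eq_rare-event} with parameters $\vec\beta$ as large as the moment bounds in \cref{lemm_WF-moment} permit, and then optimizing the resulting exponent $\gamma_*$.

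First I would record what is already known about the structure of the data $(D,\mu_0,b,\sigma)=((0,1),\delta_{x(0)},b,\sigma)$ with $b,\sigma$ given by \eqref{eq_coefficient-WF}. The coefficients satisfy \cref{assum} with H\"older exponents $\alpha_b=\alpha_\sigma=1$ and with the subsets \eqref{eq_WF-subset}; crucially, $D_{\growth,b;T}(R)=D_{\growth,\sigma;T}(R)=D_{\conti,b;T}(R)=(0,1)=D$, because the drift and its modulus of continuity, and the diffusion coefficient itself, are globally bounded on $(0,1)$. Only $D_{\conti,\sigma;T}(R)$ and $D_{\ellip;T}(R)$ shrink away from the boundary $\{0,1\}$. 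Therefore, writing $R_{\min}:=\min\{R_{\conti,\sigma},R_\ellip\}$, the intersection $D_T(\vec R)$ equals $(R_{\min}^{-1},1-R_{\min}^{-1})$, and the event $\{\inf_{t\in[0,T]}\dist(X(t),\bR\setminus D_T(\vec R))\leq\Delta\}$ is contained in the union $\{\inf_{t\in[0,T]}X(t)\leq R_{\min}^{-1}+\Delta\}\cup\{\sup_{t\in[0,T]}(1-X(t))\leq R_{\min}^{-1}+\Delta\}$.

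Next I would apply Markov's inequality together with \cref{lemm_WF-moment}. For any $p\in(0,p_*)$ with $p_*:=2(\kappa_1\wedge\kappa_2)/\theta^2-1$ (which is positive by the assumption $2(\kappa_1\wedge\kappa_2)>\theta^2$), the lemma yields $\bE[\sup_{t\in[0,T]}X(t)^{-p}]$ and $\bE[\sup_{t\in[0,T]}(1-X(t))^{-p}]$ finite, and bounded uniformly in $\vec R$ and $\Delta$. Hence
\begin{equation*}
\bP\!\left(\inf_{t\in[0,T]}\dist(X(t),\bR\setminus D_T(\vec R))\leq\Delta\right)\leq C_p\bigl(\Delta^{p}+R_{\conti,\sigma}^{-p}+R_\ellip^{-p}\bigr)
\end{equation*}
(using $(R_{\min}^{-1}+\Delta)^p\leq 2^p(R_{\min}^{-p}+\Delta^p)$ and $R_{\min}^{-p}\leq R_{\conti,\sigma}^{-p}+R_\ellip^{-p}$). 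Thus \eqref{eq_rare-event} holds with $\beta_0=\beta_{\conti,\sigma}=\beta_\ellip=p$, while $\beta_{\growth,b},\beta_{\growth,\sigma},\beta_{\conti,b}$ may be taken arbitrarily large (the corresponding ``rare event'' probability is simply zero).

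The remaining step is arithmetic. Substituting $\alpha_b=\alpha_\sigma=1$, $\beta_{\growth,b}^{-1}=\beta_{\growth,\sigma}^{-1}=\beta_{\conti,b}^{-1}=0$, and $\beta_0=\beta_{\conti,\sigma}=\beta_\ellip=p$ into \eqref{eq_beta*} gives $\beta_*=\min\{p,\,p/(p+1),\,p/(p+3)\}=p/(p+3)$. Plugging this into the four terms of \eqref{eq_gamma*} gives, respectively, $p/(p+1)$, $p/(2(p+1))$, $p/(p+3)$, and $p/(2(p+3))$, whose minimum is $\gamma_*(p)=p/(2(p+3))$. A direct computation shows
\begin{equation*}
\sup_{p\in(0,p_*)}\frac{p}{2(p+3)}=\frac{p_*}{2(p_*+3)}=\frac{1}{4}\cdot\frac{2(\kappa_1\wedge\kappa_2)-\theta^2}{(\kappa_1\wedge\kappa_2)+\theta^2},
\end{equation*}
so for any $\gamma$ below this supremum there exists $p\in(0,p_*)$ with $\gamma<\gamma_*(p)$, and \cref{theo_main}~(iii) produces the claimed bound $d_\LP(\Law_\bP(X_T),\Law_{\bP^\pi}(\fp^\pi[X^\pi]))\leq C_\gamma|\pi|^\gamma$ for some $C_\gamma\in(0,\infty)$. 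No genuine obstacle should arise, the bookkeeping of $\vec\beta$-exponents and the optimization of $\gamma_*(p)$ in $p$ being the only slightly delicate points; the inverse-moment control near both boundaries $\{0\}$ and $\{1\}$, which is where all real analytic work would lie, has already been supplied by \cref{lemm_WF-moment}.
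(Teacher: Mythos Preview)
Your proposal is correct and follows essentially the same route as the paper: verify \eqref{eq_rare-event} via Markov's inequality and \cref{lemm_WF-moment}, note that only $\beta_0,\beta_{\conti,\sigma},\beta_\ellip$ are genuinely constrained (to the range $(0,p_*)$ with $p_*=\tfrac{2(\kappa_1\wedge\kappa_2)}{\theta^2}-1$) while $\beta_{\growth,b},\beta_{\growth,\sigma},\beta_{\conti,b}$ can be taken arbitrarily large, then feed these into \eqref{eq_beta*}--\eqref{eq_gamma*} and optimize. Your arithmetic $\beta_*=p/(p+3)$, $\gamma_*=p/(2(p+3))$, and $\sup_{p<p_*}\gamma_*(p)=\tfrac{1}{4}\cdot\tfrac{2(\kappa_1\wedge\kappa_2)-\theta^2}{(\kappa_1\wedge\kappa_2)+\theta^2}$ matches the paper's limiting computation exactly.
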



\begin{proof}
Recall that the coefficients of the stochastic integro-differential equation \eqref{eq_WF-SFDE} satisfy \cref{assum} with $\alpha_{b}=\alpha_{\sigma}=1$, the sets $D_{\growth,b;T},D_{\growth,\sigma;T},D_{\conti,b;T},D_{\conti,\sigma;T},D_{\ellip;T}\subset D=(0,1)$ given by \eqref{eq_WF-subset}, and some constants $K_{\growth,b;T},K_{\growth,\sigma;T},K_{\conti,b;T},K_{\conti,\sigma;T},K_{\ellip;T}$ depending only on $\kappa_1,\kappa_2,\kappa_3$ and $\theta$. In order to apply \cref{theo_main} (iii), we check the condition \eqref{eq_rare-event}. Observe that, for any $\Delta\in(0,1]$ and $\vec{R}=(R_{\growth,b},R_{\growth,\sigma},R_{\conti,b},R_{\conti,\sigma},R_\ellip)\in[1,\infty)^5$,
\begin{align*}
	&\bP\left(\inf_{t\in[0,T]}\dist\!\left(X(t),\bR\setminus D_T(\vec{R})\right)\leq \Delta\right)\\
	&=\bP\left(\text{$X(t)\leq\frac{1}{R_{\conti,\sigma}\wedge R_\ellip}+\Delta$ or $X(t)\geq1-\frac{1}{R_{\conti,\sigma}\wedge R_\ellip}-\Delta$ for some $t\in[0,T]$}\right)\\
	&\leq\bP\left(\sup_{t \in [0,T]}X(t)^{-1} \geq \left(\frac{1}{R_{\conti,\sigma}\wedge R_\ellip}+\Delta\right)^{-1}\right)+\bP\left(\sup_{t \in [0,T]}\big(1-X(t)\big)^{-1} \geq \left(\frac{1}{R_{\conti,\sigma}\wedge R_\ellip}+\Delta\right)^{-1}\right),
\end{align*}
and thus, by Markov's inequality and \cref{lemm_WF-moment},
\begin{align*}
	&\bP\left(\inf_{t\in[0,T]}\dist\!\left(X(t),\bR\setminus D_T(\vec{R})\right)\leq \Delta\right)\\
	&\leq\left(\frac{1}{R_{\conti,\sigma}\wedge R_\ellip}+\Delta\right)^p\left\{\bE\left[\sup_{t \in [0,T]}X(t)^{-p}\right]+\bE\left[\sup_{t \in [0,T]}\big(1-X(t)\big)^{-p}\right]\right\}\\
	&\leq C_p\max\left\{\Delta^p,R_{\conti,\sigma}^{-p},R_\ellip^{-p}\right\},
\end{align*}
for any $p\in(0,\frac{2(\kappa_1\wedge\kappa_2)}{\theta^2}-1)$. Here, $C_p\in(0,\infty)$ is a constant which does not depend on $\Delta$ or $\vec{R}$. The above estimate shows that \eqref{eq_rare-event} holds for any $\vec{\beta}=(\beta_0,\beta_{\growth,b},\beta_{\growth,\sigma},\beta_{\conti,b},\beta_{\conti,\sigma},\beta_\ellip)$ with
\begin{equation*}
	\beta_{\growth,b},\beta_{\growth,\sigma},\beta_{\conti,b}\in(0,\infty),\ \beta_0,\beta_{\conti,\sigma},\beta_\ellip\in\left(0,\frac{2(\kappa_1\wedge\kappa_2)}{\theta^2}-1\right).
\end{equation*}
Concerning the constants $\beta_*$ and $\gamma_*$ defined by \eqref{eq_beta*} and \eqref{eq_gamma*} , we have
\begin{equation*}
\begin{split}
\beta_*&=
\min\left\{
\beta_0,
\frac{1}{1+\beta_\ellip^{-1}+\beta_{\conti,b}^{-1}},
\frac{1}{1+\beta_\ellip^{-1}+2\beta_{\conti,\sigma}^{-1}}
\right\}
\to
\frac{1}{2}\cdot \frac{2(\kappa_1\wedge\kappa_2)-\theta^2}{(\kappa_1\wedge\kappa_2)+\theta^2}
\end{split}
\end{equation*}
and
\begin{equation*}
\begin{split}
\gamma_*&=
\min\left\{
\frac{1}{1+\beta_\ellip^{-1}+\beta_{\conti,b}^{-1}+\beta_{\growth,b}^{-1}},
\frac{1}{2}\cdot\frac{1}{1+\beta_\ellip^{-1}+\beta_{\conti,b}^{-1}+\beta_{\growth,\sigma}^{-1}},\right.\\
&\hspace{1.5cm}\left.
\frac{2}{1+\beta_\ellip^{-1}+2\beta_{\conti,\sigma}^{-1}+2\beta_{\growth,b}^{-1}+\beta_*^{-1}},
\frac{1}{1+\beta_\ellip^{-1}+2\beta_{\conti,\sigma}^{-1}+2\beta_{\growth,\sigma}^{-1}+\beta_*^{-1}}
\right\}\\
&\to
\frac{1}{4} \cdot \frac{2(\kappa_1\wedge\kappa_2)-\theta^{2}}{(\kappa_1\wedge\kappa_2)+\theta^2}
\end{split}
\end{equation*}
as $\beta_{\growth,b},\beta_{\growth,\sigma},\beta_{\conti,b}\to\infty$ and $\beta_0,\beta_{\conti,\sigma},\beta_\ellip\to\frac{2(\kappa_1\wedge\kappa_2)}{\theta^2}-1$. Therefore, noting \cref{rem_gamma*}, by \cref{theo_main} (iii), we get the desired estimate. This completes the proof.
\end{proof}


\subsection{Multi-dimensional polynomial diffusion}\label{subsec_polydiff}

The standard Wright--Fisher diffusion (that is, \eqref{eq_WF-SFDE} with $\kappa_3=0$) is a kind of one-dimensional polynomial diffusions. Now we consider the following $n$-dimensional polynomial diffusion with $n\geq2$:
\begin{equation}\label{eq_polydiff}
	\diff X(t)=-\kappa X(t)\,\diff t+\theta\,\sqrt{1-|X(t)|^{2}}I_{n\times n}\,\diff W(t),\ \ t\in[0,\infty),\ \ X(0)=x(0),
\end{equation}
where $\kappa,\theta\in(0,\infty)$ are given parameters, and $W$ denotes an $n$-dimensional Brownian motion. The solution $X$ can be seen as a multi-dimensional extension of the Jacobi process. The above SDE was first investigated in \cite{Sw02} and then applied in the field of mathematical finance in \cite{FiLa16}.
It is known that the SDE \eqref{eq_polydiff} has a unique strong solution $X$ on the closed unit ball $\overline{D}=\{x\in\bR^n\,|\,|x|\leq1\}$ when the parameters satisfy $\kappa\geq(\sqrt{2}-1)\theta^2$ for any initial condition $x(0)\in\overline{D}$ (see \cite[Theorem 4.6]{LaPu17}).
Moreover, it is also known that, for the case where $x(0)\in D=\{x\in\bR^n\,|\,|x|<1\}$, $X(t)\in D$ for any $t\in[0,\infty)$ $\bP$-a.s.\ if and only if $\kappa\geq\theta^2$ (see \cite[Proposition 2]{Sw02} and \cite[Proposition 2.2]{LaPu17}).

The pathwise uniqueness result \cite[Theorem 4.6]{LaPu17} together with \cite[Theorem D]{KaNa88} imply that the (standard) Euler--Maruyama scheme for the SDE \eqref{eq_polydiff} converges in the $L^{2}$-sup sense when the parameters satisfy $\kappa\geq(\sqrt{2}-1)\theta^2$, but these results do not provide any convergence rates.
Numerical approximation for the polynomial diffusion \eqref{eq_polydiff} is studied in \cite{NaTaYu23}, where the authors provide a convergence rate in the $L^{2}$ sense for the semi-implicit Euler--Maruyama scheme under the condition $\kappa>3\theta^{2}$.

Let $\kappa\geq\theta^2$ and $x(0)\in D=\{x\in\bR^n\,|\,|x|<1\}$. As discussed in \cref{subsubsec_Markov}, the $n$-dimensional Markovian SDE \eqref{eq_polydiff} on $D$ fits into the framework of the present paper. More precisely, the data satisfies \cref{assum} with the choices of the exponents $\alpha_b=\alpha_\sigma=1$, the sets
\begin{equation}\label{eq_polydiff-subset}
\begin{split}
	D_{\growth,b;T}(R)=D_{\growth,\sigma;T}(R)=D_{\conti,b;T}(R)=D,\,
	D_{\conti,\sigma;T}(R)=D_{\ellip;T}(R)=\left\{\xi\in D\relmiddle||\xi|<\left(1-\frac{1}{R^2}\right)^{1/2}\right\},\\
	R\in[1,\infty),
\end{split}
\end{equation}
and some constants $K_{\growth,b;T},K_{\growth,\sigma;T},K_{\conti,b;T},K_{\conti,\sigma;T},K_{\ellip;T}$ depending only on $\kappa$ and $\theta$.
Hence, \cref{theo_main} (i) shows that
\begin{equation}\label{eq_polydiff-weakconv}
	\poly[X^\pi]\to X_T\ \ \text{weakly on $\cC^n_T$ as $|\pi|\downarrow0$ along $\pi\in\Pi_T$ for any $T\in(0,\infty)$},
\end{equation}
where $X^\pi=\EM$ is the standard Euler--Maruyama scheme given by \eqref{eq_EM-Markov} with initial condition $X^\pi(t_0)=x(0)$. In order to get a weak convergence order with respect to the L\'{e}vy--Prokhorov metric, we use \cref{theo_main} (iii). As before, a moment estimate corresponding to the ``rare event'' appearing in \eqref{eq_rare-event} plays a crucial role in determining the parameters $\vec{\beta}=(\beta_0,\beta_{\growth,b},\beta_{\growth,\sigma},\beta_{\conti,b},\beta_{\conti,\sigma},\beta_\ellip)\in(0,\infty)^6$. Here, we require a slightly stronger condition $\kappa>\theta^2$.


\begin{lemm}\label{lemm_polydiff-moment}
Let $\kappa,\theta\in(0,\infty)$ satisfy $\kappa>\theta^2$, and let $\weaksol$ be the weak solution of the SDE \eqref{eq_polydiff} on $D=\{\xi \in \bR^{n}\,|\,|\xi|<1\}$ with initial condition $X(0)=x(0)\in D$. Then, for any $p\in(0,\frac{\kappa}{\theta^2}-1)$, there exists a constant $C_p\in(0,\infty)$, which depends only on $n,\kappa,\theta$ and $p$ such that, for any $T\in(0,\infty)$,
\begin{equation*}
	\bE\left[\sup_{t\in[0,T]}\big(1-|X(t)|^2\big)^{-p}+\int^T_0\big(1-|X(t)|^2\big)^{-p-1}\,\diff t\right]\leq C_pe^{C_pT}\big(1-|x(0)|^2\big)^{-p}.
\end{equation*}
\end{lemm}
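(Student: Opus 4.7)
The plan is to mimic the structure of the proof of \cref{lemm_WF-moment}, replacing the one-dimensional ``distance-to-boundary'' quantities $X$ and $1-X$ with the multi-dimensional analogue $U(x):=1-|x|^2$, which is smooth on $\bR^n$ and positive precisely on $D$. Since $X(t)\in D$ for all $t\in[0,\infty)$ $\bP$-a.s.\ under the assumption $\kappa\geq\theta^2$, the stopping times $\tau_N:=\inf\{t\geq0\,|\,U(X(t))\leq 1/N\}$ tend to infinity $\bP$-a.s.\ as $N\to\infty$, providing the natural localization.

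First, I would compute $\diff U(X(t))$ by It\^o's formula, using $\nabla U(x)=-2x$, $\nabla^2 U(x)=-2I_{n\times n}$, and the fact that $\sigma(x)\sigma(x)^\top=\theta^2(1-|x|^2)I_{n\times n}=\theta^2 U(x) I_{n\times n}$. This yields
\begin{equation*}
\diff U(X(t))=\big\{2\kappa-(2\kappa+n\theta^2)U(X(t))\big\}\,\diff t-2\theta\sqrt{U(X(t))}\,\langle X(t),\diff W(t)\rangle,
\end{equation*}
with quadratic variation $4\theta^2 U(X(t))(1-U(X(t)))\,\diff t$. Applying It\^o's formula again to $U(X(t))^{-p}$ and collecting terms, the drift contains a coefficient $-2p[\kappa-(p+1)\theta^2]$ in front of $U(X(t))^{-p-1}$, which is \emph{strictly negative} precisely under the assumption $p\in(0,\kappa/\theta^2-1)$. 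This gives the fundamental identity, analogous to \eqref{eq_WF-Ito}:
\begin{equation*}
U(X(t))^{-p}+2p[\kappa-(p+1)\theta^2]\int^t_0 U(X(s))^{-p-1}\,\diff s=U(x(0))^{-p}+C_p\int^t_0 U(X(s))^{-p}\,\diff s+M(t),
\end{equation*}
where $M$ is a continuous local martingale with $\diff\langle M\rangle(t)\leq 4p^2\theta^2 U(X(t))^{-2p-1}\,\diff t$.

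Evaluating at $t=T_1\wedge\tau_N$ and taking expectations kills the martingale (since the integrand of $M$ is bounded on $[0,\tau_N]$) and, thanks to the strict positivity of $\kappa-(p+1)\theta^2$, yields the ``Hardy-type'' bound
\begin{equation*}
\bE\!\left[\int^{T_1\wedge\tau_N}_0 U(X(s))^{-p-1}\,\diff s\right]\leq C_p\left\{U(x(0))^{-p}+\bE\!\left[\int^{T_1\wedge\tau_N}_0 U(X(s))^{-p}\,\diff s\right]\right\}.
\end{equation*}
Then, taking the supremum over $t\in[0,T_1\wedge\tau_N]$ in the It\^o identity and applying Burkholder--Davis--Gundy to $M$, the martingale contribution is bounded by $C_p\bE[(\int^{T_1\wedge\tau_N}_0 U(X(s))^{-2p-1}\,\diff s)^{1/2}]$, which, after factoring $U^{-2p-1}=U^{-p}\cdot U^{-p-1}$ and applying the Cauchy--Schwarz and Young inequalities, is absorbed partly into $\tfrac{1}{2}\bE[\sup U(X(\cdot\wedge\tau_N))^{-p}]$ and partly into the Hardy-type bound above.

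Combining these estimates leads to the integral inequality
\begin{equation*}
\bE\!\left[\sup_{t\in[0,T_1]}U(X(t\wedge\tau_N))^{-p}\right]\leq C_p\left\{U(x(0))^{-p}+\int^{T_1}_0\bE\!\left[\sup_{t\in[0,s]}U(X(t\wedge\tau_N))^{-p}\right]\,\diff s\right\}
\end{equation*}
for any $T_1\in[0,T]$, and Gronwall's inequality yields $\bE[\sup_{t\in[0,T]}U(X(t\wedge\tau_N))^{-p}]\leq C_p e^{C_p T}U(x(0))^{-p}$ uniformly in $N$. Plugging this back into the Hardy-type bound and letting $N\to\infty$ with Fatou's lemma completes the proof. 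The main obstacle is purely computational, namely the bookkeeping in the It\^o expansion that identifies $\kappa-(p+1)\theta^2$ as the crucial sign-determining coefficient; once this dissipative structure is isolated, the remainder follows the standard Wright--Fisher-type scheme without essential modification.
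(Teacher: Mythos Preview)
Your proof is correct. The computation of the It\^o expansion for $U(X(t))=1-|X(t)|^2$ and then $U(X(t))^{-p}$ is accurate, the key coefficient $\kappa-(p+1)\theta^2$ is identified correctly, and the localization/Burkholder--Davis--Gundy/Gronwall argument is the standard one from \cref{lemm_WF-moment}.

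The paper, however, takes a shorter route: rather than repeating the Wright--Fisher machinery in the multi-dimensional setting, it observes that $\widetilde X:=|X|^2$ is itself a one-dimensional Wright--Fisher diffusion. Indeed, It\^o's formula gives
\[
\diff\widetilde X(t)=\big\{-2\kappa\widetilde X(t)+n\theta^2(1-\widetilde X(t))\big\}\,\diff t+2\theta\sqrt{\widetilde X(t)(1-\widetilde X(t))}\,\diff\widetilde W(t),
\]
where $\widetilde W$ is a one-dimensional Brownian motion obtained from $\langle X,\diff W\rangle$ via L\'evy's characterization. This is exactly \eqref{eq_WF-SFDE} with $\kappa_3=0$ and parameters $(\kappa_1,\kappa_2,\theta)$ replaced by $(2\kappa,n\theta^2,2\theta)$, so \cref{lemm_WF-moment}~(ii) applies directly with the range $p\in(0,\tfrac{2\cdot2\kappa}{(2\theta)^2}-1)=(0,\tfrac{\kappa}{\theta^2}-1)$, yielding the claim in one line. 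Your approach unfolds this reduction and redoes the underlying computation by hand; it gains self-containment but at the cost of repeating an argument already packaged in \cref{lemm_WF-moment}. Both arrive at the same estimate with the same constants' dependence.
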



\begin{proof}
Define $\widetilde{X}:=|X|^2$. As in the proof of \cite[Proposition 2.4]{NaTaYu23}, we see that $\widetilde{X}$ solves the SDE:
\begin{equation*}
	\diff\widetilde{X}(t)=\big\{\!-2\kappa\widetilde{X}(t)+n\theta^2(1-\widetilde{X}(t))\big\}\,\diff t+2\theta\sqrt{\widetilde{X}(t)(1-\widetilde{X}(t))}\,\diff\widetilde{W}(t),\ \ t\in[0,\infty),\ \ \widetilde{X}(0)=|x(0)|^2,
\end{equation*}
for some one-dimensional Brownian motion $\widetilde{W}$. The above is a (classical) Wright--Fisher diffusion \eqref{eq_WF-SFDE} with $\kappa_3=0$ and parameters $(\kappa_1,\kappa_2,\theta)$ replaced by $(2\kappa,n\theta^2,2\theta)$. Therefore, by \cref{lemm_WF-moment} (ii), we get the desired estimate.
\end{proof}

By using \cref{lemm_polydiff-moment} and \cref{theo_main} (iii), we can obtain the following convergence order for the weak approximation \eqref{eq_polydiff-weakconv} under the slightly stronger condition $\kappa>\theta^2$.


\begin{theo}\label{theo_polydiff}
Let $\kappa,\theta\in(0,\infty)$ satisfy $\kappa>\theta^2$, and let $\weaksol$ be the weak solution of the multi-dimensional polynomial diffusion \eqref{eq_polydiff} on $D=\{\xi \in \bR^{n}\,|\,|\xi|<1\}$ with initial condition $X(0)=x(0)\in D$.
For each $T\in(0,\infty)$ and $\pi\in\Pi_T$, let $X^\pi$ be the Euler--Maruyama scheme defined on $(\Omega^\pi,\cF^\pi,\bP^\pi)$ and given by \eqref{eq_EM-Markov} with initial condition $X^\pi(t_0)=x(0)$.
Then, for any
\begin{equation*}
	\gamma\in\left(0,\frac{\kappa-\theta^2}{2\kappa+\theta^2}\right),
\end{equation*}
there exists a constant $C_\gamma\in(0,\infty)$ such that
\begin{equation*}
	d_\LP\big(\Law_\bP(X_T),\Law_{\bP^\pi}(\poly[X^\pi])\big)\leq C_\gamma|\pi|^\gamma
\end{equation*}
for any $\pi\in\Pi_{T}$.
\end{theo}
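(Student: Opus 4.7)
The plan is to mimic the argument for the Wright--Fisher diffusion (\cref{theo_WF}) and apply \cref{theo_main}~(iii). First I would record that, as noted just above the theorem, the data of \eqref{eq_polydiff} fits \cref{assum} with exponents $\alpha_b=\alpha_\sigma=1$, with the sets $D_{\growth,b;T}(R)$, $D_{\growth,\sigma;T}(R)$, $D_{\conti,b;T}(R)$ all equal to $D$, and with $D_{\conti,\sigma;T}(R)=D_{\ellip;T}(R)=\{\xi\in D\,|\,|\xi|<(1-R^{-2})^{1/2}\}$, so that $D_T(\vec R)$ only depends on $R_*:=R_{\conti,\sigma}\wedge R_\ellip$.

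Next I would verify the rare--event condition \eqref{eq_rare-event}. A point $\xi\in D_T(\vec R)$ is within distance $\Delta$ of its complement precisely when $|\xi|\geq(1-R_*^{-2})^{1/2}-\Delta$; squaring (and using $\Delta\leq 1$) gives
\begin{equation*}
	1-|\xi|^2\leq R_*^{-2}+2\Delta.
\end{equation*}
Therefore, by Markov's inequality and \cref{lemm_polydiff-moment}, for any $p\in(0,\kappa\theta^{-2}-1)$,
\begin{align*}
	\bP\!\left(\inf_{t\in[0,T]}\dist(X(t),\bR^n\setminus D_T(\vec R))\leq\Delta\right)
	&\leq\bP\!\left(\sup_{t\in[0,T]}(1-|X(t)|^2)^{-1}\geq(R_*^{-2}+2\Delta)^{-1}\right)\\
	&\leq C_p\max\{\Delta^p,R_{\conti,\sigma}^{-2p},R_\ellip^{-2p}\}.
\end{align*}
Hence \eqref{eq_rare-event} holds for any $\vec\beta$ with $\beta_{\growth,b},\beta_{\growth,\sigma},\beta_{\conti,b}\in(0,\infty)$ arbitrary, $\beta_0\in(0,\kappa\theta^{-2}-1)$ and $\beta_{\conti,\sigma},\beta_\ellip\in(0,2(\kappa\theta^{-2}-1))$; the doubling comes from the squared-distance translation and is the reason the final rate differs from the Wright--Fisher case.

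Finally I would plug these admissible parameters into the formulas \eqref{eq_beta*} and \eqref{eq_gamma*} with $\alpha_b=\alpha_\sigma=1$ and pass to the limit $\beta_{\growth,b},\beta_{\growth,\sigma},\beta_{\conti,b}\to\infty$, $\beta_0\uparrow(\kappa-\theta^2)/\theta^2$, and $\beta_{\conti,\sigma},\beta_\ellip\uparrow2(\kappa-\theta^2)/\theta^2$. A direct computation then gives
\begin{equation*}
	\beta_*\to\min\!\left\{\frac{\kappa-\theta^2}{\theta^2},\frac{2(\kappa-\theta^2)}{2\kappa-\theta^2},\frac{2(\kappa-\theta^2)}{2\kappa+\theta^2}\right\}=\frac{2(\kappa-\theta^2)}{2\kappa+\theta^2},
\end{equation*}
and the four terms in \eqref{eq_gamma*} simplify to $\frac{2(\kappa-\theta^2)}{2\kappa-\theta^2}$, $\frac{\kappa-\theta^2}{2\kappa-\theta^2}$, $\frac{2(\kappa-\theta^2)}{2\kappa+\theta^2}$, and $\frac{\kappa-\theta^2}{2\kappa+\theta^2}$; the last one is the smallest under the assumption $\kappa>\theta^2$, so $\gamma_*\uparrow\frac{\kappa-\theta^2}{2\kappa+\theta^2}$. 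The conclusion then follows from \cref{theo_main}~(iii). The main technical point is organizing the computation of $\beta_*$ and $\gamma_*$ and checking that the fourth term in \eqref{eq_gamma*} is indeed the binding constraint; once the doubling factor in the exponents of $R_{\conti,\sigma}$ and $R_\ellip$ is correctly identified from the squared-distance argument, everything else is routine arithmetic.
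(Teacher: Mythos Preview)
Your proposal is correct and follows essentially the same route as the paper's proof: both invoke the choice of sets \eqref{eq_polydiff-subset}, translate the distance condition into a bound on $1-|X(t)|^2$, apply Markov's inequality together with \cref{lemm_polydiff-moment} to obtain \eqref{eq_rare-event} with $\beta_0\in(0,\kappa\theta^{-2}-1)$ and $\beta_{\conti,\sigma},\beta_\ellip\in(0,2(\kappa\theta^{-2}-1))$, and then compute the limit of $\gamma_*$ from \eqref{eq_gamma*}. The only cosmetic difference is that the paper uses the slightly looser constant $3\Delta$ in place of your $2\Delta$ in the squaring step, which does not affect the outcome.
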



\begin{proof}
Noting \eqref{eq_polydiff-subset}, for any $\Delta\in(0,1]$ and $\vec{R}=(R_{\growth,b},R_{\growth,\sigma},R_{\conti,b},R_{\conti,\sigma},R_\ellip)\in[1,\infty)^5$, we have
\begin{align*}
	\bP\left(\inf_{t\in[0,T]}\dist\!\left(X(t),\bR^n\setminus D_T(\vec{R})\right)\leq \Delta\right)&=\bP\left(\text{$|X(t)|\geq\left(1-\frac{1}{R^2_{\conti,\sigma}\wedge R^2_\ellip}\right)^{1/2}-\Delta$ for some $t\in[0,T]$}\right)\\
	&\leq\bP\left(\text{$1-|X(t)|^2\leq\frac{1}{R^2_{\conti,\sigma}\wedge R^2_\ellip}+3\Delta$ for some $t\in[0,T]$}\right)\\
	&=\bP\left(\sup_{t \in [0,T]}\big(1-|X(t)|^2\big)^{-1}\geq\left(\frac{1}{R^2_{\conti,\sigma}\wedge R^2_\ellip}+3\Delta\right)^{-1}\right),
\end{align*}
and thus, by Markov's inequality and \cref{lemm_polydiff-moment},
\begin{align*}
	\bP\left(\inf_{t\in[0,T]}\dist\!\left(X(t),\bR^n\setminus D_T(\vec{R}) \right)\leq \Delta\right)&\leq\left(\frac{1}{R^2_{\conti,\sigma}\wedge R^2_\ellip}+3\Delta\right)^p\bE\left[\sup_{t\in[0,T]}\big(1-|X(t)|^2\big)^{-p}\right]\\
	&\leq C_p\max\left\{\Delta^p,R_{\conti,\sigma}^{-2p},R_\ellip^{-2p}\right\},
\end{align*}
for any $p\in(0,\frac{\kappa}{\theta^2}-1)$. Here, $C_p\in(0,\infty)$ is a constant which does not depend on $\Delta$ or $\vec{R}$. The above estimate shows that \eqref{eq_rare-event} holds for any $\vec{\beta}=(\beta_0,\beta_{\growth,b},\beta_{\growth,\sigma},\beta_{\conti,b},\beta_{\conti,\sigma},\beta_\ellip)$ with
\begin{equation*}
	\beta_0\in\left(0,\frac{\kappa}{\theta^2}-1\right),\
	\beta_{\growth,b},\beta_{\growth,\sigma},\beta_{\conti,b}\in(0,\infty),\
	\beta_{\conti,\sigma},\beta_\ellip\in\left(0,\frac{2\kappa}{\theta^2}-2\right).
\end{equation*}
Concerning with the constants $\beta_*$ and $\gamma_*$ defined by \eqref{eq_beta*} and \eqref{eq_gamma*}, we have
\begin{equation*}
\begin{split}
\beta_*
&=
\min\left\{
\beta_0,
\frac{1}{1+\beta_\ellip^{-1}+\beta_{\conti,b}^{-1}},
\frac{1}{1+\beta_\ellip^{-1}+2\beta_{\conti,\sigma}^{-1}}
\right\}
\to
\frac{2(\kappa-\theta^{2})}{2\kappa+\theta^{2}}
\end{split}
\end{equation*}
and
\begin{equation*}
\begin{split}
\gamma_*
&=
\min\left\{
\frac{1}{1+\beta_\ellip^{-1}+\beta_{\conti,b}^{-1}+\beta_{\growth,b}^{-1}},
\frac{1}{2}\cdot\frac{1}{1+\beta_\ellip^{-1}+\beta_{\conti,b}^{-1}+\beta_{\growth,\sigma}^{-1}},\right.\\
&\hspace{1.5cm}\left.
\frac{2}{1+\beta_\ellip^{-1}+2\beta_{\conti,\sigma}^{-1}+2\beta_{\growth,b}^{-1}+\beta_*^{-1}},
\frac{1}{1+\beta_\ellip^{-1}+2\beta_{\conti,\sigma}^{-1}+2\beta_{\growth,\sigma}^{-1}+\beta_*^{-1}}
\right\}
\\&\to
\frac{\kappa-\theta^{2}}{2\kappa+\theta^{2}}
\end{split}
\end{equation*}
as $\beta_0\to\frac{\kappa}{\theta^2}-1$, $\beta_{\growth,b},\beta_{\growth,\sigma},\beta_{\conti,b}\to\infty$ and $\beta_{\conti,\sigma},\beta_\ellip\to\frac{2\kappa}{\theta^2}-2$. Therefore, noting \cref{rem_gamma*}, by \cref{theo_main} (iii), we get the desired estimate. This completes the proof.
\end{proof}


\subsection{Volatility process in the $3/2$-stochastic volatility model}\label{subsec_3/2}

We consider the following one-dimensional Markovian SDE on $D=(0,\infty)$:
\begin{equation}\label{eq_3/2}
	\diff X(t)=\kappa_{1}X(t)(\kappa_{2}-X(t))\,\diff t+\theta X(t)^{3/2}\,\diff W(t),\ \ t\in[0,\infty),\ \ X(0)=x(0)\in D,
\end{equation}
where $\kappa_1,\kappa_2\in\bR$ and $\theta\in(0,\infty)$ are given parameters. The above SDE describes the dynamics of the volatility process in the so-called $3/2$-stochastic volatility model \cite{He97}.
By Feller's test (cf. \cite[chapter 5, Theorem 5.29]{KaSh91}), we can show that the SDE \eqref{eq_3/2} admits a unique strong solution $X$ such that $X(t)\in D$ for any $t\in[0,\infty)$ a.s.\ if and only if the parameters satisfy $2\kappa_1+\theta^2\geq0$.
In the literature on numerical approximations of the SDE \eqref{eq_3/2}, Neuenkirch and Szpruch \cite[Proposition 3.2]{NeSz14} provide a convergence rate in $L^{p}$ by means of the Lamperti-backward Euler--Maruyama scheme for $p \in [1,(\kappa_{1}+\theta^{2})/(3\theta^{2}))$ under the assumption that $\kappa_{1}-2\theta^{2}>0$, and Sabanis \cite[Theorem 2 and Appendix]{Sa16} provide a convergence rate in $L^{p}$ by means of the tamed Euler--Maruyama scheme for $p \in [2,(2\kappa_{1}+\theta^{2})/(3\theta^{2})]$ under the assumption that $2\kappa_{1}-5\theta^{2}>0$.

Let $2\kappa_1+\theta^2\geq0$. As discussed in \cref{subsubsec_Markov}, the one-dimensional Markovian SDE \eqref{eq_3/2} on $D=(0,\infty)$ fits into the framework of the present paper. More precisely, the data satisfies \cref{assum} with the choices of the exponents $\alpha_b=\alpha_\sigma=1$, the sets
\begin{equation}\label{eq_3/2-subset}
\begin{split}
	D_{\growth,b;T}(R)&=(0,2R^{1/2}),\ D_{\growth,\sigma;T}(R)=(0,2R^{2/3}),\\
	D_{\conti,b;T}(R)&=(0,2R),\ D_{\conti,\sigma;T}(R)=(0,2R^{2}),\ D_{\ellip;T}(R)=\left(\frac{1}{R^{2/3}},\infty\right),\ \ R\in[1,\infty),
\end{split}
\end{equation}
and some constants $K_{\growth,b;T},K_{\growth,\sigma;T},K_{\conti,b;T},K_{\conti,\sigma;T},K_{\ellip;T}$ depending only on $\kappa_1$, $\kappa_2$ and $\theta$; the multiplication by $2$ in \eqref{eq_3/2-subset} is just for a technical reason to make the computation in the proof of \cref{theo_3/2} below simple.
Noting that the true solution $X$ satisfies that $X(t)\in D$ for any $t\in[0,\infty)$ a.s., by \cref{theo_main} (i), we see that
\begin{equation}\label{eq_3/2-weakconv}
	\poly[X^\pi]\to X_T\ \ \text{weakly on $\cC_T$ as $|\pi|\downarrow0$ along $\pi\in\Pi_T$ for any $T\in(0,\infty)$},
\end{equation}
where $X^\pi=\EM$ is the standard Euler--Maruyama scheme given by \eqref{eq_EM-Markov} with initial condition $X^\pi(t_0)=x(0)$. In order to get a weak convergence order with respect to the L\'{e}vy--Prokhorov metric, we use \cref{theo_main} (iii). To do so, we first investigate the following moment estimates corresponding to the ``rare event'' appearing in \eqref{eq_rare-event}. Here, we require a slightly stronger condition $2\kappa_1+\theta^2>0$.


\begin{lemm}\label{lemm_3/2-moment}
Let $\kappa_1,\kappa_2\in\bR$ and $\theta\in(0,\infty)$ satisfy $2\kappa_1+\theta^2>0$, and let $\weaksol$ be the weak solution of the SDE \eqref{eq_3/2} on $D=(0,\infty)$ with initial condition $X(0)=x(0)\in D$. Let $T\in(0,\infty)$ be fixed. Then the following hold:
\begin{itemize}
\item[(i)]
For any $p\in(0,\frac{2\kappa_1}{\theta^2}+1)$, there exits a constant $C_p\in(0,\infty)$, which depends only on $\kappa_1,\kappa_2,\theta$ and $p$, such that
\begin{equation*}
	\bE\left[\sup_{t\in[0,T]}X(t)^p+\int^T_0X(t)^{p+1}\,\diff t\right]\leq C_pe^{C_pT}x(0)^p.
\end{equation*}
\item[(ii)]
For any $p\in[2,\infty)$, there exits a constant $C_p\in(0,\infty)$, which depends only on $\kappa_1,\kappa_2,\theta$ and $p$, such that
\begin{equation*}
	\bE\big[\|X^{-1}_T\|_\infty^p\big]\leq C_pe^{C_pT}\big(1+x(0)^{-p}\big).
\end{equation*}
\end{itemize}
\end{lemm}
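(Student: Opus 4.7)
The plan is to treat the two parts by different routes tailored to the sign of the exponent.

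For (i), I would apply It\^{o}'s formula to $X(t)^p$, which gives
\[
	\diff X(t)^p=\left[p\kappa_1\kappa_2X(t)^p-p\Big(\kappa_1-\tfrac{(p-1)\theta^2}{2}\Big)X(t)^{p+1}\right]\diff t+p\theta X(t)^{p+1/2}\,\diff W(t).
\]
The assumption $p\in(0,\tfrac{2\kappa_1}{\theta^2}+1)$ is precisely what makes the coefficient $c_p:=\kappa_1-\tfrac{(p-1)\theta^2}{2}$ strictly positive, so the $X^{p+1}$ term provides dissipation. I would localize by $\tau_N:=\inf\{t\geq 0\,|\,X(t)\geq N\}$, take expectations to kill the stochastic integral, and use Gronwall's inequality to obtain $\bE[X(t\wedge\tau_N)^p]\leq x(0)^p\exp(p|\kappa_1\kappa_2|t)$ together with an upper bound for $c_p\bE[\int_0^{T\wedge\tau_N}X(s)^{p+1}\,\diff s]$. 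To upgrade to the supremum, I would proceed exactly as in the proof of \cref{lemm_WF-moment}: apply Burkholder--Davis--Gundy to the martingale part, estimate
\[
	\bE\Big[\Big(\!\int^{T\wedge\tau_N}_0X(s)^{2p+1}\,\diff s\Big)^{\!1/2}\Big]\leq\bE\Big[\sup_{s\in[0,T\wedge\tau_N]}X(s)^p\Big]^{\!1/2}\bE\Big[\!\int^{T\wedge\tau_N}_0X(s)^{p+1}\,\diff s\Big]^{\!1/2},
\]
and absorb the $\sup$ term via Young's inequality. Finally let $N\to\infty$ and use Fatou.

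For (ii), the clean observation is that $Y(t):=X(t)^{-1}$ is a CIR (square-root) process. Indeed, It\^{o}'s formula applied to $f(x)=1/x$ gives
\[
	\diff Y(t)=\Big[(\kappa_1+\theta^2)-\kappa_1\kappa_2Y(t)\Big]\diff t-\theta\sqrt{Y(t)}\,\diff W(t),\ \ Y(0)=x(0)^{-1},
\]
and the Feller condition $2(\kappa_1+\theta^2)\geq\theta^2$ reduces to $2\kappa_1+\theta^2\geq0$, which holds (strictly) by assumption, so $Y$ stays in $(0,\infty)$ almost surely. For $p\geq 2$, applying It\^{o} to $Y^p$ and using the crude bound $Y^{p-1}\leq 1+Y^p$ reduces the drift to $C_p(1+Y^p)-p\kappa_1\kappa_2Y^p$, which is dominated by $\widetilde{C}_p(1+Y^p)$ irrespective of the sign of $\kappa_1\kappa_2$. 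Localizing at $\tau_N:=\inf\{t\geq0\,|\,Y(t)\geq N\}$, taking expectations, and applying Gronwall yields $\bE[Y(t\wedge\tau_N)^p]\leq C_pe^{C_pT}(1+y(0)^p)$. For the supremum, Burkholder--Davis--Gundy gives a bound involving $\bE[(\int_0^{T\wedge\tau_N}Y^{2p-1}\,\diff s)^{1/2}]$, which via $Y^{2p-1}\leq Y^p\cdot Y^{p-1}\leq(\sup Y^p)\cdot(1+Y^p)$ and Young's inequality is absorbed into $\epsilon\bE[\sup Y^p]+C_{\epsilon,p}(1+\int_0^T\bE[Y(s)^p]\,\diff s)$; Fatou in $N$ finishes the proof.

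I do not see a serious obstacle; everything rests on the standard Itô--Gronwall--BDG--Young machinery. The only structurally delicate ingredient is the sign of the $X^{p+1}$ dissipation, which is exactly controlled by the constraint $p<\tfrac{2\kappa_1}{\theta^2}+1$, and the identification of $X^{-1}$ as a CIR process, which makes part (ii) essentially a classical computation.
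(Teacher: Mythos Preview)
Your proposal is correct and for part (i) follows essentially the same route as the paper: It\^{o}'s formula applied to $X^p$, the observation that $p<\tfrac{2\kappa_1}{\theta^2}+1$ makes the $X^{p+1}$ term dissipative, localization via $\tau_N=\inf\{t\geq0\,|\,X(t)\geq N\}$, the Burkholder--Davis--Gundy/Young absorption argument, Gronwall, and Fatou. The paper combines the two estimates (for $\int X^{p+1}$ and for $\sup X^p$) before applying Gronwall to the resulting inequality in $T_1\mapsto\bE[\sup_{[0,T_1]}X(\cdot\wedge\tau_N)^p]$, whereas you first extract a pointwise Gronwall bound; the difference is cosmetic.

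For part (ii) there is a mild difference in presentation. The paper dispatches it in one line by invoking the general one-sided-growth lemma (\cref{appendix_lemm_LG} (ii)), whose proof amounts to writing down the SFDE satisfied by $X^{-1}$ and checking that its coefficients obey \eqref{appendix_eq_LG}. Your route is the concrete specialization of exactly this: you identify $Y=X^{-1}$ as a CIR process $\diff Y=[(\kappa_1+\theta^2)-\kappa_1\kappa_2 Y]\,\diff t-\theta\sqrt{Y}\,\diff W$, note that the hypothesis $2\kappa_1+\theta^2>0$ is the Feller condition, and run the standard It\^{o}--Gronwall--BDG argument directly on $Y^p$. Both approaches are the same machinery; yours buys an explicit structural insight (the CIR identification), while the paper's buys brevity by reusing an already-proved abstract lemma.
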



\begin{proof}
The assertion (ii) follows from \cref{appendix_lemm_LG} (ii). We show the assertion (i).

Let $T\in(0,\infty)$ and $p\in(0,\frac{2\kappa_1}{\theta^2}+1)$ be fixed. We denote by $C_p$ a positive constant which depends only on $\kappa_1,\kappa_2,\theta$ and $p$ and varies from line to line. By It\^o's formula, we have
\begin{equation}\label{eq_3/2-Ito}
	X(t)^p+p\left\{\kappa_1-\frac{(p-1)\theta^2}{2}\right\}\int^t_0X(s)^{p+1}\,\diff s=x(0)^p+p\kappa_1\kappa_2\int^t_0X(s)^p\,\diff s+p\theta\int^t_0X(s)^{p+1/2}\,\diff W(s)
\end{equation}
for any $t\in[0,\infty)$ $\bP$-a.s.
For each $N \in \bN$, define a stopping time $\tau_N$ by $\tau_N:=\inf\{t\geq0\,|\,X(t)\geq N\}$.
Notice that $\tau_N\to\infty$ as $N\to\infty$ $\bP$-a.s. Take an arbitrary $T_1\in[0,T]$. On the one hand, letting $t=T_1\wedge\tau_N$ and taking expectations in \eqref{eq_3/2-Ito}, we see that
\begin{equation*}
	p\left\{\kappa_1-\frac{(p-1)\theta^2}{2}\right\}\bE\left[\int^{T_1\wedge\tau_N}_0X(s)^{p+1}\,\diff s\right]\leq x(0)^p+p\kappa_1\kappa_2\bE\left[\int^{T_1\wedge\tau_N}_0X(s)^p\,\diff s\right].
\end{equation*}
Since $\kappa_1-\frac{(p-1)\theta^2}{2}>0$, we get
\begin{equation}\label{eq_3/2-estimate1}
	\bE\left[\int^{T_1\wedge\tau_N}_0X(s)^{p+1}\,\diff s\right]\leq C_p\left\{x(0)^p+\bE\left[\int^{T_1\wedge\tau_N}_0X(s)^p\,\diff s\right]\right\}.
\end{equation}
On the other hand, considering the supremum with respect to $t\in[0,T_1\wedge\tau_N]$ and taking expectations in \eqref{eq_3/2-Ito}, by the Burkholder--Davis--Gundy inequality, we see that
\begin{equation*}
	\bE\left[\sup_{t\in[0,T_1]}X(t\wedge\tau_N)^p\right]\leq C_p\left\{x(0)^p+\bE\left[\int^{T_1\wedge\tau_N}_0X(s)^p\,\diff s\right]+\bE\left[\left(\int^{T_1\wedge\tau_N}_0X(s)^{2p+1}\,\diff s\right)^{1/2}\right]\right\}.
\end{equation*}
From this, together with the estimate 
\begin{align*}
	C_p\bE\left[\left(\int^{T_1\wedge\tau_N}_0X(s)^{2p+1}\,\diff s\right)^{1/2}\right]&\leq C_p\bE\left[\sup_{s\in[0,T_1]}X(s\wedge\tau_N)^{p/2}\left(\int^{T_1\wedge\tau_N}_0X(s)^{p+1}\,\diff s\right)^{1/2}\right]\\
	&\leq\frac{1}{2}\bE\left[\sup_{s\in[0,T_1]}X(s\wedge\tau_N)^p\right]+C_p\bE\left[\int^{T_1\wedge\tau_N}_0X(s)^{p+1}\,\diff s\right],
\end{align*}
we obtain
\begin{equation}\label{eq_3/2-estimate2}
	\bE\left[\sup_{t\in[0,T_1]}X(t\wedge\tau_N)^p\right]\leq C_p\left\{x(0)^p+\bE\left[\int^{T_1\wedge\tau_N}_0X(s)^p\,\diff s\right]+\bE\left[\int^{T_1\wedge\tau_N}_0X(s)^{p+1}\,\diff s\right]\right\}.
\end{equation}
By \eqref{eq_3/2-estimate1} and \eqref{eq_3/2-estimate2}, we get
\begin{align*}
	\bE\left[\sup_{t\in[0,T_1]}X(t\wedge\tau_N)^p\right]&\leq C_p\left\{x(0)^p+\bE\left[\int^{T_1\wedge\tau_N}_0X(s)^p\,\diff s\right]\right\}\\
	&\leq C_p\left\{x(0)^p+\int^{T_1}_0\bE\left[\sup_{t\in[0,s]}X(t\wedge\tau_N)^p\right]\,\diff s\right\}.
\end{align*}
Noting that $T_1\in[0,T]$ is arbitrary, Gronwall's inequality yields that
\begin{equation*}
	\bE\left[\sup_{t\in[0,T]}X(t\wedge\tau_N)^p\right]\leq C_pe^{C_p T}x(0)^p.
\end{equation*}
From this estimate and \eqref{eq_3/2-estimate1} with $T_1=T$, letting $N\to\infty$ and using Fatou's lemma, we obtain the desired estimate in the assertion (i).
\end{proof}

By \cref{lemm_3/2-moment} and \cref{theo_main} (iii), we can obtain the following convergence order for the weak approximation \eqref{eq_3/2-weakconv} under the slightly stronger condition $2\kappa_1+\theta^2>0$.


\begin{theo}\label{theo_3/2}
Let $\kappa_1,\kappa_2\in\bR$ and $\theta\in(0,\infty)$ satisfy $2\kappa_1+\theta^2>0$, and let $\weaksol$ be the weak solution of the SDE \eqref{eq_3/2} on $D=(0,\infty)$ with initial condition $X(0)=x(0)\in D$.
For each $T\in(0,\infty)$ and $\pi\in\Pi_T$, let $X^\pi$ be the Euler--Maruyama scheme defined on $(\Omega^\pi,\cF^\pi,\bP^\pi)$ and given by \eqref{eq_EM-Markov} with initial condition $X^\pi(t_0)=x(0)$.
Then, for any
\begin{equation*}
	\gamma\in\left(0,\frac{2\kappa_1+\theta^2}{4\kappa_1+7\theta^2}\right),
\end{equation*}
there exists a constant $C_\gamma\in(0,\infty)$ such that
\begin{equation*}
	d_\LP\big(\Law_\bP(X_T),\Law_{\bP^\pi}(\poly[X^\pi])\big)\leq C_\gamma|\pi|^\gamma
\end{equation*}
for any $\pi\in\Pi_{T}$.
\end{theo}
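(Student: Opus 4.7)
The plan is to derive the convergence order by invoking \cref{theo_main} (iii) with the decay exponents $\vec{\beta}=(\beta_0,\beta_{\growth,b},\beta_{\growth,\sigma},\beta_{\conti,b},\beta_{\conti,\sigma},\beta_\ellip)$ coming from Markov's inequality applied to the moment bounds in \cref{lemm_3/2-moment}. Since the assumption \cref{assum} holds with the sets \eqref{eq_3/2-subset} (and $\alpha_b=\alpha_\sigma=1$), I would first rewrite the rare event
\[
	\left\{\inf_{t\in[0,T]}\dist\!\left(X(t),\bR\setminus D_T(\vec R)\right)\leq\Delta\right\}
\]
as the union of an ``upper exit'' event $\{\sup_{t\in[0,T]}X(t)\geq L_{\vec R}-\Delta\}$ and a ``lower exit'' event $\{\inf_{t\in[0,T]}X(t)\leq R_\ellip^{-2/3}+\Delta\}$, where
\[
	L_{\vec R}:=\min\!\left\{2R_{\growth,b}^{1/2},\,2R_{\growth,\sigma}^{2/3},\,2R_{\conti,b},\,2R_{\conti,\sigma}^{2}\right\}.
\]
Applying Markov's inequality to these two events, together with the moment estimates $\bE[\sup_{t\in[0,T]}X(t)^q]<\infty$ for $q\in(0,\tfrac{2\kappa_1}{\theta^2}+1)$ and $\bE[\sup_{t\in[0,T]}X(t)^{-p}]<\infty$ for $p\in[2,\infty)$ from \cref{lemm_3/2-moment}, yields estimates of the form \eqref{eq_rare-event} with any $\beta_0,\beta_\ellip\in(0,\infty)$ and
\[
	\beta_{\growth,b}=q/2,\quad \beta_{\growth,\sigma}=2q/3,\quad \beta_{\conti,b}=q,\quad \beta_{\conti,\sigma}=2q,\quad q\in\!\Big(0,\tfrac{2\kappa_1}{\theta^2}+1\Big).
\]

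Next I would plug these exponents into \eqref{eq_beta*} and \eqref{eq_gamma*} and pass to the supremum as $\beta_0,\beta_\ellip\uparrow\infty$ and $q\uparrow\tfrac{2\kappa_1+\theta^2}{\theta^2}$ (which is positive by assumption). Using $\alpha_b=\alpha_\sigma=1$, a direct computation gives
\[
	\beta_*\ \longrightarrow\ \frac{2\kappa_1+\theta^2}{2\kappa_1+2\theta^2},
\]
since the two candidates $\tfrac{\beta_{\conti,b}}{\beta_{\conti,b}+1}$ and $\tfrac{\beta_{\conti,\sigma}}{\beta_{\conti,\sigma}+2}$ coincide in the limit. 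Then each of the four terms defining $\gamma_*$ becomes
\[
	\frac{2\kappa_1+\theta^2}{2\kappa_1+4\theta^2},\ \frac{2\kappa_1+\theta^2}{4\kappa_1+7\theta^2},\ \frac{2\kappa_1+\theta^2}{2\kappa_1+4\theta^2},\ \frac{2\kappa_1+\theta^2}{4\kappa_1+7\theta^2},
\]
respectively, and the minimum is achieved by the second and fourth (since $4\kappa_1+7\theta^2>2\kappa_1+4\theta^2$). Hence $\sup_{\vec\beta}\gamma_*(\alpha_b,\alpha_\sigma,\vec\beta)=\tfrac{2\kappa_1+\theta^2}{4\kappa_1+7\theta^2}$, and \cref{theo_main} (iii) then delivers the claimed estimate for every $\gamma\in(0,\tfrac{2\kappa_1+\theta^2}{4\kappa_1+7\theta^2})$.

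The computation itself is purely algebraic once the moment bounds are secured, so there is no genuine obstacle beyond careful bookkeeping of the five decay exponents and verification that the lower-bound term and upper-bound term are both controlled after choosing the correct moment order. The only subtle point is that, while $p$ in the lower-bound estimate can be chosen arbitrarily large (yielding $\beta_0,\beta_\ellip\to\infty$), the upper-bound exponents $\beta_{\growth,b},\beta_{\growth,\sigma},\beta_{\conti,b},\beta_{\conti,\sigma}$ are limited by the admissible range $q<\tfrac{2\kappa_1}{\theta^2}+1$ in \cref{lemm_3/2-moment} (i), which is precisely what makes the final rate depend on the ratio $\kappa_1/\theta^2$ rather than being the benchmark $\tfrac12$.
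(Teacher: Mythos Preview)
Your proposal is correct and follows essentially the same route as the paper: verify the rare-event bound \eqref{eq_rare-event} via Markov's inequality and the moment estimates of \cref{lemm_3/2-moment}, read off the admissible $\vec{\beta}$ from the exponents in the sets \eqref{eq_3/2-subset}, and then optimize $\gamma_*$ by sending $\beta_0,\beta_\ellip\to\infty$ and $q\uparrow\tfrac{2\kappa_1}{\theta^2}+1$. Your computed limits $\beta_*\to\tfrac{2\kappa_1+\theta^2}{2(\kappa_1+\theta^2)}$ and $\gamma_*\to\tfrac{2\kappa_1+\theta^2}{4\kappa_1+7\theta^2}$ match the paper exactly, and the identification of the second and fourth terms as the binding constraints is correct.
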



\begin{proof}
Recall the choices of the subsets in \eqref{eq_3/2-subset}. For any $\Delta\in(0,1]$ and $\vec{R}=(R_{\growth,b},R_{\growth,\sigma},R_{\conti,b},R_{\conti,\sigma},R_\ellip)\in[1,\infty)^5$, noting that $\Delta\leq1\leq\min\left\{R^{1/2}_{\growth,b},R^{2/3}_{\growth,\sigma},R_{\conti,b},R^2_{\conti,\sigma}\right\}$, we have
\begin{align*}
	&\bP\left(\inf_{t\in[0,T]}\dist\!\left(X(t),\bR\setminus D_T(\vec{R}) \right)\leq \Delta\right)\\
	&=\bP\left(\text{$X(t)\geq2\min\left\{R^{1/2}_{\growth,b},R^{2/3}_{\growth,\sigma},R_{\conti,b},R^2_{\conti,\sigma}\right\}-\Delta$ or $X(t)\leq\frac{1}{R^{2/3}_\ellip}+\Delta$ for some $t\in[0,T]$}\right)\\
	&\leq\bP\left(\sup_{t\in[0,T]}X(t)\geq\min\left\{R^{1/2}_{\growth,b},R^{2/3}_{\growth,\sigma},R_{\conti,b},R^2_{\conti,\sigma}\right\}\right)+\bP\left(\sup_{t\in[0,T]}X(t)^{-1}\geq\left(\frac{1}{R^{2/3}_\ellip}+\Delta\right)^{-1}\right),
\end{align*}
and thus, by Markov's inequality and \cref{lemm_3/2-moment},
\begin{align*}
	&\bP\left(\inf_{t\in[0,T]}\dist\!\left(X(t),\bR\setminus D_T(\vec{R}) \right)\leq \Delta\right)\\
	&\leq\left(\min\left\{R^{1/2}_{\growth,b},R^{2/3}_{\growth,\sigma},R_{\conti,b},R^2_{\conti,\sigma}\right\}\right)^{-p}\bE\left[\sup_{t\in[0,T]}X(t)^p\right]+\left(\frac{1}{R^{2/3}_\ellip}+\Delta\right)^q\bE\left[\sup_{t\in[0,T]}X(t)^{-q}\right]\\
	&\leq C_{p,q}\max\left\{\Delta^q,R_{\growth,b}^{-p/2},R_{\growth,\sigma}^{-2p/3},R_{\conti,b}^{-p},R_{\conti,\sigma}^{-2p},R_\ellip^{-2q/3}\right\},
\end{align*}
for any $p\in(0,\frac{2\kappa_1}{\theta^2}+1)$ and $q\in(0,\infty)$. Here, $C_{p,q}\in(0,\infty)$ is a constant which does not depend on $\Delta$ or $\vec{R}$. The above estimate shows that \eqref{eq_rare-event} holds for any $\vec{\beta}=(\beta_0,\beta_{\growth,b},\beta_{\growth,\sigma},\beta_{\conti,b},\beta_{\conti,\sigma},\beta_\ellip)$ with
\begin{equation*}
	\beta_0,\beta_\ellip\in(0,\infty),\ \beta_{\growth,b}\in\left(0,\frac{\kappa_1}{\theta^2}+\frac{1}{2}\right),\ \beta_{\growth,\sigma}\in\left(0,\frac{4\kappa_1}{3\theta^2}+\frac{2}{3}\right),\ \beta_{\conti,b}\in\left(0,\frac{2\kappa_1}{\theta^2}+1\right),\ \beta_{\conti,\sigma}\in\left(0,\frac{4\kappa_1}{\theta^2}+2\right).
\end{equation*}
Concerning the constants $\beta_*$ and $\gamma_*$ defined by \eqref{eq_beta*} and \eqref{eq_gamma*}, we have
\begin{equation*}
\begin{split}
\beta_*&=
\min\left\{
\beta_0,
\frac{1}{1+\beta_\ellip^{-1}+\beta_{\conti,b}^{-1}},
\frac{1}{1+\beta_\ellip^{-1}+2\beta_{\conti,\sigma}^{-1}}
\right\}
\to
\frac{2\kappa_{1}+\theta^{2}}{2(\kappa_{1}+\theta^{2})},
\end{split}
\end{equation*}
and
\begin{equation*}
\begin{split}
\gamma_*
&=
\min\left\{
\frac{1}{1+\beta_\ellip^{-1}+\beta_{\conti,b}^{-1}+\beta_{\growth,b}^{-1}},
\frac{1}{2}\cdot\frac{1}{1+\beta_\ellip^{-1}+\beta_{\conti,b}^{-1}+\beta_{\growth,\sigma}^{-1}},\right.\\
&\hspace{1.5cm}\left.
\frac{2}{1+\beta_\ellip^{-1}+2\beta_{\conti,\sigma}^{-1}+2\beta_{\growth,b}^{-1}+\beta_*^{-1}},
\frac{1}{1+\beta_\ellip^{-1}+2\beta_{\conti,\sigma}^{-1}+2\beta_{\growth,\sigma}^{-1}+\beta_*^{-1}}
\right\}
\\&\to
\frac{2\kappa_{1}+\theta^{2}}{4\kappa_{1}+7\theta^{2}}
\end{split}
\end{equation*}
as $\beta_0,\beta_\ellip\to\infty$, $\beta_{\growth,b}\to\frac{\kappa_1}{\theta^2}+\frac{1}{2}$, $\beta_{\growth,\sigma}\to\frac{4\kappa_1}{3\theta^2}+\frac{2}{3}$, $\beta_{\conti,b}\to\frac{2\kappa_1}{\theta^2}+1$ and $\beta_{\conti,\sigma}\to\frac{4\kappa_1}{\theta^2}+2$. Therefore, by \cref{theo_main} (iii), we get the desired estimate. This completes the proof.
\end{proof}


\subsection{Dyson's Brownian motions}\label{subsec_Dyson}
Let $d=n\geq2$, and consider the following SDE defined on $D=\{\xi \in \bR^{n}|\,\xi_{1}>\xi_{2}>\cdots>\xi_{n}\}$:
\begin{equation}\label{eq_Dyson}
	\diff X_{i}(t)=\sum_{1\leq j\leq n,j \neq i}\frac{\kappa}{X_{i}(t)-X_{j}(t)}\,\diff t+\diff W_{i}(t),\ \ t\in[0,\infty),\ \ i\in\{1,\dots,n\},\ \ X(0)=x(0) \in D,
\end{equation}
where $\kappa$ is a fixed positive constant. The SDE \eqref{eq_Dyson} has a unique strong solution $X=(X_1,\dots,X_n)^\top$ such that $X(t)\in D$ for any $t\in[0,\infty)$ a.s.\ if and only if $\kappa\geq1/2$. Indeed, the ``if part'' (that is, the sufficiency of $\kappa\geq1/2$) is shown in \cite[Theorem 3.1 and Proposition 4.1]{CeLe97} and \cite[Lemma 1]{RoSh93}, and the ``only if part'' (that is, the necessity of $\kappa\geq1/2$) can be shown by comparing the dynamics of the process $X_{i,i+1}:=\frac{1}{\sqrt{2}}(X_i-X_{i+1})$ (see \eqref{eq_Dyson-ij} below) with the Bessel process for each $i\in\{1,\dots,n-1\}$. The solution $X$ of the SDE \eqref{eq_Dyson} is called Dyson's Brownian motion, which arises in mathematical physics as a non-colliding particle system \cite{Dy62} and in random matrix theory as dynamics of the eigenvalues of some matrix-valued Brownian motions \cite{AGZ10,Dy62}.

Numerical approximations for Dyson's Brownian motion are studied in \cite{DNT24+,NT20,NT24+}, where the authors provide convergence rates in the $L^{p}$-sup sense for some $p \geq 2$ (which depends on $\kappa$) by means of the backward/truncated Euler--Maruyama schemes assuming that $\kappa$ is sufficiently large; for example, \cite[Theorem 4.2]{DNT24+} requires that $\kappa>9/2$.

Let $\kappa\geq1/2$. As discussed in \cref{subsubsec_Markov}, the Markovian SDE \eqref{eq_Dyson} on $D=\{\xi \in \bR^{n}|\,\xi_{1}>\xi_{2}>\cdots>\xi_{n}\}$ fits into the framework of the present paper. More precisely, the data satisfies \cref{assum} with the choices of the exponents $\alpha_b=\alpha_\sigma=1$, the sets
\begin{equation}\label{eq_Dyson-subset}
\begin{split}
	D_{\growth,b;T}(R)=\left\{\xi\in D\relmiddle|\xi_i-\xi_j>\frac{1}{R}\ \text{for any $1\leq i<j\leq n$}\right\},\\
	D_{\conti,b;T}(R)=\left\{\xi\in D\relmiddle|\xi_i-\xi_j>\frac{1}{R^{1/2}}\ \text{for any $1\leq i<j\leq n$}\right\},\\
	D_{\growth,\sigma;T}(R)=D_{\conti,\sigma;T}(R)=D_{\ellip;T}(R)=D,\ \ R\in[1,\infty),
\end{split}
\end{equation}
and some constants $K_{\growth,b;T},K_{\growth,\sigma;T},K_{\conti,b;T},K_{\conti,\sigma;T},K_{\ellip;T}$ depending only on $\kappa$ and $n$.
Noting that the true solution $X$ satisfies that $X(t)\in D$ for any $t\in[0,\infty)$ a.s., by \cref{theo_main} (i), we see that
\begin{equation}\label{eq_Dyson-weakconv}
	\poly[X^\pi]\to X_T\ \ \text{weakly on $\cC^n_T$ as $|\pi|\downarrow0$ along $\pi\in\Pi_T$ for any $T\in(0,\infty)$},
\end{equation}
where $X^\pi=\EM$ is the standard Euler--Maruyama scheme given by \eqref{eq_EM-Markov} with initial condition $X^\pi(t_0)=x(0)$. In order to get a weak convergence order with respect to the L\'{e}vy--Prokhorov metric, we use \cref{theo_main} (iii). To do so, we first investigate moment estimates corresponding to the ``rare event'' appearing in \eqref{eq_rare-event}. In the literature, it has been shown that
\begin{align*}
	&\max_{1\leq i<j\leq n}\sup_{t\in[0,T]}\bE\left[\big(X_i(t)-X_j(t)\big)^{-p}\right]<\infty\ \ \text{for any $p \in (0,2\kappa-1)$ under the condition $\kappa>1/2$,\ \ and}\\
	&\max_{1\leq i<j\leq n}\bE\left[\sup_{t\in[0,T]}\big(X_i(t)-X_j(t)\big)^{-p}\right]<\infty\ \ \text{for any $p \in (0,2\kappa-3)$ under the condition $\kappa>3/2$}
\end{align*}
(see \cite[Theorem 2.7]{DNT24+}).
Notice that there is a gap between the above two estimates in terms of the parameters $\kappa$ and $p$.
The following lemma fills the gap and improves the above results.


\begin{lemm}\label{lemm_Dyson-moment}
Assume that $\kappa>1/2$, and let $\weaksol$ be the weak solution of the SDE \eqref{eq_Dyson} on $D=\{\xi \in \bR^{n}|\,\xi_{1}>\xi_{2}>\cdots>\xi_{n}\}$ with initial condition $X(0)=x(0)\in D$. Then, for any $p\in(0,2\kappa-1)$, there exists a constant $C_p\in(0,\infty)$, which depends only on $\kappa,n$ and $p$, such that
\begin{equation*}
	\max_{1\leq i<j\leq n}\bE\left[\sup_{t\in[0,\infty)}\big(X_i(t)-X_j(t)\big)^{-p}+\int^\infty_0(X_i(t)-X_j(t)\big)^{-p-2}\,\diff t\right]\leq C_p\max_{1\leq i<j\leq n}\big(x_i(0)-x_j(0)\big)^{-p}.
\end{equation*}
\end{lemm}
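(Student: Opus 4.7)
}

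The strategy is to analyze the ``gap processes'' $Z_{ij}(t) := X_i(t) - X_j(t)$ for $i<j$ via It\^o's formula applied to $Z_{ij}^{-p}$, exploiting the structure of the Coulomb repulsion. From \eqref{eq_Dyson}, each gap satisfies
\begin{equation}\label{eq_Dyson-ij}
	dZ_{ij}(t) = \frac{2\kappa}{Z_{ij}(t)}\,dt + \sum_{k\neq i,j}\kappa\left[\frac{1}{X_i(t)-X_k(t)}-\frac{1}{X_j(t)-X_k(t)}\right]dt + dW_i(t)-dW_j(t),
\end{equation}
with quadratic variation $d\langle Z_{ij}\rangle_t=2\,dt$. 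Applying It\^o to $Z_{ij}^{-p}$ and using the partial fraction identity
\[
	\frac{1}{(X_i-X_k)(X_j-X_k)} = \frac{1}{X_i-X_j}\left[\frac{1}{X_j-X_k}-\frac{1}{X_i-X_k}\right]
\]
gives, for each pair $i<j$,
\[
	d(Z_{ij}^{-p}) = -p(2\kappa-1-p)\,Z_{ij}^{-p-2}\,dt + p\kappa\,Z_{ij}^{-p}\!\!\sum_{k\neq i,j}\frac{dt}{(X_i-X_k)(X_j-X_k)} - pZ_{ij}^{-p-1}\,(dW_i-dW_j).
\]
The coefficient $-p(2\kappa-1-p)$ is strictly negative precisely when $p\in(0,2\kappa-1)$, so the first term is a favorable dissipation producing an integrated bound on $Z_{ij}^{-p-2}$.

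To handle the interaction term, I would work with the symmetric functional $F_q(t):=\sum_{1\leq i<j\leq n}Z_{ij}(t)^{-q}$ and sum the displays above over all pairs $i<j$. The triple interaction sum splits according to the position of $k$: for $i<k<j$ the contribution $\frac{1}{(X_i-X_k)(X_j-X_k)}$ is strictly \emph{negative} and, by AM--GM applied to the identity $Z_{ij}=(X_i-X_k)+(X_k-X_j)$, is bounded above by $-4/Z_{ij}^{2}$, supplying additional dissipation of size $-4p\kappa\,Z_{ij}^{-p-2}$ per interior particle. For $k\notin[i,j]$ the contribution is positive, but the AM--GM estimate $\frac{1}{|X_i-X_k||X_j-X_k|}\leq \tfrac12(|X_i-X_k|^{-2}+|X_j-X_k|^{-2})$ followed by Young's inequality with exponents $\tfrac{p+2}{p},\tfrac{p+2}{2}$ rewrites it as a combination of $Z_{ij}^{-p-2}$ and $Z_{\cdot\cdot}^{-p-2}$ terms that can be absorbed after symmetrization.

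Introducing the stopping times $\tau_N:=\inf\{t\geq 0\mid F_1(t)\geq N\}$ to make the local martingale a true martingale, taking expectations in the summed It\^o identity, and rearranging should yield
\[
	\bE\!\left[F_p(t\wedge\tau_N)\right] + c_{n,p,\kappa}\,\bE\!\left[\int_0^{t\wedge\tau_N}F_{p+2}(s)\,ds\right] \leq F_p(0)
\]
for a strictly positive constant $c_{n,p,\kappa}$, uniformly in $t\geq 0$. For the supremum estimate, I would then apply the Burkholder--Davis--Gundy inequality to the martingale term, bound $Z_{ij}^{-2p-2}=Z_{ij}^{-p}\cdot Z_{ij}^{-p-2}$ by Cauchy--Schwarz, and absorb $\tfrac12\,\bE[\sup_{t\leq T\wedge\tau_N}F_p(t)]$ into the left-hand side, with the remaining $\bE[\int F_{p+2}\,ds]$ controlled by the previous display. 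Finally, sending $N\to\infty$ (using $\tau_N\to\infty$ a.s., since $X(t)\in D$ for all $t$) and then $T\to\infty$ via monotone/Fatou convergence delivers the bounds claimed in the lemma, with a constant depending only on $n,\kappa,p$.

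The principal obstacle is Step~3: showing that, after summation over pairs, the positive part of the interaction sum can be absorbed into the dissipation budget $p(2\kappa-1-p)$ \emph{for every} $p\in(0,2\kappa-1)$, without losing the sharp threshold. This requires a delicate balance between the unfavorable ``exterior'' interactions (where Young's inequality introduces $Z_{ij}^{-p-2}$ terms with coefficient $\tfrac{p^2\kappa}{p+2}(n-j+i-1)$) and the favorable ``interior'' interactions (giving $-4p\kappa(j-i-1)Z_{ij}^{-p-2}$). Making this combinatorial accounting work uniformly is the key technical step and the reason one improves upon the $p<2\kappa-3$ threshold from \cite{DNT24+} to the sharp $p<2\kappa-1$.
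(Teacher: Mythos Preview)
Your overall It\^o/BDG strategy is fine, and you correctly isolate the obstacle: the combinatorial accounting in Step~3. Unfortunately, with an \emph{unweighted} sum $F_p=\sum_{i<j}Z_{ij}^{-p}$ and the AM--GM/Young estimates you describe, this accounting does not close at the sharp threshold. Look at a nearest-neighbour gap, say $Z_{12}$. There are no interior particles, so there is no ``interior bonus''; after Young's inequality, the exterior interactions in the It\^o formula for $(1,2)$ contribute $\frac{p^2\kappa}{p+2}(n-2)$ to the coefficient of $Z_{12}^{-p-2}$, and the exterior interactions of the other pairs $(2,j)$, $j\geq 3$, with exterior particle $1$ contribute a further $\frac{p\kappa}{p+2}(n-2)$. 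The net coefficient of $Z_{12}^{-p-2}$ is therefore at least $-p(2\kappa-1-p)+(n-2)\frac{p\kappa(p+1)}{p+2}$, which is \emph{positive} for every $n\geq 3$ once $p$ is close to $2\kappa-1$. So your inequality $\bE[F_p]+c\,\bE[\int F_{p+2}]\leq F_p(0)$ cannot hold with $c>0$ for the full range $p\in(0,2\kappa-1)$.

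The paper's proof fixes this by two changes. First, it works with a \emph{weighted} Lyapunov functional $\sum_{i<j}a_{j-i}\,X_{i,j}^{-p}$, where the weights $a_m$ depend only on the index distance $m=j-i$ and are to be chosen. Second, instead of Young's inequality on the cross terms $a_{j-i}X_{i,j}^{-p-1}X_{j,k}^{-1}+a_{k-j}X_{i,j}^{-1}X_{j,k}^{-p-1}$ (for $i<j<k$), it uses a dichotomy: if one of $X_{i,j},X_{j,k}$ exceeds the other by a factor $a_{j-i}+a_{k-j}+1$, the cross term is bounded by $X_{i,j}^{-p-2}+X_{j,k}^{-p-2}$; if they are comparable, the cross term is bounded by $(a_{j-i}+a_{k-j}+2)^{p+3}X_{i,k}^{-p-2}$. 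The crucial point is that the large, weight-dependent part is routed to the \emph{wider} gap $X_{i,k}$ with $k-i\geq 2$. After summing, the coefficient in front of a nearest-neighbour term $X_{i,i+1}^{-p-2}$ is only $n-2$ (no weight dependence), while the dissipation supplied is $a_1(\kappa-\tfrac{p+1}{2})$; since $\kappa-\tfrac{p+1}{2}>0$ for every $p<2\kappa-1$, one simply picks $a_1$ large enough. Then $a_2,a_3,\ldots$ are chosen inductively (each $a_m$ only needs to dominate a polynomial in $a_1,\ldots,a_{m-1}$) so that every coefficient becomes $1$. Your interior/exterior splitting and Young-type bounds do not produce this hierarchical structure; the weighted sum together with the comparable/non-comparable dichotomy is the missing idea.
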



\begin{proof}
For each $1\leq i<j\leq n$, we set $X_{i,j}:=\frac{1}{\sqrt{2}}(X_{i}-X_{j})$ and $W_{i,j}:=\frac{1}{\sqrt{2}}(W_{i}-W_{j})$.
Notice that $W_{i,j}$ is a one-dimensional standard Brownian motion, $X_{i,j}(t)>0$ for any $t\in[0,\infty)$ $\bP$-a.s., and $X_{i,j}$ evolves as
\begin{equation}\label{eq_Dyson-ij}
\begin{split}
	\diff X_{i,j}(t)&=\diff W_{i,j}(t)+\kappa X_{i,j}(t)^{-1}\,\diff t-\frac{\kappa}{2}\left\{\sum_{k\leq i-1}X_{k,i}(t)^{-1}+\sum_{\ell\geq j+1}X_{j,\ell}(t)^{-1}\right\}\,\diff t\\
	&\hspace{0.5cm}+\frac{\kappa}{2}\left\{\sum_{k\leq j-1,k\neq i}X_{k,j}(t)^{-1}+\sum_{\ell\geq i+1,\ell\neq j}X_{i,\ell}(t)^{-1}\right\}\,\diff t,\ \ t\in[0,\infty),
\end{split}
\end{equation}
with initial condition $X_{i,j}(0)=x_{i,j}(0):=\frac{1}{\sqrt{2}}(x_i(0)-x_j(0))>0$. In the above expression, the summation with respect to an empty set of indexes is understood to be zero. Let $p\in(0,2\kappa-1)$ be fixed. We show that
\begin{equation}\label{eq_Dyson-estimate}
	\max_{1\leq i<j\leq n}\bE\left[\sup_{t\in[0,\infty)}X_{i,j}(t)^{-p}+\int^\infty_0X_{i,j}(t)^{-p-2}\,\diff t\right]\leq C_p\max_{1\leq i<j\leq n}x_{i,j}(0)^{-p}
\end{equation}
for some constant $C_p\in(0,\infty)$ which depends only on $\kappa,n$ and $p$. To do so, we first show that there exist some positive constants $a_1,\dots,a_{n-1}$, which depend only on $\kappa,n$ and $p$, such that
\begin{equation}\label{eq_Dyson-sum}
\begin{split}
&\sum_{1\leq i<j\leq n}a_{j-i}X_{i,j}(t)^{-p}
+
p\sum_{1\leq i<j\leq n}
\int_{0}^{t}
X_{i,j}(s)^{-p-2}
\,\diff s
\\&\leq
\sum_{1\leq i<j\leq n}a_{j-i}x_{i,j}(0)^{-p}
-
p \sum_{1\leq i<j\leq n}a_{j-i}
\int_{0}^{t}
X_{i,j}(s)^{-p-1}
\,\diff W_{i,j}(s)
\end{split}
\end{equation}
for any $t\in[0,\infty)$ $\bP$-a.s.

By It\^o's formula, for each $1\leq i<j\leq n$, we have
\begin{align*}
X_{i,j}(t)^{-p}
&=
x_{i,j}(0)^{-p}
-
p\int_{0}^{t}
X_{i,j}(s)^{-p-1}
\,\diff W_{i,j}(s)
-p\left(
\kappa-\frac{p+1}{2}
\right)
\int_{0}^{t}
X_{i,j}(s)^{-p-2}
\,\diff s
\\&\quad
+\frac{p\kappa}{2}
\int_{0}^{t}
X_{i,j}(s)^{-p-1}
\left\{
\sum_{k \leq i-1} X_{k,i}(s)^{-1}
+
\sum_{\ell \geq j+1} X_{j,\ell}(s)^{-1}
\right\}
\,\diff s
\\&\quad
-\frac{p\kappa}{2}
\int_{0}^{t}
X_{i,j}(s)^{-p-1}
\left\{
\sum_{k\leq j-1,k\neq i} X_{k,j}(s)^{-1}
+
\sum_{\ell\geq i+1,\ell\neq j} X_{i,\ell}(s)^{-1}
\right\}
\,\diff s,
\end{align*}
and hence
\begin{align*}
	&X_{i,j}(t)^{-p}+p\left(\kappa-\frac{p+1}{2}\right)\int^t_0X_{i,j}(s)^{-p-2}\,\diff s\\
	&\leq x_{i,j}(0)^{-p}-p\int^t_0X_{i,j}(s)^{-p-1}\,\diff W_{i,j}(s)+\frac{p\kappa}{2}\int^t_0X_{i,j}(s)^{-p-1}\left\{\sum_{k\leq i-1}X_{k,i}(s)^{-1}+\sum_{\ell\geq j+1}X_{j,\ell}(s)^{-1}\right\}\,\diff s.
\end{align*}
Let $a_1,\dots,a_{n-1}$ be positive constants, which will be determined later. Multiplying $a_{j-i}$ to both sides of the above inequality and then summing up over $1\leq i<j\leq n$, we get
\begin{equation}\label{eq_Dyson-estimate1}
\begin{split}
	&\sum_{1\leq i<j\leq n}a_{j-i}X_{i,j}(t)^{-p}+p\left(\kappa-\frac{p+1}{2}\right)\sum_{1\leq i<j\leq n}a_{j-i}\int^t_0X_{i,j}(s)^{-p-2}\,\diff s\\
	&\leq\sum_{1\leq i<j\leq n}a_{j-i}x_{i,j}(0)^{-p}-p\sum_{1\leq i<j\leq n}a_{j-i}\int^t_0X_{i,j}(s)^{-p-1}\,\diff W_{i,j}(s)\\
	&\hspace{0.5cm}+\frac{p\kappa}{2}\int^t_0\sum_{1\leq i<j\leq n}a_{j-i}X_{i,j}(s)^{-p-1}\left\{\sum_{k\leq i-1}X_{k,i}(s)^{-1}+\sum_{\ell\geq j+1}X_{j,\ell}(s)^{-1}\right\}\,\diff s.
\end{split}
\end{equation}
We now consider the integrand of the last term in the right hand side of \eqref{eq_Dyson-estimate1}. Observe that
\begin{align*}
&\sum_{1\leq i<j\leq n}a_{j-i}
X_{i,j}(s)^{-p-1}
\left\{
\sum_{k \leq i-1} X_{k,i}(s)^{-1}
+
\sum_{\ell \geq j+1} X_{j,\ell}(s)^{-1}
\right\}
\\&=
\sum_{1\leq i<j<k\leq n}
\Big\{
a_{j-i}X_{i,j}(s)^{-p-1}X_{j,k}(s)^{-1}
+
a_{k-j}X_{i,j}(s)^{-1}X_{j,k}(s)^{-p-1}
\Big\}.
\end{align*}
We fix $1 \leq i<j<k\leq n$. On the one hand, if
\begin{equation*}
	X_{i,j}(s)\geq(a_{j-i}+a_{k-j}+1)X_{j,k}(s)\ \text{or}\ X_{j,k}(s) \geq (a_{j-i}+a_{k-j}+1)X_{i,j}(s),
\end{equation*}
then it holds that
\begin{align*}
a_{j-i}X_{i,j}(s)^{-p-1}X_{j,k}(s)^{-1}
+
a_{k-j}X_{i,j}(s)^{-1}X_{j,k}(s)^{-p-1}
\leq
X_{i,j}(s)^{-p-2}+X_{j,k}(s)^{-p-2}.
\end{align*}
On the other hand, if
\begin{equation*}
	X_{i,j}(s)<(a_{j-i}+a_{k-j}+1)X_{j,k}(s)\ \text{and}\ X_{j,k}(s)<(a_{j-i}+a_{k-j}+1)X_{i,j}(s),
\end{equation*}
then we have
\begin{align*}
X_{i,k}(s)=X_{i,j}(s)+X_{j,k}(s)<(a_{j-i}+a_{k-j}+2)(X_{i,j}(s) \wedge X_{j,k}(s)),
\end{align*}
and hence it holds that
\begin{align*}
a_{j-i}X_{i,j}(s)^{-p-1}X_{j,k}(s)^{-1}
+
a_{k-j}X_{i,j}(s)^{-1}X_{j,k}(s)^{-p-1}
<
\big(a_{j-i}+a_{k-j}+2\big)^{p+3}X_{i,k}(s)^{-p-2}.
\end{align*}
Therefore, we obtain
\begin{align*}
	&\sum_{1\leq i<j\leq n}a_{j-i}X_{i,j}(s)^{-p-1}\left\{\sum_{k \leq i-1}X_{k,i}(s)^{-1}+\sum_{\ell \geq j+1} X_{j,\ell}(s)^{-1}\right\}\\
	&\leq\sum_{1\leq i<j<k\leq n}\Big\{X_{i,j}(s)^{-p-2}+X_{j,k}(s)^{-p-2}+\big(a_{j-i}+a_{k-j}+2\big)^{p+3}X_{i,k}(s)^{-p-2}\Big\}\\
	&=\sum_{1\leq i<j\leq n}\left\{n-j+i-1+\sum_{1\leq \ell\leq j-i-1}\big(a_\ell+a_{j-i-\ell}+2\big)^{p+3}\right\}X_{i,j}(s)^{-p-2}.
\end{align*}
From the above estimate, together with \eqref{eq_Dyson-estimate1}, we obtain
\begin{align*}
&\sum_{1\leq i<j\leq n}a_{j-i}X_{i,j}(t)^{-p}
+
p\sum_{1\leq i<j\leq n}
\widehat{a}_{j-i}
\int_{0}^{t}
X_{i,j}(s)^{-p-2}
\,\diff s
\\&\leq
\sum_{1\leq i<j\leq n}a_{j-i}x_{i,j}(0)^{-p}
-
p \sum_{1\leq i<j\leq n}a_{j-i}
\int_{0}^{t}
X_{i,j}(s)^{-p-1}
\,\diff W_{i,j}(s),
\end{align*}
where $\widehat{a}_1,\dots,\widehat{a}_{n-1}$ are defined by
\begin{equation*}
	\widehat{a}_m:=a_m\left(\kappa-\frac{p+1}{2}\right)-\frac{\kappa}{2}\left\{n-m-1+\sum_{1\leq \ell\leq m-1}\big(a_\ell+a_{m-\ell}+2\big)^{p+3}\right\}
\end{equation*}
for $m\in\{1,\dots,n-1\}$. Notice that the summation term in the expression of $\widehat{a}_m$ is equal to zero for $m=1$ and independent of $a_m,\dots,a_{n-1}$ for $2\leq m\leq n-1$. Thus, noting that $\kappa-\frac{p+1}{2}>0$, we can set positive constants $a_1,\dots,a_{n-1}$ inductively such that $\widehat{a}_m=1$ for any $m\in\{1,\dots,n-1\}$. Then, we get the estimate \eqref{eq_Dyson-sum}. Notice that the constants $a_1,\dots,a_{n-1}$ depend only on $\kappa,n$ and $p$.

For $N \in \bN$, define a stopping time $\tau_{N}$ by $\tau_N:=\inf\left\{t\geq0\relmiddle| X_{i,j}(t)<1/N\, \text{for some}\,1\leq i<j\leq n \right\}$.
Notice that $\tau_{N}\to\infty$ as $N\to \infty$ $\bP$-a.s. By \eqref{eq_Dyson-sum}, we have
\begin{equation}\label{eq_Dyson-estimate2}
	\max_{1\leq i<j\leq n}\bE\left[\int_{0}^{\tau_{N}}
X_{i,j}(s)^{-p-2}\,\diff s\right]\leq C_p\max_{1\leq i<j\leq n}x_{i,j}(0)^{-p}.
\end{equation}
Here and in the rest of this proof, $C_p\in(0,\infty)$ denotes a constant which depends only on $\kappa,n$ and $p$ and varies from line to line. Taking supremum with respect to $t\in[0,\tau_N]$ and then taking the expectations in \eqref{eq_Dyson-sum}, we have
\begin{align*}
	a_{j'-i'}\bE\left[\sup_{t\in[0,\tau_N]}X_{i',j'}(t)^{-p}\right]&\leq\sum_{1\leq i<j\leq n}a_{j-i}x_{i,j}(0)^{-p}+p\bE\left[\sup_{t\in[0,\tau_N]}\left|\sum_{1\leq i<j\leq n}a_{j-i}\int^t_0X_{i,j}(s)^{-p-1}\,\diff W_{i,j}(s)\right|\right]\\
	&\leq\sum_{1\leq i<j\leq n}a_{j-i}x_{i,j}(0)^{-p}+p\sum_{1\leq i<j\leq n}a_{j-i}\bE\left[\sup_{t\in[0,\tau_N]}\left|\int^t_0X_{i,j}(s)^{-p-1}\,\diff W_{i,j}(s)\right|\right]
\end{align*}
for any $1\leq i'<j'\leq n$, and thus,
\begin{equation}\label{eq_Dyson-estimate3}
	\max_{1\leq i<j\leq n}\bE\left[\sup_{t\in[0,\tau_N]}X(t)^{-p}\right]\leq C_p\max_{1\leq i<j\leq n}x_{i,j}(0)^{-p}+C_p\max_{1\leq i<j\leq n}\bE\left[\sup_{t\in[0,\tau_N]}\left|\int^t_0X_{i,j}(s)^{-p-1}\,\diff W_{i,j}(s)\right|\right].
\end{equation}
Concerning the second term above, by using the Burkholder--Davis--Gundy inequality and Young's inequality, we get
\begin{align}
	\nonumber
	&C_p\max_{1\leq i<j\leq n}\bE\left[\sup_{t\in[0,\tau_N]}\left|\int^t_0X_{i,j}(s)^{-p-1}\,\diff W_{i,j}(s)\right|\right]\\
	\nonumber
	&\leq C_p\max_{1\leq i<j\leq n}\bE\left[\left(\int^{\tau_N}_0X_{i,j}(s)^{-2p-2}\,\diff s\right)^{1/2}\right]\\
	\nonumber
	&\leq C_p\max_{1\leq i<j\leq n}\bE\left[\sup_{t\in[0,\tau_N]}X_{i,j}(t)^{-p/2}\left(\int^{\tau_N}_0X_{i,j}(s)^{-p-2}\,\diff s\right)^{1/2}\right]\\
	\label{eq_Dyson-estimate4}
	&\leq\frac{1}{2}\max_{1\leq i<j\leq n}\bE\left[\sup_{t\in[0,\tau_N]}X_{i,j}(t)^{-p}\right]+C_p\max_{1\leq i<j\leq n}\bE\left[\int^{\tau_N}_0X_{i,j}(s)^{-p-2}\,\diff s\right]
\end{align}
By \eqref{eq_Dyson-estimate2}, \eqref{eq_Dyson-estimate3} and \eqref{eq_Dyson-estimate4}, we obtain
\begin{equation*}
	\max_{1\leq i<j\leq n}\bE\left[\sup_{t\in[0,\tau_N]}X_{i,j}(t)^{-p}+\int^{\tau_N}_0X_{i,j}(s)^{-p-2}\,\diff s\right]\leq C_p\max_{1\leq i<j\leq n}x_{i,j}(0)^{-p}.
\end{equation*}
By taking the limit $N \to \infty$, Fatou's lemma yields that the estimate \eqref{eq_Dyson-estimate} holds. This completes the proof.
\end{proof}

By \cref{lemm_Dyson-moment} and \cref{theo_main} (iii), we can obtain the following convergence order for the weak approximation \eqref{eq_Dyson-weakconv} under the slightly stronger condition $\kappa>1/2$.


\begin{theo}\label{theo_Dyson}
Assume that $\kappa>1/2$, and let $\weaksol$ be the weak solution of the SDE \eqref{eq_Dyson} on $D=\{\xi \in \bR^{n}|\,\xi_{1}>\xi_{2}>\cdots>\xi_{n}\}$ with initial condition $X(0)=x(0)\in D$.
For each $T\in(0,\infty)$ and $\pi\in\Pi_T$, let $X^\pi$ be the Euler--Maruyama scheme defined on $(\Omega^\pi,\cF^\pi,\bP^\pi)$ and given by \eqref{eq_EM-Markov} with initial condition $X^\pi(t_0)=x(0)$.
Then, for any
\begin{equation*}
	\gamma\in\left(0,\frac{2\kappa-1}{2(2\kappa+1)}\right),
\end{equation*}
there exists a constant $C_\gamma\in(0,\infty)$ such that
\begin{equation*}
	d_\LP\big(\Law_\bP(X_T),\Law_{\bP^\pi}(\poly[X^\pi])\big)\leq C_\gamma|\pi|^\gamma
\end{equation*}
for any $\pi\in\Pi_{T}$.
\end{theo}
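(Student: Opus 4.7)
The plan is to apply \cref{theo_main} (iii) to the SDE \eqref{eq_Dyson}, viewed as a Markovian SFDE with data satisfying \cref{assum} under the choices of subsets in \eqref{eq_Dyson-subset} and H\"{o}lder exponents $\alpha_b = \alpha_\sigma = 1$. Because the assumption $\kappa > 1/2$ ensures that $X(t) \in D$ for every $t \in [0,\infty)$ $\bP$-a.s., the qualitative weak convergence \eqref{eq_Dyson-weakconv} itself already follows from \cref{theo_main} (i); the remaining task is to identify the rate.

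First, I would verify the rare-event estimate \eqref{eq_rare-event}. Observe that if $\dist\!\left(X(t),\bR^n\setminus D_T(\vec{R})\right)\leq\Delta$ at some $t\in[0,T]$, then by the definition of $D_{\growth,b;T}$ and $D_{\conti,b;T}$ in \eqref{eq_Dyson-subset} there must exist indices $i<j$ with $X_i(t)-X_j(t)\leq\max\{R_{\growth,b}^{-1},R_{\conti,b}^{-1/2}\}+2\Delta$. A union bound, Markov's inequality and the moment estimate in \cref{lemm_Dyson-moment} then yield, for every $p\in(0,2\kappa-1)$,
\begin{equation*}
	\bP\!\left(\inf_{t\in[0,T]}\dist\!\left(X(t),\bR^n\setminus D_T(\vec{R})\right)\leq\Delta\right)\leq C_p\max\left\{\Delta^p,R_{\growth,b}^{-p},R_{\conti,b}^{-p/2}\right\},
\end{equation*}
with $C_p$ independent of $\Delta$ and $\vec{R}$. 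Consequently \eqref{eq_rare-event} holds with any $\vec{\beta}$ such that $\beta_0,\beta_{\growth,b}<2\kappa-1$ and $\beta_{\conti,b}<(2\kappa-1)/2$, while $\beta_{\growth,\sigma},\beta_{\conti,\sigma},\beta_\ellip\in(0,\infty)$ are arbitrary.

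Next, I would substitute these parameters into \eqref{eq_beta*}--\eqref{eq_gamma*} with $\alpha_b=\alpha_\sigma=1$ and take the limits $\beta_\ellip,\beta_{\growth,\sigma},\beta_{\conti,\sigma}\to\infty$, $\beta_0,\beta_{\growth,b}\to2\kappa-1$ and $\beta_{\conti,b}\to(2\kappa-1)/2$. A direct computation gives $\beta_*\to\frac{2\kappa-1}{2\kappa+1}$; plugging this back into \eqref{eq_gamma*}, the four candidate terms become $\frac{2\kappa-1}{2\kappa+2}$, $\frac{2\kappa-1}{2(2\kappa+1)}$, $\frac{2\kappa-1}{2\kappa+1}$, and $\frac{2\kappa-1}{4\kappa}$, respectively. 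Since $4\kappa+2>\max\{2\kappa+2,2\kappa+1,4\kappa\}$ for every $\kappa>1/2$, the second term is the smallest, and hence $\gamma_*\to\frac{2\kappa-1}{2(2\kappa+1)}$. Then \cref{theo_main} (iii) furnishes the asserted estimate.

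The main technical step is the moment bound in \cref{lemm_Dyson-moment}, which pushes the exponent all the way up to the sharp range $p\in(0,2\kappa-1)$ by combining a coupled It\^o expansion for $\sum_{i<j}a_{j-i}X_{i,j}(t)^{-p}$ with suitably tuned positive weights $a_m$ and a Burkholder--Davis--Gundy argument; once that lemma is in hand, the verification of \eqref{eq_rare-event} and the optimization over $\vec{\beta}$ are essentially algebraic.
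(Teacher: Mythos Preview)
Your proposal is correct and follows essentially the same approach as the paper's proof: you verify \eqref{eq_rare-event} via \cref{lemm_Dyson-moment} and Markov's inequality, identify the admissible range of $\vec{\beta}$, and then optimize in \eqref{eq_beta*}--\eqref{eq_gamma*} exactly as the paper does. Your computation of the four candidate terms in $\gamma_*$ and the identification of the minimum is in fact more explicit than the paper's, which simply states the limiting values of $\beta_*$ and $\gamma_*$ without displaying the intermediate terms; the only cosmetic difference is that the paper uses $\sqrt{2}\Delta$ where you use $2\Delta$ in the distance-to-boundary step, which of course has no effect on the rate.
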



\begin{proof}
Noting \eqref{eq_Dyson-subset}, for any $\Delta\in(0,1]$ and $\vec{R}=(R_{\growth,b},R_{\growth,\sigma},R_{\conti,b},R_{\conti,\sigma},R_\ellip)\in[1,\infty)^5$, we have
\begin{align*}
	&\bP\left(\inf_{t\in[0,T]}\dist\!\left(X(t),\bR^n\setminus D_T(\vec{R}) \right)\leq \Delta\right)\\
	&\leq\bP\left(X_i(t)-X_j(t)\leq\frac{1}{R_{\growth,b}\wedge R^{1/2}_{\conti,b}}+\sqrt{2}\Delta\ \ \text{for some $1\leq i<j\leq n$ and $t\in[0,T]$}\right)\\
	&\leq\sum_{1\leq i<j\leq n}\bP\left(\sup_{t\in[0,T]}\big(X_i(t)-X_j(t)\big)^{-1}\geq\left(\frac{1}{R_{\growth,b}\wedge R^{1/2}_{\conti,b}}+\sqrt{2}\Delta\right)^{-1}\right),
\end{align*}
and thus, by Markov's inequality and \cref{lemm_Dyson-moment},
\begin{align*}
	\bP\left(\inf_{t\in[0,T]}\dist\!\left(X(t),\bR^n\setminus D_T(\vec{R}) \right)\leq \Delta\right)&\leq\left(\frac{1}{R_{\growth,b}\wedge R^{1/2}_{\conti,b}}+\sqrt{2}\Delta\right)^p\sum_{1\leq i<j\leq n}\bE\left[\sup_{t\in[0,T]}\big(X_i(t)-X_j(t)\big)^{-p}\right]\\
	&\leq C_p\max\left\{\Delta^p,R_{\growth,b}^{-p},R_{\conti,b}^{-p/2}\right\},
\end{align*}
for any $p\in(0,2\kappa-1)$. Here, $C_p\in(0,\infty)$ is a constant which does not depend on $\Delta$ or $\vec{R}$. The above estimate shows that \eqref{eq_rare-event} holds for any $\vec{\beta}=(\beta_0,\beta_{\growth,b},\beta_{\growth,\sigma},\beta_{\conti,b},\beta_{\conti,\sigma},\beta_\ellip)$ with
\begin{equation*}
	\beta_0,\beta_{\growth,b}\in(0,2\kappa-1),\ \beta_{\conti,b}\in\left(0,\kappa-\frac{1}{2}\right),\ \beta_{\growth,\sigma},\beta_{\conti,\sigma},\beta_\ellip\in(0,\infty).
\end{equation*}
Concerning the constants $\beta_*$ and $\gamma_*$ defined by \eqref{eq_beta*} and \eqref{eq_gamma*}, we have
\begin{equation*}
\begin{split}
\beta_*&=
\min\left\{
\beta_0,
\frac{1}{1+\beta_\ellip^{-1}+\beta_{\conti,b}^{-1}},
\frac{1}{1+\beta_\ellip^{-1}+2\beta_{\conti,\sigma}^{-1}}
\right\}
\to
\frac{2\kappa-1}{2\kappa+1}
\end{split}
\end{equation*}
and
\begin{equation*}
\begin{split}
\gamma_*
&=
\min\left\{
\frac{1}{1+\beta_\ellip^{-1}+\beta_{\conti,b}^{-1}+\beta_{\growth,b}^{-1}},
\frac{1}{2}\cdot\frac{1}{1+\beta_\ellip^{-1}+\beta_{\conti,b}^{-1}+\beta_{\growth,\sigma}^{-1}},\right.\\
&\hspace{1.5cm}\left.
\frac{2}{1+\beta_\ellip^{-1}+2\beta_{\conti,\sigma}^{-1}+2\beta_{\growth,b}^{-1}+\beta_*^{-1}},
\frac{1}{1+\beta_\ellip^{-1}+2\beta_{\conti,\sigma}^{-1}+2\beta_{\growth,\sigma}^{-1}+\beta_*^{-1}}\right\}
\\&\to
\frac{2\kappa-1}{2(2\kappa+1)}
\end{split}
\end{equation*}
as $\beta_0,\beta_{\growth,b}\to2\kappa-1$, $\beta_{\conti,b}\to\kappa-\frac{1}{2}$ and $\beta_{\growth,\sigma},\beta_{\conti,\sigma},\beta_\ellip\to\infty$. Therefore, noting \cref{rem_gamma*}, by \cref{theo_main} (iii), we get the desired estimate. This completes the proof.
\end{proof}


\appendix
\setcounter{theo}{0}
\setcounter{equation}{0}

\section*{Appendix}\label{appendix}


\section{A priori moment estimates for weak solutions of SFDEs}\label{appendix_LG}

The following lemma provides a standard a priori moment estimates for weak solutions of SFDEs under the one-sided linear-growth type conditions.


\begin{lemm}\label{appendix_lemm_LG}
Let $\data$ be a data. Suppose that we are given a weak solution $\weaksol$ to the SFDE \eqref{eq_SFDE} associated with $\data$.
\begin{itemize}
\item[(i)]
Suppose that there exists a constant $\widehat{C}\in(0,\infty)$ such that
\begin{equation}\label{appendix_eq_LG}
	\langle x(t),b(t,x)\rangle\leq\widehat{C}\big(1+\|x_t\|_\infty^2\big)\ \ \text{and}\ \ |\sigma(t,x)|^2\leq\widehat{C}\big(1+\|x_t\|_\infty^2\big)
\end{equation}
for any $(t,x)\in[0,\infty)\times\cC^n[\supp\mu_0;D]$ and that $M_p(\mu):=\int_{\bR^n}|\xi|^p\,\mu_0(\diff\xi)<\infty$ for some $p\in[2,\infty)$. Then, there exists a constant $C_p\in(0,\infty)$, which depends only on $\widehat{C}$ and $p$, such that
\begin{equation*}
	\bE\big[\|X_T\|_\infty^p\big]\leq C_pe^{C_pT}\Big(1+M_p(\mu_0)\Big)
\end{equation*}
for any $T\in(0,\infty)$.
\item[(ii)]
Let $D=(0,\infty)^n$. Suppose that there exists a constant $\widehat{C}\in(0,\infty)$ such that
\begin{equation}\label{appendix_eq_LG-negative}
	\sum^n_{i=1}x_i(t)^3b_i(t,x^{-1})\geq-\widehat{C}\big(1+\|x_t\|_\infty^2\big)\ \ \text{and}\ \ \sum^n_{i=1}x_i(t)^4\sum^d_{j=1}\sigma_{i,j}(t,x^{-1})^2\leq\widehat{C}\big(1+\|x_t\|_\infty^2\big)
\end{equation}
for any $(t,x)\in[0,\infty)\times\cC^n[\supp\mu_0;(0,\infty)^n]$ and that $\check{M}_p(\mu_0):=\int_{(0,\infty)^n}|\xi^{-1}|^p\,\mu_0(\diff\xi)<\infty$ for some $p\in[2,\infty)$. Then, there exists a constant $C_p\in(0,\infty)$, which depends only on $\widehat{C}$ and $p$, such that
\begin{equation*}
	\bE\big[\|X_T^{-1}\|_\infty^p\big]\leq C_pe^{C_pT}\Big(1+\check{M}_p(\mu_0)\Big)
\end{equation*}
for any $T\in(0,\infty)$.
\end{itemize}
\end{lemm}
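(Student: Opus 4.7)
My strategy for both parts is the standard localization--It\^o--BDG--Gronwall scheme, applied to $(1+|X(t)|^2)^{p/2}$ for (i) and to $(1+|Y(t)|^2)^{p/2}$ with $Y(t):=X(t)^{-1}$ (componentwise) for (ii). I would first introduce the stopping times
\begin{equation*}
\tau_N:=\inf\big\{t\geq 0 \,\big|\, |X(t)|\geq N\big\}\ \text{for (i)},\quad
\varsigma_N:=\inf\big\{t\geq 0 \,\big|\, \min_{1\leq i\leq n}X_i(t)\leq 1/N\big\}\ \text{for (ii)},
\end{equation*}
both of which tend to $\infty$ $\bP$-a.s.\ (in (ii) because the defining property of a weak solution guarantees $X(t)\in(0,\infty)^n$ for all $t\in[0,\infty)$ $\bP$-a.s.), and then stop all processes at $\tau_N\wedge T$ or $\varsigma_N\wedge T$ to turn the relevant local martingales into genuine $L^{2}$-martingales.

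For part (i), It\^o's formula produces a drift of the form $C_p(1+|X|^2)^{p/2-1}\langle X,b(t,X)\rangle+C'_p(1+|X|^2)^{p/2-1}|\sigma(t,X)|^2+C''_p(1+|X|^2)^{p/2-2}|\sigma(t,X)^{\top}X|^2$, which by assumption \eqref{appendix_eq_LG} combined with $|\sigma^{\top}X|^2\leq|\sigma|^2|X|^2$ is bounded by $C_p(1+\|X_t\|_\infty^2)^{p/2}$; the quadratic-variation density of the local-martingale part is controlled by $C_p(1+\|X_t\|_\infty^2)^{p}$. Applying BDG to the stopped martingale, followed by the decoupling
\begin{equation*}
\left(\int_0^t(1+\|X_s\|_\infty^2)^p\,\diff s\right)^{1/2}\leq \sup_{s\leq t}(1+\|X_s\|_\infty^2)^{p/4}\left(\int_0^t(1+\|X_s\|_\infty^2)^{p/2}\,\diff s\right)^{1/2}
\end{equation*}
and Young's inequality (to absorb one half of the supremum-of-martingale term into the left-hand side), I would obtain
\begin{equation*}
\bE\!\left[\sup_{s\in[0,t\wedge\tau_N]}(1+|X(s)|^2)^{p/2}\right]\leq C_p\left\{1+M_p(\mu_0)+\int_0^t\bE\!\left[\sup_{r\in[0,s\wedge\tau_N]}(1+|X(r)|^2)^{p/2}\right]\diff s\right\},
\end{equation*}
and Gronwall's inequality followed by Fatou as $N\to\infty$ would conclude (i).

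For part (ii), a direct It\^o computation on $Y_i=X_i^{-1}$ gives
\begin{equation*}
\tfrac{1}{2}\diff|Y(t)|^2=\left[-\sum_{i=1}^{n}Y_i(t)^3 b_i(t,X)+\tfrac{3}{2}\sum_{i=1}^{n}Y_i(t)^4\sum_{j=1}^{d}\sigma_{i,j}(t,X)^2\right]\diff t+\diff M(t),
\end{equation*}
with $\diff\langle M\rangle(t)/\diff t\leq|Y(t)|^2\sum_{i,j}Y_i(t)^4\sigma_{i,j}(t,X)^2$ by Cauchy--Schwarz. Invoking \eqref{appendix_eq_LG-negative} with the test path $x:=Y$, for which $x^{-1}=X$ and $\|x_t\|_\infty=\|Y_t\|_\infty$, bounds the drift by $\tfrac{5}{2}\widehat{C}(1+\|Y_t\|_\infty^2)$ and the martingale-variation density by $\widehat{C}|Y(t)|^2(1+\|Y_t\|_\infty^2)$. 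From here I would run the It\^o--BDG--Gronwall chain of part (i) verbatim, with $(1+|X|^2)^{p/2}$ replaced by $(1+|Y|^2)^{p/2}$ and $\tau_N$ replaced by $\varsigma_N$, to produce the claim.

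The step I expect to require the most care is the matching of the quantifier in \eqref{appendix_eq_LG-negative}---which ranges over paths $x\in\cC^n[\supp\mu_0;(0,\infty)^n]$---with the trajectory $Y=X^{-1}$, whose initial value $Y(0)=X(0)^{-1}$ need not lie in $\supp\mu_0$ a priori. However, \eqref{appendix_eq_LG-negative} is a purely pointwise-in-$t$ inequality depending only on the instantaneous values $b(t,X)$, $\sigma(t,X)$ (equivalently $b(t,Y^{-1})$, $\sigma(t,Y^{-1})$) and on $\|Y_t\|_\infty$, so the natural reading is as a pathwise estimate which extends to any continuous $(0,\infty)^n$-valued input; it is under this reading that the computation above proceeds, and no further technical difficulty is expected.
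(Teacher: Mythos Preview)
Your proof is correct and, for part~(i), essentially identical to the paper's (applying It\^o's formula to $(1+|X|^2)^{p/2}$ instead of $|X|^p$ is a cosmetic difference).

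For part~(ii) the paper takes a slightly more conceptual route than your direct computation. Rather than rerunning the It\^o--BDG--Gronwall chain for $Y=X^{-1}$, the paper observes that $(X^{-1},W,\Omega,\cF,\bF,\bP)$ is itself a weak solution of an SFDE with data $((0,\infty)^n,\check{\mu}_0,\check{b},\check{\sigma})$, where $\check{\mu}_0:=\mu_0\circ(\xi\mapsto\xi^{-1})^{-1}$ and
\[
\check{b}_i(t,x)=-x_i(t)^2 b_i(t,x^{-1})+x_i(t)^3\sum_{j=1}^d\sigma_{i,j}(t,x^{-1})^2,\qquad
\check{\sigma}_{i,j}(t,x)=x_i(t)^2\sigma_{i,j}(t,x^{-1}).
\]
One then checks that \eqref{appendix_eq_LG-negative} for $(b,\sigma)$ is exactly \eqref{appendix_eq_LG} for $(\check{b},\check{\sigma})$, so part~(i) applies verbatim to $X^{-1}$. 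This packaging buys two things: it avoids repeating the entire estimate, and it cleanly resolves the domain concern you raised. Indeed, $Y(0)=X(0)^{-1}$ has law $\check{\mu}_0$, so $Y\in\cC^n[\supp\check{\mu}_0;(0,\infty)^n]$ automatically, and $Y^{-1}=X\in\cC^n[\supp\mu_0;D]$ lies in the domain of $b,\sigma$; no ``natural reading'' of the hypothesis is needed. Your direct computation reaches the same estimates, but the reduction-to-(i) argument is shorter and makes the role of the initial-support condition transparent.
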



\begin{proof}
First, we show the assertion (i). Let $p\in[2,\infty)$ be fixed. In this proof, we denote by $C_p$ a positive constant depending only on $\widehat{C}$ and $p$ which may vary from line to line. Using It\^{o}'s formula, we have
\begin{align*}
	|X(t)|^p&=|X(0)|^p+\int^t_0\Big\{p|X(s)|^{p-2}\langle X(s),b(s,X)\rangle\\
	&\hspace{3cm}+\frac{p}{2}|X(s)|^{p-2}|\sigma(s,X)|^2+\frac{p(p-2)}{2}|X(s)|^{p-4}|\sigma(s,X)^\top X(s)|^2\1_{\bR^n\setminus\{0\}}(X(s))\Big\}\,\diff s\\
	&\hspace{1cm}+\int^t_0p|X(s)|^{p-2}\big\langle X(s),\sigma(s,X)\,\diff W(s)\big\rangle,\ \ t\in[0,\infty).
\end{align*}
By the assumption \eqref{appendix_eq_LG}, we obtain
\begin{equation}\label{appendix_eq_LG-estimate}
	|X(t)|^p\leq|X(0)|^p+C_pt+\int^t_0\|X_s\|_\infty^p\,\diff s+p\int^t_0|X(s)|^{p-2}\big\langle X(s),\sigma(s,X)\,\diff W(s)\big\rangle,\ \ t\in[0,\infty).
\end{equation}
For each $N\in\bN$, define $\tau_N:=\inf\{t\geq0\,|\,|X(t)|\geq N\}$ and $a_N(t):=\bE[\|X_{t\wedge\tau_N}\|_\infty^p]$, $t\in[0,\infty)$. Each $\tau_N$ is a stopping time such that $\tau_N\to\infty$ $\bP$-a.s. Thanks to \eqref{appendix_eq_LG-estimate}, by the Burkholder--Davis--Gundy inequality, second estimate in \eqref{appendix_eq_LG} and Young's inequality, we have
\begin{align*}
	a_N(t)&\leq M_p(\mu_0)+C_pt+\int^t_0a_N(s)\,\diff s+C_p\bE\left[\left(\int^{t\wedge\tau_N}_0|X(s)|^{2p-2}\|X_s\|_\infty^2\,\diff s\right)^{1/2}\right]\\
	&\leq M_p(\mu_0)+C_pt+\int^t_0a_N(s)\,\diff s+C_p\bE\left[\left(\int^t_0\|X_{s\wedge\tau_N}\|_\infty^p\,\diff s\right)^{1/2}\|X_{t\wedge\tau_N}\|_\infty^{p/2}\right]\\
	&\leq M_p(\mu_0)+C_pt+C_p\int^t_0a_N(s)\,\diff s+\frac{1}{2}a_N(t),
\end{align*}
and hence
\begin{equation*}
	a_N(t)\leq2M_p(\mu_0)+C_pt+C_p\int^t_0a_N(s)\,\diff s
\end{equation*}
for any $t\in[0,\infty)$. Hence, Gronwall's inequality yields that
\begin{equation*}
	a_N(T)\leq C_pe^{C_pT}\big(1+M_p(\mu_0)\big)
\end{equation*}
for any $T\in(0,\infty)$. Then, letting $N\to\infty$, Fatou's lemma yields that the desired estimate holds.

Next, we show the assertion (ii). By using It\^{o}'s formula, we see that $(X^{-1},W,\Omega,\cF,\bF,\bP)$ is a weak solution of the SFDE \eqref{eq_SFDE} with data $((0,\infty)^n,\check{\mu}_0,\check{b},\check{\sigma})$, where $\check{\mu}_0:=\mu_0\circ(\xi\mapsto \xi^{-1})^{-1}$, and
\begin{align*}
	&\check{b}_i(t,x):=-x_i(t)^2b_i(t,x^{-1})+x_i(t)^3\sum^d_{j=1}\sigma_{i,j}(t,x^{-1})^2,\ \ i\in\{1,\dots,n\},\\
	&\check{\sigma}_{i,j}(t,x):=x_i(t)^2\sigma_{i,j}(t,x^{-1}),\ \ i\in\{1,\dots,n\},\ \ j\in\{1,\dots,d\},
\end{align*}
for $(t,x)\in[0,\infty)\times\cC^n[\supp\check{\mu}_0;(0,\infty)^n]$. Clearly, the assumption \eqref{appendix_eq_LG-negative} implies that $\check{b}$ and $\check{\sigma}$ satisfy \eqref{appendix_eq_LG}. Hence, applying the assertion (i) to $X^{-1}$ instead of $X$, we obtain the conclusion of (ii). This completes the proof.
\end{proof}


\section{Existence and uniqueness of the controlled Euler--Maruyama scheme}\label{appendix_controlledEM}

In this section, we show existence and uniqueness of the controlled Euler--Maruyama scheme $\hX^\pi$ defined as the solution of the non-standard SFDE \eqref{eq_controlledEM}. To do so, we need the following standard lemma.


\begin{lemm}\label{appendix_lemm_stopping}
Let $x,y:[0,\infty)\to\bR$ be right continuous functions, and define
\begin{equation*}
	\tau_x:=\inf\left\{t\geq0\relmiddle|x(t)=0\right\}\ \ \text{and}\ \ \tau_y:=\inf\left\{t\geq0\relmiddle|y(t)=0\right\}.
\end{equation*}
If $\tau_x\wedge\tau_y<\infty$ and $x(\tau_x\wedge\tau_y)=y(\tau_x\wedge\tau_y)$, then $\tau_x=\tau_y$.
\end{lemm}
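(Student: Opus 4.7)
The plan is to assume without loss of generality that $\tau_x\le\tau_y$ (and hence $\tau_x\wedge\tau_y=\tau_x<\infty$), establish that in fact $x(\tau_x)=0$, and then conclude via the hypothesis that $y(\tau_x)=0$ as well, so $\tau_y\le\tau_x$ and equality follows.

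First I would prove the key fact: for a right-continuous function $x:[0,\infty)\to\bR$ with $\tau_x:=\inf\{t\geq0\,|\,x(t)=0\}<\infty$, one necessarily has $x(\tau_x)=0$. By the definition of infimum applied to the set $S_x:=\{t\geq0\,|\,x(t)=0\}$, there exists a sequence $(t_n)_{n\in\bN}\subset S_x$ with $t_n\ge\tau_x$ and $t_n\downarrow\tau_x$. Either some $t_n$ equals $\tau_x$, in which case $\tau_x\in S_x$ directly, or all $t_n>\tau_x$, in which case right continuity of $x$ at $\tau_x$ gives $x(\tau_x)=\lim_{n\to\infty}x(t_n)=0$. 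In both cases $x(\tau_x)=0$.

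Applying this to the assumed case $\tau_x\wedge\tau_y=\tau_x$, we obtain $x(\tau_x)=0$. The hypothesis $x(\tau_x\wedge\tau_y)=y(\tau_x\wedge\tau_y)$ then gives $y(\tau_x)=0$, so $\tau_x\in\{t\geq0\,|\,y(t)=0\}$ and therefore $\tau_y\le\tau_x$. Combined with the WLOG assumption $\tau_x\le\tau_y$, this yields $\tau_x=\tau_y$, as desired. The symmetric case $\tau_y\le\tau_x$ is handled identically by interchanging the roles of $x$ and $y$.

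The only subtlety is the right-continuity argument establishing $x(\tau_x)=0$; everything else is a one-line deduction. Since the statement is purely deterministic and the proof does not use any additional structure on $x$ or $y$ beyond right continuity, I do not anticipate any genuine obstacle—just the need to be careful that the infimum is attained in the sense that $x(\tau_x)=0$ (which is why right continuity, rather than mere continuity from the right at other points, is exactly the hypothesis one needs).
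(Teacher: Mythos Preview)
Your proof is correct and follows essentially the same approach as the paper: both arguments hinge on the right-continuity of $x$ forcing $x(\tau_x)=0$ when $\tau_x<\infty$, and then transferring this to $y$ via the hypothesis. The only cosmetic difference is that the paper argues by contradiction (assuming $\tau_x<\tau_y$ strictly and deriving $y(\tau_x)\neq 0$ versus $x(\tau_x)=0$), whereas you give the equivalent direct argument under a WLOG assumption; the content is the same.
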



\begin{proof}
Assume that $\tau_x<\tau_y$. Then, we have $x(\tau_x)=x(\tau_x\wedge\tau_y)=y(\tau_x\wedge\tau_y)=y(\tau_x)$. The definition of $\tau_y$ and the assumption $\tau_x<\tau_y$ yield that $y(\tau_x)\neq0$. However, the definition of $\tau_x$ and the right-continuity of $x$ yield that $x(\tau_x)=0$, leading a contradiction. Hence, we have $\tau_y\leq\tau_x$. Similarly, we can show that $\tau_x\leq\tau_y$. This completes the proof.
\end{proof}

Using the above standard fact, we show existence and uniqueness of the solution of a general version of \eqref{eq_controlledEM}.


\begin{lemm}\label{appendix_lemm_controlledEM}
Let $(\Omega,\cF,\bF,\bP)$ be a filtered probability space supporting a $d$-dimensional Brownian motion $W$ relative to $\bF$ and an $\bR^n$-valued $\cF_0$-measurable random variable $\xi$. Let $\bar{b}:[0,\infty)\times\cC^n\to\bR^n$ and $\bar{\sigma}:[0,\infty)\times\cC^n\to\bR^{n\times d}$ be progressively measurable maps, and let $f:\Omega\times[0,\infty)\times\bR^n\to\bR^n$ and $g:\Omega\times[0,\infty)\times\bR^n\to\bR$ be $\bF$-progressively measurable maps. Assume that $\bar{b},\bar{\sigma},f,g$ satisfy the following properties:
\begin{itemize}
\item
There exists an increasing sequence $(t_k)^\infty_{k=0}\subset[0,\infty)$ with $t_0=0$ and $\lim_{k\to\infty}t_k=\infty$ such that $\bar{b}(t,y)=\sum^\infty_{k=0}\bar{b}(t_k,y)\1_{[t_k,t_{k+1})}(t)$ and $\bar{\sigma}(t,y)=\sum^\infty_{k=0}\bar{\sigma}(t_k,y)\1_{[t_k,t_{k+1})}(t)$ for any $(t,y)\in[0,\infty)\times\cC^n$;
\item
There exists a constant $L\in(0,\infty)$ such that $|f(\omega,t,\eta_1)-f(\omega,t,\eta_2)|\leq L|\eta_1-\eta_2|$ for any $\eta_1,\eta_2\in\bR^n$ and $(\omega,t)\in\Omega\times[0,\infty)$. Furthermore, it holds that $\int^T_0|f(\omega,t,0)|\diff t<\infty$ for any $T\in(0,\infty)$ and $\omega\in\Omega$;
\item
The map $g(\omega,\cdot,\cdot):[0,\infty)\times\bR^n\to\bR$ is continuous for any $\omega\in\Omega$.
\end{itemize}
Then, there exists a unique (up to $\bP$-indistinguishability) $\bR^n$-valued continuous $\bF$-adapted process $Y$ on $(\Omega,\cF,\bP)$ such that
\begin{equation}\label{appendix_eq_SFDE-stopped}
	\begin{dcases}
	\diff Y(t)=\bar{b}(t,Y)\,\diff t+\bar{\sigma}(t,Y)\,\diff W(t)+f(t,Y(t))\1_{[0,\tau)}(t)\,\diff t,\ \ t\in[0,\infty),\\
	Y(0)=\xi,\ \ \tau=\inf\left\{t\geq0\relmiddle|g(t,Y(t))=0\right\}.
	\end{dcases}
\end{equation}
\end{lemm}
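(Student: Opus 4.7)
The plan is to construct $Y$ inductively on the grid $(t_k)_{k\geq 0}$, exploiting the piecewise-constant-in-time structure of $\bar{b}$ and $\bar{\sigma}$: combined with their progressive measurability, it makes
\begin{equation*}
B_k := \bar{b}(t_k, Y_{t_k}),\qquad \Sigma_k := \bar{\sigma}(t_k, Y_{t_k})
\end{equation*}
$\cF_{t_k}$-measurable random variables once $Y$ has been built as a continuous $\bF$-adapted process on $[0, t_k]$. The extension to $[t_k, t_{k+1}]$ splits into two cases. If $g(s, Y(s))$ has already vanished for some $s \in [0, t_k]$, the control is switched off and one simply extends $Y$ by $Y(t) = Y(t_k) + B_k(t - t_k) + \Sigma_k(W(t) - W(t_k))$. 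Otherwise one first solves the auxiliary equation
\begin{equation*}
\widetilde Y(t) = Y(t_k) + B_k(t - t_k) + \Sigma_k(W(t) - W(t_k)) + \int_{t_k}^t f(s, \widetilde Y(s))\,\diff s,\quad t \in [t_k, t_{k+1}],
\end{equation*}
which admits a unique continuous $\bF$-adapted solution by a pathwise Picard iteration using the Lipschitz continuity and local integrability of $f$. If $g(\cdot, \widetilde Y(\cdot))$ does not vanish on $[t_k, t_{k+1}]$, set $Y \equiv \widetilde Y$ there; otherwise let $\sigma_k$ be its first zero, set $Y \equiv \widetilde Y$ on $[t_k, \sigma_k]$, and switch off the control on $[\sigma_k, t_{k+1}]$ via the explicit formula as in the first case. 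Iterating produces a continuous $\bF$-adapted process $Y$ on $[0,\infty)$, and a direct verification shows that $\tau := \inf\{t \geq 0 : g(t, Y(t)) = 0\}$ coincides with the first $\sigma_k$ produced by the construction (or equals $\infty$), so $\1_{[0,\tau)}$ reproduces exactly the intervals on which the $f$-term was kept active in \eqref{appendix_eq_SFDE-stopped}.

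For uniqueness, let $Y, Y'$ be two solutions of \eqref{appendix_eq_SFDE-stopped} with first hitting times $\tau, \tau'$, and argue by induction on $k$ that $Y \equiv Y'$ on $[0, t_k]$. Given this, $B_k = B_k'$ and $\Sigma_k = \Sigma_k'$; on $[t_k, t_{k+1}]$ both processes satisfy the SDE with the same $\cF_{t_k}$-measurable affine part, and their respective controls $f(\cdot, Y(\cdot))\1_{[0,\tau)}$ and $f(\cdot, Y'(\cdot))\1_{[0,\tau')}$ coincide on $[t_k, t_{k+1} \wedge \tau \wedge \tau']$, so Gronwall's inequality yields $Y \equiv Y'$ on this latter set. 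If $\tau \wedge \tau' \geq t_{k+1}$, one is done; otherwise $\tau \wedge \tau' < \infty$ and $Y(\tau \wedge \tau') = Y'(\tau \wedge \tau')$, so the continuous functions $g(\cdot, Y(\cdot))$ and $g(\cdot, Y'(\cdot))$ agree there, and \cref{appendix_lemm_stopping} forces $\tau = \tau'$. Beyond this common stopping time the two equations reduce to the same explicit affine expression on $[t_k, t_{k+1}]$, giving $Y \equiv Y'$ there and closing the induction.

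The main obstacle to a more direct fixed-point proof is the discontinuous dependence of the stopping time on the solution: the map $Y \mapsto \int_0^{\cdot \wedge \tau(Y)} f(s, Y(s))\,\diff s$ fails to be Lipschitz in the sup norm, so a global Banach contraction cannot be run. The piecewise-constant-in-time structure of $\bar{b}$ and $\bar{\sigma}$ is what decouples the problem into finitely many pieces per compact interval, on each of which the unstopped controlled equation is purely Lipschitz and classical SDE theory applies; \cref{appendix_lemm_stopping} then provides the clean identification of two a priori distinct stopping times once the underlying paths coincide up to their minimum.
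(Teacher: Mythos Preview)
Your proposal is correct and follows the same core strategy as the paper: exploit the piecewise-constant-in-time structure of $\bar b,\bar\sigma$ to reduce to a Lipschitz equation on each subinterval $[t_k,t_{k+1}]$, and invoke \cref{appendix_lemm_stopping} to identify the two a priori distinct stopping times once the paths agree up to their minimum. The organizational difference is minor but worth noting: the paper first constructs an auxiliary process $\widetilde Y$ solving the \emph{always-controlled} equation \eqref{appendix_eq_SFDEtilde} globally, defines $\widetilde\tau$ as the first zero of $g(\cdot,\widetilde Y(\cdot))$, then builds $Y$ from the equation with the \emph{frozen} control $f(\cdot,\widetilde Y(\cdot))\1_{[0,\widetilde\tau)}$ (which is explicit, not implicit in $Y$), and finally shows $\tau=\widetilde\tau$ via the lemma. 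You instead build $Y$ directly, handling the switch-off within each interval as it occurs. The paper's detour through $\widetilde Y$ buys a cleaner separation between the Lipschitz fixed-point step and the stopping-time identification; your route is more direct but requires tracking the switch interval-by-interval. Both are valid.

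One wording quibble in your uniqueness step: the controls $f(\cdot,Y(\cdot))\1_{[0,\tau)}$ and $f(\cdot,Y'(\cdot))\1_{[0,\tau')}$ do not ``coincide'' on $[t_k,t_{k+1}\wedge\tau\wedge\tau']$ a priori; rather, both indicators equal $1$ there, so $Y$ and $Y'$ satisfy the \emph{same} Lipschitz equation, whence Gronwall gives $Y\equiv Y'$. You clearly intend this, but the phrasing conflates hypothesis and conclusion. Also, the case $\tau\wedge\tau'\leq t_k$ (control already off at the start of the interval) should be stated separately; it follows immediately from the induction hypothesis $Y\equiv Y'$ on $[0,t_k]$ and the explicit formula.
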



\begin{proof}
First, we show that there exists a unique $\bR^n$-valued continuous $\bF$-adapted process $\widetilde{Y}$ on $(\Omega,\cF,\bP)$ such that
\begin{equation}\label{appendix_eq_SFDEtilde}
	\begin{dcases}
	\diff\widetilde{Y}(t)=\bar{b}(t,\widetilde{Y})\,\diff t+\bar{\sigma}(t,\widetilde{Y})\,\diff W(t)+f(t,\widetilde{Y}(t))\,\diff t,\ \ t\in[0,\infty),\\
	\widetilde{Y}(0)=\xi.
	\end{dcases}
\end{equation}
Noting the assumptions that $\bar{b}(t,y)=\sum^\infty_{k=0}\bar{b}(t_k,y)\1_{[t_k,t_{k+1})}(t)$ and $\bar{\sigma}(t,y)=\sum^\infty_{k=0}\bar{\sigma}(t_k,y)\1_{[t_k,t_{k+1})}(t)$, we can construct $(\widetilde{Y}(t))_{t\in(t_k,t_{k+1}]}$ for each $k\in\bN\cup\{0\}$ by the step-by-step argument. Indeed, assuming that the stopped continuous $\bF$-adapted process $\widetilde{Y}_{t_k}=\widetilde{Y}(t_k\wedge\cdot)$ is uniquely constructed for some $k\in\bN\cup\{0\}$, then $(\widetilde{Y}(t))_{t\in(t_k,t_{k+1}]}$ is constructed as the unique solution of
\begin{equation}\label{appendix_eq_SFDEtilde-k}
	\diff\widetilde{Y}(t)=\bar{b}\big(t_k,\widetilde{Y}\big)\,\diff t+\bar{\sigma}\big(t_k,\widetilde{Y}\big)\,\diff W(t)+f\big(t,\widetilde{Y}(t)\big)\,\diff t,\ \ t\in(t_k,t_{k+1}],
\end{equation}
with initial condition $\widetilde{Y}(t_k)$, which is given by the assumption of the induction. By progressive measurability of $\bar{b}$ and $\bar{\sigma}$, we have $\bar{b}(t_k,\widetilde{Y})=\bar{b}(t_k,\widetilde{Y}_{t_k})$ and $\bar{\sigma}(t_k,\widetilde{Y})=\bar{\sigma}(t_k,\widetilde{Y}_{t_k})$, and they are determined by the assumption of the induction. Thanks to the Lipschitz continuity of $f$, the equation \eqref{appendix_eq_SFDEtilde-k} is nothing but the standard SDE with Lipschitz coefficients, and hence it admits a unique continuous $\bF$-adapted solution $(\widetilde{Y}(t))_{t\in[t_k,t_{k+1}]}$. By induction, we see that \eqref{appendix_eq_SFDEtilde} admits a unique continuous $\bF$-adapted solution $\widetilde{Y}$.

Next, we define
\begin{equation*}
	\widetilde{\tau}:=\inf\left\{t\geq0\relmiddle|g(t,\widetilde{Y}(t))=0\right\},
\end{equation*}
which is an $\bF$-stopping time. Then, consider the following equation:
\begin{equation}\label{appendix_eq_SFDE-stopped'}
	\begin{dcases}
	\diff Y(t)=\bar{b}(t,Y)\,\diff t+\bar{\sigma}(t,Y)\,\diff W(t)+f(t,\widetilde{Y}(t))\1_{[0,\widetilde{\tau})}(t)\,\diff t,\ \ t\in[0,\infty),\\
	Y(0)=\xi.
	\end{dcases}
\end{equation}
Noting that the term $f(t,\widetilde{Y}(t))\1_{[0,\widetilde{\tau})}(t)$ is given, again by the step-by-step argument, we can construct $Y$ as follows:
\begin{equation*}
	Y(t)=Y(t_k)+\bar{b}(t_k,Y)(t-t_k)+\bar{\sigma}(t_k,Y)(W(t)-W(t_k))+\int^t_{t_k}f\big(s,\widetilde{Y}(s)\big)\1_{[0,\widetilde{\tau})}(s)\,\diff s,\ \ t\in(t_k,t_{k+1}],
\end{equation*}
for each $k\in\bN\cup\{0\}$, with the initial condition $Y(0)=\xi$. We show that the process $Y$ is a solution to the SFDE \eqref{appendix_eq_SFDE-stopped}. To do so, define an $\bF$-stopping time $\tau$ by
\begin{equation*}
	\tau:=\inf\left\{t\geq0\relmiddle|g(t,Y(t))=0\right\}.
\end{equation*}
First, we show that
\begin{equation}\label{appendix_eq_Y=tildeY}
	Y(t\wedge\tau\wedge\widetilde{\tau})=\widetilde{Y}(t\wedge\tau\wedge\widetilde{\tau})\ \ \text{for any $t\in[0,\infty)$}.
\end{equation}
To do so, we show that $Y_{t_k\wedge\tau\wedge\widetilde{\tau}}=\widetilde{Y}_{t_k\wedge\tau\wedge\widetilde{\tau}}$ inductively with respect to $k\in\bN\cup\{0\}$. Clearly, the equality holds for $k=0$. Let $k\in\bN\cup\{0\}$ be fixed, and assume that $Y_{t_k\wedge\tau\wedge\widetilde{\tau}}=\widetilde{Y}_{t_k\wedge\tau\wedge\widetilde{\tau}}$. Then, on the event $\{t_k<\tau\wedge\widetilde{\tau}\}\in\cF_{t_k}$, we have, for any $t\in(t_k,t_{k+1}\wedge\tau\wedge\widetilde{\tau}]$,
\begin{align*}
	Y(t)&=Y(t_k)+\bar{b}(t_k,Y)(t-t_k)+\bar{\sigma}(t_k,Y)(W(t)-W(t_k))+\int^t_{t_k}f\big(s,\widetilde{Y}(s)\big)\,\diff s\\
	&=\widetilde{Y}(t_k)+\bar{b}\big(t_k,\widetilde{Y}\big)(t-t_k)+\bar{\sigma}\big(t_k,\widetilde{Y}\big)(W(t)-W(t_k))+\int^t_{t_k}f\big(s,\widetilde{Y}(s)\big)\,\diff s\\
	&=\widetilde{Y}(t).
\end{align*}
Hence, we get $Y_{t_{k+1}\wedge\tau\wedge\widetilde{\tau}}=\widetilde{Y}_{t_{k+1}\wedge\tau\wedge\widetilde{\tau}}$. By induction, we see that $Y_{t_k\wedge\tau\wedge\widetilde{\tau}}=\widetilde{Y}_{t_k\wedge\tau\wedge\widetilde{\tau}}$ holds for any $k\in\bN\cup\{0\}$, showing \eqref{appendix_eq_Y=tildeY}. In particular, it holds that $g(\tau\wedge\widetilde{\tau},Y(\tau\wedge\widetilde{\tau}))=g(\tau\wedge\widetilde{\tau},\widetilde{Y}(\tau\wedge\widetilde{\tau}))$ on $\{\tau\wedge\widetilde{\tau}<\infty\}$. Thus, applying \cref{appendix_lemm_stopping} to $x(t)=g(t,Y(t))$ and $y(t)=g(t,\widetilde{Y}(t))$, we see that $\tau=\widetilde{\tau}$. Combining this equality with \eqref{appendix_eq_SFDE-stopped'} and \eqref{appendix_eq_Y=tildeY}, we see that $Y$ satisfies \eqref{appendix_eq_SFDE-stopped}.

Uniqueness follows from a similar argument as above. Here, we provide a sketch of the proof. Let $Y'$ be another solution of \eqref{appendix_eq_SFDE-stopped} with stopping time $\tau'=\inf\{t\geq0\,|\,g(t,Y'(t))=0\}$. The step-by-step argument and the Lipschitz continuity of $f$ yield that $Y_{t_k\wedge\tau\wedge\tau'}=Y'_{t_k\wedge\tau\wedge\tau'}$ for any $k\in\bN\cup\{0\}$ a.s. Hence, we have $Y(t\wedge\tau\wedge\tau')=Y'(t\wedge\tau\wedge\tau')$ for any $t\in[0,\infty)$ a.s., and in particular $g(\tau\wedge\tau',Y(\tau\wedge\tau'))=g(\tau\wedge\tau',Y'(\tau\wedge\tau'))$ on $\{\tau\wedge\tau'<\infty\}$ a.s. Applying \cref{appendix_lemm_stopping} to $x(t)=g(t,Y(t))$ and $y(t)=g(t,Y'(t))$, we see that $\tau=\tau'$ a.s. Hence, $f(t,Y(t))\1_{[0,\tau)}(t)=f(t,Y'(t))\1_{[0,\tau')}(t)$ for any $t\in[0,\infty)$ a.s. Using this equality, again by the step-by-step argument as above, we see that $Y=Y'$ a.s. This completes the proof.
\end{proof}


\begin{rem}\label{appendix_rem_controlledEM}
Let \cref{assum} hold, and suppose that we are given a weak solution $\weaksol$ of the original SFDE \eqref{eq_SFDE}. Then, for each $T\in(0,\infty)$, $\pi\in\Pi_T$, $\lambda\in(0,\infty)$, $\Delta\in(0,1]$ and $\vec{R}=(R_{\growth,b},R_{\growth,\sigma},R_{\conti,b},R_{\conti,\sigma},R_\ellip)\in[1,\infty)^5$, applying \cref{appendix_lemm_controlledEM} to the $\cF_0$-measurable initial condition $\xi(\omega)=X(\omega,0)$, $\omega\in\Omega$, the path-dependent coefficients
\begin{equation*}
	\bar{b}(t,y)=b(\pi(t),\poly[y]),\ \bar{\sigma}(t,y)=\sigma(\pi(t),\poly[y]),\ \ (t,y)\in[0,\infty)\times\cC^n,
\end{equation*}
and the $\bF$-progresively measurable maps
\begin{equation*}
	f(\omega,t,\eta)=\frac{\lambda}{\Delta}(X(\omega,t)-\eta)\1_{[0,T\wedge\zeta(\omega))}(t),\ g(\omega,t,\eta)=\dist\left(\eta,\bR^n\setminus B_{X(\omega,t)}(\Delta)\right),\ \ (\omega,t,\eta)\in\Omega\times[0,\infty)\times\bR^n,
\end{equation*}
we see that there exists a unique (up to $\bP$-indistinguishability) $\bR^n$-valued continuous and $\bF$-adapted process $\hX^\pi=(\hX^\pi(t))_{t\in[0,\infty)}$ on $(\Omega,\cF,\bP)$ satisfying \eqref{eq_controlledEM}. This shows existence and uniqueness of the controlled Euler--Maruyama scheme.
\end{rem}


\section{Fundamental estimates for stochastic processes}\label{appendix_prob}

In this section, we prove some fundamental estimates for stochastic processes used in the proof of \cref{prop_X-hXpi}. The following lemma is concerned with the tail-probability of the modulus of continuity of a continuous local martingale, which is used for the estimate of the term $P_2$ appearing in \eqref{eq_prob}.


\begin{lemm}\label{appendix_lemm_mod-of-conti}
For any $n$-dimensional continuous local martingale $M=(M^1,\dots,M^n)^\top$ on a filtered probability space $(\Omega,\cF,\bF,\bP)$ and any constants $0<\delta\leq T<\infty$, $\kappa\in(0,\infty)$ and $\theta\in(0,\infty)$, it holds that
\begin{equation*}
	\bP\left(\max_{i\in\{1,\dots,n\}}\esssup\displaylimits_{t\in[0,T]}\frac{\diff\langle M^i\rangle(t)}{\diff t}\leq\kappa\ \ \text{and}\ \ \varpi(M_T;\delta)\geq4\sqrt{n\kappa\delta\left(\theta^2+\log\frac{T}{\delta}\right)}\right)\leq8n\exp\left(-\theta^2\right).
\end{equation*}
\end{lemm}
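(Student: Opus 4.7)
The plan is to reduce the multi-dimensional, random-quadratic-variation estimate to a one-dimensional, deterministic-quadratic-variation setting where the classical exponential martingale inequality applies, and then to recombine the pieces by a covering of $[0,T]$ plus a union bound.

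First, I would reduce to the coordinates: since $|v|\le\sqrt{n}\max_i|v_i|$ for $v\in\bR^n$, one has $\varpi(M_T;\delta)\le\sqrt{n}\max_i\varpi(M^i_T;\delta)$. The event in the statement is therefore contained in $\bigcup_{i=1}^n A_i$, where
\begin{equation*}
A_i:=\left\{\esssup_{t\in[0,T]}\frac{\diff\langle M^i\rangle(t)}{\diff t}\le\kappa\right\}\cap\left\{\varpi(M^i_T;\delta)\ge4\sqrt{\kappa\delta\left(\theta^2+\log\tfrac{T}{\delta}\right)}\right\}.
\end{equation*}
A union bound will contribute a factor of $n$ at the end, so it suffices to estimate $\bP(A_i)$ uniformly in $i$.

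Next, I would cover $[0,T]$ by the intervals $J_k:=[k\delta,(k+2)\delta]\cap[0,T]$ for $k\in\{0,1,\dots,K-1\}$ with $K=\lceil T/\delta\rceil$. For any $0\le r\le s\le T$ with $s-r\le\delta$, choosing $k$ with $k\delta\le r<(k+1)\delta$ gives $s\le(k+2)\delta$ and hence $r,s\in J_k$, so $|M^i(s)-M^i(r)|\le 2\sup_{t\in J_k}|M^i(t)-M^i(k\delta)|$. Consequently,
\begin{equation*}
\varpi(M^i_T;\delta)\le 2\max_{0\le k<K}\sup_{t\in J_k}|M^i(t)-M^i(k\delta)|,
\end{equation*}
which means $A_i\subset\bigcup_{k=0}^{K-1}(A_i\cap B_{i,k})$, where $B_{i,k}:=\{\sup_{t\in J_k}|M^i(t)-M^i(k\delta)|\ge2\sqrt{\kappa\delta(\theta^2+\log(T/\delta))}\}$.

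To bound $\bP(A_i\cap B_{i,k})$, I would introduce the stopping time $\sigma_k^i:=\inf\{t\ge k\delta:\langle M^i\rangle(t)-\langle M^i\rangle(k\delta)>2\kappa\delta\}$. On $A_i$ the process $\langle M^i\rangle$ is Lipschitz on $[0,T]$ with constant $\kappa$, so $\sigma_k^i\ge(k+2)\delta$ there, and the supremum appearing in $B_{i,k}$ agrees with the supremum of the stopped process $N_k^i(u):=M^i(\sigma_k^i\wedge(k\delta+u))-M^i(k\delta)$. Now $N_k^i$ is a continuous local martingale with $\langle N_k^i\rangle_\infty\le 2\kappa\delta$ on the whole probability space, so the standard exponential inequality (obtained from the exponential supermartingale $\exp(\lambda N_k^i-\lambda^2\langle N_k^i\rangle/2)$ applied to $\pm N_k^i$ and optimization in $\lambda$) yields
\begin{equation*}
\bP\left(\sup_{u\ge 0}|N_k^i(u)|\ge\epsilon\right)\le 2\exp\!\left(-\frac{\epsilon^2}{4\kappa\delta}\right).
\end{equation*}
Setting $\epsilon=2\sqrt{\kappa\delta(\theta^2+\log(T/\delta))}$ gives the single-pair bound $2(\delta/T)\exp(-\theta^2)$. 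A union bound over $i\in\{1,\dots,n\}$ and $k\in\{0,\dots,K-1\}$, together with $K\le T/\delta+1\le 2T/\delta$ (valid since $\delta\le T$), produces a total bound of at most $4n\exp(-\theta^2)$, comfortably within the asserted $8n\exp(-\theta^2)$. The only subtle point, and the main thing to double-check, is the interaction between the localization and the covering, namely that on $A_i$ the stopping time $\sigma_k^i$ dominates $(k+2)\delta$ so that the stopped and unstopped suprema coincide on $J_k$; everything else is routine.
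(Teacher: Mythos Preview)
Your argument is correct and in fact yields the sharper constant $4n$ in place of the paper's $8n$. The approach, however, differs from the paper's. The paper first reduces to the one-dimensional case exactly as you do, but then invokes the Dubins--Schwarz representation to write $M^i=\kappa^{1/2}W\circ\langle\widetilde{M}^i\rangle$ for a Brownian motion $W$, observes that on the event $\{\esssup\diff\langle M^i\rangle/\diff t\le\kappa\}$ one has $\varpi(M^i_T;\delta)\le\kappa^{1/2}\varpi(W_T;\delta)$, and then proves the Brownian estimate $\bP(\varpi(W_T;\delta)\ge 4\sqrt{\delta(\theta^2+\log(T/\delta))})\le 8e^{-\theta^2}$ via a covering by intervals of length $2\delta$, the reflection principle $\sup_{[0,2]}|W|\le 2\max(\sup W,\sup(-W))$ with $\sup_{[0,2]}W\sim|W(2)|$, and the Gaussian tail bound. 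Your route bypasses Dubins--Schwarz entirely: the stopping times $\sigma_k^i$ localize the quadratic variation directly, so the exponential supermartingale inequality applies without any appeal to Brownian-specific facts. This is more self-contained and slightly more efficient; the paper's route has the conceptual advantage of isolating a clean statement about $\varpi(W_T;\delta)$ that may be reused elsewhere.
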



\begin{proof}
Without loss of generality, we may assume that $M(0)=0$. Notice that
\begin{align*}
	&\bP\left(\max_{i\in\{1,\dots,n\}}\esssup\displaylimits_{t\in[0,T]}\frac{\diff\langle M^i\rangle(t)}{\diff t}\leq\kappa\ \ \text{and}\ \ \varpi(M_T;\delta)\geq4\sqrt{n\kappa\delta\left(\theta^2+\log\frac{T}{\delta}\right)}\right)\\
	&\leq\sum^n_{i=1}\bP\left(\esssup\displaylimits_{t\in[0,T]}\frac{\diff\langle M^i\rangle(t)}{\diff t}\leq\kappa\ \ \text{and}\ \ \varpi(M^i_T;\delta)\geq4\sqrt{\kappa\delta\left(\theta^2+\log\frac{T}{\delta}\right)}\right).
\end{align*}
Thus, it suffices to show the lemma for $n=1$. Let $M$ be a one-dimensional continuous local martingale with $M(0)=0$. Define $\widetilde{M}:=\kappa^{-1/2}M$. By Dubins--Schwarz theorem, enlarging the filtered probability space if necessary, one can construct a one-dimensional Brownian motion $W$ such that $\widetilde{M}=W\circ\langle\widetilde{M}\rangle$. Then, we have $M=\kappa^{1/2}W\circ\langle\widetilde{M}\rangle$. Observe that, on the event $\{\esssup_{t\in[0,T]}\frac{\diff\langle M\rangle(t)}{\diff t}\leq\kappa\}$, we have $\langle\widetilde{M}\rangle(t)-\langle\widetilde{M}\rangle(s)\leq t-s$ for any $0\leq s<t\leq T$, and hence
\begin{equation*}
	\varpi(M_T;\delta)=\sup_{\substack{0\leq s<t\leq T\\t-s\leq\delta}}|M(t)-M(s)|=\kappa^{1/2}\sup_{\substack{0\leq s<t\leq T\\t-s\leq\delta}}\big|W\big(\langle\widetilde{M}\rangle(t)\big)-W\big(\langle\widetilde{M}\rangle(s)\big)|\leq\kappa^{1/2}\varpi(W_T;\delta).
\end{equation*}
Thus, we obtain
\begin{equation*}
	\bP\left(\esssup\displaylimits_{t\in[0,T]}\frac{\diff\langle M\rangle(t)}{\diff t}\leq\kappa\ \ \text{and}\ \ \varpi(M_T;\delta)\geq4\sqrt{\kappa\delta\left(\theta^2+\log\frac{T}{\delta}\right)}\right)\leq\bP\left(\varpi(W_T;\delta)\geq4\sqrt{\delta\left(\theta^2+\log\frac{T}{\delta}\right)}\right).
\end{equation*}
Therefore, it suffices to show the lemma for one-dimensional Brownian motion $M=W$ and constant $\kappa=1$.

Now we prove that
\begin{equation}\label{appendix_eq_conti-BM}
	\bP\left(\varpi(W_T;\delta)\geq4\sqrt{\delta\left(\theta^2+\log\frac{T}{\delta}\right)}\right)\leq8\exp\left(-\theta^2\right)
\end{equation}
for any $0<\delta\leq T<\infty$ and $\theta\in(0,\infty)$.
Observe that $\varpi(W_T;\delta)\leq\varpi(W_{m\delta};\delta)$, where $m:=\min\{k\in\bN\,|\,k\geq\frac{T}{\delta}\}$. Furthermore,
\begin{align*}
	\varpi(W_{m\delta};\delta)&=\sup_{\substack{0\leq s<t\leq m\delta\\t-s\leq\delta}}|W(t)-W(s)|\\
	&=\sup_{\substack{0\leq s<t\leq m\\t-s\leq1}}|W(\delta t)-W(\delta s)|\\
	&=\max_{k\in\{0,\dots,m-1\}}\sup_{s\in[k,k+1)}\sup_{t\in(s,(s+1)\wedge m]}|W(\delta t)-W(\delta s)|,
\end{align*}
and hence
\begin{equation*}
	\varpi(W_{m\delta};\delta)\leq\delta^{1/2}\max_{k\in\{0,\dots,m-1\}}Z_k,
\end{equation*}
where
\begin{equation*}
	Z_k:=\delta^{-1/2}\sup_{0\leq s<t\leq2}|W(\delta(t+k))-W(\delta(s+k))|,\ \ k\in\{0,\dots,m-1\}.
\end{equation*}
By the scaling property and the stationarity of the increments of Brownian motion, we see that $Z_0,\dots,Z_{m-1}$ are identically distributed with $Z_0\sim\sup_{0\leq s<t\leq2}|W(t)-W(s)|$. By these observations, we have
\begin{align*}
	\bP\left(\varpi(W_T;\delta)\geq4\sqrt{\delta\left(\theta^2+\log\frac{T}{\delta}\right)}\right)&\leq\bP\left(\varpi(W_{m\delta};\delta)\geq4\sqrt{\delta\left(\theta^2+\log\frac{T}{\delta}\right)}\right)\\
	&\leq\bP\left(\max_{k\in\{0,\dots,m-1\}}Z_k\geq4\sqrt{\theta^2+\log\frac{T}{\delta}}\right)\\
	&\leq\sum^{m-1}_{k=0}\bP\left(Z_k\geq4\sqrt{\theta^2+\log\frac{T}{\delta}}\right)\\
	&=m\bP\left(\sup_{0\leq s<t\leq2}|W(t)-W(s)|\geq4\sqrt{\theta^2+\log\frac{T}{\delta}}\right).
\end{align*}
Noting that
\begin{align*}
	&\sup_{0\leq s<t\leq2}|W(t)-W(s)|\leq2\sup_{t\in[0,2]}|W(t)|=2\max\left\{\sup_{t\in[0,2]}W(t),\sup_{t\in[0,2]}(-W(t))\right\},\\
	&\sup_{t\in[0,2]}W(t)\sim\sup_{t\in[0,2]}(-W(t))\sim|W(2)|,
\end{align*}
and $\bP(|W(2)|\geq\xi)\leq2\exp(-\xi^2/4)$ for any $\xi>0$, we get
\begin{align*}
	\bP\left(\varpi(W_T;\delta)\geq4\sqrt{\delta\left(\theta^2+\log\frac{T}{\delta}\right)}\right)&\leq m\bP\left(\max\left\{\sup_{t\in[0,2]}W(t),\sup_{t\in[0,2]}(-W(t))\right\}\geq2\sqrt{\theta^2+\log\frac{T}{\delta}}\right)\\
	&\leq2m\bP\left(|W(2)|\geq2\sqrt{\theta^2+\log\frac{T}{\delta}}\right)\\
	&\leq4m\frac{\delta}{T}\exp\left(-\theta^2\right).
\end{align*}
Since $m=\min\{k\in\bN\,|\,k\geq\frac{T}{\delta}\}\leq\frac{T}{\delta}+1\leq2\frac{T}{\delta}$, the estimate \eqref{appendix_eq_conti-BM} holds. This completes the proof.
\end{proof}

The following lemma, which is used for the estimate of the term $P_3$ appearing in \eqref{eq_prob}, is a slight refinement of \cite[Lemma B.1]{KuSc20} incorporating a logarithmic term.


\begin{lemm}\label{appendix_lemm_Kulik--Scheutzou}
Let $Z$ be a one-dimensional nonnegative It\^{o} process $Z$ on a filtered probability space $(\Omega,\cF,\bF,\bP)$ represented by
\begin{equation*}
	\diff Z(t)=v(t)\,\diff t+\diff N(t),\ \ t\in[0,\infty),
\end{equation*}
for some progressively measurable process $v$ and one-dimensional continuous local martingale $N$ with $N(0)=0$. Suppose that there exist constants $T\in(0,\infty)$, $\kappa\in(0,\infty)$, $A\in[0,\infty)$ and $B\in(0,\infty)$ and a nonnegative random variable $\varsigma$ such that $\varsigma\leq T$ and
\begin{equation*}
	v(t)\leq-\kappa Z(t)+A\ \ \text{and}\ \ \frac{\diff\langle N\rangle(t)}{\diff t}\leq B\ \ \text{for any $t\in[0,\varsigma)$ $\bP$-a.s.}
\end{equation*}
Then, for any $\theta\in(0,\infty)$, it holds that
\begin{equation}\label{appendix_eq_Kulik--Scheutzou}
	\bP\left(\sup_{t\in[0,\varsigma]}\big(Z(t)-e^{-\kappa t}Z(0)\big)\geq\frac{A}{\kappa}+\sqrt{\frac{2B\big(\theta^2+\log(1\vee(T\kappa))\big)}{\kappa}}\right)\leq219\exp\left(-\theta^2\right).
\end{equation}
\end{lemm}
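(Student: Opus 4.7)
The first step is to apply It\^o's formula to the process $t \mapsto e^{\kappa t}Z(t)$. Using the hypothesis $v(t) \leq -\kappa Z(t) + A$ on $[0,\varsigma)$, one obtains
\begin{equation*}
    d(e^{\kappa t}Z(t)) = e^{\kappa t}(\kappa Z(t) + v(t))\,dt + e^{\kappa t}\,dN(t) \leq A e^{\kappa t}\,dt + e^{\kappa t}\,dN(t)
\end{equation*}
for $t \in [0,\varsigma)$. Integrating and rearranging yields, on $[0,\varsigma]$,
\begin{equation*}
    Z(t) - e^{-\kappa t}Z(0) \leq \frac{A(1-e^{-\kappa t})}{\kappa} + e^{-\kappa t}M(t) \leq \frac{A}{\kappa} + e^{-\kappa t}M(t),
\end{equation*}
where $M(t) := \int_0^t e^{\kappa s}\,dN(s)$ is a continuous local martingale with $M(0)=0$. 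Hence the problem reduces to establishing
\begin{equation*}
    \bP\Big(\sup_{t\in[0,\varsigma]} e^{-\kappa t}M(t) \geq \sqrt{2B(\theta^2 + \log(1\vee T\kappa))/\kappa}\Big) \leq 219\exp(-\theta^2).
\end{equation*}

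I would then exploit that the process $Y(t) := e^{-\kappa t}M(t)$ solves the OU-type equation $dY = -\kappa Y\,dt + dN$, with $Y(0)=0$. The quadratic variation of $M$ satisfies $\langle M\rangle(t) \leq B(e^{2\kappa t}-1)/(2\kappa)$ on $[0,\varsigma)$, so by Dubins--Schwarz one may write $M(t) = W(\langle M\rangle(t))$ for a Brownian motion $W$ on an enlarged space. Setting $R = \sqrt{B/\kappa}$, the ``stationary'' variance of $Y$ is at most $R^2/2$, and the correlation time is $1/\kappa$, so the Donsker-type heuristics for OU suprema give precisely the scaling $R\sqrt{2(\theta^2 + \log(1\vee T\kappa))}$ that appears in the statement.

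For the rigorous tail bound I would split into two regimes. When $T\kappa \leq 1$, we have $\log(1\vee T\kappa) = 0$, and the bound follows from a single application of the reflection-principle/Gaussian estimate for $\sup_{t \in[0,\varsigma]} M(t)$ after time-change, since $\langle M\rangle(\varsigma) = O(B/\kappa)$. When $T\kappa > 1$, I partition $[0,T\wedge \varsigma]$ into intervals $[s_{k-1},s_k]$ with $s_k = k\log 2/\kappa$, so that $e^{-\kappa t} \leq 2^{-(k-1)}$ on the $k$-th interval while $\langle M\rangle(s_k) \leq B\cdot 4^k/(2\kappa)$. On each interval one bounds
\begin{equation*}
    \bP\Big(\sup_{t \in [s_{k-1},s_k]}e^{-\kappa t}M(t) \geq y\Big) \leq \bP\big(\sup_{t\leq s_k}M(t) \geq 2^{k-1}y\big) \leq \exp(-y^2\kappa/(4B))
\end{equation*}
by the reflection-principle/time-change bound, and sums over $k$. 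The main obstacle is to obtain the correct $\sqrt{\log}$ scaling rather than $\sqrt{T\kappa}$: a naive union bound over the $\Theta(T\kappa)$ intervals would give an unbounded prefactor. To sharpen this I would replace the union bound by a chaining or exponential-supermartingale argument tailored to the OU dynamics $dY = -\kappa Y\,dt + dN$; concretely, for fixed $\lambda > 0$, the process $\exp(\lambda Y(t) - \tfrac{\lambda^2}{2}\int_0^t e^{-2\kappa(t-s)}B\,ds)$ is a local supermartingale on the sub-event $\{Y(s) \geq 0,\ s\leq t\}$, which, combined with Doob's maximal inequality and an optimization in $\lambda$, yields the Gaussian tail with the sharp constant. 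The numerical constant $219$ absorbs the additional factors coming from the union of the two regimes and the two-sided treatment.
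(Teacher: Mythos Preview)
Your reduction step is essentially the paper's: the variation-of-constants representation
\[
Z(t)-e^{-\kappa t}Z(0)=\int_0^t e^{-\kappa(t-s)}\bigl(v(s)+\kappa Z(s)\bigr)\,\diff s+\int_0^t e^{-\kappa(t-s)}\,\diff N(s)
\]
immediately gives the $A/\kappa$ bound on the drift part, and after a Dubins--Schwarz time change the stochastic part is dominated by $B^{1/2}\sup_{t\in[0,T]}\int_0^t e^{-\kappa(t-s)}\,\diff W(s)$ for a Brownian motion $W$. So far so good; the whole lemma is thus reduced to a tail estimate for the supremum of the standard Ornstein--Uhlenbeck process on $[0,T]$.

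The gap is in that tail estimate. Your dyadic partition with the bound
\[
\bP\Bigl(\sup_{t\leq s_k}M(t)\geq 2^{k-1}y\Bigr)\leq\exp\bigl(-y^2\kappa/(4B)\bigr)
\]
loses a factor of $2$ in the Gaussian exponent (because you control $\sup M$ on $[0,s_k]$ by a Brownian motion whose variance is essentially the \emph{full} quadratic variation $\langle M\rangle(s_k)\asymp B\cdot 4^k/(2\kappa)$), and summing then produces $(T\kappa)^{1/2}e^{-\theta^2/2}$ rather than a bounded prefactor. You diagnose this correctly, but the proposed fix is not right: the process $\exp\bigl(\lambda Y(t)-\tfrac{\lambda^2}{2}\int_0^t e^{-2\kappa(t-s)}B\,\diff s\bigr)$ is \emph{not} a supermartingale, because $Y$ has the drift $-\kappa Y$, and restricting to the random set $\{Y(s)\geq 0,\ s\leq t\}$ is not a stopping-time construction that preserves the supermartingale property. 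Doob's inequality does not apply to such a conditional object, so the optimization in $\lambda$ does not go through.

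The paper closes this gap with a different device (its \cref{appendix_lemm_OU-exp}): apply It\^o's formula directly to $f(X(t))=\exp\bigl(\tfrac{\kappa}{2}X(t)^2\bigr)$ for the OU process $X(t)=\int_0^t e^{-\kappa(t-s)}\,\diff W(s)$. The key algebraic identity is that the drift of $f(X)$ equals $\tfrac{\kappa}{2}\bigl(1-\tfrac{\kappa}{2}X^2\bigr)f(X)-\tfrac{\kappa^2}{4}X^2 f(X)$, and since $(1-\tfrac{\kappa}{2}x^2)e^{\kappa x^2/2}\leq 1$, one obtains after localization and the Burkholder--Davis--Gundy inequality the exponential moment bound $\bE\bigl[\sup_{t\in[0,T]}\exp(\tfrac{\kappa}{2}X(t)^2)\bigr]\leq 73(2+T\kappa)$. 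Markov's inequality then yields exactly the tail with the $\log(1\vee T\kappa)$ correction and the constant $219$. This is the missing ingredient in your argument.
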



\begin{proof}
The arguments in the proof of Lemma B.1 in \cite{KuSc20} show that
\begin{equation*}
	Z(t)=e^{-\kappa t}Z(0)+\int^t_0e^{-\kappa(t-s)}\big(v(s)+\kappa Z(s)\big)\,\diff s+\int^t_0e^{-\kappa(t-s)}\,\diff N(s),\ \ t\in[0,\infty),
\end{equation*}
and
\begin{align*}
	&\sup_{t\in[0,\varsigma]}\int^t_0e^{-\kappa(t-s)}\big(v(s)+\kappa Z(s)\big)\,\diff s\leq\frac{A}{\kappa},
	&\sup_{t\in[0,\varsigma]}\int^t_0e^{-\kappa(t-s)}\,\diff N(s)\leq B^{1/2}\sup_{t\in[0,T]}\int^t_0e^{-\kappa(t-s)}\,\diff W(s),
\end{align*}
where $W$ is a one-dimensional Brownian motion defined on an enlarged probability space. The estimate \eqref{appendix_eq_Kulik--Scheutzou} then follows from \cref{appendix_lemm_OU-exp} below.
\end{proof}

The proof of the above lemma is based on the following estimate of the tail-probability of the supremum of a stochastic convolution (or the Ornstein--Uhlenbeck process), which is important by its own right.


\begin{lemm}\label{appendix_lemm_OU-exp}
Let $W$ be a one-dimensional Brownian motion on a probability space $(\Omega,\cF,\bP)$. For any $T\in(0,\infty)$ and $\kappa\in(0,\infty)$, it holds that
\begin{equation}\label{appendix_eq_OU-exp}
	\bE\left[\exp\left(\frac{\kappa}{2}\sup_{t\in[0,T]}\left|\int^t_0e^{-\kappa(t-s)}\,\diff W(s)\right|^2\right)\right]\leq73(2+T\kappa).
\end{equation}
In particular, for any $T\in(0,\infty)$, $\kappa\in(0,\infty)$ and $\theta\in(0,\infty)$, it holds that
\begin{equation}\label{appendix_eq_OU-prob}
	\bP\left(\sup_{t\in[0,T]}\left|\int^t_0e^{-\kappa(t-s)}\,\diff W(s)\right|\geq\sqrt{\frac{2\big(\theta^2+\log(1\vee(T\kappa))\big)}{\kappa}}\right)\leq219\exp\left(-\theta^2\right).
\end{equation}
\end{lemm}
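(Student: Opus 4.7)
First, I will derive the probability bound \eqref{appendix_eq_OU-prob} from the exponential moment bound \eqref{appendix_eq_OU-exp} by Markov's inequality. Applying Markov to $\exp\!\big(\tfrac{\kappa}{2}\sup_t|X(t)|^2\big)$ at the threshold $a^2=2(\theta^2+\log(1\vee T\kappa))/\kappa$ gives
\begin{equation*}
\bP\!\left(\sup_{t\in[0,T]}\Big|\!\int_0^te^{-\kappa(t-s)}\,\diff W(s)\Big|\geq a\right)\leq 73(2+T\kappa)\,e^{-\kappa a^2/2}=\frac{73(2+T\kappa)}{1\vee T\kappa}\,e^{-\theta^2},
\end{equation*}
and a case split on whether $T\kappa\leq 1$ or $T\kappa>1$ shows $(2+T\kappa)/(1\vee T\kappa)\leq 3$, yielding the constant $219$.

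For the exponential moment bound, the plan is to reduce to a computation for standard Brownian motion and apply an explicit space-time supermartingale. Writing $X(t):=\int_0^te^{-\kappa(t-s)}\,\diff W(s)=e^{-\kappa t}U(t)$ with $U(t):=\int_0^te^{\kappa s}\,\diff W(s)$, the quadratic variation $\langle U\rangle(t)=(e^{2\kappa t}-1)/(2\kappa)$ is deterministic, so by Dubins--Schwarz one has $U(t)=B(\langle U\rangle(t))$ for a standard Brownian motion $B$ (possibly on an enlargement of the probability space). This gives $X(t)^2=B(s)^2/(2\kappa s+1)$ with $s:=\langle U\rangle(t)$, and after a Brownian rescaling the problem reduces to bounding $\bE[\exp(M_R/4)]$, where $M_R:=\sup_{r\in[0,R]}\widetilde W(r)^2/(r+1)$ for a standard Brownian motion $\widetilde W$ and $R:=e^{2\kappa T}-1$. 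I then introduce $Q(s,w):=(s+1)^{-1/2}\exp\!\big(w^2/(2(s+1))\big)$; a direct computation verifies $\partial_sQ+\tfrac12\partial_{ww}Q=0$, so $Q(s,\widetilde W(s))$ is a positive continuous local martingale starting from $1$, hence a nonnegative supermartingale. Doob's maximal inequality together with the pointwise lower bound $\sup_{s\leq R}Q(s,\widetilde W(s))\geq(R+1)^{-1/2}\exp(M_R/2)$ yields the tail estimate $\bP(M_R\geq\mu)\leq\sqrt{R+1}\,e^{-\mu/2}$ for every $\mu>0$.

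The hard part will be converting this tail bound into the claimed linear-in-$T\kappa$ exponential moment estimate. Naively integrating the tail (splitting $\bE[\exp(M_R/4)]=1+\tfrac14\int_0^\infty e^{\mu/4}\bP(M_R>\mu)\,\diff\mu$ at $\mu=\log(R+1)$) only gives $\bE[\exp(M_R/4)]\leq 2(R+1)^{1/4}=2e^{\kappa T/2}$, which is exponential rather than linear in $T\kappa$. This reflects the fact that the exponent $\kappa/2$ equals half of the critical stationary value $1/(2\sigma_\infty^2)=\kappa$ (with $\sigma_\infty^2=1/(2\kappa)$), so the single-martingale estimate is inherently loose. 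To upgrade to the linear bound $73(2+T\kappa)$, I plan to partition $[0,T]$ into $N\leq 1+T\kappa$ sub-intervals of length $\leq 1/\kappa$, apply the Markov property of the OU on each piece to write $\sup_{t\in[t_k,t_{k+1}]}|X(t)|\leq|X(t_k)|+\sup_{t\in[t_k,t_{k+1}]}|Z_k(t)|$ with $Z_k$ a fresh OU started from $0$ independent of $\cF_{t_k}$, use $\exp(\max_k a_k)\leq\sum_k\exp(a_k)$ to linearize the outer supremum, and combine the two contributions by a H\"older split. The margin between $\kappa/2$ and the critical threshold $\kappa$ is what should allow the H\"older exponents to be chosen so that the $|X(t_k)|^2$-factor retains a moment bounded uniformly in $k$, while the $\sup|Z_k|^2$-factor is controlled by the space-time martingale estimate on a window with $R\leq e^2-1$; summing the resulting uniform per-piece bound over the $N=O(1+T\kappa)$ pieces then produces the desired linear growth.
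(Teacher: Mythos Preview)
Your derivation of \eqref{appendix_eq_OU-prob} from \eqref{appendix_eq_OU-exp} via Markov's inequality is correct and matches the paper exactly.

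For \eqref{appendix_eq_OU-exp}, the paper takes a much more direct route than your Dubins--Schwarz/partition plan: it applies It\^o's formula directly to $f(X(t))=\exp\!\big(\tfrac{\kappa}{2}X(t)^2\big)$, where $X$ is the OU process, and uses the elementary inequality $(1-\tfrac{\kappa}{2}x^2)\exp(\tfrac{\kappa}{2}x^2)\leq 1$ to obtain
\[
f(X(t))+\frac{\kappa^2}{4}\int_0^tX(s)^2f(X(s))\,\diff s\leq 1+\frac{t\kappa}{2}+\kappa\int_0^tX(s)f(X(s))\,\diff W(s).
\]
Taking expectations bounds the integral term by $\tfrac{4}{\kappa^2}(1+\tfrac{T\kappa}{2})$; taking the supremum, the Burkholder--Davis--Gundy inequality (with constant $6$) and Young's inequality then give $73(2+T\kappa)$ in one stroke. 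No time change, no partitioning, no space-time martingale.

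Your partitioning plan, as written, does not close. With pieces of length $1/\kappa$ and the split $(a+b)^2\leq(1+\epsilon)a^2+(1+\epsilon^{-1})b^2$, uniform control of $\bE[\exp(\tfrac{\kappa(1+\epsilon)}{2}|X(t_k)|^2)]$ forces $\epsilon<1$. But your $Q$-martingale tail bound $\bP(M_R\geq\mu)\leq\sqrt{R+1}\,e^{-\mu/2}$ has decay exponent exactly $\tfrac12$ \emph{regardless of $R$}, so $\bE[\exp(\alpha M_{R'})]<\infty$ only for $\alpha<\tfrac12$; since $\tfrac{\kappa(1+\epsilon^{-1})}{2}\sup|Z_k|^2=\tfrac{1+\epsilon^{-1}}{4}M_{R'}$, finiteness requires $\epsilon>1$. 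The two constraints are incompatible. The idea can be salvaged---take pieces of length $\delta/\kappa$ for small $\delta$ and control $\sup|Z_k|$ via the crude bound $|Z_k(t)|\leq|U_k(t)|$ and reflection for the martingale $U_k$, yielding a tail exponent of order $1/\delta$---but this is work you have not accounted for, and recovering the explicit constant $73$ along that route looks awkward. The paper's It\^o argument sidesteps all of this.
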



\begin{proof}
Define $X(t):=\int^t_0e^{-\kappa(t-s)}\,\diff W(s)$, $t\in[0,\infty)$, and $f(x):=\exp(\frac{\kappa}{2}x^2)$, $x\in\bR$. We show that
\begin{equation}\label{appendix_eq_OU-exp'}
	\bE\left[\sup_{t\in[0,T]}f(X(t))\right]\leq73(2+T\kappa).
\end{equation}
Notice that $X$ is an Ornstein--Uhlenbeck process such that
\begin{equation*}
	\diff X(t)=-\kappa X(t)\,\diff t+\diff W(t),\ \ t\in[0,\infty),\ \ X(0)=0.
\end{equation*}
Also, notice that $f(0)=1$, $f'(x)=\kappa xf(x)$ and $f''(x)=(\kappa+\kappa ^2x^2)f(x)$. Thus, It\^{o}'s formula yields that
\begin{equation*}
	f(X(t))=1+\int^t_0\Big\{-\kappa^2X(s)^2f(X(s))+\frac{1}{2}\big(\kappa+\kappa^2X(s)^2\big)f(X(s))\Big\}\,\diff s+\int^t_0\kappa X(s)f(X(s))\,\diff W(s),\ \ t\in[0,\infty).
\end{equation*}
Equivalently, it holds that
\begin{align*}
	&f(X(t))+\frac{\kappa^2}{4}\int^t_0X(s)^2f(X(s))\,\diff s\\
	&=1+\frac{\kappa}{2}\int^t_0\left(1-\frac{\kappa}{2}X(s)^2\right)f(X(s))\,\diff s+\kappa\int^t_0X(s)f(X(s))\,\diff W(s),\ \ t\in[0,\infty).
\end{align*}
Since $(1-\frac{\kappa}{2}x^2)f(x)=(1-\frac{\kappa}{2}x^2)\exp(\frac{\kappa}{2}x^2)\leq1$ for any $x\in\bR$, we obtain
\begin{equation}\label{appendix_eq_OU-Ito}
	f(X(t))+\frac{\kappa^2}{4}\int^t_0X(s)^2f(X(s))\,\diff s\leq1+\frac{t\kappa}{2}+\kappa\int^t_0X(s)f(X(s))\,\diff W(s),\ \ t\in[0,\infty).
\end{equation}
Let $N\in\bN$, and define $\tau_N:=\inf\{t\geq0\,|\,|X(t)|\geq N\}$. Then $\tau_N$ is a stopping time such that $\lim_{N\to\infty}\tau_N=\infty$ a.s. Notice that the stopped stochastic integral $\int^{\cdot\wedge\tau_N}_0X(s)f(X(s))\,\diff W(s)$ is a martingale, and hence its expectation is zero. By taking the expectations in both sides of \eqref{appendix_eq_OU-Ito} with $t=T\wedge\tau_N$, we obtain
\begin{equation}\label{appendix_eq_OU-estimate}
	\frac{\kappa^2}{4}\bE\left[\int^{T\wedge\tau_N}_0X(s)^2f(X(s))\,\diff s\right]\leq1+\frac{T\kappa}{2}.
\end{equation}
Furthermore, taking the supremum for $t\in[0,T\wedge\tau_N]$ and then taking the expectations in both sides of \eqref{appendix_eq_OU-Ito}, we have, by the Burkholder--Davis--Gundy inequality\footnote{Following the proof of \cite[Theorem 3.28]{KaSh91}, in the Burkholder--Davis--Gundy inequality $\bE[\|M_T\|_\infty]\leq K_{1/2}\bE[\langle M\rangle(T)^{1/2}]$ for one-dimensional continuous local martingale starting from zero, we can take $K_{1/2}=6$.},
\begin{equation*}
	\bE\left[\sup_{t\in[0,T\wedge\tau_N]}f(X(t))\right]\leq1+\frac{T\kappa}{2}+6\kappa\bE\left[\left(\int^{T\wedge\tau_N}_0X(s)^2f(X(s))^2\,\diff s\right)^{1/2}\right].
\end{equation*}
Then, Young's inequality yields that
\begin{align*}
	\bE\left[\sup_{t\in[0,T\wedge\tau_N]}f(X(t))\right]&\leq1+\frac{T\kappa}{2}+6\kappa\bE\left[\left(\int^{T\wedge\tau_N}_0X(s)^2f(X(s))\,\diff s\right)^{1/2}\left(\sup_{s\in[0,T\wedge\tau_N]}f(X(s))\right)^{1/2}\right]\\
	&\leq1+\frac{T\kappa}{2}+18\kappa^2\bE\left[\int^{T\wedge\tau_N}_0X(s)^2f(X(s))\,\diff s\right]+\frac{1}{2}\bE\left[\sup_{s\in[0,T\wedge\tau_N]}f(X(s))\right].
\end{align*}
Thanks to the stopping time $\tau_N$, we have $\bE[\sup_{s\in[0,T\wedge\tau_N]}f(X(s))]<\infty$. Hence, the above estimate shows that
\begin{equation*}
	\bE\left[\sup_{t\in[0,T\wedge\tau_N]}f(X(t))\right]\leq2+T\kappa+36\kappa^2\bE\left[\int^{T\wedge\tau_N}_0X(s)^2f(X(s))\,\diff s\right].
\end{equation*}
Combining this estimate with \eqref{appendix_eq_OU-estimate}, we obtain
\begin{equation*}
	\bE\left[\sup_{t\in[0,T\wedge\tau_N]}f(X(t))\right]\leq73(2+T\kappa).
\end{equation*}
By taking the limit $N\to\infty$ in this estimate and using Fatou's lemma, we obtain \eqref{appendix_eq_OU-exp'} and equivalently \eqref{appendix_eq_OU-exp}.

As for the estimate \eqref{appendix_eq_OU-prob}, by using Markov's inequality and \eqref{appendix_eq_OU-exp}, we have
\begin{align*}
	&\bP\left(\sup_{t\in[0,T]}\left|\int^t_0e^{-\kappa(t-s)}\,\diff W(s)\right|\geq\sqrt{\frac{2\big(\theta^2+\log(1\vee(T\kappa))\big)}{\kappa}}\right)\\
	&=\bP\left(\exp\left(\frac{\kappa}{2}\sup_{t\in[0,T]}\left|\int^t_0e^{-\kappa(t-s)}\,\diff W(s)\right|^2\right)\geq(1\vee(T\kappa))\exp\left(\theta^2\right)\right)\\
	&\leq\bE\left[\exp\left(\frac{\kappa}{2}\sup_{t\in[0,T]}\left|\int^t_0e^{-\kappa(t-s)}\,\diff W(s)\right|^2\right)\right](1\vee(T\kappa))^{-1}\exp\left(-\theta^2\right)\\
	&\leq73(2+T\kappa)(1\vee(T\kappa))^{-1}\exp\left(-\theta^2\right)\\
	&\leq219\exp\left(-\theta^2\right).
\end{align*}
This completes the proof.
\end{proof}


\section*{Acknowledgments}
The first author was supported by JSPS KAKENHI Grant Number 22K13958.
The second author was supported by JSPS KAKENHI Grant Number 21H00988 and 23K12988.


\end{document}